\newcommand{\supp}{\operatorname{supp}}
\newcommand{\NN}{\mathbb{N}}
\newcommand{\RR}{\mathbb{R}}
\newtheorem{theorem}{Theorem}
\newtheorem{proposition}[theorem]{Proposition}
\newtheorem{lemma}[theorem]{Lemma}
\newtheorem{corollary}[theorem]{Corollary}
\theoremstyle{definition}
\newtheorem{definition}[theorem]{Definition}
\newtheorem{example}{Example}
\newtheorem{remark}[theorem]{Remark}
\newcommand{\beqn}{\begin{equation}}
\newcommand{\eeqn}{\end{equation}}
\newcommand{\bear}{\begin{eqnarray}}
\newcommand{\eear}{\end{eqnarray}}
\newcommand{\bean}{\begin{eqnarray*}}
\newcommand{\eean}{\end{eqnarray*}}
\begin{document}
\title{On a Boltzmann equation  for Compton scattering  from non relativistic electrons at low density.}
\maketitle
\begin{center}
E. Cort\'es\footnotemark[1], M. Escobedo\footnotemark[2]
\end{center}

\footnotetext[1]{BCAM - Basque Center for Applied Mathematics. Alameda de Mazarredo 14,
E--48009 Bilbao, Spain.
E-mail~: {\tt e.cortes.coral@gmail.com}
}
\footnotetext[2]{Departamento de Matem\'aticas, Universidad del
Pa{\'\i}s Vasco, Apartado 644, E--48080 Bilbao, Spain.
E-mail~: {\tt miguel.escobedo@ehu.es}}

\abstract{A Boltzmann equation, used to describe the evolution of the density function of a gas of photons interacting by  Compton scattering  with  electrons at low density and non relativistic equilibrium, is considered. A truncation of the very singular redistribution function is introduced and justified. The existence of weak solutions is proved for a large set of initial data. A simplified equation, where only the quadratic terms are kept and  that appears at very low temperature of the electron gas, for small values of the photon's energies,  is also studied. The existence of weak solutions, and also of more regular solutions that are very flat near the origin, is proved. The long time asymptotic behavior of weak solutions of the simplified equation is  described.}
\tableofcontents

\section{Introduction.}

\setcounter{equation}{0}
\setcounter{theorem}{0}

When only Compton scattering events are considered,  the evolution of the particle density of a gas of photons that interact with electrons at non relativistic equilibrium is usually described by means of a Boltzmann equation that  may be found  in \cite{1964PhFl....7..735D}, \cite{1957Komp}, \cite{1965Weyman} and  many others.  For  a spatially  homogeneous  isotropic gas of photons and  non relativistic electrons at equilibrium, the equation is simplified to the following expression,
with a  notation more usual  in  the literature of  physics:
\begin{align}
&k^2{\partial f\over \partial t}(t, k)=Q_{\beta}(f, f)(t, k),\qquad t>0,\;k\geq 0, \label{S1E0} \\
&Q_{\beta}(f, f)(t, k)=\int_0^\infty \left(\frac {} {} f(t, k')
\,(1+f(t, k))  e^{-\beta k} - \right.\nonumber \\
&\hskip 2cm \left. -f(t, k) (1 + f(t, k')) e^{-\beta k'} \right)k k' \mathcal B _{ \beta  }(k, k')dk'. \label{S1E1}
\end{align}
The variable  $k=|\textbf k|$ denotes  the energy of a photon of momentum 
$\textbf k\in\RR^3$ (taking the speed of light $c$ equal to one), $\beta =(\hbar T)^{-1}$, with $T$ the temperature of the gas of electrons, $(4\pi /3)k^2 f(t,  k)\geq 0$ is the  particle density, and $\mathcal B _{ \beta  }(k, k')$ is a function called sometimes the redistribution function. 

We emphasize that only elastic collisions of one photon and one electron giving  rise to one photon and one electron are considered in this equation, and no radiation effects are taken into account.  As shown in \cite{F},  the cross section for
emission of an additional photon of energy $k $  diverges as  $k $  approaches zero, and so  the probability of a Compton process  unaccompanied  by such emission  is zero. It follows that the equation (\ref{S1E0}), (\ref{S1E1}) can not take accurately  into account  photons with too small energy.

When the speed of light $c$ is taken into account, the corresponding  equation (\ref{S1E0}), (\ref{S1E1}) is very often approximated  by  a  nonlinear Fokker Planck equation (cf. \cite{1957Komp}).  For $\beta >>(m c^2)^{-1}$ (that corresponds to non relativistic electrons with  mass $m$), the scattering cross section of photons with energies $k << mc^2$ may be approximated by the Thompson scattering cross section. It is then possible to deduce the following expression of $\mathcal B_\beta (k, k')$:
\begin{align}
\mathcal B _{ \beta  }(k, k')&=\sqrt \beta \;e^{\beta \frac {(k'+k )} {2}}\int _0^\pi 
\frac {(1+\cos^2\theta)} { |\textbf{k}'-\textbf{k}| }e^{-\beta \frac {\Delta ^2+\frac {m^2v^4} {4}} {2mv^2}} d\cos \theta,\label{S2E009}\\
v&=\frac {1} {m }|\textbf{k}'-\textbf{k}|,\quad \Delta =k'-k,  \label{S2E007}
\end{align}
(cf. \cite{Mes} and \cite{EMV}). It is then argued (cf. \cite{1957Komp} for example) that $\mathcal B _{ \beta  }(k, k')$ is strongly peaked in the region 
\begin{equation}
\label{S1CK}
\left\{k>0, \;k'>0;\,\,|k-k'|<<\min\{k, k'\}\right\}
\end{equation} 
for large values of $\beta $, (cf. Figure \ref{fig234} in Appendix B.2) and then, if the variations of $f$ are not too large, it is  possible to expand the integrand of (\ref{S1E0}) around $k$ and, after a suitable rescaling of the time variable, the equation (\ref{S1E0}), (\ref{S1E1}) is approximated by:
\bear
\frac{\partial f}{\partial t}=\frac{1}{k^2}\frac{\partial }{\partial k}\left(k^4
\left(\frac{\partial f}{\partial k}+f^2+f \right)\right),\label{S1E2}
\eear
the  Kompaneets equation (\cite{1957Komp}).  However, it is  difficult to determine under what conditions on the initial data and in what range of  photon energies $k$, is this approximation correct.

Due in particular  to its importance in modern cosmology and high energy astrophysics, the  Kompaneets equation has received great attention in the literature of physics (cf. the review \cite{BIRKINSHAW199997}). It has also been studied from a more strictly mathematical point of view (\cite{doi:10.1063/1.865928}, \cite{MR1491861}, \cite{OK}), and several of its possible approximations  have also been considered (\cite{Ballew2016BoseEinsteinCI}, \cite{MR3523075}). 
It was first observed  in \cite{1969JETP...28.1287Z} that for a large class of initial data, as $t$ increases, the solutions of (\ref{S1E2})  may develop steep profiles, very close to a shock wave, near $k=0$.  This was proved to happen in \cite{MR1491861} for some of the solutions,  for  $k$ in a neighborhood of the origin and large times.

On the basis of the equilibrium distributions $F_M$ of (\ref{S1E0}), (\ref{S1E1}) given  by
\begin{align}
&k^2F_M=k^2f_\mu +\alpha \delta _0, \,\,\,\mu \le 0,\,\,\alpha \ge 0,\,\,\,\alpha \mu =0, \label{S1Estat1}\\
&f_\mu(k) =\frac {1} {e^{k-\mu }-1},\,\,\int  _{ 0 }^\infty k^2f_\mu (k)dk=M_\mu,\,\,M=\alpha +M_\mu,\label{S1Estat2}
\end{align}
some of its unsteady  solutions are also expected to develop, asymptotically in time,  very large values and strong variation in very small regions near the origin. This  was proved to be true  in \cite{ESMI} where, under the assumptions that   $e^{-\eta(k+k')}(kk')^{-1}\mathcal B _{ \beta  }(k, k')$ is a bounded  function on $[0, \infty)^2$ for some $\eta\in [0, 1)$,  it is shown that, as $t\to \infty$, certain solutions  form a Dirac mass at the origin. A detailed description of this formation was  given later   in  \cite{Escobedo2004208},  assuming  $\mathcal B _{ \beta  }(k, k')(kk')^{-1}\equiv 1$  and for some  classes  of  initial data. Of course, in the region where this delta formation takes place, the equation (\ref{S1E0}), (\ref{S1E1}) can not be approximated by the Kompaneets equation (\ref{S1E2}).  

It is obvious however that the function  $\mathcal B _{ \beta  }(k, k')$  in (\ref{S2E009}), (\ref{S2E007}) does not satisfies the  conditions  imposed in \cite{ESMI} or \cite{Escobedo2004208}.  On the other hand, the Boltzmann equation  (\ref{S1E0}), (\ref{S1E1})  with the kernel    (\ref{S2E009}), (\ref{S2E007}) was considered  in \cite{Nouri1} and \cite{Nouri2}. Local existence for small initial data with a  moment of order $-1$  was proved in \cite{Nouri1}. It was proved in \cite{Nouri2} that,  although globally solvable  in time for  initial data bounded from above by the Planck distribution, the Cauchy problem has no solution, even local in time, for initial data  greater that the Planck distribution. This seems to be an effect  of the very small values of $k$ and $k'$ with respect to $|k-k'|$ in the collision integral, and   indicates that some  truncation is needed in order to have a reasonable theory for the Cauchy problem.  (cf.  Section \ref{truncation} below). 
 
In this article we  consider first the Cauchy problem for an equation where  the kernel (\ref{S2E009}), (\ref{S2E007}) is  truncated in a region where $k$ or $k'$ are much smaller than $|k-k'|$, although  the strong singularity at the origin $k=k'=0$ is kept. This is achieved by multiplying the kernel $\mathcal B _{ \beta  }$ by a suitable cut off function $\Phi(k, k')$,
\begin{align}
&k^2{\partial f\over \partial t}(t, k)=\widetilde {Q}_{\beta}(f, f)(t, k) \label{S1E0B} \\
&\widetilde{Q}_{\beta}(f, f)(t, k)=\int_0^\infty \left(\frac {} {} f(t, k')
\,(1+f(t, k))  e^{-\beta k} - \right.\nonumber \\
&\hskip 1cm \left. -f(t, k) (1 + f(t, k')) e^{-\beta k'} \right)k k'  \Phi (k, k')\mathcal B _{ \beta  }(k, k')dk' \label{S1E1B}
\end{align}
The Cauchy problem for (\ref{S1E0B}), (\ref{S1E1B})  proved to have weak solutions for a large class of initial data in the space of non negative measures. Because of some difficulties   coming from the kernel $\mathcal B_\beta $ and its truncation, it  is not possible to perform   the same analysis as in \cite{ESMI} or  \cite{Escobedo2004208}, where the asymptotic behavior of the solutions was described. 

Some further insight may be obtained from a simplified equation, proposed  in 
\cite{1972ZhETF..62.1392Z} and  \cite{1972JETP...35...81Z}, where the authors suggest  to keep only the quadratic terms in (\ref{S1E1}) when $f>>1$ (or when the function $f$ has a large derivative) and consider,
\begin{equation}
k^2{\partial f\over \partial t} (t, k)
=f(t, k) \int_0^\infty f(t, k') \bigl( e^{-\beta k} - e^{-\beta k'} \bigr) kk'\mathcal B _{ \beta  }(k, k')dk' .\label{S1EwLp}
\end{equation}
This equation may be  formally obtained in the limit  $\beta \to \infty$ and $\beta k$ of order one (cf. Section \ref{deduction} below).  If the reasoning leading from equation (\ref{S1E0}) to the Kompaneets equation goes for the equation (\ref{S1EwLp}), the following non linear first order equation is obtained,
\bear
\frac{\partial f}{\partial t}=\frac{1}{k^2}\frac{\partial }{\partial k}\left(k^4 f^2\right),\,\,\,t>0, \, k>0. \label{S1E2K3}
\eear
For the same reasons as for the  equation (\ref{S1E0}), we  consider the equation (\ref{S1EwLp}) with the truncated redistribution function,
\begin{equation}
k^2{\partial f\over \partial t} (t, k)
=f(t, k) \int_0^\infty \!\!\!f(t, k') \bigl( e^{-\beta k} - e^{-\beta k'} \bigr) kk'\Phi (k, k')\mathcal B _{ \beta  }(k, k')dk' .\label{S1Ew}
\end{equation}

As for the equation  (\ref{S1E0B}), (\ref{S1E1B}), equation (\ref{S1Ew})  has weak solutions for a large set of initial data. Moreover, if the initial data is an integrable function, sufficiently flat around the origin, it has a global solution, that remains, for all time,  an integrable function, flat around the origin.  The weak solutions  of (\ref{S1Ew}) converge as $t$ tends to infinity to a limit  that may be almost completely characterized. It is formed by an at most countable number of Dirac masses, whose locations are determined by the way in which the mass of the initial data is distributed. This  suggests a possible transient behavior for the solutions of the complete equation (\ref{S1E0B}), where large and concentrated peaks could form and remain for some time.

We refer to  \cite{Grachev} for  recent numerical simulations on the behavior of the solutions of the equation (\ref{S1E0}) and the Kompaneets approximation. The anisotropic case has also been  recently considered in \cite{Buet}.

We describe now  our results in more detail.

\subsection{The function $\mathcal B _{ \beta  }(k, k')$. Weak formulation.}
Due to the $k^2$ factor in the left hand side of (\ref{S1E0}), it is natural to introduce the new variable
\begin{equation}
\label{S1ENU}
v(t, k)=k^2f(t, k).
\end{equation}
This  variable $v$ is now, up to a constant, the photon density in the radial variables, and  equation (\ref{S1E0}), (\ref{S1E1}) reads,
\begin{align}
&\frac {\partial v} {\partial t}(t, k) =  \mathcal{Q}_{\beta}(v, v)(t,k),\qquad t>0,\;k\geq 0, \label{S1EN0} \\
&\mathcal{Q}_{\beta}(v, v)(t,k)=\int_0^\infty q_\beta (v, v') \frac {\mathcal B _{ \beta  }(k, k')} {kk'}dk', \label{S1EN1}\\
&q_\beta (v, v')=v' (k^2+v)  e^{-\beta k} - v (k'^2 + v') e^{-\beta k'}, \label{S1Z203}
\end{align}
where we use the common notation $v=v(t,k)$ and $v'=v(t,k')$.
As a consequence of the change of variables (\ref{S1ENU}), the factor $kk'$ in the collision integral has been changed to $(kk')^{-1}$. 

An expression  of  $\mathcal B _{ \beta  }(k, k')$ may be obtained at low density of electrons and using the non relativistic approximation of the Compton scattering cross section (cf. \cite{Mes, EMV}). It may  be seen in particular that  $\mathcal B _{ \beta  }(k, 0)>0$ for all $k>0$, and
\begin{align}
\mathcal B _{ \beta  }(k, k')&=\frac {44} {15 }\left(\frac {2} {k+k '}+1\right)+\mathcal O(k+k'),\quad k+k'\to 0, \label{S1ER1'}\\
\mathcal B _{ \beta  }(k, 0)&= \frac {8 \sqrt \beta} {3}e^{\frac {k} {2}}\frac {e^{- \frac {\beta ^2+k^2} {2\beta m}}} {k}.\label{S1ER2}
\end{align}
The kernel  $\mathcal B _{ \beta  }(k, k')(kk')^{-1}$ is then rather singular near the axes,
and  the collision integral $\mathcal Q(v,v)$ is not  defined for $v(t)$ a general non negative bounded measure. In order to overcome this problem, it is usual to introduce weak solutions. A natural definition of weak solution is:
\begin{align}
\label{S1Z202}
\frac {d} {dt} \!\int\limits  _{ [0, \infty) }\!\!v(t, k)\varphi (k)dk=\frac {1} {2}\iint\limits _{[0, \infty)^2 } \big(\varphi-\varphi'\big) q_\beta (v, v')\frac {\mathcal B _{ \beta  }(k, k')}{kk'}dk dk'
\end{align}
for a suitable space of test functions $\varphi $. Again, we use the notation $\varphi=\varphi(k)$ and $\varphi'=\varphi(k')$. Since $\mathcal B _{ \beta  }(k, 0)>0$ for all $k>0$, the integral in the right hand side of (\ref{S1Z202})  may still  diverge. It  was actually   proved in \cite{Nouri2} that for initial data $v_0$ such that
$$
v_0(x)> \frac {x^2} {e^x-1},\quad\forall x\ge 0,
$$ 
the  (\ref{S1Z202}) has no solution in $C([0, T), \mathscr{M}_+^1([0,\infty)))$, for any $T>0$. 

Kernels with that  kind of singularities have been considered in coagulation equations. One possible way to overcome this difficulty  and obtain global solutions is to impose   test functions $\varphi $  compactly supported on $(0, \infty)$, like in \cite{Norris}, or such that $\varphi (x)\sim x^\alpha $ as $x\to 0$ for some $\alpha $ large enough, like for example in \cite{Fournier}, (but in that case we could not expect to obtain any information on what happens near the origin), or also to look for solutions $v$ in suitable weighted spaces like in \cite{Kumar2} and \cite{Camejo} (but that would exclude the Dirac delta at the origin). In all these cases, the propagation  of negative moments for all  $t>0$ is necessary. That property does not seem to hold true for (\ref{S1E0B}), cf. Remark \ref{S2EXIP1} for the local propagation of some negative moments.  See Remark \ref{S1RXP} and Remark \ref{S5RXBT2} for the equation (\ref{S1Ew}). 

\subsubsection{Truncated  kernel: why and how.}
\label{truncation}

As we have already mentioned, the equation (\ref{S1E0}), (\ref{S1E1}) does not describe the Compton scattering if ``too'' low energy photons  are considered, since in that case the spontaneous emission of photons must be taken into account (cf. \cite{F}). At this level of description then, some cut off  seems   necessary for  a coherent  description, where only collisions of one photon and one electron giving one photon and one electron are considered.

In view of the properties of the function $\mathcal B_\beta $ for $\beta $ large presented in Appendix \ref{deduction}, and since no precise indication is available in the literature of physics, we  use  a mathematical criteria as follows:  

(i) - We  truncate the kernel $\mathcal B_\beta $, down  to zero, out of the  following subset  of  $[0, \infty)\times [0, \infty)$:
\begin{align}
&\forall (k,k')\in [0,\delta_*]^2,\quad |k-k'|\le \rho_*(kk')^{\alpha_1} (k+k')^{\alpha_2},  \label{S1EN100}\\
&\forall (k, k')\in [0,\infty)^2\setminus[0,\delta_*]^2,\quad\theta k\le k'\le \theta ^{-1} k, \label{S1EN100B}
\end{align}
for some constants $\delta_*>0$, $\rho_*>0$, $\alpha_1\ge 1/2$, $2\alpha_2\ge 3-4\alpha$, and $\theta \in (0, 1)$.

(ii) - In order to  minimize the region of this truncation, we choose $\alpha_1=\alpha_2 =1/2$.

(iii) - We  leave $\mathcal B_\beta $ unchanged as much as possible inside that region, but at the same time we want the resulting truncated kernel to belong to $C((0, \infty)\times (0, \infty))$.

\begin{remark}
It is suggested in \cite{1975SvPhU..18...79Z} that for very large values of $\beta $, the support of $\mathcal B_\beta $ is a 
subdomain of $|k-k'|\!<\!2k^2/mc^2$ for small values of $k$ and $k'$. That would be a stronger truncation than  in (ii).
\end{remark}
\noindent
Then we multiply  $\mathcal B _{ \beta  }(k, k')$ by $\Phi (k, k')$, where:\\

\noindent 1. $\Phi (k,k')=\Phi(k',k)$ for all $k>0$, $k'>0$,\\
2. $\Phi \in C([0,\infty)^2\setminus\{(0,0\})$,\\
3. $\supp (\Phi)=D$, where $(k,k')\in D$ if and only if (\ref{S1EN100}) and (\ref{S1EN100B}) hold for $\alpha_1=\alpha_2=1/2$, and some constants $\theta\in(0,1)$, $\delta_*>0$ and $\rho_*=\rho_*(\theta,\delta_*)$.\\
4. $\Phi(k,k')=1\;\forall(k,k')\in D_1\subset D$, where $(k,k')\in D_1$ if and only if,
\begin{equation*}
\begin{array}{lll}
|k-k'|\le \rho_1\sqrt{kk'(k+k')}&\hbox{if}&(k,k')\in [0,\delta_*]^2,\\
\theta_1 k\le k'\le \theta_1 ^{-1} k&\hbox{if}&(k, k')\in [0,\infty)^2\setminus[0,\delta_*]^2,
\end{array}
\end{equation*}
for some $\theta_1\in (\theta,1)$ and $\rho _1=\rho_1(\theta_1,\delta_*)>0$. (Cf. also (\ref{B1})--(\ref{Bbound})).

Then, for all $\varphi \in C^1([0, \infty))$, 
\begin{equation*}
\big(e^{-\beta k}-e^{-\beta k'}\big)\big(\varphi(k) -\varphi(k')\big)\frac {\mathcal B _{ \beta  }(k, k')} {kk'}\Phi (k , k' )\in L^\infty _{ loc }([0, \infty)\times [0, \infty)),
\end{equation*}
and if $\varphi '(0)=0$,
\begin{equation*}
\big(e^{-\beta k}-e^{-\beta k'}\big)\big(\varphi(k) -\varphi(k')\big)\frac {\mathcal B _{ \beta  }(k, k')} {kk'}\Phi (k , k' )\in C([0, \infty)\times [0, \infty))
\end{equation*}
(cf. Lemma \ref{klbounds}, Lemma \ref{k continuous} and (\ref{k})).\\

In the  first  part of this work, we then consider the problem
\begin{align}
\label{S1ZTr}
\frac{\partial v}{\partial t}(t,k)=\int_{[0,\infty)}q_{\beta}(v,v')\frac{\mathcal{B}_{\beta}(k,k')\Phi(k,k')}{kk'}dk'.
\end{align}
We need the following notations:

$C^1_b([0,\infty))$ is the space of bounded continuous functions, with continuous bounded derivative, on $[0,\infty)$. 

The space of nonnegative bounded Radon measures is denoted  $\mathscr{M}_+([0,\infty))$, and 
\begin{align}
&\mathscr{M}^{\rho }_+([0,\infty))=\{v\in\mathscr{M}_+([0,\infty)): M_{\rho }(v)<\infty\},\,\,\,\forall \rho  \in \RR,\nonumber\\
&M_{\rho }(v)=\int_{[0,\infty)}k^{\rho }v(k)dk\quad(\text{moment of order $\rho $}),\label{definition moment}\\
\label{definition exponential moment}
&X_{\rho}(v)=\int_{[0,\infty)}e^{\rho k}v(k)dk.
\end{align}
We use the notation $\int v(k)dk$ instead of $\int dv(k)$, even if the measure $v$ is not absolutely continuous with respect to the Lebesgue measure. 

Unless stated otherwise, the space $\mathscr{M}_+([0,\infty))$ is considered with the narrow topology. We recall that the narrow topology is generated by the metric
$d_0(\mu,\nu)=\|\mu-\nu\|_0$, where (cf. \cite{BOG}, Theorem 8.3.2),
\begin{gather}
\label{mu0}
\|\mu\|_0=\sup\bigg\{\int_{[0,\infty)}\varphi d\mu:\varphi\in\text{Lip}_1([0,\infty)),\;\|\varphi\|_{\infty}\leq 1\bigg\},\\
\label{Lip1}
 \text{Lip}_1([0,\infty))=\{\varphi:[0,\infty)\to\RR:|\varphi(x)-\varphi(y)|\leq|x-y|\}. 
\end{gather}
The following is an existence result for the problem (\ref{S1ZTr}).

\begin{theorem}
\label{MT1}
Given any $v_0\in\mathscr{M}_+([0,\infty))$ satisfying 
\begin{equation}
\label{eu0}
X_{\eta}(v)<\infty,
\end{equation}
for some $\eta\in\left(\frac{1-\theta}{2},\frac{1}{2}\right)$, then there exists  $v\in C([0,\infty),\mathscr{M}_+([0,\infty)))$  weak solution of (\ref{S1ZTr}), i.e., such that satisfies the following 
(i)-(ii):
\begin{enumerate}[(i)]
\itemsep=0em
\item 
For all $\varphi\in C_b([0,\infty))$,
\begin{gather}
\int_{ [0, \infty) }v(\cdot , k)\varphi (k)dk \in C( [0, \infty);\RR),\\
\int_{ [0, \infty) }v(0 , k)\varphi (k)dk=\int_{ [0, \infty) }v_0(k)\varphi (k)dk,
\end{gather}
\item 
For all $\varphi\in C^1_b([0,\infty))$ with $\varphi'(0)=0$,
\begin{gather}
\int_{ [0, \infty) }v(\cdot , k)\varphi (k)dk \in W^{1, \infty}_{loc} ([0, \infty); \RR),\\
\intertext{and for almost every $t>0$,}
\label{S1MT105}
\!\!\!\!\!\!\frac {d} {dt}\int\limits_{ [0, \infty) }v(t , k)\varphi (k)dk=\frac{1}{2}\iint\limits_{[0, \infty)^2}\frac{\Phi\mathcal B_\beta}{kk'} q_{\beta}(v,v')(\varphi-\varphi')dkdk'.
\end{gather}
\end{enumerate}
The measure $v(t)$ also satisfies, for all $t\geq 0$,
\begin{gather}
M_0(v(t))=M_0(v_0) \label{mass}\\
X_{\eta}(v(t)) \leq e^{C_{\eta}t}X_{\eta}(v_0), \label{expmoment}\\
\shortintertext{where}
\label{Ceta}
C_{\eta}=\frac{C_*}{2\theta^2}\frac{(1-\theta)}{(1+\theta)}\frac{\eta}{\left(\frac{1}{2}-\eta\right)},\quad C_*>0.
\end{gather}
\end{theorem}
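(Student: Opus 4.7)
The plan is to follow a standard regularization-compactness scheme adapted to the narrow topology. First, I would approximate (\ref{S1ZTr}) by a family of problems indexed by $n$, obtained by replacing the kernel $\Phi(k,k')\mathcal{B}_\beta(k,k')/(kk')$ by a bounded, compactly supported modification $K_n\in C_c((0,\infty)^2)$, say $K_n=\Phi\mathcal{B}_\beta/(kk')$ restricted to $[1/n,n]^2$ and smoothly truncated. For each $n$, the right-hand side of the equation defines a locally Lipschitz map on bounded subsets of $\mathscr{M}_+([0,\infty))$, so a Picard iteration (or Banach fixed point on $C([0,T],\mathscr{M}_+)$ with total variation norm) produces an approximate solution $v_n$, which is global in time once mass is conserved.

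Second, I would establish the two uniform a priori bounds (\ref{mass})--(\ref{expmoment}). Mass conservation for $v_n$ is immediate from $\varphi\equiv 1$, since $\varphi-\varphi'\equiv 0$. The exponential moment bound is obtained by testing against $\varphi(k)=e^{\eta k}\chi_R(k)$ for a smooth cutoff $\chi_R$ with $\chi_R'(0)=0$ and letting $R\to\infty$. Symmetrizing the double integral gives, in the linear part,
\begin{equation*}
\iint \frac{\Phi\mathcal{B}_\beta}{kk'}\,(e^{\eta k'}-e^{\eta k})\,v'\,k^{2}\,e^{-\beta k}\,dk\,dk',
\end{equation*}
while the quadratic part $vv'\bigl(e^{-\beta k}-e^{-\beta k'}\bigr)(e^{\eta k}-e^{\eta k'})$ carries a favorable sign because $\eta>0$ and $\beta>0$ make the two factors always of opposite sign on $\{k\neq k'\}$. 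On $\supp(\Phi)$ one has $|k-k'|\lesssim\sqrt{kk'(k+k')}$ near the origin and $k\asymp k'$ away from it, which together with $\eta<1/2$ and $\eta>(1-\theta)/2$ allows estimating $(e^{\eta k'}-e^{\eta k})e^{-\beta k}$ against $e^{\eta k'}+e^{\eta k}$ times the explicit constant in (\ref{Ceta}); Gronwall then yields (\ref{expmoment}).

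Third, these bounds give compactness. The uniform exponential moment implies tightness of $\{v_n(t):n\in\NN,\,t\in[0,T]\}$ in the narrow topology for every $T>0$, and the weak formulation together with the local $L^\infty$ bound on $(\varphi-\varphi')\Phi\mathcal{B}_\beta/(kk')$ (Lemma \ref{klbounds}) shows that $t\mapsto\int v_n(t,k)\varphi(k)\,dk$ is uniformly Lipschitz on compact time intervals for each $\varphi\in C^1_b$ with $\varphi'(0)=0$, and uniformly continuous for general $\varphi\in C_b$ (after approximation by such $\varphi$ using the tightness). A diagonal Arzel\`a--Ascoli extraction in the metric $d_0$ of (\ref{mu0}) yields a subsequence and a limit $v\in C([0,\infty),\mathscr{M}_+([0,\infty)))$.

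Finally, I would pass to the limit in the weak formulation. For $\varphi\in C^1_b$ with $\varphi'(0)=0$, Lemma \ref{k continuous} ensures that the kernel $(\varphi-\varphi')\Phi\mathcal{B}_\beta/(kk')$ extends to an element of $C([0,\infty)^2)$, bounded on the support of $\Phi$; hence the linear piece of the collision integral (the $k^2$ and $k'^2$ terms) passes to the limit by narrow convergence of $v_n$, and the quadratic piece by narrow convergence of $v_n\otimes v_n\to v\otimes v$ tested against a continuous bounded function. The main obstacle is the exponential-moment estimate: the interplay of the two small signs and the geometric constraints on $\supp(\Phi)$ must be handled carefully to produce exactly the constant $C_\eta$ in (\ref{Ceta}), and the requirement $\eta>(1-\theta)/2$ is exactly what makes the $\theta$-dependent ratio there finite. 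A secondary difficulty is that $e^{\eta k}$ is not an admissible test function directly, so one must argue by truncation and monotone convergence, using the nonnegativity of $v_n$ and the sign structure above to justify passing $R\to\infty$ before passing $n\to\infty$.
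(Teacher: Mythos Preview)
Your overall scheme matches the paper's: regularize the kernel, use mass conservation and an exponential-moment Gronwall bound as uniform a priori estimates, extract a limit via Arzel\`a--Ascoli in the narrow metric, and pass to the limit in the weak formulation. The paper obtains the regularized solutions by quoting an existing result (Theorem~3 of \cite{ESMI}) rather than a fixed-point argument, but that is inessential.

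There is, however, a genuine gap in your final step. You write that Lemma~\ref{k continuous} makes the symmetrized kernel $k_\varphi(x,y)=(\varphi-\varphi')\Phi\mathcal B_\beta/(kk')$ an element of $C([0,\infty)^2)$ \emph{bounded on the support of $\Phi$}, and then invoke narrow convergence of $v_n\otimes v_n$ against a continuous bounded function. But $k_\varphi$ is \emph{not} bounded on $\Gamma=\supp(\Phi)$: by (\ref{kbound}) one only has $|k_\varphi(x,y)|\lesssim e^{|x-y|/2}$, and on $\Gamma$ this grows like $e^{(1-\theta)\max\{x,y\}/2}$. Likewise $\mathcal L_\varphi$ grows like $e^{(1-\theta)x/2}$ by (\ref{llbound}). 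So neither the quadratic nor the linear piece of the collision integral is tested against a bounded continuous function, and narrow convergence alone does not close the argument. The paper resolves this by splitting $[0,\infty)^2$ into a compact box (where Stone--Weierstrass plus narrow convergence suffices) and a tail region (where the uniform exponential moment (\ref{expmoment}) with $\eta>(1-\theta)/2$ absorbs the growth of $k_\varphi$); see the estimates around (\ref{K-K})--(\ref{I3bound3}) in the proof of Corollary~\ref{S2Cor15}. This is also the real reason the hypothesis $\eta>(1-\theta)/2$ is needed: it is not used in deriving the constant $C_\eta$ in (\ref{Ceta}) (that only requires $\eta<1/2$ so that $\int_x^\infty \eta e^{(\eta-1/2)y}dy$ converges), but rather to dominate the $e^{(1-\theta)x/2}$ growth of $k_\varphi$ and $\mathcal L_\varphi$ when passing to the limit and when proving equicontinuity.
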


\begin{remark}
Theorem \ref{MT1} does not precludes the formation, in finite time, of  a Dirac measure at the origin in  the weak solutions of (\ref{S1ZTr}) with  integrable initial data. Such a possibility  was actually   considered for  the solutions of the Kompaneets
equation (cf. \cite{1975SvPhU..18...79Z, 1969JETP...28.1287Z, 1972JETP...35...81Z} and others). It was proved in  \cite{MR1491861} and \cite{ESMI} that, for  large sets of initial data, this does not happen, neither in the  Kompaneets equation, nor in equation (\ref{S1EN0}) with a very simplified kernel. But it is not known  yet if it may happen  for the equation (\ref{S1EN0}) with the  kernel $\Phi (k, k')\mathcal B_\beta (k, k')$.
\end{remark}

Given  a weak solution  $\{u(t)\} _{ t>0 }$ of (\ref{S1ZTr}) whose Lebesgue decomposition is $u(t)=g(t)+G(t)$, with $g(t)\in L^1([0, \infty))$, the natural physical entropy is
\begin{align}
&H(u(t))=\int  _{ (0, \infty) }h(x, g(t, x))dx-\int  _{ (0, \infty) } xG(t, x)dx, \label{S2EH}\\
&h(x, s)=(x^2+s)\log (x^2+s)-s\log s-x^2\log x^2-sx.\label{S2Eh}
\end{align}
It was proved in \cite{ESMI} that the maximum of $H$ over all the non negative measures of total mass $M$ is achieved at, and only at, $U_M=k^2F_M$ given in (\ref{S1Estat1}). 
But  the corresponding  dissipation of entropy used  in  \cite{ESMI}, is not defined here  due to the singularity of the kernel  $\frac{\Phi\mathcal B_\beta}{kk'} $ at the origin. The study of the long time behavior of the weak solutions obtained in Theorem \ref{MT1}  seems then to be more involved than in \cite{ESMI}, (cf.  also Section \ref{entropy}).

\subsection{The simplified equation } 
In view of the exponential terms in (\ref{S1E1}), it is very natural to consider the scaled variable $\beta k=x$, then scale the time variable too as $\beta ^3 t=\tau$,  and the dependent variable  as $\beta^2 k^2 f(t, k)=u(\tau, x)$ in order to let the total number of particles  to be unchanged (cf. Section \ref{betascaling}). When this is done, it  appears that the linear term is  formally of  lower order in $\beta >>1$:

\begin{align}
\frac {\partial u} {\partial \tau }(\tau,x)&=\int _0^\infty \frac {\widetilde B_{\beta}(x,y)} {xy}\big(e^{-x}-e^{-y}\big)u(\tau,x)u(\tau,y)dy+\nonumber\\
&+\beta ^{-3}\int _0^\infty  \frac {\widetilde B_{\beta}(x,y)} {xy}\big(u(\tau,y)x^2e^{-x}-u(\tau,x)y^2e^{-y}\big)dy, \label{EscEuIntro}
\end{align}
where $\widetilde B_{\beta}(x, y)$ is the suitably scaled version of $ \mathcal{B}_{\beta}(k, k')$.

 If only the quadratic term is kept in (\ref{S1ZTr}), the following equation follows:
\begin{equation}
\label{S1ER1}
\frac {\partial v} {\partial t}(t,k)=v(t, k)\!\!\!\!\int\limits_{[0, \infty)} \!\!\!\! v(t, k')
\big(e^{- \beta k }-e^{-\beta  k'}\big)\frac{\mathcal B_\beta (k, k')\Phi (k, k')}{kk'}dk'.
\end{equation}

Weak solutions $u\in C([0,\infty),\mathscr{M}_+([0,\infty)))$ to (\ref{S1ER1}) for all initial data $u_0\in\mathscr{M}_+([0,\infty))$ satisfying (\ref{eu0}) are proved to exist (cf. Theorem \ref{S8Th1}) with similar arguments as for the complete equation.

But  equation (\ref{S1ER1}) also  has solutions $v\in C([0, \infty), L^1([0, \infty)))$ for initial data $v_0\in L^1([0, \infty))$ that are sufficiently flat around the origin. This ``flatness''  condition happens then to be sufficient to prevent the finite time formation of a Dirac measure at the origin in the solutions of (\ref{S1ER1}).
\begin{theorem}
\label{MT3t}
For any nonnegative initial data $v_0\in L^1([0, \infty))$   such that:
\begin{align}
\label{flat}
\forall r>0,\quad \int _0^\infty v_0(k)\left(e^{\frac {r} {k^{3/2}}}+e^{\eta k}\right) dk<\infty,
\end{align}
for some $\eta>(1-\theta)/2$,  
there exists  a  nonnegative  global weak solution $v\in C([0, \infty), L^1([0, \infty)))$ of (\ref{S1ER1}) that also satisfies
\begin{align}
\label{FOBIS}
v(t, k)=v_0(k)e^{\int_0^{t}\int_0^\infty \big(e^{- \beta k }-e^{-\beta  k'}\big)\frac{\mathcal B_\beta (k, k')\Phi (k, k')}{kk'}\, v(s, k')\, dk'\, ds},
\end{align} 
for all $t>0$, and $ a. e.\, k>0$. Moreover, for all $t>0$,
 \begin{align}
&\|v(t)\|_1 =  \|v_0\|_1, \label{S2Ec602}\\
&v(t, k)\le v_0(k)e^{\frac {tC_0} {k^{3/2}}},\,\forall t>0, \,\, and\,\,\,\,a. e.\, k>0,\label{S2Ec6021} 
\end{align}
where $C_0=\frac{\rho_* C_*}{\sqrt{\theta(1+\theta)}}X_{\eta}(v_0)$.
\end{theorem}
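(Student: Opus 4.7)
The representation (\ref{FOBIS}) suggests realising $v$ as a fixed point of the Duhamel-type map
\begin{equation*}
\Psi(w)(t,k)=v_0(k)\exp\!\left(\int_0^t\mathcal K[w](s,k)\,ds\right),\quad \mathcal K[w](s,k):=\int_0^\infty(e^{-\beta k}-e^{-\beta k'})\frac{\mathcal B_\beta\Phi}{kk'}w(s,k')\,dk'.
\end{equation*}
Any fixed point automatically obeys (\ref{FOBIS}) and, upon differentiation in $t$, solves (\ref{S1ER1}) pointwise in $k$. The cornerstone is the uniform kernel estimate
\begin{equation*}
\left|(e^{-\beta k}-e^{-\beta k'})\frac{\mathcal B_\beta(k,k')\Phi(k,k')}{kk'}\right|\le \frac{\rho_*C_*}{\sqrt{\theta(1+\theta)}\,k^{3/2}}\,e^{\eta k'}\quad\text{on }\supp(\Phi).
\end{equation*}
On $[0,\delta_*]^2$ this follows from the expansion (\ref{S1ER1'}), the defining inequality $|k-k'|\le \rho_*\sqrt{kk'(k+k')}$, and $k'\ge \theta k$; in the complementary region $\theta k\le k'\le \theta^{-1}k$ the residual growth of $\mathcal B_\beta$ is absorbed into $e^{\eta k'}$ thanks to $\eta<1/2$. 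Consequently $|\mathcal K[w](t,k)|\le C_0/k^{3/2}$ whenever $X_\eta(w(t))\le X_\eta(v_0)$.

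Next I would prove that $X_\eta$ is dissipated along any genuine solution: testing (\ref{S1ER1}) against $e^{\eta k}$ and symmetrising yields
\begin{equation*}
\frac{d}{dt}X_\eta(v(t))=\frac12\iint(e^{\eta k}-e^{\eta k'})(e^{-\beta k}-e^{-\beta k'})\frac{\mathcal B_\beta\Phi}{kk'}\,v(t,k)\,v(t,k')\,dk\,dk'\le 0,
\end{equation*}
the two parenthesised differences having opposite signs. With this in hand I would iterate $v^{(n+1)}=\Psi(v^{(n)})$ from $v^{(0)}\equiv v_0$; the kernel bound forces $v^{(n+1)}(t,k)\le v_0(k)e^{tC_0'/k^{3/2}}$ for a possibly enlarged constant $C_0'$, which keeps the iterates in a closed subset of $C([0,T];L^1([0,\infty)))$ thanks to the flatness hypothesis (\ref{flat}). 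A Banach contraction on a short interval $[0,T_*]$, coming from the Lipschitz dependence of $\mathcal K[w]$ on $w$ through the same kernel estimate, produces a fixed point $v$; concatenation extends $v$ globally since the contraction constants do not depend on the interval length, and the dissipation argument above, now applied to $v$ itself, recovers the sharp constant $C_0=\frac{\rho_*C_*}{\sqrt{\theta(1+\theta)}}X_\eta(v_0)$ in (\ref{S2Ec6021}).

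Formula (\ref{FOBIS}) and the pointwise bound (\ref{S2Ec6021}) are then built into the construction; mass conservation (\ref{S2Ec602}) follows from the antisymmetry of the kernel of $\mathcal K[v]$ under $(k,k')\leftrightarrow (k',k)$, which gives $\int v(t,k)\mathcal K[v](t,k)\,dk=0$; and continuity $v\in C([0,\infty);L^1([0,\infty)))$ is obtained by dominated convergence in (\ref{FOBIS}) with majorant $v_0(k)e^{tC_0/k^{3/2}}$. The main obstacle I expect is the sharp kernel estimate above: extracting the exact weight $k^{-3/2}$ uniformly on $\supp(\Phi)$ from the $1/(k+k')$ singularity of $\mathcal B_\beta$ is delicate, and it is precisely this exponent that dictates the form $e^{r/k^{3/2}}$ of the flatness condition (\ref{flat}) and the choice $\alpha_1=\alpha_2=1/2$ in the truncation (\ref{S1EN100}). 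A secondary subtlety is propagating the $X_\eta$ bound cleanly through the iterates rather than only through the limit, which is handled by the transient enlargement $C_0\to C_0'$ above and recovering the sharp constant a posteriori.
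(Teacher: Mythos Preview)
Your overall strategy---realising $v$ as a Duhamel fixed point of the map $\Psi$---is natural and is indeed how the paper begins, but there is a genuine gap in your kernel estimate and hence in how the contraction closes. The inequality
\[
\bigl|(e^{-k}-e^{-k'})\,b(k,k')\bigr|\le \frac{\rho_*C_*}{\sqrt{\theta(1+\theta)}\,k^{3/2}}\,e^{\eta k'}
\]
holds on $\Gamma$ only for $k'>k$: in that range $k'-k\le(1-\theta)k'$, so $e^{(k'-k)/2}\le e^{\eta k'}$ as soon as $\eta>(1-\theta)/2$. For $k'<k$, however, the factor is $e^{(k-k')/2}\le e^{(1-\theta)k/2}$, and converting this to a weight in $k'$ via $k\le k'/\theta$ produces $e^{(1-\theta)k'/(2\theta)}$, which is $\le e^{\eta k'}$ only under the strictly stronger hypothesis $\eta\ge(1-\theta)/(2\theta)$. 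Consequently your claim $|\mathcal K[w](t,k)|\le C_0/k^{3/2}$ fails for large $k$; only the one-sided bound $\mathcal K[w](t,k)\le C_0/k^{3/2}$ survives, and that is precisely what delivers (\ref{S2Ec6021}). But the Lipschitz estimate on $\Psi$ needs the two-sided bound, so a contraction in $C([0,T];L^1)$ or in the $X_\eta$-weighted space does not close without moment assumptions on $v_0$ that the theorem does not provide. The assertion that ``the contraction constants do not depend on the interval length'' is also not right as stated: the local existence time is governed by quantities like $\int v_0(k)e^{TC_0'/k^{3/2}+\eta k}\,dk$, which grow with $T$.

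The paper avoids this by \emph{not} attempting a direct fixed point for general $v_0$. Its proof is in two steps. Step~1 treats compactly supported data, $\supp v_0\subset[0,L]$: then for $k\le L$ one has the genuinely two-sided bound $|R(k,k')|\le C_L(1+k'^{-3/2})$, the exponential growth being absorbed into the $L$-dependent constant $C_L$, and the Banach fixed point is run in the weighted space $L^1\big((1+k^{-3/2})\,dk\big)$---note that the singular weight sits on the \emph{integration} variable $k'$, not on $k$. Mass conservation together with the flatness hypothesis then feed the blow-up alternative and give global existence on $[0,\infty)$. Step~2 recovers general $v_0$ by truncating to $v_0\mathds 1_{[0,n]}$ and passing to the limit $n\to\infty$ in $L^1$ via Dunford--Pettis: the uniform integrability comes exactly from the pointwise majorant $u_n(t,k)\le v_0(k)e^{tC_0/k^{3/2}}$ combined with (\ref{flat}). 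Your dissipation argument for $X_\eta$ and your observation that mass conservation follows from the antisymmetry of $R$ are both correct and are used in the paper, but they enter \emph{after} the solution is constructed, not inside the contraction.
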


\begin{remark}
\label{S1RXP}
It follows from (\ref{S2Ec6021}) that for the solution $v$ obtained in  Theorem \ref{MT3t}, $v(t)$ satisfies (\ref{flat}) for almost every $t>0$. That property is then propagated globally in time.  
\end{remark}

For a solution $v$ to equation (\ref{S1ER1}), the moment $M_{\rho}(v(t))$ defined in (\ref{definition moment}) is proved to be a Lyapunov function on $[0,\infty)$ for all $\rho\geq 1$ (cf. Lemma \ref{LmomZ}). With some abuse of language, we sometimes refer to $M_{\rho}(v)$ as  an entropy functional for 
equation (\ref{S1ER1}).
It is possible to characterize the  nonnegative measures that minimize $M_{\rho}(v)$ for $\rho>1$, 
or satisfy $D_\rho (v)=0$, where
\begin{align}
\!\!\!D_\rho (v)=&\iint_{ (0, \infty)^2 }\frac {\Phi\mathcal B_\beta} {kk'}
 \big(e^{-\beta k}-e^{-\beta k'} \big)(k^\rho -k'^\rho)v(k)v(k')dkdk' \label{S1ERDH}
\end{align}
is the corresponding entropy dissipation functional.  
This question is  solved, usually, at mass $M$ and energy $E$ fixed. Because of  the truncated kernel $\Phi \mathcal B_\beta $, it is also necessary to introduce  the following  property about  the support of the measure $v$ in connection  with the support of the kernel $\Phi \mathcal B_\beta $.

Given a measure $v\in \mathscr{M}_+([0,\infty))$, we denote $\{A_n(v)\} _{ n\in \NN }$ the, at most, countable collection of disjoint closed  subsets of the support of $v$ such that, 
\begin{equation}
\begin{aligned}
\label{S1PQ}
&\hskip -0.5cm \small (k, k')\in A_n\!\times \!A_n\hbox{ for some} \,n\in \NN ,\hbox{ if and only if},\,\Phi \mathcal (k,  k')\not =0\,\,\,\hbox{or} \\
&\hskip -0.5cm \exists \{k_n\} _{ n\in \NN }\subset [k, k'); \,k_1=k, \lim _{ n\to \infty } k_n=k',\Phi \mathcal (k_n,  k _{ n+1 })\not =0\;\forall n\in\NN.
\end{aligned}
\end{equation}
(cf. Section \ref{entropy} for a   precise definition of $A_n(v)$). 

Let us define now, for any countable collection $\mathcal C=\{C_n, M_n\} _{ n\in \NN }$  of disjoint, closed subsets $C_n\subset [0, \infty)$ enjoying the property (\ref{S1PQ}),  and positive real  numbers $M_n$, the following family of non negative measures,
\begin{align*}
\mathcal{F} _{ \mathcal C,\alpha  }=\bigg\{ v\in \mathscr M^\alpha _+([0,\infty)):C_n=A_n(v),\; M_n=\int  _{ A_n(v) }v(k)dk \bigg\}.
\end{align*}
\begin{theorem}
\label{characterization}
For any $\mathcal C$ and $\alpha >1$ as above, the following statements are equivalent:
\begin{enumerate}[(i)]
\item $v\in \mathcal{F} _{ \mathcal C,\alpha  }$ and $D_{\alpha}(v)=0$.
\item $M_{\alpha}(v)=\min\{M_{\alpha}(v):v\in\mathcal{F} _{ \mathcal C,\alpha  }\}$.
\item $v=\sum_{n=0}^{\infty} M_n\delta_{k_n}$, where $k_n=\min \{k\in A_n\}$.
\end{enumerate}
\end{theorem}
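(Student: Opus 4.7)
My strategy is to close the equivalence via (iii) $\Rightarrow$ (i) $\Rightarrow$ (iii) together with the direct equivalence (ii) $\Leftrightarrow$ (iii). Both chains rest on one foundational observation: the integrand of $D_\alpha$ in (\ref{S1ERDH}) has a definite sign. Indeed, for $\alpha > 0$, the factors $e^{-\beta k} - e^{-\beta k'}$ and $k^\alpha - k'^\alpha$ have opposite signs and vanish simultaneously only on the diagonal, while $\Phi\mathcal{B}_\beta/(kk') \geq 0$ on $(0,\infty)^2$. Hence $D_\alpha(v) \leq 0$, and $D_\alpha(v) = 0$ if and only if
\begin{equation*}
\Phi(k,k')\,\mathcal{B}_\beta(k,k')=0 \quad \text{for } v\otimes v\text{-a.e. } (k,k') \text{ with } k \neq k'.
\end{equation*}
I shall refer to this condition as $(\star)$.

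For (iii) $\Rightarrow$ (i), I substitute $v=\sum_n M_n\delta_{k_n}$ into $D_\alpha$. The tensor $v\otimes v$ is discrete, supported on $\{(k_n,k_m)\}_{n,m}$. Diagonal contributions ($n=m$) vanish because $k_n^\alpha - k_n^\alpha = 0$, and off-diagonal contributions ($n \neq m$) vanish because $k_n$ and $k_m$ lie in distinct components $A_n(v) \neq A_m(v)$, so the very definition (\ref{S1PQ}) of the classes forces $\Phi(k_n,k_m)=0$: a direct $\Phi$-link would place them in the same class. For the converse (i) $\Rightarrow$ (iii), I argue that $(\star)$ forces $v|_{A_n(v)}$ to be a single Dirac mass for each $n$. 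Suppose instead that $\supp(v|_{A_n(v)})$ contained two distinct points $a \neq b$; by the $\Phi$-connectivity intrinsic to $A_n(v)$ in (\ref{S1PQ}), $a$ and $b$ would be linked by a $\Phi$-chain, and a careful measure-theoretic analysis shows that a product neighbourhood of some pair along this chain must carry positive $v\otimes v$-mass with $\Phi > 0$ and $k \neq k'$, contradicting $(\star)$. Hence $v|_{A_n(v)} = M_n \delta_{k_n^\ast}$ for a single $k_n^\ast$; since $A_n(v) \subseteq \supp(v)$, in fact $A_n(v) = \{k_n^\ast\}$, and $k_n^\ast = \min A_n(v)$ trivially.

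The equivalence (ii) $\Leftrightarrow$ (iii) is elementary once the partition structure is available. Any $v \in \mathcal{F}_{\mathcal{C},\alpha}$ decomposes as $v = \sum_n v|_{C_n}$ with $\int_{C_n} dv = M_n$, hence
\begin{equation*}
M_\alpha(v) = \sum_n \int_{C_n} k^\alpha\,dv(k) \geq \sum_n (\min C_n)^\alpha M_n = \sum_n k_n^\alpha M_n,
\end{equation*}
with equality if and only if every $v|_{C_n}$ is concentrated at $\min C_n$, i.e.\ $v = \sum_n M_n \delta_{k_n}$.

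The main obstacle will be the implication (i) $\Rightarrow$ (iii). Translating the qualitative condition $(\star)$ into the genuine structural rigidity that $v|_{A_n(v)}$ be a single Dirac requires careful measure-theoretic bookkeeping, especially when the connectivity between two points of $\supp(v) \cap A_n(v)$ is realized only through the chain-limit clause of (\ref{S1PQ}) rather than via a direct $\Phi$-link, or when $v|_{A_n(v)}$ has a mixed atomic/non-atomic structure.
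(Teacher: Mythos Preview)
Your implications (iii)\,$\Rightarrow$\,(i) and (ii)\,$\Leftrightarrow$\,(iii) are essentially the paper's arguments: the first is a direct computation, and the second is the pointwise inequality $\int_{A_k}k^\alpha\,dv\geq(\min A_k)^\alpha M_k$ summed over $k$, with the equality case identified exactly as you do.

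The gap is in your (i)\,$\Rightarrow$\,(iii). Your condition $(\star)$ is correct, but the chain argument built on (\ref{S1PQ}) does not close. The intermediate points $k_n$ in the chain are only required to lie in the interval $[k,k')$, not in $\supp(v)$; consequently, even if $\Phi(k_j,k_{j+1})>0$, a product neighbourhood of $(k_j,k_{j+1})$ need not carry any $v\otimes v$-mass, and the contradiction with $(\star)$ evaporates. You flag this step as the ``main obstacle'' and promise ``careful measure-theoretic bookkeeping'', but no amount of bookkeeping will produce mass where there is none. (Note also that (\ref{S1PQ}) is explicitly an informal description; the actual definition of $A_k(v)$ is the one in Section~\ref{entropy}, via connected components of the complement of the ``big gaps'' of $\supp(v)$.)

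The paper's route is shorter and avoids chains entirely. From $D_\alpha(v)=0$ and the strict negativity of the integrand on the sets $\Gamma_\varepsilon=\{(x,y)\in\Gamma:\operatorname{dist}((x,y),\partial\Gamma)>\varepsilon,\ |x-y|>\varepsilon\}$, one gets $\supp(v\times v)\subset\Gamma_\varepsilon^c$, and letting $\varepsilon\to0$ yields $\supp(v)\times\supp(v)\subset\Delta\cup(\mathring\Gamma)^c$. Thus \emph{any} two distinct points $y<x$ of $\supp(v)$ satisfy $y\leq\gamma_1(x)$, i.e.\ $x-y\geq x-\gamma_1(x)>0$. This forces $\supp(v)$ to be a discrete countable set with $0$ as the only possible accumulation point. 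Every gap between adjacent support points is then automatically ``big'' (belongs to $\mathcal I$), so by the precise construction of the $A_k$'s each one reduces to a singleton $\{x_k\}$. No connectivity or chain reasoning is needed: the global $\Gamma$-disjointness of distinct support points does all the work.
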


\begin{remark}
\label{rem10}
For any sequence $\{x_n\} _{ n\in \NN }$ such that $x_n>0$, $x_n\to 0$ as $n\to\infty$, and $\Phi (x_n, x _m)=0$ for all $n\not =m$,  the measure
\begin{equation*}
u=\sum _{ n=0 }^\infty \alpha _n\delta_{x_n}
\end{equation*} 
satisfies the conditions (i)--(ii) in Theorem \ref{characterization}. Although $0\in \supp u$, there is no Dirac measure at the origin.

\end{remark}

The long time behavior of the weak solutions of  (\ref{S1ER1}) is described in  the following Theorem,

\begin{theorem}
\label{S7E001}
Let $v$ be a weak solution of (\ref{S1ER1}) constructed in Theorem \ref{S8Th1} for an initial data $v_0\in\mathscr{M}_+([0,\infty))$ satisfying $X_{\eta}(v_0)<\infty$ for some $\eta\geq (1-\theta)/2$.

Then, as $t\to\infty$, 
 $v(t)$ converges   in $C([0,\infty),\mathscr{M}_+([0,\infty)))$ to the measure 
 \begin{align}
\label{stat}
\mu =\sum_{i=0}^{\infty}M_i'\delta_{k_i'},
\end{align}
where $M_i'\geq 0$, $k_i'\geq0$ satisfy the following properties:  
\begin{enumerate}[1.]
\item $k_i'\in\supp (v_0)$ for all $i\in\NN$,
\item $\Phi(k_i',k_j')=0$ for all $i\neq j$,
\item
If we define $\mathcal{J}_n\!=\!\{i\in\NN:k_i'\!\in\! A_n(v_0),\,M_i'>0\}$ for all $n\!\in\!\NN$,
\begin{align}
\label{sum masses}
\sum_{i\in\mathcal{J}_n}M_i'=M_n,
\end{align}
\item
For all $n\in\NN$, if $k_n=\min\{k\in A_n(v_0)\}>0$, then there exists $k_i'$ such that $k_i'=k_n$.
\end{enumerate}
 \end{theorem}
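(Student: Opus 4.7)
The plan is to combine compactness of the orbit $\{v(t)\}_{t\ge 0}$, the Lyapunov structure from Lemma \ref{LmomZ}, and the rigidity provided by Theorem \ref{characterization}. Since $X_\eta(v_0)<\infty$ implies $M_\rho(v_0)<\infty$ for every $\rho>0$, Lemma \ref{LmomZ} gives $M_\rho(v(t))\le M_\rho(v_0)$ uniformly in $t$ for every $\rho\ge 1$. Combined with the conservation $M_0(v(t))=M_0(v_0)$, Markov's inequality applied to $[R,\infty)$ yields uniform tightness of $\{v(t)\}_{t\ge 0}$, and Prokhorov's theorem provides that every sequence $t_n\to\infty$ admits a narrow cluster point $\mu\in\mathscr{M}_+([0,\infty))$.

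To identify $\mu$, I would use the dissipation identity behind Lemma \ref{LmomZ}: for some fixed $\rho>1$,
\[
M_\rho(v_0)-M_\rho(v(t))=\tfrac{1}{2}\int_0^t |D_\rho(v(s))|\,ds,
\]
so $\int_0^\infty |D_\rho(v(s))|\,ds<\infty$ and some subsequence $s_n\to\infty$ satisfies $D_\rho(v(s_n))\to 0$. Since $\Phi\mathcal B_\beta/(kk')$ is locally bounded on $(0,\infty)^2$ by Lemma \ref{klbounds}, the integrand in (\ref{S1ERDH}) is continuous on that set, while the exponential moment controls the tails; this lets one pass to the limit along $s_n$ and conclude $D_\rho(\mu)=0$. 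Applying Theorem \ref{characterization} then forces $\mu=\sum_i M_i'\delta_{k_i'}$ with $\Phi(k_i',k_j')=0$ for all $i\neq j$, which is property 2.

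Properties 1 and 3 exploit the multiplicative form of (\ref{S1ER1}). Testing the weak equation against a continuous function supported away from $\supp v_0$ makes both sides vanish, so $\supp v(t)\subset\supp v_0$ for all $t\ge 0$, and taking narrow limits gives property 1. For (\ref{sum masses}), I would test against a continuous $\varphi$ approximating the indicator of a closed neighborhood of $A_n(v_0)$: by (\ref{S1PQ}) and the support condition on $\Phi$, the collision integrand vanishes whenever $k\in A_n$ and $k'$ lies in a different component, so the mass $\int_{A_n}v(t,k)\,dk$ is constant in $t$ and equals $M_n$. Narrow convergence along $s_n$ then yields (\ref{sum masses}).

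Property 4 is the main obstacle. The key input is that the $A_n$-localized first moment satisfies
\[
\tfrac{d}{dt}\int_{A_n}k\,v(t,k)\,dk=\tfrac{1}{2}\iint_{A_n\times A_n}(k-k')\bigl(e^{-\beta k}-e^{-\beta k'}\bigr)\tfrac{\Phi\mathcal B_\beta}{kk'}v(k)v(k')\,dkdk'\le 0,
\]
and is bounded below by $k_n M_n$, so it converges. Combining this with the fact that the flux into $k_n$ carried by the nonnegative sign of $e^{-\beta k_n}-e^{-\beta k'}$ on $A_n$ is itself nonnegative should force the accumulation of a positive atomic mass at $k_n$ in the limit, which is property 4. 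Finally, to promote subsequential convergence to full convergence, one has to verify that properties 1--4 together with the monotone limits of $M_\rho(v(t))$ for sufficiently many $\rho\ge 1$ pin down the limit uniquely; this rigidity is the second delicate point of the argument, since the positions and masses of the interior atoms $k_i'$ inside each $A_n$ must coincide across all possible subsequences.
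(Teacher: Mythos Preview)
Your overall architecture---compactness of the orbit, the entropy--dissipation identity from Lemma \ref{LmomZ}, and rigidity via Theorem \ref{characterization}---matches the paper's, and your treatment of properties 1--3 is essentially the same as the paper's (Proposition \ref{comparison}(iii) for the support, Lemma \ref{constant mass blocks} for the block masses). Two points deserve correction.

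\textbf{Property 4.} Your argument is only a heuristic: saying the ``flux into $k_n$ \ldots\ should force the accumulation of a positive atomic mass at $k_n$'' is not a proof, and the localized first-moment monotonicity you write down does not by itself yield an atom at $k_n$. The paper argues by contradiction. Suppose the smallest atom of $\mu$ inside $A_n$ sits at some $x'_{j_0}>k_n$. Since $k_n\in\supp v_0$ and there is mass of $v_0$ in $A_n$ to the right of $x'_{j_0}-\varepsilon$, Proposition \ref{prop.64}(ii) (strict decrease of $\int_{[r,\infty)}v(t)$ when the support straddles $r$) combined with the constancy of $\int_{\{x<k_n\}}v(t)$ from Lemma \ref{constant mass blocks} forces $\int_{[k_n,x'_{j_0}-\varepsilon)}v(t)\ge\delta>0$ for all $t>0$. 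Passing to the narrow limit gives $\mu([k_n,x'_{j_0}-\varepsilon])\ge\delta$, contradicting the assumption that $\mu$ has no atom there. The key ingredient you are missing is the \emph{strict} monotonicity of the tail mass in Proposition \ref{prop.64}(ii).

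\textbf{Uniqueness of the limit.} Your proposal to pin down $\mu$ via the limits of $M_\rho(v(t))$ for many $\rho\ge 1$ could in principle be made to work (exponential moments give Carleman determinacy), but it is unnecessarily heavy and you have not carried it out. The paper's argument is much simpler and you should use it: by Proposition \ref{prop.64}(i), for every $x\ge 0$ the map $t\mapsto\int_{[x,\infty)}v(t,y)\,dy$ is nonincreasing, hence $F(x):=\lim_{t\to\infty}\int_{[x,\infty)}v(t,y)\,dy$ exists. Any narrow subsequential limit $W$ then satisfies $\int_{[x,\infty)}W\,dy=F(x)$ for all $x$ (one inequality from monotonicity, the other from Portmanteau). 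Since $M_0(W)=M_0(v_0)$, two such limits share the same cumulative distribution function and are therefore equal. This replaces your ``second delicate point'' entirely.
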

\begin{remark}
If in Point 4 of  Theorem \ref{S7E001}, $k_n=\min\{k\in A_n(v_0)\}=0$ for some $n\in\NN$,  but $v_0$ has no Dirac measure at $k=0$, we do not know if  $k'_i=0$ for some $i\in \NN$, even if  the origin belongs to the support of the limit measure $\mu $ (cf. Remark \ref{rem10} for example).
\end{remark}

The  measure  $\mu $ is  of course determined by the initial data $v_0$, but its complete description  (i.e. the values of $k'_i$ and $M'_i$) is not known, only  the locations $k'_i$  of some of the  Dirac masses. For example, it is possible to have $k' _{i }= k_n=\min\{k\in A_n(v_0)\}$ and $k' _{i }< k' _{j }\in A_n(v_0)$ for some $n$, $i$, $j$ in $\NN$, and
$k'_{ i }$, $k' _{ j}$ not seeing each other, i.e., $\Phi(k'_{i},k'_{j})=0$ (cf. Example \ref{S5Eex1}, Section \ref{behavior} ). The location of a Dirac measure at $k' _{i}=x_n$ is just given by the support of the initial data, but the appearance of a Dirac measure at  $k' _{ j}$ is  more difficult to be determined. 

The long time behaviour that is proved in Theorem \ref{S7E001} for the solutions of the simplified equation (\ref{S1ER1}) can not be expected of course to hold for the solutions of the complete equation (\ref{S1ZTr}). But in combination with the  equation (\ref{EscEuIntro}) for $\beta$ large,  it could indicate that the solutions of the complete problem (\ref{S1ZTr}) also undergo  the formation of large an concentrated peaks, that could remain for some long, although finite,  time.
\subsection{General comment}
The main results of the article are stated in this Introduction in terms of the original variables,  $t$, $k$, and $v(t, k)=k^2f(t, k)$. However, in order to make clearly appear some important aspects of the equation,  it is  useful to introduce $\tau$, $x$, and $u(\tau , x)$, variables scaled with the  parameter $\beta $. This is  a natural parameter since it is related with the inverse of the temperature of the gas of electrons.. This scaling makes clearly appear two features  of the equation for $\beta >>1$, namely, the fact that $\mathcal B_\beta $ is very much peaked along the diagonal, and the different scaling properties  of the quadratic and linear part of the collision integral in (\ref{S1EN0}) (cf. Section \ref{betascaling} for details).

However, since in all this work the value of the parameter $\beta $ remains fixed, it is taken equal to one, without any loss of generality.  Therefore, except in Section \ref{deduction}, we have $\tau \equiv t$, $x\equiv k$ and $u\equiv v$. In particular, for the sake of brevity, we do not re-write again the main results in terms of the variables $x$ and $u$, although the proofs will be written in those terms. 

The main results are actually proved for general kernels $B$ satisfying some of the properties that  the truncated kernel $\Phi (k, k') \mathcal B_\beta (k, k')$ is proved to enjoy, and that are sufficient for our purpose. 

\section{Existence of weak solutions.}

\setcounter{equation}{0}
\setcounter{theorem}{0}
In this Section we prove existence of weak solutions to the following problem:
\begin{align}
&\frac{\partial u}{\partial t}(t,x)=Q(u,u)=\int_{[0,\infty)}b(x,y)q(u,u)dy, \label{P}\\
&u(0)=u_0\in \mathscr M_+([0, \infty)), \label{P2}
\end{align}
where $t>0$, $x\geq 0$, 
\begin{gather}
\label{q}
\hskip -0.1cm q(u,u)=u(t,y)(x^2+u(t,x))e^{-x}-u(t,x)(y^2+u(t,y))e^{-y},\\
\label{b}
\hskip -0.2cm b(x,y)=\frac{B(x,y)}{xy},
\end{gather}
under the following assumptions on the kernel $B$:
\begin{flalign}
\label{B1}
&\text{(i) $B(x,y)\geq 0$ for all $(x,y)\in[0,\infty)^2$},&\\
\label{B2}
&\text{(ii) $B(x,y)=B(y,x)$ for all $(x,y)\in[0,\infty)^2$,}&\\
&\text{(iii) $B\in C([0,\infty)^2\setminus\{(0,0)\})$,}\label{Bcont}&\\
&\text{(iv) There exist $\theta\in (0,1)$, $\delta_*>0$ and $\rho_*=\rho_*(\theta,\delta_*)>0$ such that}\nonumber&\\
\label{Bsupport}
&\qquad\qquad\qquad\qquad\supp(B)=\Gamma=\Gamma_1\cup\Gamma_2,&\\
\label{Bsupport1}
&\quad\qquad\Gamma_1=\big\{(x,y)\in[0,\infty)^2\setminus[0,\delta_*]^2:\theta x\leq y\leq \theta^{-1}x\big\},&\\
\label{Bsupport2}
&\quad\qquad\Gamma_2=\big\{(x,y)\in[0,\delta_*]^2:|x-y|\leq \rho_*\sqrt{xy(x+y)}\big\}&\\
&\text{(v) There exists a constant $C_*>0$ such that, for all $(x,y)\in\Gamma$,}\nonumber&\\
\label{Bbound}
&\qquad\qquad\quad B(x,y)\leq B\left(\frac{x+y}{2},\frac{x+y}{2}\right)\leq \frac{C_*e^{\frac{x+y}{2}}}{x+y}.&
\end{flalign}

\begin{remark}
\label{cone}
The region $\Gamma$ in (\ref{Bsupport})--(\ref{Bsupport2}) is such that:
\begin{equation}
\label{S3EGamma20}
\Gamma =\left\{(x, y)\in[0,\infty)^2: y\in \big(\gamma _1(x), \gamma _2(x)\big) \right\},
\end{equation}
where
\begin{equation}
\label{g}
\gamma_1(x)=\left\{
\begin{array}{ll}
\frac{2x+\rho_*^2x^2-\rho_* x^{3/2}\sqrt{\rho_*^2x+8}}{2(1-\rho_*^2x)}&\text{if }x\in[0,\delta_*]\\
\theta x&\text{if }x\in(\delta_*,\infty),
\end{array}
\right.
\end{equation}

\begin{equation}
\label{f}
\gamma_2(x)=\left\{
\begin{array}{ll}
\frac{2x+\rho_*^2x^2+\rho_* x^{3/2}\sqrt{\rho_*^2x+8}}{2(1-\rho_*^2x)}&\text{if }x\in[0,\theta\delta_*]\\
\theta^{-1}x&\text{if }x\in(\theta\delta_*,\infty).
\end{array}
\right.
\end{equation}
In particular $\theta x\leq \gamma_1(x)\leq x\leq \gamma_2(x)\leq \theta^{-1}x$ for all $x\geq 0$.The value of $\rho_*=\rho_*(\theta,\delta_*)$ is chosen so that $\gamma_1$ and $\gamma_2$ are continuous. 
\end{remark}

\begin{definition}
\label{def:ws}
We say that  a map $u:[0, \infty)\to\mathscr M_+([0, \infty))$ is a weak solution of  (\ref{P})--(\ref{P2})  if
\begin{align}
\label{ws1}
&(i)\,\, \forall  \varphi \in C_b([0, \infty)),\,\,\int_{ [0, \infty) }u(\cdot , x)\varphi (x)dx \in C( [0, \infty);\RR)\\
&\qquad \hbox{and} \hskip 1.6cm \int_{ [0, \infty) }u(0 , x)\varphi (x)dx=\int_{ [0, \infty) }u_0(x)\varphi (x)dx,\label{wsID} \\
\label{ws2}
&(ii)\,\, \forall \varphi \in C^1_b([0, \infty)),\, \varphi '(0)=0,\\
\label{ws3}
&\quad\int_{ [0, \infty) }u(\cdot , x)\varphi (x)dx \in W^{1,\infty}_{loc}([0, \infty); \RR),\\
\label{ws4}
&\quad\frac {d} {dt}\!\!\int\limits  _{ [0, \infty) }\!\!\!u(t , x)\varphi (x)dx=\frac{1}{2}\!\!\iint\limits_{[0, \infty)^2}\!\!b(x, y)q(u,u)(\varphi(x)\!-\!\varphi(y))dydx.
\end{align}
\end{definition}
The existence of weak solutions for the problem (\ref{P}), (\ref{P2}) was proved in \cite{ESMI} under  conditions on the kernel $b$  not fulfilled in our case. In order to use that result in \cite{ESMI},
we first consider a regularised version of (\ref{P}), with a truncated  function $b_n\in L^\infty ([0, \infty)\times [0, \infty))$.

It is not possible to define
the   dissipation of entropy  for the weak solutions of  (\ref{P}) as in  \cite{ESMI}, for the same reason as for the equation (\ref{S1ZTr}). However, it may be defined  for the solutions $u_n$ of the regularised version of (\ref{P}),  with the truncated kernel $b_n$,
\begin{align}
&D^{(n)}(u_n)=\frac {1} {2}D^{(n)}_1(g_n)+D^{(n)}_2(g_n, G_n)+\frac {1} {2}D^{(n)}_3(G_n),\label{S2ED}\\
&D^{(n)}_1(g_n)=\iint_{ (0, \infty)^2 }b_n(x, y)j\left((x^2+g_n)e^{-x}g_n', (y^2+g_n')e^{-y}g_n \right)dydx,\label{S2ED1}\\
&D^{(n)}_2(g_n, G_n)=\iint_{ (0, \infty)^2 }b_n(x, y)j\left((x^2+g_n)e^{-x}, g_ne^{-y} \right)G_n(y)dydx,\label{S2ED2}\\
&D^{(n)}_3(G_n)=\iint_{ (0, \infty)^2 }b_n(x, y) j\left(e^{-x}, e^{-y} \right)G_n(y)G_n(x)dydx,\label{S2ED3}\\
&j(a, b)=(a-b)(\ln a -\ln b),\,\,\,\forall a>0, b>0, \label{S2Ej}
\end{align}
where $u_n=g_n+G_n$ is the Lebesgue's decomposition of $u_n$.

\subsection{Regularised problem}
For $n\in\NN$, let $\phi_n\in C_c((0,\infty))$ be such that $0\leq \phi_n(x)\leq x^{-1}$ for all $x\geq 0$, $\supp(\phi_n)=[1/(n+1),n+1]$ and $\phi_n(x)=x^{-1}$ for $x\in[1/n,n]$, so that $\lim_{n\to\infty}\phi_n(x)=x^{-1}$. Then we define
\begin{align}
\label{bn}
b_n(x,y)=B(x,y)\phi_n(x)\phi_n(y),
\end{align}
and consider the problem
\begin{align}
&\frac{\partial u_n}{\partial t}(t,x)=Q_n(u_n,u_n)=\int_{[0,\infty)}b_n(x,y)q(u_n,u_n)dy, \label{P100}\\
&u_n(0)=u_0\in \mathscr M_+([0, \infty)). \label{P200}
\end{align}
If we denote
\begin{align}
\label{K}
&K_{\varphi}(u,u)=\frac{1}{2}\iint_{[0,\infty)^2}k_{\varphi}(x,y)u(t,x)u(t,y)dydx,\\
\label{k}
&k_{\varphi}(x,y)=b(x,y)(e^{-x}-e^{-y})(\varphi(x)-\varphi(y)),\\
\label{L}
&L_{\varphi}(u)=\frac{1}{2}\int_{[0,\infty)}\mathcal{L}_{\varphi}(x)u(t,x)dx,\\
\label{ll}
&\mathcal{L}_{\varphi}(x)=\int_0^{\infty}\ell_{\varphi}(x,y)dy,\\
\label{l}
&\ell_{\varphi}(x,y)=b(x,y)y^2e^{-y}(\varphi(x)-\varphi(y)),
\end{align}
then (\ref{ws4}) reads
\begin{align}
\label{WFKL}
\frac{d}{dt}\int_{[0,\infty)}\varphi(x)u(t,x)=K_{\varphi}(u,u)-L_{\varphi}(u),
\end{align}
and the weak formulation of (\ref{P100}) reads
\begin{align}
\label{WFn}
\frac{d}{dt}\int_{[0,\infty)}\varphi(x)u_n(t,x)=K_{\varphi,n}(u_n,u_n)-L_{\varphi,n}(u_n),
\end{align}
where $b$ is replaced by $b_n$ in  the formulas  (\ref{K})--(\ref{l}). Since 
$b_n\in L^{\infty}([0,\infty)^2)$ for all $n\in \NN$,  Theorem  3 in \cite{ESMI}  may be applied (cf. Proposition \ref{theoremESMI}).
For any $u\in\mathscr{M}_+([0,\infty))$, we denote $u=u_r+u_s$ the Lebesgue decomposition of $u$ into an absolutely continuous measure with respect to the Lebesgue measure, $u_r$, and a singular measure, $u_s$.
\begin{remark}
\label{Ksym}
By symmetry and Lemma \ref{k continuous}, for all $\varphi\in C^1_b([0,\infty))$, 
$$K_{\varphi}(u,u)=\int_{[0,\infty)}\int_{[0,x)}k_{\varphi}(x,y)u(t,x)u(t,y)dydx.$$
\end{remark}

\begin{proposition}
\label{theoremESMI}
For any $n\in\NN$ and any initial data $u_0=u_{0,r}+u_{0,s}\in \mathscr{M}_+^1([0,\infty))$, there exists a unique weak  solution $u_n=u_{n,r}+u_{n,s}\in C([0,\infty),\mathscr{M}_+^1([0,\infty)))$ to (\ref{P100}), (\ref{P200}) that satisfies
\begin{align}
\label{CONSER}
&M_0(u_n(t))=M_0(u_0)\qquad\forall t\geq 0,\\
&\supp(u_{n,s}(t))\subset\supp (u_{0,s})\qquad\forall t\geq 0,
\end{align}
and for all $\varphi\in C_c([0,\infty)\times[0,\infty))$,
\begin{align}
\label{WFn'}
&\int_{[0,\infty)}\varphi(t,x)u_n(t,x)dx=\int_{[0,\infty)}\varphi(0,x)u_0(x)dx\\
&+\int _0^t\int _{[0,\infty )}\varphi _t(t, x)u(t, x)dxds
+\int_0^t\int_{[0,\infty)}Q_n(u_n,u_n)\varphi(s,x)dxds,\nonumber
\end{align}
and for all $t_1$ and $t_2$ with $t_2\geq t_1\geq 0$,
\begin{align}
\int  _{ t_1 }^{t_2}D^{(n)}(u_n(t))dt=H(u_n(t_1))-H(u_n(t_2)).
\end{align}
Moreover, if $u_0\in L^1([0,\infty))$ then  $u_n\in C([0,\infty),L^1([0,\infty)))$.
\end{proposition}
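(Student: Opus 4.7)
The plan is to reduce the proposition to a direct application of Theorem 3 of \cite{ESMI}, after verifying that the truncated kernel $b_n$ fits its hypotheses, and then to derive the additional assertions (mass conservation, support preservation of the singular part, the distributional form (\ref{WFn'}), the entropy identity, and $L^1$ propagation) one by one.

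First I would check that $b_n$ fulfils the assumptions of \cite{ESMI}. By construction $\phi_n\in C_c((0,\infty))$ with $\supp(\phi_n)=[1/(n+1),n+1]$, so $\supp(b_n)\subset[1/(n+1),n+1]^2$ stays away from both the origin and the axes. Since $B\in C([0,\infty)^2\setminus\{(0,0)\})$ by (\ref{Bcont}), $B$ is bounded on the compact set $\supp(\phi_n)\times\supp(\phi_n)$, and therefore $b_n\in L^\infty([0,\infty)^2)\cap C([0,\infty)^2)$. Symmetry and nonnegativity are inherited from $B$ through (\ref{B1}) and (\ref{B2}). In particular the kernel $b_n(x,y)e^{-x-y}$ satisfies the integrability and boundedness conditions used in \cite{ESMI}, so Theorem 3 of that paper applies and yields a unique $u_n\in C([0,\infty),\mathscr{M}_+^1([0,\infty)))$ satisfying (\ref{WFn}) for every $\varphi\in C^1_b([0,\infty))$, the distributional form (\ref{WFn'}), and the entropy identity between any two times $t_1\le t_2$.

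Next I would derive the conservation of mass (\ref{CONSER}). Taking $\varphi\equiv 1$ in (\ref{WFn}) kills both the quadratic integrand and the linear integrand, since $(\varphi(x)-\varphi(y))=0$. Thus $\frac{d}{dt}M_0(u_n(t))=0$ and $M_0(u_n(t))=M_0(u_0)$ for all $t\ge 0$. For the support property of the singular part, I would use the Lebesgue decomposition $u_n=u_{n,r}+u_{n,s}$ and note that (\ref{P100}) acts on the singular component essentially as a linear ODE of the form $\partial_t u_{n,s}=-u_{n,s}\,\mathcal{A}(t,x)$, where $\mathcal{A}(t,x)=\int b_n(x,y)((y^2+u_n)e^{-y}-u_n(t,y)e^{-x})dy$ is a bounded continuous function on compact sets in $t$; this follows from boundedness of $b_n$ and continuity of $t\mapsto u_n(t)$ in narrow topology. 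Integrating, $u_{n,s}(t)=u_{0,s}e^{-\int_0^t\mathcal{A}(s,\cdot)ds}$, so $\supp(u_{n,s}(t))\subset\supp(u_{0,s})$. This step uses the same decomposition procedure already carried out in \cite{ESMI}, which I would quote if necessary.

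Finally I would establish $L^1$ propagation. If $u_0\in L^1([0,\infty))$, then $u_{0,s}\equiv 0$, hence by the previous step $u_{n,s}(t)\equiv 0$ for all $t\ge 0$ and $u_n(t)=u_{n,r}(t)\in L^1([0,\infty))$. Narrow continuity together with conservation of total mass upgrades to continuity in $L^1$ by the Scheffé lemma applied along sequences $t_k\to t$. The main obstacle in the whole argument is the accurate verification that Theorem 3 of \cite{ESMI} is directly applicable with the kernel $b_n$; all other properties follow either from standard test-function choices in (\ref{WFn}) or from the decomposition of $u_n$ into its absolutely continuous and singular parts.
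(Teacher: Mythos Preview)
Your proposal is correct and follows the same approach as the paper: both reduce the proposition to a direct application of Theorem~3 in \cite{ESMI}, after noting that the truncated kernel $b_n$ is bounded and compactly supported away from the axes. The paper's proof is in fact a bare one-line citation of that theorem, so your additional verification of the hypotheses and your derivations of mass conservation, support preservation, and $L^1$ propagation are elaborations rather than a different route; note, however, that the continuity in $\mathscr{M}_+^1([0,\infty))$ here is with respect to the total variation norm (see the remark following the proposition), which makes the $L^1$ continuity immediate and renders the Scheff\'e argument unnecessary.
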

\begin{proof}
Theorem  3 in \cite{ESMI}.
\end{proof}
\begin{remark}
In Proposition \ref{theoremESMI}, the space $\mathscr{M}_+^1([0,\infty))$ is endowed with the total variation norm.
\end{remark}

\begin{corollary}
\label{extendedWF}
Let $u_n$ be as in Proposition \ref{theoremESMI} for $n\in\NN$. Then (\ref{WFn}) holds for all $t>0$ and for all nonnegative $\varphi\in C([0,\infty))$ such that
$\int_{[0,\infty)}\varphi(x)u_0(x)dx<\infty$.
\end{corollary}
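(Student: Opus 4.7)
The plan is a monotone spatial cutoff applied to the integrated weak form already available for $C_c$ test functions. Fix $n\in\NN$ and a nonnegative $\varphi\in C([0,\infty))$ with $\int_{[0,\infty)}\varphi\,u_0\,dx<\infty$, and let $\chi_R\in C_c([0,\infty))$ be nondecreasing in $R$, equal to $1$ on $[0,R]$ and vanishing on $[R+1,\infty)$. Set $\varphi_R:=\varphi\chi_R\in C_c([0,\infty))$, so that $\varphi_R\nearrow\varphi$ pointwise as $R\to\infty$.

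Applying (\ref{WFn'}) to time-independent test functions of the form $(s,x)\mapsto\varphi_R(x)\eta_T(s)$, with $\eta_T\in C_c([0,\infty))$ equal to $1$ on $[0,T]$, and then sending $T\to\infty$ (all terms being absolutely convergent for fixed $R$), yields
$$\int_{[0,\infty)}\varphi_R\,u_n(t)\,dx=\int_{[0,\infty)}\varphi_R\,u_0\,dx+\int_0^t\!\int_{[0,\infty)}Q_n(u_n,u_n)(s,x)\,\varphi_R(x)\,dx\,ds$$
for every $t\ge 0$. Using the symmetry $b_n(x,y)=b_n(y,x)$ exactly as in Remark~\ref{Ksym}, the inner double integral can be rewritten as $K_{\varphi_R,n}(u_n,u_n)(s)-L_{\varphi_R,n}(u_n)(s)$; this symmetrization requires no regularity of $\varphi_R$ beyond continuity and boundedness, both of which hold.

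The next step is to pass to the limit $R\to\infty$. By construction (\ref{bn}), $\supp(b_n)\subset[1/(n+1),n+1]^2$, on which $\varphi$ is bounded by some constant $C_n:=\sup_{[1/(n+1),n+1]}\varphi<\infty$. Hence, uniformly in $R$,
$$|k_{\varphi_R}(x,y)|\le 2C_n\,b_n(x,y)\,|e^{-x}-e^{-y}|,\qquad |\ell_{\varphi_R}(x,y)|\le 2C_n\,b_n(x,y)\,y^2 e^{-y},$$
and the right-hand sides are integrable against $u_n\otimes u_n$ and $u_n$ respectively, thanks to mass conservation (\ref{CONSER}). Monotone convergence gives $\int\varphi_R\,u_0\to\int\varphi\,u_0<\infty$ and $\int\varphi_R\,u_n(t)\to\int\varphi\,u_n(t)$; dominated convergence, with the above bounds as envelopes, gives $K_{\varphi_R,n}(u_n,u_n)(s)\to K_{\varphi,n}(u_n,u_n)(s)$ and $L_{\varphi_R,n}(u_n)(s)\to L_{\varphi,n}(u_n)(s)$ for each $s$, with uniform-in-$R$ domination on $[0,t]$ permitting a further application of dominated convergence in $s$. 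The limit identity is the integrated version of (\ref{WFn}) for $\varphi$, with all quantities finite; in particular $\int\varphi\,u_n(t)\,dx<\infty$ for every $t\ge 0$.

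Finally, since $u_n\in C([0,\infty),\mathscr{M}_+^1([0,\infty)))$ in the total-variation norm (Proposition~\ref{theoremESMI}) and the integrands defining $K_{\varphi,n}$ and $L_{\varphi,n}$ are bounded continuous in $(x,y)$ on $\supp(b_n)$, the map $s\mapsto K_{\varphi,n}(u_n,u_n)(s)-L_{\varphi,n}(u_n)(s)$ is continuous on $[0,\infty)$. The fundamental theorem of calculus then delivers (\ref{WFn}) at every $t>0$. The main obstacle is controlling the linear and quadratic terms uniformly in the cutoff parameter $R$ in spite of $\varphi$ being only continuous, nonnegative, and possibly unbounded at infinity (and without any condition at $x=0$); what makes this work is the compact support of $b_n$ inside $(0,\infty)^2$ forced by the regularization (\ref{bn}), which renders $\varphi$ effectively bounded on the relevant region and thus replaces the stronger assumptions $\varphi\in C^1_b$ and $\varphi'(0)=0$ appearing in Definition~\ref{def:ws}.
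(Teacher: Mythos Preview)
Your proof is correct and follows essentially the same approach as the paper: approximate $\varphi$ monotonically by compactly supported functions, use the weak form (\ref{WFn'}) together with the symmetry of $b_n$ to obtain the integrated $K-L$ identity, pass to the limit via monotone/dominated convergence (the compact support of $b_n$ in $(0,\infty)^2$ being the key ingredient), and conclude by continuity of $u_n$ in total variation and the fundamental theorem of calculus. Your treatment is in fact slightly more careful than the paper's in two places: you make explicit the time cutoff $\eta_T$ needed to reduce (\ref{WFn'}) (stated for test functions compactly supported in both variables) to time-independent test functions, and you spell out the uniform bound $C_n=\sup_{[1/(n+1),n+1]}\varphi$ that serves as the dominated-convergence envelope.
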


\begin{proof}
Given a nonnegative function $\varphi\in C([0,\infty))$ such that \\
$\int_{[0,\infty)}\varphi(x)u_0(x)dx<\infty$, let $\{\varphi_k\}_{k\in\NN}\subset C_c([0,\infty))$ be such that 
$\varphi_k(x)\to\varphi(x)$ as $k\to\infty$ for all $x\in[0,\infty)$, and $\varphi_k\leq\varphi_{k+1}\leq\varphi$ for all $k\in\NN$. By (\ref{WFn'}) with test function $\varphi_k$, and recalling that $\phi_n$ is compactly supported, it is easy to deduce using Fubini's theorem, the symmetry of $B$, and the antisymmetry of $q(u_n,u_n)$, that for all $k\in\NN$,
\begin{align}
\label{www}
\int_{[0,\infty)}\varphi_k(x)u_n(t,x)dx&=\int_{[0,\infty)}\varphi_k(x)u_0(x)dx\nonumber\\
&+\int_0^t \big(I_{\varphi_k,n}(u_n,u_n)-L_{\varphi_k,n}(u_n)\big)ds.
\end{align}
Using again that $\phi_n$ is compactly supported, we can pass to the limit as $k\to\infty$ in (\ref{www}) by monotone and dominated convergence theorems to obtain (\ref{www}) with $\varphi$ instead of $\varphi_k$.

Now, since $u_n\in C([0,\infty),\mathscr{M}_+^1([0,\infty)))$, where the topology on $\mathscr{M}_+^1([0,\infty))$ is the total variation norm, it follows that the maps 
$$t\mapsto K_{\varphi,n}(u_n(t),u_n(t)),\quad t\mapsto L_{\varphi,n}(u_n(t))$$
are continuous for all $n\in\NN$ and all $t>0$. Then (\ref{WFn}) follows from (\ref{www}) with $\varphi$ istead of $\varphi_k$, by the fundamental theorem of calculus.
\end{proof}

\subsection{The limit $n\to\infty$}
The goal now is to pass to the limit as $n\to\infty$ in (\ref{WFn}) and obtain a weak solution of (\ref{P})--(\ref{Bbound}). We start with the following uniform estimate.
\begin{proposition}
\label{exponential decay}
Let $u_n$ and $u_0$ be as in Proposition \ref{theoremESMI}. If $X_{\eta}(u_0)<\infty$
for some $\eta\in (0,1/2)$, then for all $t>0$ and all $n\in\NN$,
\begin{equation}
\label{exp}
X_{\eta}(u_n(t))\leq e^{C_{\eta}t}X_{\eta}(u_0)\\
\end{equation}
where $C_{\eta}$ is defined in (\ref{Ceta}).
\end{proposition}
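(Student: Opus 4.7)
The natural approach is to apply the extended weak formulation of Corollary~\ref{extendedWF} with the test function $\varphi(x)=e^{\eta x}$. This is legitimate since $\varphi$ is nonnegative and continuous on $[0,\infty)$ and $\int_{[0,\infty)}\varphi(x)u_0(x)dx=X_\eta(u_0)<\infty$ by assumption. One then obtains
\begin{equation*}
\frac{d}{dt}X_\eta(u_n(t))=K_{\varphi,n}(u_n,u_n)-L_{\varphi,n}(u_n)
\end{equation*}
for almost every $t>0$. The plan is to show that the quadratic term $K_{\varphi,n}$ is nonpositive and bound the linear part by $C_\eta X_\eta(u_n)$; estimate (\ref{exp}) then follows by Gronwall's lemma.

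For $K_{\varphi,n}$, the integrand contains the product $(e^{-x}-e^{-y})(e^{\eta x}-e^{\eta y})$, whose two factors have opposite monotonicity in their arguments (the first decreasing, the second increasing since $\eta>0$), hence $k_{\varphi,n}(x,y)\leq 0$ and $K_{\varphi,n}(u_n,u_n)\leq 0$. The crux of the proof is therefore the pointwise estimate
\begin{equation*}
J_n(x):=-\mathcal{L}_{\varphi,n}(x)=\int_0^\infty b_n(x,y)y^2 e^{-y}\bigl(e^{\eta y}-e^{\eta x}\bigr)dy\leq 2C_\eta e^{\eta x},\qquad\forall x>0,
\end{equation*}
which, integrated against $u_n$, yields $-L_{\varphi,n}(u_n)\leq C_\eta X_\eta(u_n)$.

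To prove the pointwise bound I would first discard the contribution from $y<x$, where the integrand is nonpositive, and retain only the integration over $y\in(x,\gamma_2(x)]\subset(x,x/\theta]$ by Remark~\ref{cone}. On this set the sharp directional mean-value inequality $e^{\eta y}-e^{\eta x}\leq \eta(y-x)e^{\eta y}$ is valid. Using (\ref{Bbound}) together with $\phi_n(x)\phi_n(y)\leq (xy)^{-1}$, one has $b_n(x,y)\leq C_*e^{(x+y)/2}/\bigl((x+y)xy\bigr)$, so that
\begin{equation*}
J_n(x)\leq \frac{\eta C_*}{x}e^{x/2}\int_x^{x/\theta}\frac{y(y-x)}{x+y}e^{(\eta-1/2)y}dy.
\end{equation*}
The support geometry of Remark~\ref{cone}, namely $y-x\leq (1-\theta)x/\theta$ and $y/(x+y)\leq 1/(1+\theta)$ on $\{y>x\}\cap\Gamma$, yields the crucial estimate $y(y-x)/(x+y)\leq C(\theta)\,x$, which cancels the $1/x$ factor. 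The remaining integral $\int_x^\infty e^{(\eta-1/2)y}dy = e^{(\eta-1/2)x}/(1/2-\eta)$ converges thanks to the hypothesis $\eta<1/2$ and produces the factor $(1/2-\eta)^{-1}$; combined with $e^{x/2}\cdot e^{(\eta-1/2)x}=e^{\eta x}$ and the $\theta$-dependent prefactors, this reproduces a constant of the form announced in (\ref{Ceta}).

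The main technical obstacle is that the cancellation between the $+e^{\eta y}$ and $-e^{\eta x}$ parts of the integrand is essential. A naive bound $|e^{\eta y}-e^{\eta x}|\leq\eta|y-x|e^{\eta\max(x,y)}$ applied on the full set $\Gamma\cap\mathrm{supp}\,b_n(x,\cdot)$ couples the factor $e^{x/2}$ from (\ref{Bbound}) with the endpoint behaviour at $y\sim\theta x$ and produces exponential growth strictly faster than $e^{\eta x}$; the estimate would fail for any $\eta<1/2$. It is therefore indispensable to first drop the $y<x$ contribution, and then to use the directed form $(y-x)e^{\eta y}$ rather than $|y-x|e^{\eta\max(x,y)}$, so that the factor $e^{(\eta-1)y}$ provides enough decay at large $y$ to close the estimate. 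Uniformity of the bound in $n$ is automatic, since $\phi_n(x)\phi_n(y)\leq(xy)^{-1}$ for every $n$, so that the preceding estimate for $J_n(x)$ holds with a constant independent of $n$, and Gronwall finally gives $X_\eta(u_n(t))\leq e^{C_\eta t}X_\eta(u_0)$.
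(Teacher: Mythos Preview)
Your proposal is correct and follows essentially the same route as the paper: apply Corollary~\ref{extendedWF} with $\varphi(x)=e^{\eta x}$, observe $K_{\varphi,n}\le 0$, keep only the $y>x$ contribution in $-\mathcal L_{\varphi,n}$, use the mean-value bound $e^{\eta y}-e^{\eta x}\le \eta(y-x)e^{\eta y}$ together with (\ref{Bbound}) and the cone geometry of $\Gamma$, integrate $e^{(\eta-1/2)y}$ over $(x,\infty)$, and conclude by Gronwall. The paper packages the pointwise step by invoking the proof of (\ref{lbound}) ``for $C^1$ functions'', but the computation is the same as yours. Your bookkeeping $\tfrac{y(y-x)}{x+y}\le \tfrac{(1-\theta)}{\theta(1+\theta)}x$ is in fact slightly sharper than the paper's $\tfrac{y|x-y|}{x(x+y)}\le \tfrac{1-\theta}{\theta^2(1+\theta)}$, so your constant is smaller by a factor $\theta$ and a fortiori implies (\ref{exp}) with the $C_\eta$ of (\ref{Ceta}).
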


\begin{proof}
Let $\eta\in(0,1/2)$ and take $\varphi(x)=e^{\eta x}$ in (\ref{WFn}), which is allowed by Proposition \ref{extendedWF}. If we drop all the negative terms in (\ref{WFn}), we use (\ref{lbound}) in Appendix \ref{A1} (for $C^1$ functions instead of Lipschitz functions), and $\phi_n(x)\leq x^{-1}$, then

\begin{align*}
\frac{d}{dt}\int_{[0,\infty)}e^{\eta x}&u_n(t,x)dx\leq\frac{1}{2}\int_{[0,\infty)}u_n(t,x)\int_x^{\infty}|\ell_{\varphi}(x,y)|dydx\\
&\leq \frac{C_*(1-\theta)}{2\theta^2(1+\theta)}\int_{[0,\infty)}u_n(t,x)e^{\frac{x}{2}}\int_x^{\infty}\varphi'(y)e^{-\frac{y}{2}}dydx\\
&\leq C_{\eta}\int_{[0,\infty)}e^{\eta x}u_n(t,x)dx,
\end{align*}
from where (\ref{exp}) follows using Gronwall's inequality.
\end{proof}

\noindent
We prove now the following pre-compactness result of  $\{u_n(t)\} _{ n\in \NN }$ for any fixed $t>0$.
\begin{proposition} 
\label{precompactness} Let $u_n$ and $u_0$ be as in Proposition \ref{theoremESMI}. Then, for every fixed $t>0$, there exist a subsequence of $\{u_n(t)\}_{n\in\NN}$ (not relabelled) and $U \in\mathscr{M}_+([0,\infty))$ 
such that, for all $\varphi\in C_0([0,\infty))$,
\begin{align}
\label{WC1}
\lim_{n\to\infty}\int_{[0,\infty)}\varphi(x)u_n(t,x)dx=\int_{[0,\infty)}\varphi(x)U(x)dx.
\end{align}
Moreover, if $u_0$ satisfies $X_{\eta}(u_0)<\infty$ for some $\eta\in(0,1/2)$, then 
\begin{align}
\label{expU}
X_{\eta}(U)\leq e^{C_{\eta}t}X_{\eta}(u_0),
\end{align}
where $C_{\eta}$ is defined in (\ref{Ceta}), and (\ref{WC1}) holds for all $\varphi\in C([0,\infty))$ satisfying the growth condition
\begin{align}
\label{GC}
|\varphi(x)|\leq c e^{\alpha x}\quad \forall x\in[0,\infty),\;c>0,\;0\leq \alpha<\eta.
\end{align}
\end{proposition}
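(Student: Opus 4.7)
The strategy is to first extract a vaguely convergent subsequence using only the uniform mass bound, and then bootstrap to (\ref{expU}) and to the larger class of test functions using the uniform exponential moment bound of Proposition~\ref{exponential decay}.

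First I would invoke the mass conservation identity (\ref{CONSER}), which gives that $\{u_n(t)\}_{n\in\NN}$ is uniformly bounded in total variation by $M_0(u_0)$. Since $[0,\infty)$ is a locally compact second countable Hausdorff space, $C_0([0,\infty))$ is separable and its dual is the space of finite signed Radon measures. Banach--Alaoglu together with a diagonal extraction along a countable dense subset of $C_0([0,\infty))$ then produces a subsequence (not relabelled) and an element $U\in\mathscr{M}_+([0,\infty))$ such that (\ref{WC1}) holds for every $\varphi\in C_0([0,\infty))$; the nonnegativity of $U$ is inherited from that of each $u_n(t)$ by testing against nonnegative $\varphi$.

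Next, assuming $X_{\eta}(u_0)<\infty$, Proposition~\ref{exponential decay} gives the uniform bound $X_{\eta}(u_n(t))\leq e^{C_{\eta}t}X_{\eta}(u_0)$. For each $R>0$ pick $\chi_R\in C_c([0,\infty))$ with $0\leq\chi_R\leq 1$ and $\chi_R\equiv 1$ on $[0,R]$, and apply the vague convergence to the test function $e^{\eta x}\chi_R(x)\in C_c([0,\infty))$ to obtain
\[
\int_{[0,\infty)} e^{\eta x}\chi_R(x)\,U(x)\,dx=\lim_{n\to\infty}\int_{[0,\infty)} e^{\eta x}\chi_R(x)\,u_n(t,x)\,dx\leq e^{C_{\eta}t}X_{\eta}(u_0).
\]
Letting $R\to\infty$ and applying monotone convergence yields (\ref{expU}). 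To extend (\ref{WC1}) to $\varphi\in C([0,\infty))$ satisfying the growth condition (\ref{GC}) with $0\leq\alpha<\eta$, I would split $\varphi=\varphi\chi_R+\varphi(1-\chi_R)$. The first piece lies in $C_c([0,\infty))$, so (\ref{WC1}) applies directly. For the tail,
\[
\left|\int_{[0,\infty)}\varphi(x)(1-\chi_R(x))\,u_n(t,x)\,dx\right|\leq c\int_R^\infty e^{\alpha x}\,u_n(t,x)\,dx\leq c\,e^{-(\eta-\alpha)R}X_{\eta}(u_n(t)),
\]
which by Proposition~\ref{exponential decay} is bounded by $c\,e^{-(\eta-\alpha)R}e^{C_{\eta}t}X_{\eta}(u_0)$, uniformly in $n$; the same estimate holds for $U$ in place of $u_n(t)$ thanks to (\ref{expU}). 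Taking $n\to\infty$ first and then $R\to\infty$ concludes the proof.

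The argument is essentially routine functional analysis; the only delicate point is that vague convergence by itself does not prevent mass (or more generally, $e^{\eta x}u_n(t)$) from escaping to infinity. The uniform exponential moment bound provided by Proposition~\ref{exponential decay} is exactly what supplies the tightness needed both to transfer the moment bound to the limit and to enlarge the class of admissible test functions to those of subexponential growth $\alpha<\eta$.
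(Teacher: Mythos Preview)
Your proposal is correct and follows essentially the same route as the paper: weak* compactness from the uniform mass bound (\ref{CONSER}), transfer of the exponential moment via cutoff test functions and Proposition~\ref{exponential decay}, and extension of the convergence to growth-bounded $\varphi$ by a compact-plus-tail splitting controlled uniformly in $n$ by $e^{-(\eta-\alpha)R}X_\eta(u_0)e^{C_\eta t}$. The paper's argument is identical in structure (its $\zeta_j$ plays the role of your $\chi_R$, and its three-term inequality is exactly your splitting).
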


\begin{proof}
By (\ref{CONSER}), the sequence $\{u_n(t)\}_{n\in\NN}$ is uniformly bounded in $\mathscr{M}_+([0,\infty))$, and thus has a subsequence, still denoted 
$u_n(t)$, that converges to some $U\in\mathscr{M}_+([0,\infty))$ in $\sigma(\mathscr{M}([0,\infty)),C_0([0,\infty)))$ (the weak* topology), i.e., (\ref{WC1}) holds for all $\varphi\in C_0([0,\infty))$.
Moreover, if $\zeta_j\in C_c([0,\infty))$ is such that $0\leq\zeta_j\leq 1$, 
$\zeta_j(x)=1$ for all $x\in[0,j]$ and $\zeta_j(x)=0$ for all $x\geq j+1$, so that $\zeta_j\to 1$, then by weak* convergence and (\ref{CONSER}),
\begin{align*}
\int_{[0,\infty)}\zeta_j(x)U(x)dx&=\lim _{ n\to \infty  }\int_{[0,\infty)}\zeta_j(x)u_n(t,x)dx \nonumber \\
&\leq \lim_{n\to\infty}\int_{[0,\infty)}u_n(t,x)dx=\int_{[0,\infty)}u_0(x)dx,
\end{align*}
and then, as $j\to\infty$,
\begin{equation}
\label{WC1'}
\int_{[0,\infty)}U(x)dx\le \int_{[0,\infty)}u_0(x)dx.
\end{equation}

Suppose now that $u_0$ satisfies (\ref{eu0}) for some $\eta\in (0,1/2)$, and let $\psi(x)=e^{\eta x}$ and $\psi_j=\psi\zeta_j$, where $\zeta_j$ is as before. Then, by weak* convergence and Proposition \ref{exponential decay},

\begin{align*}
\int_{[0,\infty)}\psi_j(x)&U(x)dx=\lim_{n\to\infty}\int_{[0,\infty)}\psi_j(x)u_n(t,x)dx\\
&\leq\liminf_{n\to\infty}\int_{[0,\infty)}e^{\eta x}u_n(t,x)dx\leq e^{C_{\eta}t}\int_{[0,\infty)}e^{\eta x}u_0(x)dx,
\end{align*}
and letting $j\to\infty$, (\ref{expU}) holds. 

Let now $\varphi\in C([0,\infty))$ satisfying (\ref{GC}), and define $\varphi_j=\varphi\zeta_j$, with $\zeta_j$ as before, so that $\varphi_j\to\varphi$ pointwise as $j\to\infty$.
Then, for all $j\in\NN$, 
\begin{align}
\label{three terms}
&\bigg|\int_{[0,\infty)}\varphi(x)u_n(t,x)dx-\int_{[0,\infty)}\varphi(x)U(x)dx\bigg|\nonumber\\
&\leq\bigg|\int_{[0,\infty)}\varphi_j(x)u_n(t,x)dx-\int_{[0,\infty)}\varphi_j(x)U(x)dx\bigg|\\
&+\int_{[0,\infty)}|\varphi(x)-\varphi_j(x)|u_n(t,x)dx+\int_{[0,\infty)}|\varphi(x)-\varphi_j(x)|U(x)dx.\nonumber
\end{align}
By (\ref{WC1}), the first term in the right hand side above converges to zero as $n\to\infty$ for all $j\in\NN$. We just need to prove that the second and the third terms are arbitrarly small (for $j$ large enough). Both terms are treated in the same way. We use that $\varphi_j=\varphi$ on $[0,j]$, (\ref{GC}), and Proposition \ref{exponential decay} to obtain
\begin{align*}
&\int_{[0,\infty)}|\varphi(x)-\varphi_j(x)|u_n(t,x)dx=\int_{(j,\infty)}|\varphi(x)-\varphi_j(x)|u_n(t,x)dx\\
&\leq 2\int_{(j,\infty)}|\varphi(x)|u_n(t,x)dx\leq 2c\int_{(j,\infty)}e^{\alpha x}u_n(t,x)dx\\
&\leq 2ce^{(\alpha-\eta) j}\int_{(j,\infty)}e^{\eta x}u_n(t,x)dx\leq 2ce^{(\alpha-\eta)j}e^{C_{\eta}t}\int_{[0,\infty)}e^{\eta x}u_0(x)dx,
\end{align*}
and by similar estimates, and (\ref{expU}),
\begin{align*}
\int_{[0,\infty)}&|\varphi(x)-\varphi_j(x)|U(x)dx\leq 2ce^{(\alpha-\eta)j}e^{C_{\eta}t}\int_{[0,\infty)}e^{\eta x}u_0(x)dx.
\end{align*}
Since $\alpha<\eta$, both terms converges to zero as $j\to\infty$.
\end{proof}

The equicontinuity  of  $\{u_n\}_{n\in\NN}$ in the narrow topology is proved in the following Proposition,
\begin{proposition}
\label{equicontinuity} Let $u_n$ and $u_0$ be as in Proposition \ref{theoremESMI}, and suppose that $X_{\eta}(u_0)<\infty$ for some $\eta\in\left[\frac{1-\theta}{2},\frac{1}{2}\right)$.
Then, for all $n\in\NN$, $\varphi$ $L$-Lipschitz on $[0,\infty)$, $0<T<\infty$ and $t$, $t_0\in[0,T]$,
\begin{gather}
\label{S522}
\bigg|\int_{[0,\infty)}\varphi(x)u_n(t,x)dx-\int_{[0,\infty)}\varphi(x)u_n(t_0,x)dx\bigg|\leq C(u_0,T)|t-t_0|,
\shortintertext{where}
C(u_0,T)=
LC_*\left[AM_0(u_0)+\frac{(1-\theta)}{2\theta^2(1+\theta)}\right]e^{TC_{\frac{(1-\theta)}{2}}}X_{\frac{(1-\theta)}{2}}(u_0),\nonumber
\end{gather}
and $A$ is given in (\ref{kbound}).
In particular, the sequence $\{u_n\} _{ n\in \NN }$ is equicontinuous from $[0,\infty)$ into $\mathscr{M}_+([0,\infty)$ with the narrow topology.
\end{proposition}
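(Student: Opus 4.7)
The strategy is to start from the weak formulation (\ref{WFn}), integrate from $t_0$ to $t$, and bound the right-hand side by a quantity independent of $n$ and of $s\in[0,T]$. For $\varphi\in C^1_b([0,\infty))$ with $\varphi'(0)=0$, (\ref{WFn}) gives
\begin{equation*}
\int_{[0,\infty)}\varphi(x)\big(u_n(t,x)-u_n(t_0,x)\big)dx = \int_{t_0}^{t} \big(K_{\varphi,n}(u_n,u_n)-L_{\varphi,n}(u_n)\big)ds,
\end{equation*}
so everything reduces to proving $|K_{\varphi,n}(u_n,u_n)|+|L_{\varphi,n}(u_n)|\le C(u_0,T)$ uniformly in $n$ and $s\in[0,T]$.

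For the linear term I would repeat the computation already used in the proof of Proposition \ref{exponential decay}: splitting by $\{y<x\}\cup\{y>x\}$, using the elementary inequality $b_n\le b$ (which follows from $x\phi_n(x)\le 1$), and the Lipschitz version of (\ref{lbound}),
\begin{equation*}
|L_{\varphi,n}(u_n)|\le \tfrac12\int u_n(s,x)\int_x^{\infty}|\ell_\varphi(x,y)|\,dy\,dx \le \frac{LC_*(1-\theta)}{2\theta^2(1+\theta)}X_{(1-\theta)/2}(u_n(s)).
\end{equation*}
For the quadratic term I would use the symmetric representation of Remark \ref{Ksym} together with the bound (\ref{kbound}) on $k_\varphi$, which for an $L$-Lipschitz test function produces a factor $LC_*A$ multiplied by an exponential weight. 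The support constraint $y\in[\theta x,\theta^{-1}x]$ coming from (\ref{Bsupport})--(\ref{Bsupport2}) makes one of the two copies of $u_n$ integrate against a function bounded in $x$, producing $M_0(u_n)$, and leaves
\begin{equation*}
|K_{\varphi,n}(u_n,u_n)|\le LC_*A\,M_0(u_n(s))\,X_{(1-\theta)/2}(u_n(s)).
\end{equation*}

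Adding the two estimates, using the conservation of mass (\ref{CONSER}) and the uniform exponential moment bound from Proposition \ref{exponential decay} applied with $\eta=(1-\theta)/2$ (which is legitimate because $X_\eta(u_0)<\infty$ for some $\eta\ge(1-\theta)/2$ implies $X_{(1-\theta)/2}(u_0)<\infty$ by interpolation with $M_0(u_0)$), yields the claimed constant $C(u_0,T)$, independent of $n$, and (\ref{S522}) follows by integration in $s$.

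The remaining step is to upgrade from $C^1_b$ with $\varphi'(0)=0$ to a general $L$-Lipschitz $\varphi$. The plan is to mollify $\varphi$ into $\varphi_\epsilon\in C^1_b$ whose Lipschitz constant does not exceed $L$, then add a small $C^1$ cutoff supported in a shrinking neighbourhood of $0$ to enforce $\varphi_\epsilon'(0)=0$ up to a universal multiple of $L$ in the Lipschitz constant. Passing to the limit is routine since the right-hand side of (\ref{S522}) depends only on $L$ and $\int\varphi_\epsilon\,u_n(t)\to\int\varphi\,u_n(t)$ by dominated convergence using the uniform mass bound $M_0(u_n)=M_0(u_0)$. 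I expect the only real technical step to be the derivation of the precise form of (\ref{kbound}) for a Lipschitz test function: one must combine the support restriction (\ref{Bsupport}) and the upper bound (\ref{Bbound}) to obtain the constant $LC_*A$ with an exponential weight compatible with $X_{(1-\theta)/2}$, whereas the linear term is conceptually identical to the one already handled in Proposition \ref{exponential decay}. Once both integrands are controlled, equicontinuity of $\{u_n\}$ in the narrow topology is immediate from the characterisation (\ref{mu0})--(\ref{Lip1}) of $d_0$, since $C(u_0,T)$ does not depend on $n$.
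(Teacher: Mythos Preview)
Your approach is essentially the same as the paper's: integrate the weak formulation (\ref{WFn}) in time and bound $K_{\varphi,n}$ and $L_{\varphi,n}$ uniformly in $n$ via the Lipschitz estimates (\ref{kbound})--(\ref{llbound}) together with mass conservation (\ref{CONSER}) and the exponential-moment bound (\ref{exp}). Two minor points. First, the mollification step to pass from $C^1_b$ with $\varphi'(0)=0$ to general $L$-Lipschitz $\varphi$ is unnecessary: since $b_n\in L^\infty$, Corollary~\ref{extendedWF} already gives (\ref{WFn}) for any bounded continuous test function, so the paper simply starts with $\varphi$ $L$-Lipschitz. Second, your displayed bound for $L_{\varphi,n}$ keeps only the $\{y>x\}$ contribution, which is the right move in Proposition~\ref{exponential decay} (where $\varphi$ is increasing and the $\{y<x\}$ part has a sign) but not here; for a general Lipschitz $\varphi$ you must integrate $|\ell_\varphi(x,y)|$ over all $y$ and use (\ref{llbound}) (or equivalently (\ref{Lbound})), which still yields the same constant $\frac{LC_*(1-\theta)}{2\theta^2(1+\theta)}X_{(1-\theta)/2}(u_n(s))$.
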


\begin{proof}
Let $\varphi$ be $L$-Lipschitz, $0<T<\infty$ and let $t$, $t_0\in[0,T]$ with $t_0\leq t$. By (\ref{WFn}) 
\begin{align}
\label{equi}
&\bigg|\int_{[0,\infty)}\varphi(x)u_n(t,x)dx-\int_{[0,\infty)}\varphi(x)u_n(t_0,x)dx\bigg|\nonumber\\
&\leq\int_{t_0}^t\Big(|K_{\varphi,n}(u_n(s),u_n(s))|+|L_{\varphi,n}(u_n(s))|\Big)ds.
\end{align}
By (\ref{Kbound}), Remark \ref{nbounds}, (\ref{CONSER}), and Proposition \ref{exponential decay},
\begin{align}
\label{Kequi}
\int_{t_0}^t&|K_{\varphi,n}(u_n(s),u_n(s))|ds\leq LC_*AM_0(u_0)\int_{t_0}^tX_{\frac{(1-\theta)}{2}}(u_n(s))ds\nonumber\\
&\leq LC_*AM_0(u_0)e^{tC_{\frac{(1-\theta)}{2}}}X_{\frac{(1-\theta)}{2}}(u_0)(t-t_0),
\end{align}
and by (\ref{Lbound}) (positive part only), Remark \ref{nbounds} and Proposition \ref{exponential decay},
\begin{align}
\label{Lequi}
\int_{t_0}^t& |L_{\varphi,n}(u_n(s))|ds\leq\frac{LC_*(1-\theta)}{2\theta^2(1+\theta)}
\int_{t_0}^tX_{\frac{(1-\theta)}{2}}(u_n(s))ds\nonumber\\
&\leq \frac{LC_*(1-\theta)}{2\theta^2(1+\theta)}e^{tC_{\frac{(1-\theta)}{2}}}X_{\frac{(1-\theta)}{2}}(u_0)(t-t_0).
\end{align}
Using (\ref{Kequi}) and (\ref{Lequi}) in (\ref{equi}),  the estimate (\ref{S522}) follows. 
For the equicontinuity, let $\varepsilon>0$ and consider $\delta<\varepsilon/ C(u_0,T)$. 
By (\ref{mu0}), (\ref{Lip1}), if we take the supremum on (\ref{S522}) among all 
$\varphi\in\text{Lip}_1([0,\infty))$ with $\|\varphi\|_{\infty}\leq 1$, we deduce that for all $t\in[0,T]$, $t_0\in[0,T]$ such that $|t-t_0|<\delta$, then
$d_0(u_n(t),u_n(t_0))<\varepsilon$ for all $n\in\NN$, that is, $\{u_n\}_{n\in\NN}$ is equicontinuous on $[0,T]$.
\end{proof}
As a Corollary of Proposition \ref{precompactness} and Proposition \ref{equicontinuity}, we obtain that a subsequence of 
$\{u_n\}_{n\in\NN}$ converges to a limit 
$u$ in the space $C([0,\infty),\mathscr{M}_+([0,\infty)))$.
\begin{corollary}
\label{cor:u}
Let $u_n$ and $u_0$ be as in Proposition \ref{theoremESMI}, and suppose that $X_{\eta}(u_0)<\infty$ for some $\eta\in\left[\frac{1-\theta}{2},\frac{1}{2}\right)$. Then there exist a subsequence of $\{u_n\}_{n\in\NN}$ (not relabelled) and  
$u\in C([0,\infty),\mathscr{M}_+([0,\infty)))$ such that 
\begin{align}
\label{d0 convergence}
\lim_{n\to\infty}d_0(u_n(t),u(t))=0\quad\forall t\geq 0,
\end{align}
and the convergence is uniform on the compact sets of $[0,\infty)$.
Moreover,
\begin{align}
\label{exp u}
X_{\eta}(u(t))\leq e^{C_{\eta}t}X_{\eta}(u_0)\qquad\forall t\geq 0,
\end{align}
where $C_{\eta}$ is given in (\ref{Ceta}), and for all $\varphi\in C([0,\infty))$ satisfying (\ref{GC}),
\begin{align}
\label{u99}
\lim_{n\to\infty}\int_{[0,\infty)}\varphi(x)u_n(t,x)dx=\int_{[0,\infty)}\varphi(x)u(t,x)dx\quad\forall t\geq 0.
\end{align}
\end{corollary}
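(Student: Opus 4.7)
The conclusion is an Arzel\`a--Ascoli statement in the metric space $(\mathscr M_+([0,\infty)),d_0)$: Proposition \ref{equicontinuity} already supplies equicontinuity of $\{u_n\}$ on every compact time interval, and I only need to upgrade the pointwise weak-$*$ precompactness of Proposition \ref{precompactness} to precompactness in the narrow topology, which is done via a short tightness argument. Once the limit $u$ is produced, (\ref{exp u}) will follow by narrow lower semicontinuity of the exponential moment, and (\ref{u99}) by the very same cut-off reasoning as in the last part of the proof of Proposition \ref{precompactness}.

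For any $t\geq 0$, mass conservation (\ref{CONSER}) gives $\sup_n M_0(u_n(t))=M_0(u_0)$, while the exponential moment propagation (\ref{exp}) yields, for every $R>0$,
\begin{equation*}
u_n(t)\big([R,\infty)\big)\leq e^{-\eta R}X_{\eta}(u_n(t))\leq e^{-\eta R}e^{C_{\eta}t}X_{\eta}(u_0),
\end{equation*}
so $\{u_n(t)\}_{n\in\NN}$ is tight. Prokhorov's theorem then makes it relatively compact in the narrow topology, and the Arzel\`a--Ascoli theorem applied on each $[0,T]$ produces a subsequence converging in $C([0,T],\mathscr M_+([0,\infty)))$; a diagonal extraction over $T=T_k\nearrow\infty$ yields a single subsequence (not relabelled) and a function $u\in C([0,\infty),\mathscr M_+([0,\infty)))$ such that (\ref{d0 convergence}) holds, uniformly on compact subsets of $[0,\infty)$.

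To obtain (\ref{exp u}), I would use the truncations $\zeta_j\in C_c([0,\infty))$ from the proof of Proposition \ref{precompactness}. For each $j$, $e^{\eta x}\zeta_j(x)\in C_b([0,\infty))$, so narrow convergence together with (\ref{exp}) gives
\begin{equation*}
\int_{[0,\infty)}\!e^{\eta x}\zeta_j(x)u(t,x)dx=\lim_{n\to\infty}\int_{[0,\infty)}\!e^{\eta x}\zeta_j(x)u_n(t,x)dx\leq e^{C_{\eta}t}X_{\eta}(u_0),
\end{equation*}
and monotone convergence as $j\to\infty$ yields (\ref{exp u}). Equation (\ref{u99}) is then a transcription of (\ref{three terms}): split $\varphi=\varphi\zeta_j+\varphi(1-\zeta_j)$, handle the bounded continuous part $\varphi\zeta_j$ by narrow convergence, and control the tail uniformly in $n$ via $|\varphi(x)|\leq ce^{\alpha x}\leq ce^{(\alpha-\eta)j}e^{\eta x}$ together with (\ref{exp}) and (\ref{exp u}), using $\alpha<\eta$.

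The only delicate point is the narrow-versus-weak-$*$ distinction in the first step: Proposition \ref{precompactness} only provides convergence against $C_0([0,\infty))$, which allows mass to escape to infinity in the limit; the tightness furnished by (\ref{exp}) is precisely what closes this gap and makes Arzel\`a--Ascoli directly applicable in $(\mathscr M_+([0,\infty)),d_0)$. Beyond this observation, the argument is routine bookkeeping.
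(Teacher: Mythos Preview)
Your proof is correct and follows essentially the same Arzel\`a--Ascoli strategy as the paper: pointwise precompactness plus the equicontinuity of Proposition \ref{equicontinuity}, then the same truncation argument for (\ref{exp u}) and (\ref{u99}). One small remark: you describe the narrow-versus-weak-$*$ upgrade as the ``only delicate point'' and invoke Prokhorov, but this gap is already closed in the second half of Proposition \ref{precompactness} itself, since under the hypothesis $X_\eta(u_0)<\infty$ that proposition extends (\ref{WC1}) to all $\varphi\in C([0,\infty))$ with growth (\ref{GC}), in particular to every $\varphi\in C_b([0,\infty))$; the paper's proof simply cites this. Your explicit tightness-plus-Prokhorov route is a perfectly valid alternative and arguably more self-contained, but it is not an additional ingredient beyond what Proposition \ref{precompactness} already supplies.
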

\begin{remark}
(\ref{d0 convergence}) implies that, for every $\varphi \in 
C_b([0,\infty))$,
\begin{equation}
\label{NAU}
\lim _{ n\to \infty }\sup _{ t_1\le t\le t_2 }\bigg|\int_{ [0, \infty) }u_n(t, x)\varphi (x)dx-\int  _{ [0, \infty) }u(t, x)\varphi (x)dx\bigg|=0.
\end{equation}
\end{remark}

\begin{proof}
By Proposition \ref{precompactness}, the sequence $\{u_n\}_{n\in\NN}$ is relatively compact on $(\mathscr{M}_+([0,\infty)),d_0)$, and by Proposition \ref{equicontinuity}, the sequence $\{u_n\}_{n\in\NN}$ is equicontinuous from $[0,\infty)$ into $(\mathscr{M}_+([0,\infty)),d_0)$. Then, from Arzel\`{a}-Ascoli theorem, $u_n$ converges pointwise (for all $t\geq 0$) to a continuous function $u$, and the convergence is uniform on compact sets. Since the metric $d_0$ generates the narrow topology, and the convergence in (\ref{d0 convergence}) is uniform on compact sets, then (\ref{NAU}) follows.
The estimate (\ref{exp u}) and the limit (\ref{u99}) are obtained as in Proposition \ref{precompactness}, since the time $t$ is fixed. 
\end{proof}

We prove now that the limit $u$ of the sequence $\{u_n\}_{n\in\NN}$ is indeed a weak solution of (\ref{P})--(\ref{P2}). 

\begin{corollary}
\label{S2Cor15}
Given any $v_0\in\mathscr{M}_+([0,\infty))$ satisfying (\ref{eu0})
for some $\eta\in\left(\frac{1-\theta}{2},\frac{1}{2}\right)$, there exists  $v\in C([0,\infty),\mathscr{M}_+([0,\infty)))$  weak solution of (\ref{P})--(\ref{P2}), that also satisfies (\ref{mass}) and (\ref{expmoment}).
\end{corollary}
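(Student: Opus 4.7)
The plan is to identify the limit $u$ constructed in Corollary \ref{cor:u} as a weak solution of (\ref{P})--(\ref{P2}). The initial condition (\ref{wsID}), the continuity requirement in (\ref{ws1}), and the exponential moment (\ref{expmoment}) are already in hand: $u(0)=u_0$ follows from the uniform convergence on compact time sets in Corollary \ref{cor:u}; the map $t\mapsto\int\varphi(x)u(t,x)dx$ is continuous for any $\varphi\in C_b([0,\infty))$ directly from $u\in C([0,\infty),\mathscr{M}_+([0,\infty)))$ with the narrow topology; and (\ref{expmoment}) is precisely (\ref{exp u}). It therefore remains to pass to the limit $n\to\infty$ in (\ref{WFn}) integrated in time, for arbitrary $\varphi\in C^1_b([0,\infty))$ with $\varphi'(0)=0$, and then to recover the differential form (\ref{ws4}) and the $W^{1,\infty}_{loc}$ regularity of $t\mapsto\int\varphi u(t,x)dx$.

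The left-hand side of the integrated version of (\ref{WFn}) converges pointwise in $t$ by (\ref{u99}). For the time integrand, I will establish the pointwise limits $K_{\varphi,n}(u_n(s),u_n(s))\to K_\varphi(u(s),u(s))$ and $L_{\varphi,n}(u_n(s))\to L_\varphi(u(s))$ for each fixed $s\geq 0$, together with a uniform $L^\infty$ bound on $[0,T]$, so that dominated convergence handles the $s$-integration. The uniform bound is supplied by the estimates of Appendix \ref{A1} (in the form used in the proof of Proposition \ref{equicontinuity}) combined with the propagation (\ref{exp}), which controls $X_{(1-\theta)/2}(u_n(s))$ uniformly in $n$ and in $s\in[0,T]$.

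For the quadratic term I decompose
$$K_{\varphi,n}(u_n,u_n)-K_\varphi(u,u)=\bigl(K_{\varphi,n}-K_\varphi\bigr)(u_n,u_n)+\bigl(K_\varphi(u_n,u_n)-K_\varphi(u,u)\bigr).$$
For the first piece, the integrand $k_{\varphi,n}-k_\varphi$ vanishes pointwise on $\Gamma\cap(0,\infty)^2$ because $x\phi_n(x)\to 1$ pointwise on $(0,\infty)$, and is dominated by $|k_\varphi|$, which by Lemma \ref{klbounds} (and Lemma \ref{k continuous} since $\varphi'(0)=0$) is locally bounded on $[0,\infty)^2$; truncating to a compact square and controlling the tail with $X_{(1-\theta)/2}(u_n)$ yields convergence via dominated convergence, using that $u_n\otimes u_n$ has uniformly bounded total mass $M_0(u_0)^2$. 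For the second piece, $u_n(s)\otimes u_n(s)\to u(s)\otimes u(s)$ narrowly on $[0,\infty)^2$ by Corollary \ref{cor:u}, and since $\varphi'(0)=0$ the kernel $k_\varphi$ is continuous on $[0,\infty)^2$ by Lemma \ref{k continuous}; once again a compact-square truncation combined with tail bounds coming from (\ref{exp}) together with the bound on $k_\varphi$ on the support $\Gamma$ yields the convergence. The linear term is analogous and easier, since $\mathcal L_\varphi$ is locally bounded and the convergence $u_n(s)\to u(s)$ is narrow.

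The main obstacle will be the tail control at infinity in the quadratic term: narrow convergence does not allow direct testing against the unbounded kernel $k_\varphi$, so the exponential-moment propagation of Proposition \ref{exponential decay} must be used both to justify the truncation-and-tail splitting and to produce an $n$-uniform integrable bound on the time integrands. Once the integrated equation is established, (\ref{ws4}) follows by differentiation for almost every $t$, the map $t\mapsto\int\varphi(x)u(t,x)dx$ belongs to $W^{1,\infty}_{loc}([0,\infty))$ because $K_\varphi(u,u)-L_\varphi(u)$ is bounded on bounded time intervals by the same estimates, and (\ref{mass}) follows either by choosing $\varphi\equiv 1$ in (\ref{ws4}) (admissible since $\varphi'(0)=0$) or by passing to the limit $n\to\infty$ in (\ref{CONSER}).
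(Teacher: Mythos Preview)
Your proposal is correct and follows essentially the same approach as the paper's proof: both pass to the limit in the integrated form of (\ref{WFn}) via the same two-piece decomposition $K_{\varphi,n}(u_n,u_n)-K_\varphi(u,u)=(K_{\varphi,n}-K_\varphi)(u_n,u_n)+(K_\varphi(u_n,u_n)-K_\varphi(u,u))$, handle the compact part using the continuity of $k_\varphi$ at the origin (Lemma \ref{k continuous}, which requires $\varphi'(0)=0$) and control the tails with the propagated exponential moment from Proposition \ref{exponential decay}, then use dominated convergence in $s$ via the bounds (\ref{Kequi})--(\ref{Lequi}). The only cosmetic difference is that where you invoke narrow convergence of the product measures $u_n\otimes u_n\to u\otimes u$ against the continuous kernel $k_\varphi$ on compact squares, the paper makes this step explicit through a Stone--Weierstrass approximation of $k_\varphi$ by tensor products.
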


\begin{proof}
Let $\{u_n\}_{n\in\NN}$ be the sequence of solutions for the regularised problem (\ref{P100}), (\ref{P200}).
By Corollary \ref{cor:u}, a subsequence of $\{u_n\}_{n\in\NN}$ converges to a limit
$u\in C([0,\infty),\mathscr{M}_+([0,\infty)))$. Since $u$ is continuous from $[0,\infty)$ to ($\mathscr{M}_+([0,\infty)),d_0)$ and $d_0$ generates the narrow topology, then (\ref{ws1}) holds. Next, we prove that $u$ satisfies (\ref{wsID})--(\ref{ws4}). To this end, let $\varphi\in C^1_b([0,\infty))$ with $\varphi'(0)=0$. By (\ref{WFn}), for all $n\in\NN$ and all $t\geq0$,
\begin{align}
\label{IWF}
\int_{[0,\infty)}\varphi(x)u_n(t,x)dx&=\int_{[0,\infty)}\varphi(x)u_0(x)dx\\
&+\int_0^t\Big(K_{\varphi,n}(u_n(s),u_n(s))+L_{\varphi,n}(u_n(s))\Big)ds,\nonumber
\end{align}
and our goal is now  to pass to the limit as $n\to\infty$ term by term. By (\ref{NAU}), for all $t\geq 0$,
\begin{align}
\label{left}
\lim_{n\to\infty}\int_{[0,\infty)}\varphi(x)u_n(t,x)dx=\int_{[0,\infty)}\varphi(x)u(t,x)dx.
\end{align}
Let us prove that for all $t\geq 0$,
\begin{align}
\label{Lconvergence}
&\lim_{n\to\infty}L_{\varphi,n}(u_n(t))=L_{\varphi}(u(t)),\\
\label{Kconvergence}
&\lim_{n\to\infty}K_{\varphi,n}(u_n(t),u_n(t))=K_{\varphi}(u(t),n(t)).
\end{align}
Starting with (\ref{Lconvergence}), we have
\begin{align}
\label{LL1}
\!\!\!\big|L_{\varphi}(u)-L_{\varphi,n}(u_n)\big|\leq&\big|L_{\varphi}(u)-L_{\varphi}(u_n)\big|+|L_{\varphi}(u_n)-L_{\varphi,n}(u_n)|.
\end{align}
Since $\mathcal{L}_{\varphi}\in C([0,\infty))$ and $\mathcal{L}_{\varphi}$ satisfies the growth condition (\ref{GC}) with $\alpha=(1-\theta)/2$, (cf. Lemma \ref{klbounds}), then by (\ref{u99}) the first term in the right hand side of (\ref{LL1}) converges to zero as $n\to\infty$. For the second term we have, for any $R>0$,
\begin{align*}
|L_{\varphi}(u_n)-L_{\varphi,n}(u_n)|&\leq \int_{[0,R]} |\mathcal{L}_{\varphi}(x)-\mathcal{L}_{\varphi,n}(x)|u_n(t,x)dx\\
&+\int_{(R,\infty)} |\mathcal{L}_{\varphi}(x)-\mathcal{L}_{\varphi,n}(x)|u_n(t,x)dx.
\end{align*}
On the one hand, using (\ref{CONSER}),
\begin{align*}
\int_{[0,R]} |\mathcal{L}_{\varphi}(x)-\mathcal{L}_{\varphi,n}(x)|u_n(t,x)dx\leq M_0(u_0)\|\mathcal{L}_{\varphi}-\mathcal{L}_{\varphi,n}\|_{C([0,R])},
\end{align*}
which converges to zero as $n\to\infty$ by Lemma \ref{llconver}.
On the other hand, by (\ref{llbound}),
\begin{align}
\label{Ltail}
&\int_{(R,\infty)} |\mathcal{L}_{\varphi}(x)-\mathcal{L}_{\varphi,n}(x)|u_n(t,x)dx
\leq 2\int_{(R,\infty)}|\mathcal{L}_{\varphi}(x)|u_n(t,x)dx\nonumber\\
&\leq C \int_{(R,\infty)}e^{\frac{(1-\theta)x}{2}}u_n(t,x)dx\leq C e^{R\left(\frac{1-\theta}{2}-\eta\right)}\int_{(R,\infty)}e^{\eta x}u_n(t,x)dx,
\end{align}
where $C=\frac{LC_*(1-\theta)}{\theta^2(1+\theta)}$,
and by Proposition \ref{exponential decay} we deduce that (\ref{Ltail}) converges to zero as $R\to\infty$. 
That concludes the proof of (\ref{Lconvergence}).

In order to prove (\ref{Kconvergence}), we use
\begin{align}
\label{KK1}
|K_{\varphi}(u,u)-K_{\varphi,n}(u_n,u_n)|&\leq|K_{\varphi}(u,u)-K_{\varphi}(u_n,u_n)|\nonumber\\
&+|K_{\varphi}(u_n,u_n)-K_{\varphi,n}(u_n,u_n)|.
\end{align}

Then, for the first term in the right hand side of (\ref{KK1}), given $R>0$, we use
\begin{align*}
&\iint_{[0,\infty)^2}k_{\varphi}(x,y)u(x)u(y)dydx\\
&\leq\bigg(\iint_{[0,R]^2}+\iint_{[\gamma_1(R),\infty)^2}-\iint_{[\gamma_1(R),R]}\bigg)k_{\varphi}(x,y)u(x)u(y)dydx,
\end{align*}
to deduce
\begin{align}
\label{K-K}
&|K_{\varphi}(u,u)-K_{\varphi}(u_n,u_n)|\leq I_1+I_2+I_3,\\
&I_1=\bigg|\iint_{[0,R]^2}k_{\varphi}u(x)u(y)dydx-\iint_{[0,R]^2}k_{\varphi}u_n(x)u_n(y)dydx\bigg|,\nonumber\\
&I_2=\bigg|\iint_{[\gamma_1(R),R]^2}k_{\varphi}u(x)u(y)dydx-\iint_{[\gamma_1(R),R]^2}k_{\varphi}u_n(x)u_n(y)dydx\bigg|,\nonumber\\
&I_3=\bigg|\iint_{[\gamma_1(R),\infty)^2}k_{\varphi}u(x)u(y)dydx-\iint_{[\gamma_1(R),\infty)^2}k_{\varphi}u_n(x)u_n(y)dydx\bigg|.\nonumber
\end{align}
Since $k_{\varphi}\in C([0,\infty)^2)$ (cf. Lemma \ref{k continuous}), then by Stone-Weierstrass theorem, $k_{\varphi}(x,y)$ can be approximated on any compact subset $X\subset[0,\infty)^2$ by functions of the form $\psi_1(x)\psi_2(y)$, with $\psi_i\in C(X)$ for $i=1,2$. By Tietze extension theorem we may assume that $\psi_i\in C([0,\infty))$ for $i=1,2$.  Then, using that $u_n$ converges narrowly to $u$, we deduce that for any $\varepsilon>0$, $R>0$, there exists $n_*\in\NN$ such that for all $n\geq n_*$  
\begin{align}
\label{K-K2}
I_1<\varepsilon,\qquad I_2<\varepsilon.
\end{align}
Then, for $I_3$ we have the following. 
\begin{align}
\label{I3bound1}
I_3\leq \iint_{[\gamma_1(R),\infty)^2}k_{\varphi}(x,y)\big(u(x)u(y)+u_n(x)u_n(y)\big)dydx,
\end{align}
and by (\ref{kbound}), calling $C=\|\varphi'\|_{\infty} C_* A$,
\begin{align}
\label{I3bound2}
&\iint_{[\gamma_1(R),\infty)^2}\!\!\!k_{\varphi}u(t,x)u(t,y)dydx
\leq C \iint_{[\gamma_1(R),\infty)^2}\!\!\!e^{\frac{|x-y|}{2}}u(t,x)u(t,y)dydx\nonumber\\
&\leq 2C\int_{[\gamma_1(R),\infty)}e^{\frac{(1-\theta)x}{2}}u(t,x)\int_{[\gamma_1(R),x]}u(t,y)dydx\nonumber\\
&\leq 2C X_{\eta}(u(t))\int_{[\gamma_1(R),\infty)}u(t,y)dy.
\end{align}
We now use that for all $x>0$, $t>0$, there exists $R>0$ such that
\begin{align}
\label{I3bound3}
\int_{[\gamma_1(R),x]}&u(t,y)dy\leq \frac{1}{\gamma_1(R)}\int_{[\gamma_1(R),\infty)}yu(t,y)dy\nonumber\\
&\leq \frac{1}{\gamma_1(R)}\int_{[\gamma_1(R),\infty)}e^{\eta y}u(t,y)dy
\leq\frac{e^{C_{\eta}t}X_{\eta}(u_0)}{\gamma_1(R)},
\end{align}
where we have used (\ref{exp u}). Using (\ref{I3bound3}) in (\ref{I3bound2}), and (\ref{exp u}) again,
\begin{align*}
&\iint_{[\gamma_1(R),\infty)^2}k_{\varphi}u(t,x)u(t,y)dydx\leq\frac{2Ce^{2C_{\eta}t}(X_{\eta}(u_0))^2}{\gamma_1(R)},
\end{align*} 
and the same estimate holds when $u$ is replaced by $u_n$. We then obtain from (\ref{I3bound1}) that, for any $\varepsilon>0$, there exists $R>0$ such that $I_3<\varepsilon$ for all $n\in\NN$. Combining this with (\ref{K-K2}), we then deduce from (\ref{K-K}) that for all $t>0$
\begin{align}
\label{K-K3}
\lim_{n\to\infty}|K_{\varphi}(u(t),u(t))-K_{\varphi}(u_n(t),u_n(t))|=0.
\end{align}

Now, for the second term in the right hand side of (\ref{KK1}), we have
\begin{align*}
&|K_{\varphi}(u_n,u_n)-K_{\varphi,n}(u_n,u_n)|\\
&\leq \int_{[0,\infty)}\int_{[0,x]}|k_{\varphi}(x,y)||1-xy\phi_n(x)\phi_n(y)|u_n(t,x)u_n(t,y)dydx,
\end{align*} 
and we decompose the integral above as follows:
\begin{align}
\label{split}
\int_{[0,\infty)}\int_{[0,x]}=\int_{\frac{1}{n}}^n\int_{\frac{1}{n}}^x+\int_n^{\infty}\int_0^x+\int_0^n\int_0^{\min\{x,\frac{1}{n}\}}.
\end{align}
By definition $\phi_n(x)=x^{-1}$ for all $x\in[1/n,n]$, and then
\begin{align*}
\int_{\frac{1}{n}}^n\int_{\frac{1}{n}}^x|k_{\varphi}(x,y)||1-xy\phi_n(x)\phi_n(y)|u_n(t,x)u_n(t,y)dydx=0.
\end{align*}
Now, by (\ref{Kbound}) and (\ref{CONSER}),
\begin{align*}
&\int_n^{\infty}\int_0^x |k_{\varphi}(x,y)|u_n(t,x)u_n(t,y)dydx\\
&\leq LC_*AM_0(u_0)\int_n^{\infty}e^{\frac{(1-\theta)x}{2}}u_n(t,x)dx\\
&\leq  LC_*AM_0(u_0) e^{n\left(\frac{1-\theta}{2}-\eta\right)}\int_n^{\infty}e^{\eta x}u_n(t,x)dx,
\end{align*}
and from Proposition \ref{exponential decay} we deduce that it converges to zero as $n\to\infty$.
For the las term in the right hand side of (\ref{split}), we argue as follows. Let us define $x_n=\gamma_2(1/n)$ and $D_n=[0,x_n]\times[0,1/n]$. Notice that $x_n\to 0$ as $n\to\infty$.
Then by (\ref{CONSER})
\begin{align*}
\int_0^n\int_0^{\min\{x,\frac{1}{n}\}}|k_{\varphi}(x,y)||1-xy\phi_n(x)\phi_n(y)|u_n(t,x)u_n(t,y)dydx\nonumber\\
\leq\max_{(x,y)\in D_n}|k_{\varphi}(x,y)|M_0(u_0)^2.
\end{align*}
Since $k_{\varphi}(0,0)=0$ and $k_{\varphi}$ is continuous (cf. Lemma \ref{k continuous}), it follows that  $k_{\varphi}(x,y)\to 0$ for all $(x,y)\in D_n$ as $n\to \infty$. That concludes the proof of (\ref{Kconvergence}). 

From the limits (\ref{Lconvergence}), (\ref{Kconvergence}), the uniform bounds (\ref{Kequi}), (\ref{Lequi}), dominated convergence theorem and (\ref{left}), we obtain 
\begin{align}
\label{IWF'}
\int_{[0,\infty)}\varphi(x)u(t,x)dx&=\int_{[0,\infty)}\varphi(x)u_0(x)dx\nonumber\\
&+\int_0^t\Big(K_{\varphi}(u(s),u(s))+L_{\varphi}(u(s))\Big)ds.
\end{align}
The identity (\ref{wsID}) then follows from (\ref{IWF'}) for $t=0$.
It follows from Proposition \ref{equicontinuity}, by passage to the limit as $n\to \infty$,  that for any $\varphi\in C^1_b([0,\infty))$ with $\varphi'(0)=0$, the map  $t\mapsto \int  _{ [0, \infty) }u(t, x)\varphi (x)dx$ is locally Lipschitz on $[0, \infty)$, i.e., (\ref{ws2}) holds, and then from (\ref{IWF'}), the weak formulation (\ref{ws4}) follows. Taking $\varphi=1$ in (\ref{ws4}), we obtain (\ref{mass}). The estimate (\ref{expmoment}) is just (\ref{exp u}).
\end{proof}

\begin{remark} 
\label{S2EXIP1}Because of the exponential growth of the kernel $B$,  an exponential moment is required on the initial data $u_0$. This exponential moment is propagated to the solution for all $t>0$. 
Using that exponential moment, it easily follows that for any $\rho \ge 1$, if  $M _{ -\rho  }(u_0)<\infty$, there exists a constant $C_1>0$, and a non  negative locally bounded function $C_2(t)$ such that,
\begin{align*}
\frac {d} {dt} \int  _{ [0, \infty )}u(t, x)x^{-\rho }dx \le C_1\bigg( \int  _{ [0, \infty )} u(t, x)x^{-\rho }dx\bigg)^2+C_2(t),
\end{align*}
from where it follows that   $M _{ -\rho  }(u(t))<\infty$ for $t$ in a bounded interval, that depends on $M_\rho (u_0)$. 
\end{remark}

\begin{proof}[\bfseries\upshape{Proof of Theorem \ref{MT1} }]
 Theorem \ref{MT1} follows from Corollary \ref{S2Cor15} since the function $b(k, k')=\frac{\Phi\mathcal B_\beta}{kk'}$ satisfies (\ref{B1})--(\ref{Bbound}).
\end{proof}

\section{The singular part of the solution.}
\setcounter{equation}{0}
\setcounter{theorem}{0}

If $u$ is a  weak solution of (\ref{P})--(\ref{Bbound})  obtained in Theorem \ref{MT1}, for all $t>0$, the measure $u(t)$ may now be decomposed by the Lebesgue's decomposition Theorem as
\begin{align}
\label{deco1}
&u(t)=g(t)+\alpha(t)\delta_0+G(t),\\
\label{deco2}
&g(t)\in L^1([0,\infty)),\;\alpha\geq 0,\;G(t)\perp dx,\; G(t,\{0\})=0.
\end{align}
In this Section we give some properties of  $u$, $\alpha$, and $G$.

We first notice that the weak solution $u$ of (\ref{P})--(\ref{Bbound}) obtained in Theorem \ref{MT1}, satisfies the equation (\ref{P}) in the sense of distributions. This  follows from the properties of the support of the function $B$ and  Fubini's Theorem. A similar argument may be used for slightly more general  test functions $\varphi$.
To be more precise, let us define the set 
\begin{align}
\label{subspace}
\mathscr{C}=\Big\{\varphi\in C_b([0,\infty)):\sup_{x\geq 0}\frac{|\varphi(x)|}{x^{3/2}}<\infty\Big\}.
\end{align}  

\begin{proposition}
Let  $u$ be a solution of (\ref{P})--(\ref{Bbound}) obtained in Theorem \ref{MT1}. Then, for almost every $t>0$, $\partial u/\partial t \in \mathscr D'((0, \infty)) $, $Q(u(t), u(t))\in \mathscr D'((0, \infty))$, and
\begin{align}
\forall  \varphi \in C_c((0, \infty)),\quad
\frac {d} {dt}\langle u(t), \varphi  \rangle =\langle Q(u(t), u(t)), \varphi  \rangle . \label{halfweak1}
\end{align}
Moreover, 
\begin{align}
\label{halfweak2}
\forall\varphi\in\mathscr{C},\quad\frac {d} {dt}\langle u(t), \varphi  \rangle =\langle \mathcal{Q}(u(t), u(t)), \varphi  \rangle,
\end{align}
where
\begin{align}
\label{anotherQ}
\mathcal{Q}(u(t),u(t))=\int_{[0,\infty)}b(x,y)&\Big[(e^{-x}-e^{-y})u(t,x)u(t,y)\nonumber\\
&-u(t,x)y^2e^{-y}+u(t,y)x^2e^{-x}\Big]dy.
\end{align}
\end{proposition}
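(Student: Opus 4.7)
The plan is to extend the weak formulation (\ref{ws4}), valid a priori only for $\varphi \in C^1_b([0,\infty))$ with $\varphi'(0)=0$, in two steps: first to $\varphi \in C_c((0,\infty))$ by mollification and Fubini, which yields (\ref{halfweak1}) together with the identity $Q(u,u) = \mathcal{Q}(u,u)$ as distributions on $(0,\infty)$; then to $\varphi \in \mathscr{C}$ by a cutoff argument combined with dominated-convergence bounds on each of the three terms of $\mathcal{Q}(u,u)$ in (\ref{anotherQ}).

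For the first step, given $\varphi \in C_c((0,\infty))$ with $\supp\varphi \subset [a,b]$, $a>0$, I mollify to obtain $\varphi_\epsilon \in C^\infty_c((0,\infty))$, which lies in $C^1_b([0,\infty))$ and satisfies $\varphi_\epsilon'(0)=0$ trivially. Applying (\ref{ws4}) to $\varphi_\epsilon$, the key observation is that the support condition (\ref{Bsupport})--(\ref{Bsupport2}) forces $y \in [\gamma_1(x), \gamma_2(x)]$ on $\supp b$, so the integrand $b(x,y)q(u,u)(\varphi_\epsilon(x)-\varphi_\epsilon(y))$ is supported in a compact subset of $(0,\infty)^2$ bounded away from the origin; there $b$ is continuous and bounded, $u \otimes u$ is finite, and $\varphi_\epsilon \to \varphi$ uniformly, so dominated convergence passes to the limit. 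An application of Fubini's theorem together with the symmetry of $b$ and antisymmetry of $q(u,u)$ then rewrites the symmetrised right-hand side as $\int \varphi(x)\,\mathcal{Q}(u,u)(t,x)\,dx$, giving (\ref{halfweak1}) and showing that both $\partial u/\partial t$ and $Q(u,u) = \mathcal{Q}(u,u)$ belong to $\mathscr{D}'((0,\infty))$ for a.e. $t$.

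For the second step, given $\varphi \in \mathscr{C}$, I pick a smooth cutoff $\chi_n \in C^\infty_c((0,\infty))$ with $0 \le \chi_n \le 1$, $\chi_n = 1$ on $[1/n,n]$, $\supp \chi_n \subset [1/(2n), 2n]$, and set $\varphi_n = \varphi \chi_n \in C_c((0,\infty))$. The first step provides (\ref{halfweak1}) for each $\varphi_n$, and I pass to $n\to\infty$: the left-hand side converges by dominated convergence since $\varphi$ is bounded and $u(t)$ is finite; for the right-hand side I split $\mathcal{Q}(u,u)$ into its three pieces as in (\ref{anotherQ}) and apply dominated convergence to each. Using (\ref{Bbound}), the cancellation $|e^{-x}-e^{-y}| \le |x-y| \le \rho_* \sqrt{xy(x+y)}$ in $\Gamma_2$, the width estimate $\gamma_2(x)-\gamma_1(x) = O(x^{3/2})$ near the origin (cf. (\ref{g})--(\ref{f})), the pointwise control $|\varphi(x)| \le C x^{3/2}$ inherited from $\varphi \in \mathscr{C}$, and the exponential moment (\ref{exp u}) of $u$, each of the three integrands is dominated by an $n$-independent integrable function. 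The main obstacle is precisely this delicate book-keeping near the origin: the $x^{3/2}$ growth defining $\mathscr{C}$ is the critical scale that compensates the combined singularity of $b(x,y) \sim (xy(x+y))^{-1}$ and the narrow support of width $\sim x^{3/2}$, so that the quadratic term relies on $e^{-x}-e^{-y} = O(|x-y|)$ within $\Gamma_2$, while the two linear terms are controlled through the finite moment $\int x^{3/2} e^{x/2} u(t,x)\,dx$ obtained from (\ref{exp u}).
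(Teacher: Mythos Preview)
Your approach is essentially the paper's: establish absolute integrability of the unsymmetrised integrand for a dense $C^1$ subclass, apply Fubini to undo the symmetrisation in (\ref{ws4}), then extend by density. The paper does this in one stroke---noting that any $\varphi\in\mathscr{C}\cap C^1_b([0,\infty))$ automatically satisfies $\varphi'(0)=0$ (since $|\varphi(x)|\le Cx^{3/2}$ forces $\varphi(0)=\varphi'(0)=0$) and hence lies in the test class for (\ref{ws4}), so Fubini applies directly once integrability is checked---whereas you route through $C_c((0,\infty))$ first and then use cutoffs. Both are valid and amount to the same density argument.

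One small correction: you write that the linear terms are controlled by the moment $\int x^{3/2}e^{x/2}u(t,x)\,dx$, but this moment need not be finite, since Theorem~\ref{MT1} only provides $X_\eta(u(t))<\infty$ for $\eta<1/2$. The correct bound is $X_{(1-\theta)/2}(u(t))$: for instance, for the term $\int|\varphi(x)|\,u(dx)\int b(x,y)\,y^2e^{-y}\,dy$ with $x>\delta_*$, one has $\int_{\theta x}^{\theta^{-1}x} b(x,y)y^2e^{-y}\,dy\le Cx^{-1}e^{(1-\theta)x/2}$, which against $\|\varphi\|_\infty u(dx)$ is controlled by $X_{(1-\theta)/2}(u)$; similarly for the other linear term after a Fubini in $(x,y)$. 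This is a bookkeeping slip, not a structural gap.
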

\begin{remark}
Notice that in (\ref{anotherQ}),  the integral containing the factor $(e^{-x}-e^{-y})$  is convergent near the origin even for test functions $\varphi \in \mathscr{C}\setminus C_c((0, \infty))$. That is not true anymore if we consider each of the terms $e^{-x}$ and $e^{-y}$ separately.
\end{remark}

\begin{proof}
By  (\ref{Bsupport})--(\ref{Bbound}) and (\ref{mass}),
\begin{align*}
\int_{[0,\infty)}|\varphi(x)|\int_{[0,\infty)}\frac{B(x,y)}{xy}E(x,y)dydx<\infty,
\end{align*}
 where $E(x, y)$ is one of the functions in  
 $$\Big\{u(y)x^2e^{-x}, u(x)y^2e^{-y}, u(x)u(y)e^{-x}, u(x)u(y)e^{-y}\Big\}$$ when $\varphi\in C^1_c((0,\infty))$, 
 or, one of the functions in
$$\Big\{ u(x)u(y)|e^{-x}-e^{-y}|,u(x)y^2e^{-y},u(y)x^2e^{-x}\Big\}$$ when $\varphi\in\mathscr{C}\cap C^1_b([0,\infty)).$
Since $u$ is a weak solution and satisfies (\ref{ws4}), we deduce from Fubini's Theorem the identity (\ref{halfweak1}) for 
$\varphi \in C^1_c((0, \infty))$, and the identities (\ref{halfweak2})-(\ref{anotherQ}) for 
$\varphi\in\mathscr{C}\cap C^1_b([0,\infty))$. By a density argument the Proposition follows.
\end{proof}

\noindent
We may prove now the following property of the singular  measure $G(t)$.
\begin{theorem}
Let  $u$ be a weak solution of (\ref{P})--(\ref{Bbound}) obtained in Theorem \ref{MT1}, and consider the decomposition (\ref{deco1}), (\ref{deco2}). If $G(0)=0$ in $\mathscr D'((0, \infty))$, then $G(t)=0$ in $\mathscr D'((0, \infty))$ for all $t>0$.
\end{theorem}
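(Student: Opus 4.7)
The approach is to exhibit a closed linear equation for $u$ on $(0,\infty)$ of the form $\partial_t u=(A(t,x)-B(x))u+C(t,x)\,dx$, in which $u$ enters linearly once $A,B,C$ are viewed as given data, and then to integrate it explicitly by an integrating factor. Because the forcing term $C\,dx$ is an absolutely continuous measure and the integrating factor is a continuous positive function, the singular part of the solution is transported linearly and cannot appear spontaneously.

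Starting from the form (\ref{anotherQ}) of $\mathcal{Q}$, Fubini applied to the three constituent terms gives, for every $\varphi\in C_c((0,\infty))$,
\[
\langle\mathcal{Q}(u(t),u(t)),\varphi\rangle=\int\varphi(x)[A(t,x)-B(x)]u(t,dx)+\int\varphi(x)C(t,x)dx,
\]
where $A(t,x)=\int b(x,y)(e^{-x}-e^{-y})u(t,dy)$, $B(x)=\int b(x,y)y^2e^{-y}dy$ and $C(t,x)=x^2e^{-x}\int b(x,y)u(t,dy)$. The support condition (\ref{Bsupport})--(\ref{Bsupport2}) confines $y$ to $[\gamma_1(x),\gamma_2(x)]\subset(0,\infty)$ for every $x>0$ (cf. Remark \ref{cone}), so together with conservation of mass (\ref{mass}), narrow continuity of $t\mapsto u(t)$, and Lemma \ref{k continuous}, the functions $A$, $C$ are continuous on $[0,\infty)\times(0,\infty)$ and $B$ is continuous on $(0,\infty)$, all locally bounded there. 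Combining this with (\ref{halfweak1}), $u$ solves
\[
\partial_t u(t)=[A(t,\cdot)-B]u(t)+C(t,\cdot)\,dx\qquad\text{in }\mathscr{D}'((0,\infty)).
\]

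I would then apply the integrating factor $\Psi(t,x)=\exp\bigl(\int_0^t[A(s,x)-B(x)]ds\bigr)$, a positive continuous function, $C^1$ in $t$ for fixed $x\in(0,\infty)$. For any $\varphi\in C_c((0,\infty))$, the map $t\mapsto\int\varphi(x)\Psi(t,x)^{-1}u(t,dx)$ is then absolutely continuous and a direct computation (product rule, with the multiplicative cancellation coming from $\partial_t\Psi^{-1}=-(A-B)\Psi^{-1}$) yields
\[
\tfrac{d}{dt}\int\varphi\,\Psi^{-1}u(t,dx)=\int\varphi(x)\Psi(t,x)^{-1}C(t,x)dx.
\]
Integrating in time gives the Duhamel-type identity, as an equality of Radon measures on $(0,\infty)$,
\[
u(t)=\Psi(t,\cdot)\,u(0)+\Psi(t,\cdot)\int_0^t\Psi(s,\cdot)^{-1}C(s,\cdot)ds.
\]
To conclude, I read off the Lebesgue decomposition. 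On $(0,\infty)$, $u(0)=g(0)+G(0)$, so $\Psi(t,\cdot)u(0)=\Psi(t,\cdot)g(0)+\Psi(t,\cdot)G(0)$, the first summand being absolutely continuous and the second singular, since $\Psi(t,\cdot)$ is a strictly positive continuous multiplier. The Duhamel correction is the product of two continuous functions, hence absolutely continuous on $(0,\infty)$. By uniqueness of the Lebesgue decomposition, the singular part of $u(t)|_{(0,\infty)}$, which is $G(t)$ by (\ref{deco2}), equals $\Psi(t,\cdot)G(0)$; the hypothesis $G(0)=0$ in $\mathscr{D}'((0,\infty))$ then forces $G(t)=0$ in $\mathscr{D}'((0,\infty))$ for every $t>0$.

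The main obstacle is to justify rigorously the product-rule manipulation for $\Psi^{-1}u$ at the level of measures when $u$ is only narrowly continuous in time: one has to check that $x\mapsto\varphi(x)\Psi(t,x)^{-1}$ is an admissible test function uniformly on compact time intervals, which relies on the local boundedness of $A$, $B$ on compact subsets of $(0,\infty)$ provided by the cone structure (\ref{Bsupport1})--(\ref{Bsupport2}) and conservation of mass. A careful approximation by $C^1_c((0,\infty))$ functions of $x$ depending smoothly on $t$, together with the continuity of $A$, $B$, $C$ away from the origin, makes this step routine.
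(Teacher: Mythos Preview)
Your approach is correct and takes a genuinely different route from the paper's. Both arguments recognize the same key structural fact: once the solution $u$ is frozen in the coefficients, the equation on $(0,\infty)$ is linear in $u$ with locally bounded multiplicative coefficient $W=A-B$ and absolutely continuous forcing $C\,dx$. The paper exploits this by integrating in time, splitting the right-hand side into a regular piece $R(t)$ and a piece $S(t)=\int_0^t W(s,\cdot)G(s)\,ds$, taking the Lebesgue decomposition of $S(t)$, and then bounding $\int\varphi\,G(t)\,dx$ by $\int\varphi\,G(0)\,dx$ plus a constant times $\int_0^t\int\varphi\,G(s)\,dx\,ds$, finishing with Gronwall. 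You instead pass to an explicit integrating factor $\Psi(t,x)=\exp\bigl(\int_0^t W(s,x)\,ds\bigr)$ and derive the Duhamel representation $u(t)=\Psi(t,\cdot)u(0)+\Psi(t,\cdot)\int_0^t\Psi(s,\cdot)^{-1}C(s,\cdot)\,ds$, from which the singular part is read off directly as $G(t)=\Psi(t,\cdot)G(0)$.

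Your argument is sharper in that it gives an explicit formula for $G(t)$ rather than just the vanishing, and it avoids the intermediate step of decomposing $S(t)$. The price is the product-rule justification for the time-dependent test function $\varphi\Psi(t,\cdot)^{-1}$, which you correctly flag; this is standard once one observes that (\ref{halfweak1}) combined with the local Lipschitz estimate in time (Proposition~\ref{equicontinuity}) yields an integrated-in-time identity for $C^1$-in-$t$ test functions in $C_c((0,\infty))$, and $\Psi^{-1}$ has the required regularity by the continuity of $A$ and $B$ on $(0,\infty)$ that you established. The paper's Gronwall route is slightly more robust in that it only needs $W$ to be locally bounded, not continuous, but in the present setting both work.
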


\begin{proof}
By (\ref{halfweak1}), for a.e. $t>0$ and for all $\varphi\in C_c((0,\infty))$,
\begin{align*}
\frac {d} {dt}\int  _{[0, \infty) }u(t, x)\varphi (x)dx=\int _{[0,\infty)} \varphi (x) Q(u(t), u(t))(x)dx,
\end{align*}
and then, after integration in time:
\begin{align*}
\int  _{[0, \infty) } \left\{u(t,x)-u(0,x)-\int _0^t Q(u(s), u(s))(x)ds\right\}\varphi (x)dx=0
\end{align*}
for a.e. $t>0$. 
If we plug now $u=g+\alpha \delta _0+G$ in this formula and use that $\varphi\in C_c((0,\infty))$, we obtain for a.e. $t>0$,
\begin{align*}
&\int  _{[0, \infty) } \Big\{g(t)+G(t)-g(0)-G(0)-R(t)-S(t)\Big\}\varphi (x)dx=0,
\end{align*}
where
\begin{align}
&R(t,x)=\int_0^t\!\bigg(g(s,x)W(s,x)+x^2e^{-x}\int_{ [0, \infty)}\!\!b(x, y)u(s, y)dy\bigg)ds, \label{Regular 123}\\
&S(t,x)=\int _0^t G(s,x)W(s, x)ds,\label{Singular 56}\\
&W(s, x)=\int_{[0,\infty)}b(x,y)(e^{-x}-e^{-y})u(s,y)dy-\int_{[0,\infty)}b(x,y)y^2e^{-y}dy.\label{W expression}
\end{align}
It follows that, for a.e. $t>0$,
\begin{equation}
\label{S6EX3}
g(t)+G(t)-g(0)-G(0)-R(t)-S(t)=0\,\,\hbox{in}\,\,\mathscr D'((0, \infty)).
\end{equation}
Let us prove now that $R(t,\cdot)\in L^1_{loc}((0,\infty))$ for all $t\geq 0$. To this end, we first show that $W(t,\cdot)\in L^{\infty}_{loc}((0,\infty))$ for all $t\geq 0$. Let then $x$ be in a compact set $[a,c]$, with $0<a<c<\infty$, and let $t\geq 0$. Using that $\supp(b)=\Gamma\subset\{(x,y)\in[0,\infty)^2:\theta x\leq y\leq \theta^{-1}x\}$, the bound (\ref{Bbound}), and that $x\in[a,c]$, it is easily proved that there exists a constant $0<C<\infty$ that depends only on $a$, $c$, $\theta$ and $C_*$, such that for all $(x,y)\in\Gamma$ with $x\in[a,c]$,
\begin{equation}
\label{local estimates 23}
b(x,y)|e^{-x}-e^{-y}|\leq C\qquad\text{and}\qquad b(x,y)\max\{x^2,y^2\}\leq C.
\end{equation}
We then obtain from (\ref{W expression}) that for all $t\geq 0$, $x\in [a,c]$,
\begin{equation*}
|W(t,x)|\leq C(M_0(u(t))+1),
\end{equation*}
and by the conservation of mass (\ref{mass}),
\begin{equation}
\label{W bound local}
\sup_{t\geq 0}\|W(t,\cdot)\|_{L^{\infty}([a,c])}\leq C(M_0(u_0)+1).
\end{equation}
Using now (\ref{local estimates 23}), (\ref{W bound local}) and (\ref{mass}), we deduce from (\ref{Regular 123}) that for all $t\geq 0$, $x\in [a,c]$,
\begin{equation}
\label{R bound 456}
|R(t,x)|\leq C(M_0(u_0)+1)\int_0^t  g(s,x)ds +CM_0(u_0) t.
\end{equation}
Then, since $\sup_{t\geq 0}\|g(t,\cdot)\|_{L^1([a,c])}\leq \sup_{t\geq 0} M_0(u(t))=M_0(u_0)$, it follows from (\ref{R bound 456}) that
\begin{equation}
\label{R bound 457}
\|R(t,\cdot)\|_{L^1([a,c])}\leq C(M_0(u_0)+1) M_0(u_0) t+(c-a)CM_0(u_0) t.
\end{equation}

On the other hand, using the Lebesgue decomposition Theorem, we have for all $t\geq 0$:
\begin{equation*}
S(t)=S_{ac}(t)+S_s(t),\qquad S_{ac}(t)\in L^1([0,\infty)),\quad S_s(t)\perp dx.
\end{equation*}
Using this decomposition in (\ref{S6EX3}), we deduce that for a.e. $t>0$,
\begin{equation*}
g(t)-g(0)-R(t)-S_{ac}(t)=-G(t)+G(0)+S_s(t)\quad\hbox{in}\,\,\mathscr D'((0, \infty)).
\end{equation*}
Since the left hand side is absolutely continuous with respect to the Lebesgue measure and the right hand side is singular, we then obtain
for a.e. $t>0$,
\begin{align*}
&g(t)=g(0)+R(t)+S_{ac}(t)\qquad\hbox{in}\quad\mathscr D'((0, \infty)),\\
&G(t)=G(0)+S_s(t)\qquad\hbox{in}\quad\mathscr D'((0, \infty)).
\end{align*}
Then, for all $\varphi \in C_c((0, \infty))$ and a.e. $t>0$,
\begin{equation}
\label{equation singular 98}
\int _{[0,\infty)}  \varphi (x)G(t, x)dx=\int _{[0,\infty)} \varphi (x) G(0, x)dx+\int_{[0,\infty)}\varphi (x)S_s(t, x)dx.
\end{equation}
We use now that for all nonnegative $\varphi\in C_c((0,\infty))$, $t\geq 0$,
\begin{equation}
\label{total variation 45}
\int_{[0,\infty)}\varphi (x)S_s(t, x)dx\leq \int_{[0,\infty)}\varphi (x)|S_s(t, x)|dx\leq \int_{[0,\infty)}\varphi (x)|S(t, x)|dx,
\end{equation}
where $|S_s(t)|$ and $|S(t)|$ are the total variation measures of $S_s(t)$ and $S(t)$ respectively.
Then, if $\varphi\geq 0$ and $\supp(\varphi) \subset [a, c]$ for  finite $c>a>0$, we deduce from (\ref{Singular 56}), (\ref{W bound local}) and (\ref{total variation 45}) that
\begin{align*}
\int_{[0,\infty)}\varphi (x)S_s(t, x)dx&\leq \int _0^t \|W(s,\cdot)\|_{L^{\infty}([a,c])} \int_{[0,\infty)}\varphi (x)G(s, x)dxds\\
&\leq  C(M_0(u_0)+1)\int_0^t  \int_{[0,\infty)}\varphi (x)G(s, x)dxds,
\end{align*}
and then, we obtain from (\ref{equation singular 98}) that, for a.e. $t>0$,
\begin{align*}
\int _{[0,\infty)} \varphi (x) G(t, x)dx \le&\int _{[0,\infty)}\varphi (x)G(0, x)dx\\
&+C(M_0(u_0)+1)\int_0^t  \int_{[0,\infty)}\varphi (x)G(s, x)dxds.
\end{align*} 
Then by Gronwall's Lemma,
\begin{align*}
&\int _{[0,\infty)} \varphi (x) G(t, x)dx\le\\
&\leq\bigg(\int _{[0,\infty)} \varphi (x) G(0, x)dx\bigg)\bigg(1+C(M_0(u_0)+1) t e^{C(M_0(u_0)+1) t}\bigg).
\end{align*}
We deduce that, if $G(0)= 0$, then  $\int _{[0,\infty)}\varphi (x) G(t, x)dx=0$ for every $\varphi \in C_c((0,\infty))$ and then,
$G(t)=0$ in $\mathscr {D}'((0, \infty))$ for a.e. $t>0$.
\end{proof}
\begin{remark}
It would be interesting to know if, when $\alpha (0)=0$,  $\alpha(t)=0$ for a.e. $t>0$ also  or not.
\end{remark}

\subsection{An equation for the mass at the origin}

 We can obtain information of the measure at the origin $u(t,\{0\})$ from the weak formulation (\ref{ws4}), by choosing test functions like in the following Remark. 
\begin{remark}
\label{test}
Let $\varphi\in C^1_b([0,\infty))$ be nonincreasing with $\supp\varphi=[0,1]$, $\varphi(0)=1$ and $\varphi'(0)=0$. 
Then, let $\varphi_{\varepsilon}(x)=\varphi(x/\varepsilon)$ for $\varepsilon>0$. 
It follows from (\ref{mass}) and dominated convergence that
for all $t\geq 0$,
\begin{align}
\label{origin}
\lim_{\varepsilon\to 0}\int_{[0,\infty)}\varphi_{\varepsilon}(x)u(t,x)dx=u(t,\{0\}).
\end{align}
\end{remark}

\begin{proposition}
\label{prop origin}
Let  $u$ be a weak solution of (\ref{P})--(\ref{Bbound}) obtained in Theorem \ref{MT1}, and denote $\alpha(t)=u(t,\{0\})$. Then $\alpha$ is right continuous, nondecreasing and a.e. differentiable on $[0,\infty)$. Moreover, for all $t$ and $t_0$ with $t\geq t_0\geq 0$, and all $\varphi_{\varepsilon}$ as in Remark \ref{test}, the following limit exists:
\begin{gather}
\label{Klimit}
\lim_{\varepsilon\to 0}\int_{t_0}^t K_{\varphi_{\varepsilon}}(u(s),u(s))ds,\\
\shortintertext{and}
\label{eqorigin}
\alpha(t)=\alpha(t_0)+\lim_{\varepsilon\to 0}\int_{t_0}^tK_{\varphi_{\varepsilon}}(u(s),u(s))ds.
\end{gather}
\end{proposition}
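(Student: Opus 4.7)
The plan is to test the weak formulation (\ref{ws4}) with the family $\varphi_{\varepsilon}(x)=\varphi(x/\varepsilon)$ from Remark \ref{test}, which is admissible since $\varphi_{\varepsilon}\in C^1_b([0,\infty))$ with $\varphi_{\varepsilon}'(0)=\varphi'(0)/\varepsilon=0$. Integrating (\ref{ws4}) from $t_0$ to $t$ and using Corollary \ref{cor:u} to justify the integrated identity gives
\begin{equation*}
\int_{[0,\infty)}\!\!\!\varphi_{\varepsilon}u(t)dx-\int_{[0,\infty)}\!\!\!\varphi_{\varepsilon}u(t_0)dx=\int_{t_0}^{t}K_{\varphi_{\varepsilon}}(u,u)ds-\int_{t_0}^{t}L_{\varphi_{\varepsilon}}(u)ds.
\end{equation*}
By Remark \ref{test} and (\ref{mass}), the left hand side converges to $\alpha(t)-\alpha(t_0)$ as $\varepsilon\to 0$. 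The identity (\ref{eqorigin}) and the existence of the limit (\ref{Klimit}) will follow as soon as I establish that $\int_{t_0}^{t}L_{\varphi_{\varepsilon}}(u(s))ds\to 0$.

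The main technical step is thus the smallness of $L_{\varphi_{\varepsilon}}$. The direct Lipschitz bound (\ref{Lbound}) is useless here because the Lipschitz constant of $\varphi_{\varepsilon}$ is $\|\varphi'\|_{\infty}/\varepsilon$. Instead I exploit the geometry of $\supp(B)=\Gamma$. Since $\varphi_{\varepsilon}(x)-\varphi_{\varepsilon}(y)=0$ whenever $x,y>\varepsilon$, the integrand of $\mathcal{L}_{\varphi_{\varepsilon}}$ is supported, for $\varepsilon<\theta\delta_{*}$, in a neighbourhood of the origin in $\Gamma_{2}$ where $|x-y|\leq\rho_{*}\sqrt{xy(x+y)}$ and $y\in[\gamma_{1}(x),\gamma_{2}(x)]$. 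Combining $|\varphi_{\varepsilon}(x)-\varphi_{\varepsilon}(y)|\leq\|\varphi'\|_{\infty}|x-y|/\varepsilon$ with (\ref{Bbound}) and the relation $b(x,y)y^{2}\leq C e^{(x+y)/2}/(x+y)$ on $\Gamma$ yields, for $x$ in the relevant range,
\begin{equation*}
|\mathcal{L}_{\varphi_{\varepsilon}}(x)|\leq C\frac{\|\varphi'\|_{\infty}\rho_{*}}{\varepsilon}\int_{\gamma_{1}(x)}^{\gamma_{2}(x)}\frac{\sqrt{xy(x+y)}}{x+y}e^{(x-y)/2}dy\leq C'\sqrt{x},
\end{equation*}
where I used that the length of $[\gamma_{1}(x),\gamma_{2}(x)]$ is $O(x^{3/2})$. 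Inserting the support restriction $x\leq C''\varepsilon$ and integrating against $u(s)$ gives $|L_{\varphi_{\varepsilon}}(u(s))|\leq C'''\sqrt{\varepsilon}\,M_{0}(u_{0})$ uniformly in $s\in[t_{0},t]$, which is enough to send the time integral to zero.

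Once (\ref{eqorigin}) is in hand, the remaining properties of $\alpha$ follow easily. The sign of $k_{\varphi_{\varepsilon}}$ on $\Gamma$ is nonnegative: both $e^{-x}-e^{-y}$ and $\varphi_{\varepsilon}(x)-\varphi_{\varepsilon}(y)$ have the same sign for every $(x,y)$, because $e^{-\cdot}$ and $\varphi_{\varepsilon}$ are both nonincreasing. Hence $K_{\varphi_{\varepsilon}}(u,u)\geq 0$, and (\ref{eqorigin}) implies $\alpha(t)\geq\alpha(t_{0})$ for $t\geq t_{0}\geq 0$, so $\alpha$ is nondecreasing. By Lebesgue's theorem, $\alpha$ is then differentiable a.e. on $[0,\infty)$. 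For right continuity I observe that for each fixed $\varepsilon>0$, the map $t\mapsto\int\varphi_{\varepsilon}u(t)dx$ lies in $W^{1,\infty}_{loc}$ by (\ref{ws3}), and $\varphi_{\varepsilon}$ decreases pointwise to $\mathbf 1_{\{0\}}$ as $\varepsilon\to 0$ (since $\varphi$ is nonincreasing), so $\alpha(t)=\inf_{\varepsilon>0}\int\varphi_{\varepsilon}u(t)dx$ is upper semicontinuous as the infimum of a family of continuous functions; combined with monotonicity this forces right continuity. The main obstacle is the quantitative control of $L_{\varphi_{\varepsilon}}$ described in the middle paragraph, since the naive Lipschitz estimates of Appendix \ref{A1} degenerate; the argument above bypasses this by using the thinness of $\Gamma_{2}$ near the origin.
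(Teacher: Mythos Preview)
Your overall architecture coincides with the paper's: test (\ref{ws4}) with $\varphi_{\varepsilon}$, pass to the limit on the left via Remark \ref{test}, and reduce everything to showing $\int_{t_0}^{t}L_{\varphi_{\varepsilon}}(u)\,ds\to 0$; the nonnegativity of $K_{\varphi_{\varepsilon}}$ and Lebesgue's differentiation theorem then give the monotonicity and a.e.\ differentiability.

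Where you and the paper diverge is in the treatment of $L_{\varphi_{\varepsilon}}$. The paper does not obtain a bound that goes to zero uniformly in $s$; instead it produces an $\varepsilon$-independent majorant $|L_{\varphi_{\varepsilon}}(u(s))|\le c(\theta^{-1}-\theta)\int_{(0,\varepsilon/\theta]}e^{(1-\theta)x/2}u(s,x)\,dx$, proves the pointwise convergence $\mathcal L_{\varphi_{\varepsilon}}(x)\to 0$ for every $x\ge 0$, and then applies dominated convergence twice (in $x$ and then in $s$). Your route, exploiting the thinness $|\gamma_2(x)-\gamma_1(x)|=O(x^{3/2})$ of $\Gamma_2$, is more direct and yields a quantitative rate; just note that the intermediate inequality you wrote, $|\mathcal L_{\varphi_{\varepsilon}}(x)|\le C'\sqrt{x}$ with $C'$ independent of $\varepsilon$, does not follow from the preceding line: the factor $\varepsilon^{-1}$ is still present and the integral is $O(x^2)$, so what you actually get is $|\mathcal L_{\varphi_{\varepsilon}}(x)|\le C\varepsilon^{-1}x^2$. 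Combined with the support restriction $x\le \varepsilon/\theta$ this gives $|\mathcal L_{\varphi_{\varepsilon}}(x)|\le C\varepsilon$, hence $|L_{\varphi_{\varepsilon}}(u(s))|\le C\varepsilon\,M_0(u_0)$ uniformly in $s$, which is even stronger than your stated $\sqrt{\varepsilon}$ bound and certainly suffices.

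Your right–continuity argument is also different from, and shorter than, the paper's. The paper bounds $\alpha(t+h)\le\int\varphi_{\varepsilon}u(t+h)$, uses the integrated weak formulation to let $h\to 0$ at fixed $\varepsilon$, and then sends $\varepsilon\to 0$. Your observation that $\alpha(t)=\inf_{\varepsilon>0}\int\varphi_{\varepsilon}u(t)\,dx$ is upper semicontinuous (as an infimum of continuous functions) and that upper semicontinuity together with monotonicity forces right continuity is correct and more elegant.
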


\begin{proof}
Let us prove first (\ref{eqorigin}). Using $\varphi_{\varepsilon}$ in (\ref{WFKL}), we deduce by (\ref{origin}) that for all $t$ and $t_0$ with $t\geq t_0\geq 0$, the following limit exists:
\begin{align*}
\lim_{\varepsilon\to 0}\int_{t_0}^t\big(K_{\varphi_{\varepsilon}}(u(s),u(s))-L_{\varphi_{\varepsilon}}(u(s))\big)ds,
\end{align*}
and moreover
\begin{align}
\label{eqorigin1}
\alpha(t)=\alpha(t_0)+\lim_{\varepsilon\to 0}\int_{t_0}^t\big(K_{\varphi_{\varepsilon}}(u(s),u(s))-L_{\varphi_{\varepsilon}}(u(s))\big)ds.
\end{align}
We claim
\begin{align}
\label{Lorigin}
\lim_{\varepsilon\to 0}\int_{t_0}^tL_{\varphi_{\varepsilon}}(u(s))ds=0.
\end{align}
In order to prove (\ref{Lorigin}), we first obtain an integrable majorant of $L_{\varphi_{\varepsilon}}(u(s))$, and then we show 
\begin{align}
\label{Lorigin1}
\lim_{\varepsilon\to 0}L_{\varphi_{\varepsilon}}(u(s))=0\quad\forall s\geq  0.
\end{align}
Taking into account $\Gamma$, the support of $\Psi_{\varepsilon}(x,y)=\varphi_{\varepsilon}(x)-\varphi_{\varepsilon}(y)$, and using $\mathcal{L}_{\varphi_{\varepsilon}}(0)=0$ (cf. Lemma \ref{klbounds}), we have
\begin{align}
\label{maj1}
|L_{\varphi_{\varepsilon}}(u(s))|&\leq \int_{(0,\varepsilon)}u(s,x)\int_{\theta x}^{\theta^{-1}x}|\ell_{\varphi_{\varepsilon}}(x,y)|dydx\nonumber\\
&+\int_{\left[\varepsilon,\frac{\varepsilon}{\theta}\right]}u(s,x)\int_{\theta x}^{\varepsilon}|\ell_{\varphi_{\varepsilon}}(x,y)|dydx.
\end{align}
Since
\begin{align*}
\varphi_{\varepsilon}(x)-\varphi_{\varepsilon}(y)=\int_{\frac{y}{\varepsilon}}^{\frac{x}{\varepsilon}}\varphi'(z)dz\leq
\frac{\|\varphi'\|_{\infty}}{\varepsilon}|x-y|,
\end{align*}
then by (\ref{lbound})
\begin{align*}
|\ell_{\varphi_{\varepsilon}}(x,y)|\leq \frac{c}{\varepsilon}e^{\frac{x-y}{2}},\quad c=\frac{C_*(1-\theta)}{\theta^2(1+\theta)}\|\varphi'\|_{\infty},
\end{align*}
and from (\ref{maj1}) we deduce
\begin{align*}
|L_{\varphi_{\varepsilon}}(u(s))|\leq \frac{2c}{\varepsilon}&
\bigg[\int_{(0,\varepsilon)}u(s,x)\big(e^{\frac{(1-\theta)x}{2}}-e^{\frac{(1-\theta^{-1}x)}{2}}\big)dx\\
&+\int_{\left[\varepsilon,\frac{\varepsilon}{\theta}\right]}u(s,x)\big(e^{\frac{(1-\theta)x}{2}}-e^{\frac{x-\varepsilon}{2}}\big)dx\bigg].
\end{align*}
We now use  
$e^{\frac{(1-\theta)x}{2}}-e^{\frac{(1-\theta^{-1}x)}{2}}\leq \frac{(\theta^{-1}-\theta)x}{2}e^{\frac{(1-\theta)x}{2}}$,
$e^{\frac{(1-\theta)x}{2}}-e^{\frac{x-\varepsilon}{2}}\leq\frac{(\varepsilon-\theta x)}{2}e^{\frac{(1-\theta)x}{2}}$,
and (\ref{exp u}) to obtain, for all $\varepsilon>0$,
\begin{align}
\label{maj2}
|L_{\varphi_{\varepsilon}}(u(s))|&\leq c(\theta^{-1}-\theta)\int_{(0,\varepsilon)}u(s,x)e^{\frac{(1-\theta)x}{2}}dx\nonumber\\
&\quad+c(1-\theta)\int_{\left[\varepsilon,\frac{\varepsilon}{\theta}\right]}u(s,x)e^{\frac{(1-\theta)x}{2}}dx\nonumber\\
&\leq c(\theta^{-1}-\theta)\int_{\left(0,\frac{\varepsilon}{\theta}\right]}e^{\frac{(1-\theta)x}{2}}u(s,x)dx\nonumber\\
&\leq c(\theta^{-1}-\theta)e^{sC_{(1-\theta)/2}}\int_{[0,\infty)}e^{\frac{(1-\theta)x}{2}}u_0(x)dx.
\end{align}
The right hand side above is independent of $\varepsilon$, and it is clearly integrable on $[0,t]$, for all $t>0$.

Let us prove now (\ref{Lorigin1}). If we prove 
\begin{align}
\label{llorigin}
\lim_{\varepsilon\to 0}\mathcal{L}_{\varphi_{\varepsilon}}(x)=0\quad\forall x\geq 0,
\end{align}
then by (\ref{maj2}) and dominated convergence, (\ref{Lorigin1}) follows. Therefore we are left to prove (\ref{llorigin}). 
On the one hand, since $\mathcal{L}_{\varphi_{\varepsilon}}(0)=0$ for all $\varepsilon>0$ (cf. Lemma \ref{klbounds}), then
$\lim_{\varepsilon\to 0}\mathcal{L}_{\varphi_{\varepsilon}}(0)=0$. On the other hand, for all $x>0$ and $y\in[0,\infty)$, the function
$\ell_{\varphi_{\varepsilon}}(x,y)$ is well defined and 
\begin{align}
\label{lorigin3}
\lim_{\varepsilon\to 0}\ell_{\varphi_{\varepsilon}}(x,y)=0.
\end{align}
Moreoever, by (\ref{Bbound})
\begin{align*}
|\ell_{\varphi_{\varepsilon}}(x,y)|&\leq B(x,y)\frac{y}{x}e^{-y}(\varphi_{\varepsilon}(x)+\varphi_{\varepsilon}(y))
\leq 2 C_*\frac{ye^{\frac{x-y}{2}}}{x(x+y)}\mathds{1}_{\Gamma}(x,y),
\end{align*}
and then
\begin{align}
\label{lorigin4}
\int_0^{\infty}|\ell_{\varphi_{\varepsilon}}(x,y)|dy&\leq 2C_*e^{\frac{(1-\theta)x}{2}}\frac{1}{x}\int_{\theta x}^{\theta^{-1}x}\frac{y}{x+y}dy\nonumber\\
&=2C_*e^{\frac{(1-\theta)x}{2}}\int_{\theta}^{\theta^{-1}}\frac{z}{1+z}dz<+\infty.
\end{align}
It follows from (\ref{lorigin3}), (\ref{lorigin4}) and dominated convergence that $\mathcal{L}_{\varphi_{\varepsilon}}(x)\to0$ as 
$\varepsilon\to0$ for all $x>0$, and then (\ref{llorigin}) holds. That proves (\ref{Lorigin1}), which combined with (\ref{maj2}) and dominated convergence, finally proves (\ref{Lorigin}).
Using (\ref{Lorigin}) in (\ref{eqorigin1}), then the limit in (\ref{Klimit}) exists and (\ref{eqorigin}) holds. 

Since $K_{\varphi_{\varepsilon}}(u,u)\geq 0$ for all $\varepsilon>0$, it follows from (\ref{eqorigin}) that $\alpha$ is monotone nondecreasing, and then a.e. differentiable by Lebesgue Theorem.

We are left to prove the right continuity of $\alpha$. Since $\alpha$ is nondecreasing, we already know
\begin{align}
\label{rc1}
\alpha(t)\leq\liminf_{h\to 0^+}\alpha(t+h),
\end{align}
so it is sufficient to prove
\begin{align}
\label{rc2}
\limsup_{h\to 0^+}\alpha(t+h)\leq\alpha(t).
\end{align}
To this end, let $\varphi_{\varepsilon}$ as in Remark \ref{test}. Using 
$\alpha(t+h)\leq\int_{[0,\infty)}\varphi_{\varepsilon}(x)u(t+h,x)dx$ and (\ref{WFKL}) with $\varphi_{\varepsilon}$, we have
\begin{align*}
\alpha(t+h)\!\leq\! \int_{[0,\infty)}\!\!\!\varphi_{\varepsilon}(x)u(t,x)dx+\int_t^{t+h}\!\!\big(K_{\varphi_{\varepsilon}}(u(s),u(s))+L_{\varphi_{\varepsilon}}(u(s))\big)ds.
\end{align*}
From Proposition \ref{KLbounds} and (\ref{exp u}), we deduce that $K_{\varphi_{\varepsilon}}(u(s),u(s))$ and 
$L_{\varphi_{\varepsilon}}(u(s))$ are locally integrable in time for every fixed $\varepsilon>0$, so letting $h\to 0$ above, and then $\varepsilon\to 0$, we finally obtain (\ref{rc2}). The right continuity then follows from 
(\ref{rc1}) and (\ref{rc2}).
\end{proof}

\begin{remark}
By a standard approximation argument, it is possible to use $\mathds{1}_{[0,\varepsilon)}$ as a test function in (\ref{WFKL}). 
Then, by similar arguments as in the proof of Proposition \ref{prop origin}, it can be seen that
equation (\ref{eqorigin}) also holds when $\varphi_{\varepsilon}$ is replaced by $\mathds{1}_{[0,\varepsilon)}$, and then, for all $t\geq t_0\geq 0$, 
\begin{align}
\label{eqorigin2}
\alpha(t)&=\alpha(t_0)+\lim_{\varepsilon\to 0}\int_{t_0}^t\iint_{D_\varepsilon}
\frac{B(x,y)}{xy}(e^{-x}-e^{-y})u(s,x)u(s,y)dydxds,
\end{align}
where $D_{\varepsilon}=[\varepsilon,\gamma_2(\varepsilon))\times(\gamma_1(x),\varepsilon)$.
\end{remark}

\section{On entropy and entropy dissipation.}
\label{entropy}
\setcounter{equation}{0}
\setcounter{theorem}{0}
Although the entropy  $H$ given by  (\ref{S2EH}) is well defined for the weak solutions $u$  of (\ref{P}), it is not known if $t\mapsto H(u(t))$ is monotone and  may still be used as Lyapunov function to study  the long time behavior of these solutons  (we recall that the dissipation of entropy is not defined in general). But, if $u$ is a weak solution of (\ref{P}) with initial data $u _{ in }$ given by Theorem \ref{MT1}, if $\{u_n\} _{ n\in \NN }$ is the sequence  given by Proposition \ref{theoremESMI} and  if $D^{(n)}$ is the functional defined  in (\ref{S2ED}), the same calculations as in  Section 2 and Section 6  of \cite{ESMI} yield
\begin{align*}
\int_T^{\infty}D^{(n)}(u_n(t))dt\le H(U_M)+C_1\int  _{ [0, \infty) }(1+x)u _{ in }(x)dx,\,\,\forall n\in \NN
\end{align*}
for some  constant $C_1>0$, where $U_M$ is the unique equilibrium with the same mass than  $u _{ in }$, $M=M_0(u_{in})$.
Since the sequence of functions $\{b_n\}_{n\in \NN}$ is increasing,
\begin{align*}
\int_T^{\infty}D^{(m)}(u_n(t))dt \le H(U_M)+C_1\int  _{ [0, \infty) }(1+x)u _{ in }(x)dx,\,\forall n>m.
\end{align*}
Therefore, by the weak lower semi continuity of the function $D^{(m)}$ (cf. Theorem 4.6 in \cite{ESMI}), and the weak convergence of $u_n$ to $u$:
\begin{equation*}
\int_T^{\infty}D^{(m)}(u(t))dt\le  H(U_M)+C_1\int  _{ [0, \infty) }(1+x)u _{ in }(x)dx,\,\,\,\forall m\in \NN
\end{equation*}
It  follows that, for any sequence $t_n\to \infty$, and $\left\{U(t)\right\} _{ t>0 }\subset  \mathscr M_+^1([0, \infty))$ such that $u(t_n+t)\to U(t)$ in  $\mathscr M_+^1([0, \infty))$ for $a. e. \, t>0$ we have   $\lim _{ n\to \infty } D^{(m)}(u(t_n+t))=0$ and then, by weak lower semi continuity, $D^{(m)}U(t)=0$ for all $m\in \NN$ and $a. e.\, t>0$. However, it is  only possible to obtain  a partial  characterization of the measures  $U\in  \mathscr{M}_+([0,\infty))$ with total mass $M$ and such that $D^{(m)}(U)=0$ for all $m\in\NN$. 
\begin{proposition}
\label{S4Dissip}
A measure  $U\in  \mathscr{M}_+([0,\infty))$  with total mass $M>0$, satisfies  $D^{(m)}(U)=0$ for all $m\in \NN$, if and only if, there exists $\mu \le0$ and $\alpha \ge 0$ such that $U=g_\mu +\alpha \delta _0$ and 
$\int_0^\infty  g _{ \mu  }(x)dx+\alpha =M$, where
\begin{equation}
\label{gmu}
g_{\mu}(x)=\frac{x^2}{e^{x-\mu}-1},\qquad x>0.
\end{equation}
\end{proposition}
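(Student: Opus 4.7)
Since each of $D^{(m)}_1$, $D^{(m)}_2$, $D^{(m)}_3$ in (\ref{S2ED1})--(\ref{S2ED3}) is nonnegative and $j(a,b)\ge 0$ with equality only when $a=b$, the condition $D^{(m)}(U)=0$ for every $m\in\NN$ is equivalent to $D^{(m)}_1(g)=D^{(m)}_2(g,G)=D^{(m)}_3(G)=0$ for all $m$, where $U=g+G$ is the Lebesgue decomposition. Moreover $\bigcup_{m}\supp(b_m)=\Gamma\cap(0,\infty)^2$, since $\phi_m(x)>0$ eventually for every fixed $x>0$. The sufficient direction is then immediate: a direct calculation gives
\[
(x^2+g_\mu(x))e^{-x}g_\mu(y)=\frac{x^2 y^2 e^{-\mu}}{(e^{x-\mu}-1)(e^{y-\mu}-1)},
\]
which is symmetric in $(x,y)$, so $D^{(m)}_1(g_\mu)=0$; and because $\phi_m(0)=0$, one has $b_m(\cdot,0)\equiv 0$, so the contribution of $\alpha\delta_0$ to $D^{(m)}_2$ and $D^{(m)}_3$ vanishes as well.

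For the necessary direction, write $G=\alpha\delta_0+G'$ with $G'(\{0\})=0$. From $D^{(m)}_1(g)=0$ for all $m$, deduce $(x^2+g(x))e^{-x}g(y)=(y^2+g(y))e^{-y}g(x)$ on $\Gamma\cap\{g>0\}^2$; equivalently, the function $\psi(x)=(1+x^2/g(x))e^{-x}$ satisfies $\psi(x)=\psi(y)$ there. The key geometric input is that the equivalence relation on $(0,\infty)$ generated by $\Gamma$ admits a single class: $\Gamma_2$ contains a full neighborhood of the diagonal in $(0,\delta_*]^2$, short chains therefore connect any two small points; $\Gamma_1$ is the cone $\theta x\le y\le\theta^{-1}x$, chaining the large values together; and the two regions overlap around $x=\delta_*$. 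Promoting this chaining to the almost-everywhere level yields that $\psi$ equals some constant $e^{-\mu}$ a.e.\ on $\{g>0\}$, hence $g(x)=x^2/(e^{x-\mu}-1)$ a.e.\ on $\{g>0\}$; nonnegativity of $g$ forces $\mu\le 0$.

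It remains to rule out a nontrivial $G'$ and to propagate the Planck form from $\{g>0\}$ to all of $(0,\infty)$ a.e. Both steps use $D^{(m)}_2(g,G)=0$: the atom at $0$ drops out since $b_m(\cdot,0)\equiv 0$, so disintegrating against $G'$ in the variable $y$ gives, for $G'$-a.e.\ $y>0$ and a.e.\ $x\in(\gamma_1(y),\gamma_2(y))$, the pointwise identity $(x^2+g(x))e^{-x}=g(x)e^{-y}$. On the part where $g(x)>0$, this forces $g(x)=x^2/(e^{x-y}-1)$, which combined with the formula from $D_1$ yields $y=\mu\le 0$, contradicting $y>0$; on the part where $g(x)=0$ the identity becomes $x^2 e^{-x}=0$, again impossible for $x>0$. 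Hence $G'=0$. A last inspection of the same $D^{(m)}_2$ identity, combined with the connectedness of $\Gamma$, excludes that $g$ vanishes on a set of positive measure unless $g\equiv 0$ (which gives the degenerate case $U=\alpha\delta_0=M\delta_0$); otherwise the Planck form $g=g_\mu$ holds a.e.\ on $(0,\infty)$, completing the identification $U=g_\mu+\alpha\delta_0$.

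The main technical obstacle is converting the chaining of $\Gamma$ into an a.e.\ statement about the Lebesgue density $g$: the equivalence-relation argument is geometrically clear, but $g$ is only defined a.e., so the chains must be followed through sets of positive measure, typically via a Fubini / density-point argument applied to $\Gamma$ and $\{g>0\}$. A secondary difficulty is that $G'$ need not be purely atomic, so one cannot simply test $D^{(m)}_2$ at a chosen atom; disintegration of $D^{(m)}_2$ against $G'$, together with careful control of where $g>0$ along the slices $(\gamma_1(y),\gamma_2(y))$, is what makes the $\mu\le 0$ contradiction robust.
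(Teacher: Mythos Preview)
Your proposal is essentially correct and follows the same decomposition as the paper: split $D^{(m)}$ into $D_1,D_2,D_3$, extract the Planck form from $D_1=0$ via the $\Gamma$-connectivity chaining, then pin down the singular part. The one genuine difference is in how you eliminate $G'$. The paper first uses $D_3^{(m)}(G)=0$ to conclude that $G$ is a countable sum of Dirac masses at pairwise $\Gamma$-disjoint points $x_i$, and only then evaluates $D_2^{(m)}(g_\mu,G)$ explicitly as the sum $\sum_i \alpha_i(x_i-\mu)(e^{-\mu}-e^{-x_i})\int b_m(x,x_i)g_\mu(x)\,dx$, forcing $x_i=\mu\le 0$ for each atom with $x_i>0$. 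You instead bypass $D_3$ entirely and disintegrate $D_2^{(m)}$ directly against an arbitrary singular $G'$. Both routes are valid; the paper's is slightly cleaner because the atomic reduction makes the $D_2$ step a one-line algebraic identity, while yours must handle general singular $G'$ via Fubini and lean on the convention $j(a,0)=+\infty$ on the slices where $g=0$.

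One small slip: in your final paragraph you invoke $D_2^{(m)}$ again to exclude $\{g=0\}$ having positive but not full Lebesgue measure. Once you have shown $G'=0$, the only singular mass is $\alpha\delta_0$, and since $b_m(\cdot,0)\equiv 0$ the functional $D_2^{(m)}$ vanishes identically and carries no information about $g$. The dichotomy ``$g\equiv 0$ a.e.\ or $g>0$ a.e.'' already follows from $D_1^{(m)}=0$ alone: the function $x\mapsto g(x)e^{x}/(x^2+g(x))$ (taken as $0$ where $g=0$) is a.e.\ constant on $(0,\infty)$ by the same $\Gamma$-chaining you used, and that constant is either $0$ or some $e^{\mu}$. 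This is precisely the paper's argument.
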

\begin{proof} It is straightforward to check that if  $U=g_\mu +\alpha \delta _0$ for some $\mu \le 0$ and $\alpha \ge 0$, such that $\int_0^{\infty} g _{ \mu  }(x)dx+\alpha =M$, then $D^{(m)}(U)=0$.
On the other hand, if
$U=g+G$ is the Lebesgue decomposition of $U$ and $D^{(m)}(U)=0$, then $D^{(m)}_1(g)=D^{(m)}_2(g,G)=D^{(m)}_3(G)=0$. From
$D^{(m)}_1(g)=0$ it follows that, for a.e. $(x,y)\in[0,\infty)^2$,
\begin{align}
\label{D1=0}
b_m(x, y)j\Big(g'(x^2+g)e^{-x},g(y^2+g')e^{-y}\Big)=0.
\end{align}
Since $b_m(x, y)>0$ for $(x, y)\in  \Gamma_{\varepsilon, m}$ for all $\varepsilon >0$ and all $m\in\NN$, where
$$\Gamma_{\varepsilon, m}=\Big\{(x,y)\in\Gamma: d((x,y), \partial\Gamma)>\varepsilon,\,\,(x, y)\in \left(\frac {1} {m}, m\right)\times \left(\frac {1} {m}, m\right)\Big\},$$
we deduce  from (\ref{D1=0}),
\begin{align*}
\frac{g(x)e^{x}}{x^2+g(x)}=\frac{g(y)e^{y}}{y^2+g(y)}\quad a.e.\; (x,y)\in\Gamma_{\varepsilon, m },
\end{align*}
and  both terms must then be equal to a nonnegative constant, say $\gamma$. If $\gamma=0$, then 
$g=0$ for a.e. $x>\varepsilon$. If $\gamma>0$, then $\gamma=e^{\mu}$ for some $\mu \in \RR$ and $g=g_{\mu}$
for a.e. $x>\varepsilon$. Letting $\varepsilon\to 0$ we obtain that either $g=0$ or
$g=g_{\mu}$ a.e. in $(0,\infty)$ and, since $g\geq 0$, then $\mu \le 0$.

From $D^{(m)}_3(G)=0$ for all $m\in\NN$,
we obtain that $j(e^{-x},e^{-y})=(e^{-x}-e^{-y})(x-y)=0$ for $G\times G$ a.e. $(x,y)\in\Gamma_{\varepsilon, m}$. Letting $\varepsilon\to 0$, we deduce that 
\begin{equation}
\label{Gequi}
G=\sum_{i=0}^{\infty}\alpha_i\delta_{x_i},
\end{equation}
for some $\alpha_i\geq 0$, $x_i\geq 0$ with $b_m(x_i,x_j)=0$ for all $i\neq j$, and all $m\in\NN$.

From $D^{(m)}_2(g, G)=0$, $g=g_\mu $ and $G$ as in (\ref{Gequi}), we deduce that, for all $m\in \NN$,
\begin{align*}
D^{(m)}_2(g,G)=\sum_{i=0}^{\infty}\alpha_i(x_i-\mu)\big(e^{-\mu}-e^{-x_i}\big) \int_0^\infty b_m(x, x_i)g_\mu (x)dx=0,
\end{align*}
and therefore, each of the terms in the sum above is zero. If $\alpha_i>0$ and $x_i>0$ for some $i\in\NN$, it then follows that 
$\mu=x_i$, which is a contradiction since $\mu\leq 0$. Hence $G=\alpha\delta_0$ for some $\alpha\geq 0$.
\end{proof}

\begin{remark}
The measure $U$ in the statement of Proposition \ref{S4Dissip} is not uniquely determined because, since $b_m(x, 0)=0$ for all $x>0$, 
it is  possible to have  $\mu < 0$ and $\alpha >0$. 
\end{remark}

\section{A simplified equation.}
\setcounter{equation}{0}
\setcounter{theorem}{0}
There may be several reasons to consider the following simplified version of equation (\ref{S1EN0}), (\ref{S1EN1}),
\begin{align}
&\frac{\partial u}{\partial t}(t, x)=u(t, x)\int_0^\infty R(x, y)u(t, y)dy, \label{S2EG}\\
&R(x, y)=b(x, y)(e^{-x}-e^{-y}). \label{S2EG2}
\end{align}
Although the integral collision operator in (\ref{S2EG}) only contains the nonlinear terms of the integral collision operator in (\ref{S1EN0}), it  may supposed to be the dominant term when $u$ is large.  This was the underlying idea in  
\cite{1972ZhETF..62.1392Z} and \cite{1972JETP...35...81Z}, when such approximation was suggested. 
 Let us also recall that, as shown in Section \ref{betascaling} in  Appendix \ref{deduction}, if the variables $k, t$ and  $f$ are suitably scaled  with the parameter $\beta $ to obtain the new variables $x, \tau $ and $u$ (cf. (\ref{S2E2XY}) and (\ref{S2EXYu})),  the equation (\ref{S1EN0}), (\ref{S1EN1}) 
 yields  equation  (\ref{EscEu}), where the  dependence on the parameter $\beta >0$ has been kept. Then, the reduced equation (\ref{S2EG}) appears as the lower order approximation as  $\beta \to \infty$. 
No rigorous result is known about the validity of such an approximation . In any case, it may be expected from  
 (\ref{S2E2XY}), to break down  for $t\gtrsim \beta ^{3}$ and $x\gtrsim \beta$.

Due to its  simpler form, the study of  (\ref{S2EG}) is slightly easier. The existence of  solutions $u\in C([0, \infty), L^1([0, \infty)))$, that do not form a Dirac mass at the origin in finite time, is proved (cf. Section \ref{S7regular}) and it is also possible to describe the long time behaviour of the solutions. Both questions remain open for the  equation (\ref{S1EN0}). 
\subsection{Existence and properties of weak solutions.} 

In this Section we prove the following result on the existence of weak solutions of the equation (\ref{S2EG}), (\ref{S2EG2}).

\begin{theorem}
\label{S8Th1}
For any initial data $u_0\in\mathscr{M}_+([0,\infty))$ satisfying 
\begin{align}
\label{Zcondition}
X_{\eta}(u_0)<\infty\quad\text{for some}\quad\eta>\frac{1-\theta}{2},
\end{align}
there exists $u\in C([0,\infty),\mathscr{M}_+([0,\infty)))$ such  that: 
\begin{align}
\label{ws1R}
&(i)\,\, \forall  \varphi \in C_b([0, \infty)),\,\,\int_{ [0, \infty) }u(\cdot , x)\varphi (x)dx \in C( [0, \infty);\RR) \\
&\hskip 2cm \text{and}\quad \int_{ [0, \infty) }u(0 , x)\varphi (x)dx=  \int\limits  _{ [0, \infty) }u_0(x)\varphi (x)dx, \nonumber\\
&(ii)\,\, \forall \varphi \in C^1_b([0, \infty)),\, \varphi '(0)=0,\nonumber \\
\nonumber
&\,\,\int_{ [0, \infty) }u(\cdot , x)\varphi (x)dx \in W_{loc}^{1, \infty}([0, \infty); \RR),\,\hbox{and for}\,\,a. e. \, t>0,\\
\label{WSZ}
&\,\,\frac {d} {dt}\int\limits_{ [0, \infty) }u(t , x)\varphi (x)dx=\frac{1}{2}\iint\limits_{[0, \infty)^2}R(x, y)u(t,x)u(t,y)(\varphi(x)-\varphi(y))dydx.
\end{align}
(We will say that $u$ is a weak solution of  (\ref{S2EG}) with initial data $u_0$). The solution also satisfies,
\begin{align}
\label{ZmassR}
&M_0(u(t))=M_0(u_0)\quad\forall t> 0,\\
\label{Zmoment}
&X_{\eta}(u(t))\leq X_{\eta}(u_0)\quad\forall t>0.
\end{align}
\end{theorem}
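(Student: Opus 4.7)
The plan is to follow the same scheme as for the full equation (Corollary \ref{S2Cor15}), but with two simplifications coming from the reduced structure of (\ref{S2EG}): the collision term is purely quadratic, and the linear operator $L_\varphi$ is absent. First I would regularize the kernel by $b_n(x,y) = B(x,y)\phi_n(x)\phi_n(y)$ as in (\ref{bn}), and consider
\begin{equation*}
\partial_t u_n(t,x) = u_n(t,x)\int_{[0,\infty)} b_n(x,y)\bigl(e^{-x}-e^{-y}\bigr) u_n(t,y)\,dy,\qquad u_n(0) = u_0.
\end{equation*}
Since $b_n$ is bounded with compact support, the nonlinearity is locally Lipschitz on $\mathscr{M}_+^1([0,\infty))$ in total variation, and a Picard/fixed-point argument (equivalently, the exponential representation analogous to (\ref{FOBIS})) yields a unique global solution $u_n \in C([0,\infty),\mathscr{M}_+^1([0,\infty)))$.

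Next I derive the two \emph{a priori} bounds. Testing the weak formulation against $\varphi\equiv 1$ gives $M_0(u_n(t)) = M_0(u_0)$. Testing against $\varphi(x) = e^{\eta x}$, the key observation is the sign identity: for every $\eta > 0$ and every $x,y\ge 0$,
\begin{equation*}
\bigl(e^{-x}-e^{-y}\bigr)\bigl(e^{\eta x}-e^{\eta y}\bigr) \le 0,
\end{equation*}
so the symmetrized quadratic form satisfies
\begin{equation*}
\frac{d}{dt}X_\eta(u_n(t)) = \frac{1}{2}\iint b_n(x,y)\bigl(e^{-x}-e^{-y}\bigr)\bigl(e^{\eta x}-e^{\eta y}\bigr) u_n(t,x)u_n(t,y)\,dy\,dx \le 0,
\end{equation*}
which yields the \emph{monotone} bound $X_\eta(u_n(t)) \le X_\eta(u_0)$, uniform in $n$ and $t$. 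This is strictly better than the bound (\ref{exp}) for the full equation, and is the source of (\ref{Zmoment}).

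With these uniform estimates in hand I would invoke the compactness scheme of Propositions \ref{precompactness} and \ref{equicontinuity}: weak-$*$ pre-compactness of $\{u_n(t)\}$ at each $t$ (from mass conservation), upgraded to the weighted convergence (\ref{u99}) for test functions satisfying (\ref{GC}) with $\alpha < \eta$; and equicontinuity in narrow topology via
\begin{equation*}
\left|\int\varphi\, u_n(t) - \int\varphi\, u_n(t_0)\right| \le \int_{t_0}^t |K_{\varphi,n}(u_n,u_n)|\,ds \le C(u_0)\,|t-t_0|,
\end{equation*}
where the kernel bound from the appendix (cf.\ the estimate used in (\ref{Kequi})) combined with $M_0(u_n) = M_0(u_0)$ and the bound on $X_{(1-\theta)/2}(u_n)$ gives a constant $C(u_0)$ independent of $n$. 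Arzel\`a--Ascoli then produces, along a subsequence, a limit $u \in C([0,\infty),\mathscr{M}_+([0,\infty)))$ satisfying (\ref{ws1R}), (\ref{ZmassR}) and (\ref{Zmoment}).

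The only substantial obstacle is the passage to the limit in the nonlinear term $K_{\varphi,n}(u_n,u_n) \to K_\varphi(u,u)$, which is essentially the step already carried out in the proof of Corollary \ref{S2Cor15}. I would repeat that argument: split
\begin{equation*}
|K_\varphi(u,u)-K_{\varphi,n}(u_n,u_n)| \le |K_\varphi(u,u)-K_\varphi(u_n,u_n)| + |K_\varphi(u_n,u_n)-K_{\varphi,n}(u_n,u_n)|,
\end{equation*}
estimate the first term by a Stone--Weierstrass approximation of $k_\varphi$ on compact boxes combined with the narrow convergence of $u_n$, and a tail estimate using the exponential moment bound; and estimate the second term by decomposing $[0,\infty)^2$ into the region $[1/n,n]^2$ (where $b_n = b$), the exponential tail (controlled by $X_\eta(u_n)$), and the corner near the origin (controlled by $k_\varphi(0,0)=0$ from Lemma \ref{k continuous}). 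The absence of $L_\varphi$ makes this step strictly easier than in the full equation. Integrating in time and applying dominated convergence in the weak formulation then yields (\ref{WSZ}), and evaluating at $t = 0$ gives the initial condition.
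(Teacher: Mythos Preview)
Your proposal is correct and follows essentially the same scheme as the paper: regularize via $b_n$, derive mass conservation and the monotone exponential-moment bound from the sign identity $(e^{-x}-e^{-y})(e^{\eta x}-e^{\eta y})\le 0$, then run the precompactness/equicontinuity/Arzel\`a--Ascoli machinery of Propositions~\ref{precompactness}--\ref{equicontinuity} and pass to the limit in $K_{\varphi,n}$ exactly as in Corollary~\ref{S2Cor15}. The only notable difference is organizational: the paper first solves the regularized problem for $u_0\in L^1$ (Proposition~\ref{Znexistence}), obtains a weak solution, and then handles general $u_0\in\mathscr{M}_+$ by a \emph{second} approximation step ($u_{0,n}\in L^1$ narrowly converging to $u_0$) with another compactness pass, whereas you propose to run the fixed point for the regularized equation directly in $\mathscr{M}_+^1$ with total variation norm --- which is legitimate since $R_n$ is bounded and the exponential representation $u_n(t)=u_0\, e^{\int_0^t\!\int R_n(\cdot,y)u_n(s,y)\,dy\,ds}$ makes sense for measure-valued $u_0$.
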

This result is similar to Theorem \ref{MT1}  for the equation  (\ref{P})--(\ref{Bbound}), and its proof uses similar arguments.  The main difference is that  Theorem  3 in \cite{ESMI}  can not be used to obtain approximate solutions,  and this must  be done using a classical truncation argument. Let us then consider the following auxiliary problem:
\begin{align}
\label{Zn}
&\frac{\partial u_n}{\partial t}(t,x)=u_n(t,x)\int_0^{\infty}R_n(x,y)u_n(t,y)dy,\\
\label{Zn0}
&u_n(0,x)=u_{in}(x)\\
&R_n(x, y)=b_n(x, y)(e^{-x}-e^{-y})
\end{align} 
where $b_n$ is defined in (\ref{bn}).
\begin{proposition}
\label{Znexistence}
For every $n\in\NN$ and for every nonnegative initial data $u_{in}\in L^1([0,\infty))$, there exists a nonnegative function $u_n\in C([0,\infty),L^1([0,\infty)))\cap C^1((0,\infty),L^1([0,\infty)))$ that satisfies (\ref{Zn}) and (\ref{Zn0}) in $C((0, \infty), L^1([0,\infty)))$ and $L^1([0,\infty))$ respectively
and,
\begin{align}
\label{Znmass}
M_0(u_n(t))=M_0(u_{in})\quad\forall t\geq 0.
\end{align}
Moreover, for all $\varphi $,  defined on $[0, \infty)$, measurable, non negative and non decreasing  function,
\begin{align}
\label{S5WP1}
\int  _{ [0, \infty) }u_n(t, x)\varphi (x)dx\le \int  _{ [0, \infty) }u _{ in }(x)\varphi (x)dx,\,\,\,\forall n\in \NN,\;\forall  t>0.
\end{align}
\end{proposition}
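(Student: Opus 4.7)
The plan is to treat (\ref{Zn})--(\ref{Zn0}) as a first-order ODE in the Banach space $L^1([0,\infty))$ and apply the Cauchy--Lipschitz theorem. Since $b_n$ is bounded and compactly supported away from $(0,0)$ and infinity, and $|e^{-x}-e^{-y}|\le 1$, the kernel $R_n$ belongs to $L^\infty([0,\infty)^2)$. I would define $F_n:L^1([0,\infty))\to L^1([0,\infty))$ by $F_n(u)(x)=u(x)\int_0^\infty R_n(x,y)u(y)\,dy$ and check the local Lipschitz estimate: for any $u,v\in L^1$ with $\|u\|_1,\|v\|_1\le M$,
\begin{align*}
\|F_n(u)-F_n(v)\|_{L^1}\le \|R_n\|_{L^\infty}(\|u\|_1+\|v\|_1)\|u-v\|_1\le 2\|R_n\|_{L^\infty}M\|u-v\|_1,
\end{align*}
which shows $F_n$ is locally Lipschitz on $L^1([0,\infty))$.

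Cauchy--Lipschitz then yields a unique maximal solution $u_n\in C^1([0,T_n);L^1)$ with $u_n(0)=u_{in}$. For positivity I would set $W_n(t,x)=\int R_n(x,y)u_n(t,y)\,dy\in C([0,T_n);L^\infty)$ and observe that, regarding the equation as a linear scalar ODE in $t$ at each fixed $x$, it admits the explicit solution
\begin{align*}
u_n(t,x)=u_{in}(x)\exp\!\left(\int_0^t W_n(s,x)\,ds\right)\ge 0\qquad\text{a.e. }x>0,
\end{align*}
whenever $u_{in}\ge 0$. Integrating the equation in $x$ and using the antisymmetry $R_n(y,x)=-R_n(x,y)$ (inherited from the symmetry of $b_n$ and the antisymmetry of $e^{-x}-e^{-y}$), symmetrization gives $\frac{d}{dt}M_0(u_n(t))=0$, so (\ref{Znmass}) holds throughout the existence interval. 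Because this uniform $L^1$ bound prevents blow-up, the standard continuation principle yields $T_n=\infty$, hence $u_n\in C([0,\infty),L^1)\cap C^1((0,\infty),L^1)$.

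For the monotonicity inequality (\ref{S5WP1}), I would first take $\varphi\in L^\infty([0,\infty))$ nondecreasing, test the equation against $\varphi$, and symmetrize in $(x,y)$ to obtain
\begin{align*}
\frac{d}{dt}\int\varphi(x)\, u_n(t,x)\,dx=\frac{1}{2}\iint R_n(x,y)\big(\varphi(x)-\varphi(y)\big)u_n(t,x)u_n(t,y)\,dy\,dx.
\end{align*}
Since $R_n(x,y)$ carries the sign of $e^{-x}-e^{-y}$, i.e.\ the opposite sign of $x-y$, while $\varphi(x)-\varphi(y)$ carries the sign of $x-y$, the integrand is pointwise nonpositive, so $t\mapsto\int\varphi u_n(t,\cdot)\,dx$ is nonincreasing. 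For a general nonnegative nondecreasing measurable $\varphi$ one truncates $\varphi_k=\min(\varphi,k)\in L^\infty$, applies the bounded case, and passes to the limit $k\to\infty$ by monotone convergence, which gives (\ref{S5WP1}) in full generality (the inequality being trivial when $\int\varphi\, u_{in}\,dx=+\infty$). I do not anticipate a serious obstacle: the whole argument rests on the two decisive features of the truncated kernel, namely the $L^\infty$ bound on $R_n$ (from the cutoff defining $b_n$), which makes the quadratic nonlinearity locally Lipschitz on $L^1$, and the antisymmetry of $R_n$, which provides exact mass conservation and hence global existence.
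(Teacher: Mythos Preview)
Your proposal is correct and follows essentially the same line as the paper: local existence via a contraction argument using $R_n\in L^\infty$, positivity via the explicit exponential representation $u_n(t,x)=u_{in}(x)\exp\big(\int_0^t\!\int R_n(x,y)u_n(s,y)\,dy\,ds\big)$, mass conservation from the antisymmetry of $R_n$, global existence from the resulting $L^1$ bound, and the monotonicity inequality by symmetrization plus the sign of $(e^{-x}-e^{-y})(\varphi(x)-\varphi(y))$ followed by monotone approximation of $\varphi$. The only cosmetic difference is that the paper sets up the contraction directly on the exponential integral operator $A_n(f)=u_{in}\exp\big(\int_0^t\!\int R_n f\big)$ in $C([0,T];L^1)$ (so nonnegativity is built in from the start), whereas you invoke Cauchy--Lipschitz for the quadratic ODE in $L^1$ and then recover the exponential form a posteriori; the underlying estimates and the passage to global existence are identical.
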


\begin{proof} The proof uses a simple Banach fixed point argument. 
For any nonnegative $f\in C([0, \infty ), L^{1 }([0,\infty)))$ we consider the solution $u$ to the problem
\begin{equation*}
\left\{
\begin{array}{l}
\displaystyle\frac{\partial u}{\partial t}(t,x)=u(t,x)\int_0^\infty R_n(x,y)f(t,y)dy\quad x>0,\;t>0,\\
u(0,x)=u _{ in }(x),\quad x>0,
\end{array}
\right.
\end{equation*}
given by:
\begin{equation*}
\label{S8OPA}
A_n(f)\equiv u(t,x)=u _{ in }(x)e^{\int_0^{t}\int_0^\infty R_n(x,y)f(s,y)dyds}.
\end{equation*}
Our goal is then to prove first that $A_n$ is a contraction on  $\mathcal{X} _{ \rho , T }$ for some  $\rho >0$ and $T>0$ where,
\begin{align*}
\mathcal{X}_{\rho, T} =\left\{f\in C([0, T); L^1([0,\infty))); \sup _{ 0\le t \le T }\|f(t)\|_1\le \rho  \right\}.
\end{align*}
For all $T>0$, $t\in [0, T)$ and $f\in \mathcal{X} _{ \rho , T }$,
\begin{align}
\label{S8EPF1}
\|A_n(f)(t)\|_1\le 
&\|u _{ in }\|_1e^{T\rho \|R_n\|_{\infty}  };
\end{align} 
and for all  $t_1$, $t_2$ such that $0\leq t_1\leq t_2\leq T$:
\begin{align*}
|A_n(f)&(t_1,x)-A_n(f)(t_2,x)|=\\
&=u _{ in }(x)\left|e^{\int_0^{t_1}\int_0^\infty R_n(x, y)f(s, y)dyds}-e^{\int_0^{t_2}\int_0^\infty R_n(x, y)f(s, y)dyds} \right|\\
&\leq u _{ in }(x) \left|\int_{t_1}^{t_2}\int_0^\infty R_n(x, y)f(s, y)dyds\right| \times\\
&\hskip 0.4cm\times e^{\theta \int_0^{t_1}\int_0^\infty R_n(x, y)f(s, y)dyds+(1-\theta )\int_0^{t_2}\int_0^\infty R_n(x, y)f(s, y)dyds}\\
&\leq u _{ in }(x)\rho \|R_n\|_\infty   |t_1-t_2|e^{T\rho \|R_n\|_\infty}.
\end{align*} 
It then follows that
\begin{align}
\label{S8EPF2}
\|A_n(f)&(t_1)-A_n(f)(t_2)\|_1 \leq \rho \|u _{ in }\|_1   \|R_n\|_\infty  e^{\rho T\|R_n\|_{\infty} } |t_1-t_2|.
\end{align}
Let now $f$ and $g$ be in $\mathcal{X} _{ \rho , T }$ and denote $v=A_n(g)$ and $u=A_n(f)$. Arguing as before,
\begin{equation}
\label{S8EPF3}
\|u(t)-v(t)\|_1\leq  \|u _{ in }\|_1\|R_n\|_\infty \|f-g\|_{C([0,T],\,L^1([0,\infty)))}Te^{\rho T\|R_n\|_\infty}.
\end{equation}
By (\ref{S8EPF1})--(\ref{S8EPF3}), if 
\begin{align*}
&\|u _{ in }\|_1e^{T\rho \|R_n\|_{\infty}  }<\rho,\,\,\hbox{and}\\
& \|u _{ in }\|_1\|R_n\|_\infty Te^{\rho T\|R_n\|_\infty}<1,
\end{align*}
then $A_n$  is a contraction on $C([0, T], L^{1 }([0,\infty)))$, and has  a fixed point $u_n$
that satisfies
\bear
\label{S2EimplicitR}
u_n(x, t)=u _{ in }(x)e^{\int_0^{t}\int_0^\infty R_n(x, y)u_n(s, y)dyds}.
\eear
This solution may then be extended to $C([0, T _{ \max}), L^{1 }([0,\infty)))$. It immediately follows from  (\ref{S2EimplicitR}) that $u_n\ge 0$. 

Moreover, since $u_n\in  C([0, T _{ \max }),  L^{1 }([0,\infty)))$  and $R_n$ is bounded, 
we deduce from (\ref{S2EimplicitR})  that $u_n\in C^1([0, T _{ \max }), L^{1 }([0,\infty)))$ and,  for every $t\in (0, T _{ \max })$, the equation  (\ref{Zn}) is satisfied  in $L^1([0,\infty))$.
For all $T<T_{\max}$ and all $t\in[0,T]$,
\begin{align*}
&\left|u _{ in }(\cdot)\frac{d}{dt}\left(e^{\int_0^t\int_0^{\infty}R_n(\cdot,y)u_n(s,y)dy}\right)\right| \leq u _{ in }(\cdot)\|R_n\|_{\infty}\times  \\
&\quad \times \|u_n\|_{C([0,T],L^1([0,\infty)))}e^{T\|R_n\|_{\infty}\|u_n\|_{C([0,T],L^1([0,\infty)))}}\in L^1([0,\infty)),
\end{align*}
then if we multiply (\ref{S2EimplicitR}) by any $\varphi\in L^{\infty}([0,\infty))$, we deduce that for all $t<T_{\max}$,
\begin{align*}
\frac{d}{dt}\int_0^{\infty}u_n(t,x)\varphi(x)dx=\iint\limits _{ (0, \infty)^2 }R_n(x,y)u_n(t,x)u_n(t,y)\varphi(x)dydx.
\end{align*}
Recalling the definition of $R_n$, then by the symmetry of $b_n$ and Fubini's theorem,
\begin{equation}
\frac{d}{dt}\int_0^{\infty}\!\!\!u_n(t,x)\varphi(x)dx=\iint\limits _{ (0, \infty)^2 }k_{\psi ,n}(x,y)u_n(s,x)u_n(s,y)dydx,\label{S5EWFn}
\end{equation}
i.e. $u_n$ is a weak solution of  (\ref{Zn}), (\ref{Zn0}) for $t\in [0, T)$.
If we chose $\varphi=1$ we deduce that  (\ref{Znmass}) holds for all $t<T_{\max}$. Then, by a classical argument, $T_{\max}=\infty$.

In order to prove (\ref{S5WP1}) let $\psi  $ be non negative and measurable function such that  $\int_0^{\infty}u_0(x)\psi (x)dx<\infty$, and consider $\{\psi_k\}_{k\in\NN}$ the sequence of simple functions  that converges monotonically to $\psi $ as $k\to\infty$. Since $\psi _k\in L^{\infty}([0,\infty))$, then
(\ref{S5EWFn}) holds with  $\varphi =\psi _k$ for all $k$, and by Lebesgue's and monotone convergence Theorems,
\begin{align*}
\int_0^{\infty}u_n(t,x)\psi (x)dx&=\int_0^{\infty}u _{ in }(x)\psi (x)dx\\
&+\int_0^t \int_0^{\infty}\!\!\!\int_0^{\infty}k_{\psi ,n}(x,y)u_n(s,x)u_n(s,y)dydxds.
\end{align*}
Using that $u_n\in C([0,\infty),L^1([0,\infty)))$, (\ref{Znmass}) and 
\begin{align*}
\iint\limits _{ (0, \infty)^2 }&k_{\psi ,n}(x,y)\big|u_n(t_1,x)u_n(t_1,y)-u_n(t_2,x)u_n(t_2,y)\big|dydx\\
&\leq 2\|k_{\psi ,n}\|_{\infty}M_0(u _{ in })\|u_n(t_1)-u_n(t_2)\|_1,
\end{align*}
so that $t\mapsto\int_0^{\infty}\!\!\!\int_0^{\infty}k_{\psi ,n}(x,y)u_n(s,x)u_n(s,y)dydx$ is continuous,
it follows by the fundamental theorem of calculus that (\ref{S5EWFn}) holds for $\psi $.
If, in addition,  $\varphi$ is nondecreasing, then $(e^{-x}-e^{-y})(\varphi(x)-\varphi(y))\leq 0$ for all $(x,y)\in[0,\infty)^2$,
and then (\ref{S5WP1}) follows.
\end{proof}

\begin{proof}[\bfseries\upshape{Proof of Theorem \ref{S8Th1} }]
Consider first a initial data $u_0\in L^1([0,\infty))$. Let $\{u_n\}_{n\in\NN}$ be the sequence of solutions to (\ref{Zn})--(\ref{Zn0}) constructed in Proposition \ref{Znexistence} for $n\in\NN$. As in the proof of Theorem \ref{MT1}, the result follows from the precompactness  (given by the conservation of $M_0(u)$) and the  equicontinuity of $\{u_n\} _{ n\in \NN }$.  These properties follow  as in the proof of Proposition \ref{precompactness} and Proposition \ref{equicontinuity} respectively. The existence of the solution $u$ follows  using the same arguments as in  Corollary \ref{cor:u} and the end of the Proof of Theorem \ref{MT1}. 

Property (\ref{Zmoment}) for $u$ follows from (\ref{S5WP1}),  the lower semicontinuity of the non negative function $e^{\eta x}$, and the weak convergence to $u$  of  $u_n$.

For a general initial data $u_0\in\mathscr{M}_+([0,\infty))$, by Corollary 8.6 in \cite{EC} there exists a sequence $\{u_{0,n}\}_{n\in\NN}\subset L^1([0,\infty))$ such that
\begin{equation}
\label{INNN}
\lim_{n\to\infty}\int_0^{\infty}\varphi(x)u_{0,n}(x)dx=\!\!\!\int\limits_{[0,\infty)}\varphi(x)u_0(x)dx,\,\forall\varphi\in C_b([0,\infty)).
\end{equation}
Since $u _{ 0, n }\in L^1([0,\infty))$, using the previous step there exists a weak solution  $u_n$ that satisfies (\ref{ws1R})--(\ref{Zmoment}).  By (\ref{ZmassR}) and (\ref{Zmoment}), the sequence $\{u_n\} _{ n\in \NN }$ is precompact in $C([0, \infty), \mathscr M_+ ([0,\infty)))$. Arguing as in  Proposition \ref{equicontinuity}, we deduce that it is also equicontinuous. Therefore, using the same arguments as in the end of the Proof of Theorem \ref{MT1}, we deduce the existence of  a subsequence, still denoted $\{u_n\}_{n\in\NN}$, and  a weak solution of 
(\ref{S2EG}), $u\in C([0, \infty),\mathscr M_+ ([0,\infty)))$, satisfying (\ref{ws1R})--(\ref{Zmoment}).  

The property  (\ref{Zmoment}) is obtained using first in the weak formulation (\ref{WSZ})  a sequence of monotone non decreasing test functions $\{\varphi _k\} _{ k\in \NN }\subset C^1_b([0, \infty)$ such that $\varphi '_k(0)=0$ and $\varphi _k(x)\to e^{\eta x}$ for all $x\ge 0$, to  obtain:
\begin{equation}
\int  _{ [0, \infty) }u(t, x)\varphi _k(x)dx\le  \int  _{ [0, \infty) }u _{ 0}(x)\varphi _k(x)dx,
\end{equation}
and then pass to the limit as $k\to \infty$.
\end{proof}

\begin{remark}
In Theorem \ref{MT1}, the initial data is required to satisfy $X_{\eta}(u_0)<\infty$ for some $\eta\in\left(\frac{1-\theta}{2},\frac{1}{2}\right)$. On the one hand, the condition $\eta>\frac{1-\theta}{2}$ is sufficient in order to have boundedness of the operators 
$K_{\varphi}(u,u)$ and $L_{\varphi}(u)$. On the other hand, the condition $\eta<1/2$ comes from the estimate (\ref{exp}). In Theorem \ref{S8Th1}, however, that last condition is not needed, thanks to the estimate (\ref{S5WP1}).
\end{remark}

We show now that the support of $u(t)$ is constant in time.
\begin{proposition}
\label{comparison}
Let $u$ be a weak solution of (\ref{S2EG}) constructed in Theorem \ref{S8Th1} for an initial data $u_0\in\mathscr{M}_+([0,\infty))$ satisfying (\ref{Zcondition}). The following statements hold:
\begin{enumerate}[(i)]
\item  For all $r>0$, $t_0$ and $t$ with $0\leq t_0\leq t$, and $\varphi\in C^1_c((0,\infty))$ nonnegative such that  $\supp(\varphi)\subset[r,L]$ for some $L>r$, 
\begin{align}
\label{supp50}
&\int_{[0,\infty)}\varphi(x)u(t,x)dx\geq e^{-(t-t_0)C_1}\int_{[0,\infty)}\varphi(x)u(t_0,x)dx,\\
\label{supp51}
&\int_{[0,\infty)}\varphi(x)u(t,x)dx\leq e^{(t-t_0)C_2}\int_{[0,\infty)}\varphi(x)u(t_0,x)dx,
\end{align}
where
\begin{align}
C_1=\frac{C_*\rho_*M_0(u_0)}{\sqrt{\theta(1+\theta)}}\frac{e^{\frac{(1-\theta)L}{2}}}{r^{3/2}},\quad
C_2=\frac{C_*\rho_*M_0(u_0)}{\sqrt{2}}\frac{e^{\frac{(1-\theta)L}{2\theta}}}{r^{3/2}}.
\end{align}
\item For all $r>0$, $t_0$ and $t$ with $0\leq t_0\leq t$,
\begin{gather}
\label{comparison2}
\hskip -0.5cm \int\limits_{[0,r)}u(t_0,x)dx\leq \int\limits_{[0,r)}u(t,x)dx\leq e^{(t-t_0)C_{r}}\!\!\int\limits_{[0,r)}u(t_0,x)dx,
\end{gather}
\begin{flalign}
\label{comparison constant}
\text{where}&& C_{r}=\frac{C_*\rho_*M_0(u_0)}{\sqrt{\theta(1+\theta)}}\frac{e^{\frac{(1-\theta)r}{2\theta}}}{r^{3/2}}.&&
\end{flalign}
\item $\supp (u(t))=\supp (u_0)$ for all $t>0$.
\end{enumerate}
\end{proposition}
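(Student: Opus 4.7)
The plan exploits the multiplicative structure of the equation: written formally as $\partial_t u=u(t,x) W(t,x)$ with $W(t,x)=\int R(x,y)u(t,y)dy$, solutions satisfy $u(t,x)=u(t_0,x)\exp(\int_{t_0}^t W(s,x)ds)$, so the support should be preserved and integrals against test functions should evolve multiplicatively. Rigorously, for any $\varphi\in C^1_c((0,\infty))$ (which has $\varphi'(0)=0$ automatically), the antisymmetry $R(y,x)=-R(x,y)$ combined with the weak formulation (\ref{WSZ}) gives
\[
\frac{d}{dt}\int\varphi(x) u(t,x)\,dx=\int\varphi(x) u(t,x) W(t,x)\,dx.
\]
All three conclusions of the proposition will follow from two-sided pointwise bounds on $W(t,\cdot)$, Gronwall's inequality, and, for (ii) and (iii), an approximation of $\mathds{1}_{[0,r)}$ by smooth test functions.

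\textbf{Bounds on $W$.} Three ingredients on $\Gamma$ combine: the kernel estimate $b(x,y)\le C_*e^{(x+y)/2}/(xy(x+y))$ from (\ref{Bbound}), the elementary $|e^{-x}-e^{-y}|\le e^{-\min(x,y)}|x-y|$, and the near-diagonal bound $|x-y|\le \rho_*\sqrt{xy(x+y)}$ (holding on $\Gamma_2$ by (\ref{Bsupport2}) and on $\Gamma_1$ because the continuous matching of $\gamma_1,\gamma_2$ at $x=\theta\delta_*$ forces an analogous inequality for this choice of $\rho_*$). Together they yield $|R(x,y)|\le C_*\rho_* e^{|x-y|/2}/\sqrt{xy(x+y)}$. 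Splitting $W=W_+-W_-$ according to $y>x$ vs.\ $y<x$: for $x\in[r,L]$ and $y\in(x,x/\theta]$ I use $\sqrt{xy(x+y)}\ge\sqrt 2\,x^{3/2}$ and $|x-y|/2\le(1-\theta)L/(2\theta)$ to get $W_+\le C_2$; for $y\in[\theta x,x)$ I use $\sqrt{xy(x+y)}\ge\sqrt{\theta(1+\theta)}\,x^{3/2}$ and $|x-y|/2\le(1-\theta)L/2$ to get $W_-\le C_1$. Mass conservation (\ref{ZmassR}) absorbs the $\int u(t,y)\,dy$ factor into $M_0(u_0)$.

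\textbf{Conclusions.} The inequalities (\ref{supp50})--(\ref{supp51}) follow immediately from $-C_1\le W\le C_2$ on $[r,L]$ and Gronwall applied to $y(t)=\int\varphi u(t)dx$. For (ii), I approximate $\mathds{1}_{[0,r)}$ by $\varphi_n\in C^1_b([0,\infty))$ with $\varphi_n=1$ on $[0,r-1/n]$, $\varphi_n=0$ on $[r,\infty)$, $\varphi_n'(0)=0$, and $0\le\varphi_n\le 1$. In the weak formulation, $\varphi_n(x)-\varphi_n(y)$ vanishes unless one variable lies in the transition layer near $r$; combined with $\supp b\subset\Gamma$, the bilinear integrand is localized to the compact rectangle $[\theta r,r)\times[r,r/\theta]$ and its reflection, bounded away from the origin, where $R$ is uniformly controlled. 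Dominated convergence together with the antisymmetry computation that cancels the diagonal blocks yields
\[
\frac{d}{dt}\int_{[0,r)}u(t,x)\,dx=\iint_{\{x<r,\,y\ge r\}}R(x,y) u(t,x) u(t,y)\,dydx,
\]
which is $\ge 0$ since $R(x,y)>0$ for $y>x$; this gives the lower bound in (\ref{comparison2}). Reapplying the $|R|$-estimate on the same compact rectangle (now with $\sqrt{xy(x+y)}\ge\sqrt{\theta(1+\theta)}\,y^{3/2}\ge\sqrt{\theta(1+\theta)}\,r^{3/2}$, since $y\ge r$ is the larger variable and $x\ge\theta y$) produces the constant $C_r$ of (\ref{comparison constant}), and a final Gronwall step, using $\int_{[r,r/\theta]}u(t,y)\,dy\le M_0(u_0)$, gives the upper bound. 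Finally, (iii) is immediate: a measure's support is the complement of its largest open null set; part (i) shows that $u(t)$ and $u_0$ annihilate the same nonnegative $\varphi\in C^1_c((0,\infty))$, hence the same open subsets of $(0,\infty)$, and the two-sided estimate in (ii) handles neighborhoods of the origin.

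\textbf{Main obstacle.} The delicate step is the passage $n\to\infty$ in (ii): $R$ is singular near the origin and $\mathds{1}_{[0,r)}$ is not in the admissible class of test functions for (\ref{WSZ}). The convergence relies crucially on the observation that the support condition $\Gamma$ confines the non-vanishing contributions of the bilinear integrand to a compact set bounded away from $0$ as soon as one variable is pinned near $r>0$; only there is $R$ used pointwise, and uniform control is available. Some extra care is needed to verify that $t\mapsto\int_{[0,r)}u(t)\,dx$ is absolutely continuous (rather than merely continuous), but this is automatic once the integral identity above is established.
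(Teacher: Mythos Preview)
Your proposal is correct and follows essentially the same approach as the paper: rewrite the weak formulation via Fubini as $\frac{d}{dt}\int\varphi u=\int\varphi u\,W$, bound $W$ pointwise on $[r,L]$ using $|R(x,y)|\le C_*\rho_* e^{|x-y|/2}/\sqrt{xy(x+y)}$ (the paper uses exactly this, including the observation that $|x-y|\le\rho_*\sqrt{xy(x+y)}$ extends from $\Gamma_2$ to $\Gamma_1$ by the continuity of $\gamma_1,\gamma_2$), and apply Gronwall. The only noteworthy difference is in part (ii): the paper keeps the smooth approximation $\varphi$ of $\mathds{1}_{[0,r)}$, applies Gronwall to obtain an estimate with constant $C_{r_*}$ depending on the inner radius $r_*$, and only then lets $r_*\to r$; you instead pass to the limit in the test functions first to obtain the explicit identity $\frac{d}{dt}\int_{[0,r)}u=\iint_{\{x<r\le y\}}R\,u\,u$, and then apply Gronwall once, landing directly on $C_r$. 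Both routes are valid and yield the same result.
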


\begin{proof}
Proof of (i). Since there are no integrability issues near the origin because $\supp(\varphi)\subset[r,L]$, then by Fubini's theorem 
\begin{align*}
\frac{1}{2}\iint_{[0,\infty)^2}&R(x,y)(\varphi(x)-\varphi(y))u(t,x)u(t,y)dydx\\
&=\int_{[0,\infty)}\varphi(x)u(t,x)\int_{[0,\infty)}R(x,y)u(t,y)dydx.
\end{align*}
Let us prove the lower bound (\ref{supp50}). Using (\ref{Bsupport})--(\ref{Bbound}), for all $(x,y)\in\Gamma$, $y\leq x$,
\begin{align}
\label{R11}
|R(x,y)|\leq\frac{C_*e^{\frac{x-y}{2}}(x-y)}{xy(x+y)}
\leq C^*\frac{e^{\frac{(1-\theta)x}{2}}}{x^{3/2}},\quad C^*=\frac{C_*\rho_*}{\sqrt{\theta(1+\theta)}},
\end{align}
and taking into account the support of $\varphi$, we deduce that 
\begin{align*}
\int_{[0,\infty)}&\varphi(x)u(t,x)\int_{[0,\infty)}R(x,y)u(t,y)dydx\\
&\geq \int_{r}^{L} \varphi(x)u(t,x)\int_0^{x}R(x,y)u(t,y)dydx\\
&\geq-\frac{C^*e^{\frac{(1-\theta)L}{2}}}{r^{3/2}}\int_{r}^{L}\varphi(x)u(t,x)\int_0^x u(t,y)dydx\\
&\geq -C_1\int_{r}^{L}\varphi(x)u(t,x)dx,
\end{align*}
and then, from the weak formulation, we obtain that for all $t>0$,
\begin{align*}
\frac{d}{dt}\int_{r}^{L}\varphi(x)u(t,x)dx\geq -C_1\int_{r}^{L}\varphi(x)u(t,x)dx,
\end{align*}
and (\ref{supp50}) follows by Gronwall's Lemma.

We now prove the upper bound (\ref{supp51}) by similar arguments. Since $R(x,y)\leq 0$ for $y\leq x$, then
\begin{align*}
\int_{[0,\infty)}&\varphi(x)u(t,x)\int_{[0,\infty)}R(x,y)u(t,y)dydx\\
&\leq\int_{r}^{L} \varphi(x)u(t,x)\int_x^{\infty}R(x,y)u(t,y)dydx,
\end{align*}
and since for all $(x,y)\in\Gamma$, $x\leq y$,
\begin{align*}
R(x,y)&\leq \frac{C_*e^{\frac{y-x}{2}}(y-x)}{xy(x+y)}
\leq C'\frac{e^{\frac{(1-\theta)x}{2\theta}}}{x^{3/2}},\qquad C'=\frac{C_*\rho_*}{\sqrt{2}},
\end{align*}
we deduce from the weak formulation that for all $t>0$,
\begin{align*}
\frac{d}{dt}\int_{r}^{L}\varphi(x)u(t,x)dx&
\leq \frac{C'e^{\frac{(1-\theta)L}{2\theta}}}{r^{3/2}}\int_{r}^{L}\varphi(x)u(t,x)\int_x^{\infty}u(t,y)dydx\\
&\leq C_2\int_{r}^{L}\varphi(x)u(t,x)dx,
\end{align*}
and then (\ref{supp51}) follows by Gronwall's Lemma.

Proof of (ii). We first prove the lower bound in (\ref{comparison2}). Given $r>0$, let $0<r_*<r$, and $\varphi\in C^1_c([0,\infty))$ be nonnegative, nonincreasing, and such that $\varphi(x)=1$ for all $x\in[0,r_*]$  and $\varphi(x)=0$ for all 
$x\geq r$. Since $(e^{-x}-e^{-y})(\varphi(x)-\varphi(y))\geq 0$ for all $0\leq y\leq x$, it follows from the weak formulation
\begin{gather*}
\frac{d}{dt}\int_{[0,r)}\varphi(x)u(t,x)dx\geq 0\qquad\forall t>0,
\shortintertext{hence}
\int_{[0,r)}\varphi(x)u(t,x)dx\geq\int_{[0,r)}\varphi(x)u(t_0,x)dx\qquad\forall t\geq t_0\geq 0,
\end{gather*}
and then the lower bound in (\ref{comparison2}) follows by taking the supremum over all $\varphi$ as above, i.e., letting $r_*\to r$.

Let us prove now the upper bound in (\ref{comparison2}). Given $r>0$, let $r_*$ and $\varphi$ be as before. Keeping only the positive terms in the weak formulation and taking $\Gamma$ into account, we deduce
\begin{align*}
\frac{d}{dt}\int_{[0,r)}\varphi(x)u(t,x)dx
\leq\int_{r_*}^{\frac{r}{\theta}}\int_{\theta x}^{\min\{x,r\}}
|R(x,y)|\varphi(y) u(t,x)u(t,y)dydx,
\end{align*}
and by (\ref{R11}) we obtain
\begin{align*}
\frac{d}{dt}\int_{[0,r)}\varphi(x)u(t,x)dx
&\leq\frac{C^*e^{\frac{(1-\theta)r}{2\theta}}}{r_*^{3/2}}
\int_{r_*}^{\frac{r}{\theta}}u(t,x)\int_{\theta x}^{\min\{x,r\}}\varphi(y)u(t,y)dydx\\
&\leq C_{r_*}\int_{[0,r)}\varphi(y)u(t,y)dy,
\end{align*}
where $C_{r_*}=\frac{C^*e^{\frac{(1-\theta)r}{2\theta}}}{r_*^{3/2}}M_0(u_0),$
and then, from Gronwall's Lemma, 
\begin{align*}
\int_{[0,r)}\varphi(x)u(t,x)dx\leq e^{(t-t_0)C_{r_*}}\int_{[0,r)}\varphi(x)u(t_0,x)dx\qquad\forall t\geq t_0\geq 0.
\end{align*}
The upper bound in (\ref{comparison2}) then follows by letting $r_*$ tend to $r$.

Proof of (iii). We recall the following characterization of the support of a Radon measure $\mu$ (see \cite{FO}, Chapter 7): $x\in\supp(\mu)$ if and only if 
$\int_{[0,\infty)}\varphi d\mu>0$ 
for all $\varphi\in C_c([0,\infty))$ with $0\leq \varphi\leq 1$ such that $\varphi(x)>0$.
Then, from (\ref{supp50}) and (\ref{supp51}) for $t_0=0$, we deduce that 
\begin{align*}
(0,\infty)\cap\supp (u_0)=(0,\infty)\cap\supp (u(t))\qquad\forall t>0,
\end{align*}
and from (\ref{comparison2}) for $t_0=0$, we deduce that for all $t>0$,
\begin{align*}
0\in\supp(u_0)\quad\text{if and only if}\quad0 \in\supp(u(t)),
\end{align*}
which completes the proof.
\end{proof}

The queues of the weak solutions are  decreasing in time, as proved  in the following Proposition. 
\begin{proposition}
\label{prop.64}
Let $u$ be the weak solution of (\ref{S2EG}) constructed in Theorem \ref{S8Th1} for an initial data $u_0\in\mathscr{M}_+([0,\infty))$ satisfying (\ref{Zcondition}). Then
\begin{enumerate}[(i)]
\item
For all $r\geq 0$, the map $t\mapsto\int_{[r,\infty)}u(t,x)dx$ is nonincreasing on $[0,\infty)$.
\item
For all $r>0$, if
\begin{gather}
\exists x_0\in[r,\gamma_2(r))\cap\supp (u_0),\;\exists y_0\in(\gamma_1(r),r)\cap\supp (u_0),\qquad \nonumber\\
\label{wierd}
\text{such that}\quad B(x_0,y_0)>0,
\end{gather}

then the map $t\mapsto\int_{[r,\infty)}u(t,x)dx$ is strictly decreasing on $[0,\infty)$.
\end{enumerate}
\end{proposition}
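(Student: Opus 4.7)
My plan is to test the weak formulation (\ref{WSZ}) against a smooth nondecreasing approximation of $\mathds{1}_{[r,\infty)}$, exploiting the sign of the integrand.

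\textbf{Part (i).} For $r=0$ the claim reduces to the mass conservation (\ref{ZmassR}). For $r>0$ and $\varepsilon\in(0,r)$, pick $\varphi_{\varepsilon}\in C^1_b([0,\infty))$ nondecreasing with $0\le\varphi_{\varepsilon}\le 1$, $\varphi_{\varepsilon}\equiv 0$ on $[0,r-\varepsilon]$ and $\varphi_{\varepsilon}\equiv 1$ on $[r,\infty)$. Then $\varphi'_{\varepsilon}(0)=0$, so (\ref{WSZ}) applies. The elementary observation $(e^{-x}-e^{-y})(\varphi_{\varepsilon}(x)-\varphi_{\varepsilon}(y))\le 0$ pointwise, together with $b\ge 0$, yields that the right-hand side of (\ref{WSZ}) is $\le 0$, so $t\mapsto\int\varphi_{\varepsilon}u(t,x)\,dx$ is nonincreasing. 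Since $\varphi_{\varepsilon}\to\mathds{1}_{[r,\infty)}$ pointwise with $0\le\varphi_{\varepsilon}\le 1$, dominated convergence (using (\ref{ZmassR})) gives $\int\varphi_{\varepsilon}u(t,x)\,dx\to\int_{[r,\infty)}u(t,x)\,dx$, and the monotonicity survives the limit.

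\textbf{Part (ii).} Under (\ref{wierd}), I will produce a quantitative negative contribution to the right-hand side of (\ref{WSZ}) tested against $\varphi_{\varepsilon}$. Using the continuity of $B$ and $B(x_0,y_0)>0$, together with the fact that $(\gamma_1(r),r)$ is open and $\gamma_2(r)>r$, one can choose $\delta>0$ and closed sets $I\subset[r,\gamma_2(r))$ and $J=[y_0-\delta,y_0+\delta]\subset(\gamma_1(r),r)$ such that $x_0\in I$ (taking $I$ as a right-sided neighborhood $[r,r+\delta]$ if $x_0=r$, and two-sided otherwise), $y_0\in\mathring J$, and $B\ge c_0>0$ on $I\times J$, with $xy$ staying in a compact subset of $(0,\infty)$ and $e^{-y}-e^{-x}\ge c_1>0$ on $I\times J$. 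Consequently $R(x,y)\le -C_1<0$ there for some $C_1>0$. Taking additionally $\varepsilon<r-y_0-\delta$ ensures $\varphi_{\varepsilon}(x)-\varphi_{\varepsilon}(y)=1$ on $I\times J$. Since the integrand in (\ref{WSZ}) is symmetric in $(x,y)$ and pointwise $\le 0$, restricting it to $(I\times J)\cup(J\times I)$ provides an upper bound for the full integral, giving
\begin{equation*}
\frac{d}{dt}\int\varphi_{\varepsilon}u(t,x)\,dx\le -C_1\,u(t,I)\,u(t,J).
\end{equation*}
By Proposition \ref{comparison}(iii), $\supp u(t)=\supp u_0\ni x_0,y_0$ for all $t\ge 0$, so any open set meeting $x_0$ (respectively $y_0$) has strictly positive $u(t)$-mass. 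Narrow continuity of $u$ combined with the Portmanteau characterization implies that $t\mapsto u(t,U)$ is lower semicontinuous for open $U$; choosing open cores $U\subset I$, $V\subset J$ with $x_0\in U$, $y_0\in V$, the lsc functions $t\mapsto u(t,U)$ and $t\mapsto u(t,V)$ attain strictly positive infima on any compact $[t_0,t]$. Hence there is $C_2=C_2(t_0,t)>0$ with $u(s,I)u(s,J)\ge C_2$ on $[t_0,t]$, and integrating the previous inequality yields
\begin{equation*}
\int\varphi_{\varepsilon}u(t,x)\,dx-\int\varphi_{\varepsilon}u(t_0,x)\,dx\le -C_1C_2(t-t_0),
\end{equation*}
uniformly for all small $\varepsilon>0$. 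Letting $\varepsilon\to 0$ as in Part (i) gives the strict decrease.

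\textbf{Main obstacle.} The delicate point is the uniform-in-$s$ strictly positive lower bound for $u(s,I)u(s,J)$ on $[t_0,t]$: this rests crucially on support-conservation (Proposition \ref{comparison}(iii)) combined with narrow continuity of $u$ and the Portmanteau characterization on open sets. A secondary subtlety is the sidedness of $I$ around $x_0$ in the corner case $x_0=r$, where $I$ must be chosen as a right-sided neighborhood inside $[r,\gamma_2(r))$; this is precisely why (\ref{wierd}) requires $x_0<\gamma_2(r)$, guaranteeing room to the right of $r$.
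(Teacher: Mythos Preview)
Your proof is correct and follows essentially the same approach as the paper. The only differences are cosmetic: the paper first lets $\varepsilon\to 0$ via dominated convergence to obtain the identity (\ref{XX1}) with the sharp indicator $\mathds{1}_{[r,\infty)}$ and then argues pointwise strict negativity of the $s$-integrand, whereas you keep $\varphi_\varepsilon$ and pass to the limit at the end; and for the positivity of $u(s,I)u(s,J)$ the paper simply invokes Proposition~\ref{comparison}(iii) to get $\int_{G_i}u(s,x)\,dx>0$ for every $s$ (which already makes the $s$-integrand strictly negative, hence its time integral strictly negative), while your Portmanteau/lsc argument yields a uniform lower bound on $[t_0,t]$ --- valid, but more than is needed.
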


\begin{remark}
Condition (\ref{wierd}) holds, for instance, if $r$ is an interior point of the support of $u_0$.
\end{remark}

\begin{proof}
Proof of (i). For $r=0$, the result follows from the conservation of mass (\ref{ZmassR}). For $r>0$, let $\varepsilon\in(0,r)$ and 
$\varphi_{\varepsilon}\in C^1_b([0,\infty))$ be an increasing function such that $\varphi_{\varepsilon}(x)=1$ for all $x\geq r$, $\varphi_{\varepsilon}(x)=0$ for all $x\in[0,r-\varepsilon]$. Using the monotonicity of $\varphi_{\varepsilon}$, we deduce from the weak formulation (\ref{WSZ}) that for all $t\geq 0$,
\begin{align*}
\frac{d}{dt}\int_{[0,\infty)}\varphi_{\varepsilon}(x)u(t,x)dx\leq 0,
\end{align*}
and then the map $t\mapsto\int_{[0,\infty)}\varphi_{\varepsilon}(x)u(t,x)dx$ is nonincreasing. The result then follows by letting $\varepsilon\to 0$.

Proof of (ii).
Since (\ref{WSZ}) is invariant under time translations, it suffices to prove that for all $r>0$,
\begin{align*}
\int_{[r,\infty)}u(t,x)dx<\int_{[r,\infty)}u_0(x)dx\qquad\forall t>0,
\end{align*}
provided (\ref{wierd}) holds. To this end, consider $\varphi_{\varepsilon}$ as in part (i). By (\ref{WSZ})
\begin{align*}
\int_{[0,\infty)}\varphi_{\varepsilon}(x)u(t,x)dx&=\int_{[0,\infty)}\varphi_{\varepsilon}(x)u_0(x)dx\\
&+\int_0^t \int_0^{\infty}\int_0^x k_{\varphi_{\varepsilon}}(x,y)u(s,x)u(s,y)dydxds.
\end{align*}
Then, since
$\lim_{\varepsilon\to 0}k_{\varphi_{\varepsilon}}(x,y)=k_{\varphi}(x,y)$ for all $(x,y)\in[0,\infty)^2$, where $\varphi(x)=\mathds{1}_{[r,\infty)}(x)$, and for all $\varepsilon$ small enough, 
\begin{align*}
\int_0^{\infty}\int_0^x &|k_{\varphi_{\varepsilon}}(x,y)|u(s,x)u(s,y)dydx\\
&=\int_{r-\varepsilon}^{\infty}\int_{\theta x}^x |k_{\varphi_{\varepsilon}}(x,y)|u(s,x)u(s,y)dydx\\
&\leq 2C_*\rho_*\int_{r-\varepsilon}^{\infty}\int_{\theta x}^x\frac{e^{\frac{x-y}{2}}}{\sqrt{xy(x+y)}}u(s,x)u(s,y)dydx\\
&\leq \frac{2C_*\rho_*}{\sqrt{\theta(1+\theta)}(r-\varepsilon)^{3/2}}\int_{r-\varepsilon}^{\infty}e^{\frac{(1-\theta)x}{2}}u(s,x)\int_{\theta x}^x u(s,y)dydx\\
&\leq \frac{4C_*\rho_*M_0(u_0)}{\sqrt{\theta(1+\theta)}r^{3/2}}\int_0^{\infty}e^{\frac{(1-\theta)x}{2}}u_0(x)dx<\infty,
\end{align*}
we deduce from dominated convergence Theorem
\begin{align}
\int_{[r,\infty)}&u(t,x)dx=\int_{[r,\infty)}u_0(x)dx\nonumber\\
\label{XX1}
&+\int_0^t\int_{[r,\infty)}\int_{[0,r)} R(x,y)u(s,x)u(s,y)dydxds.
\end{align}
Taking $\Gamma$ into account, we observe that
\begin{align}
&\int_{[r,\infty)}\int_{[0,r)} R(x,y)u(s,x)u(s,y)dydxds\nonumber\\
\label{XX2}
&=\int_{[r,\gamma_2(r))}\int_{(\gamma_1(x),r)} R(x,y)u(s,x)u(s,y)dydxds\leq 0.
\end{align}
The goal is to show that the integral above is, indeed, strictly negative for all $s\in[0,t]$. 
By (\ref{wierd}) and Proposition \ref{comparison}  (iii),  there exists an open rectangle $G=G_1\times G_2$ centered around $(x_0,y_0)$ and contained in 
$\{(x,y)\in[0,\infty)^2:x\in[r,\gamma_2(r)),\;y\in(\gamma_1(x),r)\}$ such that
\begin{align*}
\int_{G_i} u(t,x)dx>0\qquad\forall t>0,\;i=1,2.
\end{align*}
We then obtain
\begin{align*}
&\int_{[r,\gamma_2(r))}\int_{(\gamma_1(x),r)} R(x,y)u(s,x)u(s,y)dydxds\\
&\leq \max_{(x,y)\in G}R(x,y)\int_{G_1} u(t,x)dx\int_{G_2}u(t,y)dy<0,
\end{align*}
and the result then follows from (\ref{XX1}) and (\ref{XX2}).
\end{proof}

\subsection{Global ``regular'' solutions.}
\label{S7regular}
We prove in this Section Theorem \ref{MT3t}, for initial data $u_0$ sufficiently flat around the origin. This condition on $u_0$ is  sufficient to prevent the formation of a Dirac mass in finite time. We do not know if it is necessary. We prove first the following,

\begin{proposition}
\label{MT3BISt}
For all $v_0\in L^1([0, \infty))$, $v_0\ge 0$, satisfying (\ref{flat})
for some $\eta>(1-\theta)/2$,  
there exists  a  nonnegative  global weak solution $v\in C([0, \infty), L^1([0, \infty)))$ of (\ref{S2EG}), (\ref{S2EG2}) such that
\begin{align}
\label{S2Eimplicit}
u(t,x)=u_{0}(x)e^{\int_0^t\int_0^{\infty}R(x,y)u(s,y)dyds}\,\,\,\,\forall t>0,\,a. e.\, x>0,
\end{align}
and also satisfies  $v(0)=v_0$, (\ref{S2Ec602}), (\ref{S2Ec6021}).
\end{proposition}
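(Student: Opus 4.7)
The plan is to build on the sequence $\{u_n\}_{n\in\NN}$ constructed in Proposition \ref{Znexistence}, which applies since (\ref{flat}) entails $v_0\in L^1([0,\infty))$. That sequence satisfies the Duhamel identity $u_n(t,x)=v_0(x)e^{A_n(t,x)}$ with $A_n(t,x)=\int_0^t\int_0^\infty R_n(x,y)u_n(s,y)\,dy\,ds$, and the heart of the argument is to establish the uniform pointwise bound $u_n(t,x)\le v_0(x)e^{tC_0/x^{3/2}}$. Together with hypothesis (\ref{flat}), this furnishes a $t$-dependent, $n$-independent integrable majorant $U_t(x):=v_0(x)e^{tC_0/x^{3/2}}\in L^1([0,\infty))$ for $u_n(t,\cdot)$, which is the key to upgrading the weak limit provided by Theorem \ref{S8Th1} into the implicit identity (\ref{S2Eimplicit}).

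For the uniform pointwise bound, I would drop the nonpositive contribution $y<x$ from $A_n$ and estimate the remainder. Using $b_n\le b$, the inequality $|e^{-x}-e^{-y}|\le e^{-x}(y-x)$ for $y\ge x$, the cut-off bound $|x-y|\le\rho_*\sqrt{xy(x+y)}$ on $\Gamma_2$ together with (\ref{Bbound}), and the estimate $xy(x+y)\ge\theta(1+\theta)x^3$ on $\Gamma$, one obtains $R_n(x,y)\le C^*e^{(y-x)/2}x^{-3/2}$ for $y\ge x$, with $C^*=C_*\rho_*/\sqrt{\theta(1+\theta)}$. Since $x\ge\theta y$ on $\Gamma$ forces $e^{(y-x)/2}\le e^{(1-\theta)y/2}\le e^{\eta y}$, and (\ref{S5WP1}) applied to the nondecreasing test function $\varphi(x)=e^{\eta x}$ yields the propagated moment estimate $X_\eta(u_n(s))\le X_\eta(v_0)$, the inner integral $\int_0^\infty R_n(x,y)u_n(s,y)dy$ is bounded by $C_0/x^{3/2}$; integrating in $s$ and plugging back into Duhamel gives the claimed bound.

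The passage to the limit is the delicate point. Theorem \ref{S8Th1} provides, along a subsequence, a narrow limit $u(t)\in\mathscr M_+([0,\infty))$; the majorant $U_t\in L^1$ then forces $u(t)\ll dx$ with density dominated by $U_t$, hence $u(t)\in L^1$. To recover (\ref{S2Eimplicit}) one must pass to the limit inside the exponential. Crucially, for each fixed $x>0$ the support of $b(x,\cdot)$ lies in the compact set $[\theta x,\theta^{-1}x]$, which is bounded away from the origin, so $b_n(x,y)=b(x,y)$ there for all $n$ sufficiently large; combined with the continuity of $R(x,\cdot)$ on that set and the narrow convergence $u_n(s)\to u(s)$, this yields pointwise convergence of the inner integral. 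The uniform $C_0/x^{3/2}$ bound then allows dominated convergence in the $s$-integral, so $A_n(t,x)\to A(t,x):=\int_0^t\int R(x,y)u(s,y)dy\,ds$ and $u_n(t,x)\to v_0(x)e^{A(t,x)}$ a.e. Since $u_n(t)\le U_t\in L^1$, dominated convergence also gives $\int\varphi u_n(t)dx\to\int\varphi v_0(x)e^{A(t,x)}dx$ for every $\varphi\in C_b$; comparison with the narrow limit forces $u(t,x)=v_0(x)e^{A(t,x)}$ a.e., which is (\ref{S2Eimplicit}).

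The remaining conclusions are then standard. Mass conservation (\ref{S2Ec602}) is (\ref{ZmassR}). Continuity $u\in C([0,\infty),L^1([0,\infty)))$ follows because $A(\cdot,x)$ is Lipschitz in $t$ with constant $C_0/x^{3/2}$, so $u(\cdot,x)$ is pointwise continuous; on any $[0,T]$ the dominant $U_T\in L^1$ upgrades pointwise to $L^1$ continuity via dominated convergence. The main obstacle is thus the reconciliation, at each $t$, of the narrow limit of $u_n(t)$ with the pointwise limit coming from the Duhamel formula; the stabilisation $b_n(x,\cdot)=b(x,\cdot)$ on the compact, origin-avoiding support of $b(x,\cdot)$ for each fixed $x>0$ is what makes this reconciliation work.
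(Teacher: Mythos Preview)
Your approach is correct and provides a genuine alternative to the paper's argument. The paper proceeds in two separate steps: first, for \emph{compactly supported} initial data, it runs a direct Banach fixed-point argument for the operator $A(f)(t,x)=u_0(x)\exp\big(\int_0^t\int R(x,y)f(s,y)\,dy\,ds\big)$ in the weighted space $C([0,T),L^1(\omega\,dx))$ with $\omega(x)=1+x^{-3/2}$, obtaining local existence, extension to a maximal time, and then globality via the same $v_0(x)e^{tC/x^{3/2}}$ bound (which controls $\|u(t)\|_\omega$). Second, for general data, it truncates $u_0$ to $u_0\mathds{1}_{[0,n]}$, applies Step~1, and passes to the limit using Dunford--Pettis and weak $L^1$ compactness (the equi-integrability coming precisely from (\ref{flat})).

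Your route bypasses the weighted fixed-point step entirely by reusing the kernel-truncated solutions $u_n$ of Proposition~\ref{Znexistence} and the narrow compactness already packaged in Theorem~\ref{S8Th1}. The observation that for each fixed $x>0$ the support of $b(x,\cdot)$ lies in $[\theta x,\theta^{-1}x]$, so that $R_n(x,\cdot)\equiv R(x,\cdot)$ for $n$ large and $R(x,\cdot)\in C_c((0,\infty))$, is exactly what makes the passage to the limit in the Duhamel exponent work under narrow convergence alone---this is more economical than the paper's Dunford--Pettis machinery in Step~2. What the paper's approach buys in return is a self-contained contraction-mapping construction (yielding also uniqueness, at least for compactly supported data, cf.\ the Remark following the proof), whereas yours establishes existence of one particular limit. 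One small caveat: your claim that $A(\cdot,x)$ is Lipschitz with constant $C_0/x^{3/2}$ covers only the upper bound on $\partial_t A$; the two-sided bound requires also estimating the negative part $\int_{y<x}R(x,y)u(s,y)\,dy$, but this is immediate from (\ref{R11}) and mass conservation, and does not affect the continuity argument.
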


\begin{proof}[\bfseries\upshape{Proof of Proposition \ref{MT3BISt}}]
The proof 
has  two steps. \\
\textit{Step 1}. We consider first a compactly supported initial data, say $\supp u_0\subset [0,L]$, $L>0$.
We first prove that the operator
\begin{equation}
\label{S8OPA}
A(f)(t,x)=u_0(x)e^{\int_0^{t}\int_0^\infty R(x,y)f(s,y)dyds}
\end{equation} 
is a contraction on  $Y _{ \rho , T }$ for some  $\rho >0$ and $T>0$, where
\begin{gather*}
Y_{\rho, T} =\left\{f\in C([0, T), L^1([0,\infty),\omega dx)):\; \|f\|_T\le \rho  \right\},\\
\|f\|_T=\sup_{0\leq t<T}\int_0^{\infty}\omega(x)|f(t,x)|dx=\sup_{0\leq t<T}\|f(t)\|_{\omega},\\
\omega(x)=(1+x^{-3/2}).
\end{gather*}
Using (\ref{Bsupport})--(\ref{Bbound}), for all $(x,y)\in\Gamma$, $x\leq L$,
\begin{align*}
|R(x,y)|&
\leq\frac{C_*\rho_* e^{\frac{|x-y|}{2}}}{\sqrt{xy(x+y)}}\leq\frac{C_*\rho_*}{\sqrt{\theta(1+\theta)}}\frac{e^{\frac{(1-\theta)x}{2\theta}}}{y^{3/2}}\leq C_L\omega(y),
\end{align*}
where $C_L=\frac{C_*\rho_*e^{\frac{(1-\theta)L}{2\theta}}}{\sqrt{\theta(1+\theta)}}$.
Then, for all nonnegative $f\in Y_{\rho,T}$, $x\in[0,L]$, and $t\in[0,T)$,
\begin{align*}
\int_0^t\int_0^{\infty}|R(x,y)|f(s,y)dyds&\leq C_L\rho T,
\end{align*}
and then
\begin{align}
A(f)(t, x)\le u_0(x)e^{C_L\rho T},\nonumber\\
\|A(f)\|_T\leq  \|u_0\|_{\omega}e^{C_L\rho T}.\label{1KO}
\end{align}
Notice that $\|u_0\|_{\omega}<\infty$ by the hypothesis (\ref{flat}).
Let now $t_1$ and $t_2$ be such that $0\leq t_1\leq t_2<T$. Then, for all $x\in[0,L]$,
\begin{align*}
|A(f)(t_1,x)&-A(f)(t_2,x)|=\\
&=u_0(x)\left|e^{\int_0^{t_1}\int_0^{\infty}R(x,y)f(s,y)dyds}-e^{\int_0^{t_2}\int_0^{\infty}R(x,y)f(s,y)dyds}\right|\\
&\leq u_0(x)e^{C_L\rho T}C_L\rho|t_1-t_2|,
\end{align*}
and therefore
\begin{align*}
\|A(f)(t_1)-A(f)(t_2)\|_{\omega}
&\leq \|u_0\|_{\omega}e^{C_L\rho T}C_L\rho|t_1-t_2|,
\end{align*}
from where it follows that $A\in C([0, T), L^1([0,\infty),\omega dx))$.  
On the other hand, 
if we chose $\rho=2\|u_0\|_{\omega}$ and $T>0$ such that $e^{C_L\rho T}\leq 2$, we deduce from (\ref{1KO}) that $\|A(f)\|_T\leq\rho$, i.e., $A(f)\in Y_{\rho,T}$.

Let now $f$ and $g$ be in $Y _{ \rho , T }$ .
By similar computations as before,
\begin{align*}
\|A(f)-A(g)\|_{T}&\leq \|u_0\|_{\omega}e^{C_L\rho T} C_LT\|f-g\|_T,
\end{align*}
and if $T$ is such that
\begin{align*}
 \|u_0\|_{\omega}e^{C_L\rho T} C_LT<1,
\end{align*}
then $A$ is a contraction on $Y _{ \rho , T }$, and has then a fixed point $u$
that satisfies (\ref{S2Eimplicit}) for all $t\in (0, T)$ and $a. e. \, x>0$.
It then follows in particular that $u\ge 0$. Let us denote
\begin{equation*}
T _{ \max }=\sup \Big\{T>0;  \exists\, \rho >0, \exists  u\in Y _{ \rho , T }\,\,\hbox{satisfying}\,\,  (\ref{S2Eimplicit}),\,\forall t\in [0, T)\Big\}.
\end{equation*}
We claim that if $T _{ \max }<\infty$, then $\lim\sup_{t\to T_{\max}}\|u(t)\|_{\omega}=\infty$. Suppose that $T_{\max}<\infty$ and 
$\limsup_{ t\to T_{\max} }\|u(t)\|_{\omega}=\ell<\infty$, and let $t_n\to T_{\max}$.
For every $n\in\NN$ we define $\rho_n=2\|u(t_n)\|_{\omega}$, 
and the map 
\begin{align*}
A_n(f)(t,x)=u(t_n, x)e^{\int _0^t\int _0^\infty R(x, y)f(s, y)dyds},
\end{align*}
for $f\in C([0, T), L^1([0,\infty),\omega dx))$, $T>0$.
For every $T>0$, $t\in[0,T)$, $x\in[0,L]$, and $f\in Y_{\rho_n,T}$,
\begin{align*}
&A_n(f)(t,x)\leq u(t_n, x)e^{C_L\rho_n T},\\
&\|A_n(f)\|_{T }\leq \|u(t_n)\|_{\omega}e^{C_L\rho_n T},
\end{align*}
and for all $T$ such that $T\leq (\ln 2)/(C_L\rho_n)=T_n$, it follows that $A_n(f)\in Y_{\rho_n,T}$. 
Notice that by hypothesis, $\rho_n\leq 2\ell$ for all $n$, and then
\begin{align*}
T_n\geq \frac{\ln 2}{C_L2\ell}:=\tau_1\quad\forall n\in\NN.
\end{align*}
Now let $f$ and $g$ be in $Y_{\rho_n,T}$ for $T>0$. 
Arguing as before
\begin{align*}
\|A_n(f)-A_n(g)\|_{T}&\leq \|u(t_n)\|_{\omega}e^{C_L2\ell T} C_LT\|f-g\|_T,
\end{align*}
and since
\begin{align*}
u(t_n,x)\leq u_0(x)e^{C_L2\ell T_{\max}},\quad\forall n\in\NN,
\end{align*}
then
\begin{align*}
\|A_n(f)-A_n(g)\|_{T}&\leq \|u_0\|_{\omega}e^{C_L2\ell(T+T_{\max})} C_LT\|f-g\|_T.
\end{align*}
If we chose $\tau_2>0$ such that 
\begin{align*}
 \|u_0\|_{\omega}e^{C_L2\ell(\tau_2+T_{\max})} C_L\tau_2<1,
\end{align*}
and we let $\tau_*=\min\{\tau_1,\tau_2\},$
then $A_n$ is a contraction from $Y_{\rho_n,\tau_*}$ into itself,
and has then a fixed point, say $v_n$. The function $v_n$ satisfies
\bean
v_n(t, x)=u(t_n, x)e^{\int _0^t\int _0^\infty R(x, y)\, v_n(t, y)\, dy},\quad\forall t\in [0, \tau_*),\,\text{a.e. }x\in[0,\infty).
\eean
Therefore, the function $w_n$ defined as
\begin{equation*}
w_n(t,x)=\begin{cases}
u(t, x)&\text{if}\;t\in[0,t_n)\\
v_n(t-t_n, x),&\text{if}\; t\in [t_n, t_n+\tau_*)
\end{cases}
\end{equation*}
satisfies the integral equation:
\bean
w_n(t, x)=u_0(x)e^{\int _0^t\int _0^\infty R(x, y)w_n(s, y)dyds},\quad\forall t\in [0, t_n+\tau_*).
\eean
Since $t_n\to T_{\max}$, then $t_n+\tau_*>T _{ \max }$ for $n$ large enough, and this contradicts the definition of $T _{ \max }$.
We deduce that, either $T _{ \max }=\infty $, and the solution is said to be global,  or 
$\limsup_{t\to T_{\max}}\|u(t)\|_{\omega}= \infty $ and  the solution is said to blow  up in finite time, at $T _{ \max }$.

Since for all $T<T _{ \max }$, $t\in[0,T]$ and $a. e.\,x\in[0,L]$,
\begin{align*}
&\left|\frac{d}{dt}\left(u_0(x)\varphi(x)e^{\int_0^t\int_0^{\infty}R(x,y)u(s,y)dyds}\right)\right|\\
&\leq u_0(x)|\varphi(x)|e^{C_LT\|u\|_T}C_LT\|u\|_T\quad(\text{integrable in }x),
\end{align*}
we may then multiply both sides of the equation (\ref{S2Eimplicit}) by
a function $\varphi \in C_b([0,\infty))$ and integrate on $[0,\infty)$:
\begin{align*}
\frac {d} {dt}\int _0^\infty u(t, x)\varphi (x)dx=\int _0^\infty u(t, x)\varphi (x)\left(\int _0^\infty R(x, y)u(t, y)dy\right)dx,
\end{align*}
and since
\begin{align*}
\left| u(t,\cdot)u(t,\cdot)\varphi (t,\cdot)R(\cdot, \cdot)\right|\in L^1([0,\infty)\times [0,\infty))\quad\forall t\in[0,T_{\max}),
\end{align*}
by Fubini's Theorem and the antysimmetry of $R(x,y)$,
\begin{align}
\label{S5EW18}
\frac {d} {dt}\int _0^\infty & u(t, x)\varphi (x)dx=\int _0^\infty \int _0^\infty  \varphi (x)R(x, y) u(t,x)u(t,y)dxdy\nonumber\\
 &=\frac {1} {2}\int _0^\infty \int _0^\infty  (\varphi (x)-\varphi (y)) R(x, y)u(t,x)u(t,y)dxdy.
\end{align}
This shows that $u$ is a weak solution of  (\ref{S2EG}), (\ref{S2EG2}) .
If $\varphi =1$:
$$
\frac {d} {dt}\int _0^\infty u(t, x)dx=0,
$$
and then $\|u(t)\|_1=\|u_0\|_1$ for all $t>0$. Then, since
\begin{align*}
\int_0^{\infty}R(x,y)u(s,y)dx&\leq \frac{C_L\|u_0\|_1}{x^{3/2}},
\end{align*}
we obtain from (\ref{S2Eimplicit})
\begin{align*}
u(t,x)\leq u_0(x)e^{ \frac{tC_L\|u_0\|_1}{x^{3/2}}},
\end{align*}
and then 
\begin{align}
\label{S7E167XYm}
\|u(t)\|_{\omega}\leq \int_0^{\infty}\omega(x)e^{\frac{tC_L\|u_0\|_1}{x^{3/2}}}u_0(x)dx.
\end{align}
Notice that (\ref{flat}) implies 
\begin{align}
\label{zzz2}
\forall r>0,\qquad\int_0^1 u_0(x)\frac{e^{\frac{r}{x^{3/2}}}}{x^{3/2}}dx<\infty.
\end{align}
Indeed, if we write $x^{-3/2}=e^{-\frac{3}{2}\ln x}$, then for all $r>0$,
\begin{gather*}
\int_0^1u_0(x) \frac{e^{\frac{r}{x^{3/2}}}}{x^{3/2}}dx= \int_0^1 u_0(x)e^{\frac{r}{x^{3/2}}-\frac{3}{2}\ln x}dx
\leq \int_0^1 u_0(x)e^{\frac{r'}{x^{3/2}}}dx<\infty,\\
\shortintertext{where}
r'=r+e^{-1}=\max_{x\in[0,1]}\left(r-\frac{3}{2}x^{3/2}\ln x\right).
\end{gather*}
We then obtain from (\ref{S7E167XYm}), (\ref{zzz2}) that
\begin{align}
\label{S7E167XY}
\|u(t)\|_{\omega}\leq \int_0^{\infty}\omega(x)e^{\frac{tC_L\|u_0\|_1}{x^{3/2}}}u_0(x)dx<\infty\quad\forall t\in[0,T_{\max}),
\end{align}
therefore $\lim_{t\to T_{\max}}\|u(t)\|_{\omega}<\infty$ if $T_{\max}<\infty$, and then by the alternative, $T_{\max}=\infty$.\\

\textit{Step 2}. For a general initial data $u_0$, let $u_{0,n}(x)=u_0(x)\mathds{1}_{[0,n]}(x)$, and $u_n$ be the weak solution constructed in Step 1 for the initial data $u_{0,n}$  that satisfies
\begin{align}
\label{DP0}
u_n(t,x)=u_{0,n}(x)e^{\int_0^t\int_0^{\infty}R(x,y)u_n(s,y)dyds},
\end{align}
and $\|u_n(t)\|_1=\|u_{0,n}\|_1\leq\|u_0\|_1$ for all $t>0$ and all $n\in\NN$. Then, arguing as in the proof of Theroem \ref{MT1}, a subsequence of $\{u_n\}_{n\in\NN}$ (not relabelled) converges to some $u\in C([0,\infty),\mathscr{M}_+([0,\infty)))$ 
in the space $C([0,\infty),\mathscr{M}_+([0,\infty)))$. On the other hand,
since for all $n\in\NN$,
\begin{align}
\label{DP00}
\int_0^{\infty}R(x,y)u_n(s,y)dy\leq\frac{C^*}{x^{3/2}}\int_x^{\infty}e^{\frac{(1-\theta)y}{2}}u_n(s,y)ds\leq 
\frac{C_0}{x^{3/2}},
\end{align}
where $C_0=C^*\int_0^{\infty}e^{\eta y}u_0(y)dy$, it follows  from (\ref{flat}) that for all $\varepsilon>0$ there exists $\delta>0$ such that for all $E\subset [0,\infty)$ mesasurable with $|E|<\delta$,
\begin{align}
\label{DP1}
\int_E u_n(t,x)dx&\leq\int_{E}u_0(x)e^{\frac{C_0 t}{x^{3/2}}}dx<\varepsilon \quad\forall n\in\NN,\;\forall t>0.
\end{align}
Moreover, for all $\varepsilon>0$ there exists $M>0$ such that
\begin{align}
\label{DP2}
\int_{M}^{\infty}u_n(t,x)dx&\leq e^{-\eta M}\int_M^{\infty}e^{\eta x}u_n(t,x)dx\nonumber\\
&\leq e^{-\eta M}\int_0^{\infty}e^{\eta x}u_0(x)dx<\varepsilon\quad\forall n\in\NN,\;\forall t>0.
\end{align}
It then follows from (\ref{DP1})--(\ref{DP2}) and Dunford-Pettis Theorem, that for all $t>0$, a subsequence of $u_n(t)$ (not relabelled) converges to a function $U(t)\in L^1([0,\infty))$ in the weak topology $\sigma(L^1,L^{\infty})$.  Therefore we deduce that for all $t>0$,
\begin{align*}
\int_0^{\infty}\varphi(x)U(t,x)dx=\int_{[0,\infty)}\varphi(x)u(t,x)dx\,\forall\varphi\in C_b([0,\infty)),
\end{align*}
i.e., the measure $u(t)$ is absolutely continuous with respect to the Lebesgue measure, with density $U(t)$. With some  abuse of notation we identify $u$ and $U$. The goal now is to pass to the limit in (\ref{DP0}) as $n\to\infty$.
Since $R(x,\cdot)\in L^{\infty}([0,\infty))$ for a.e. $x>0$ and all $t>0$, and 
\begin{align}
\label{DP99}
\int_0^{\infty}|R(x,y)&|u_n(s,y)dy\leq \frac{C^*}{x^{3/2}}\left(\int_0^x e^{\frac{x-y}{2}}u_n(s,y)dy+\right.\noindent\\
&\hskip 4cm \left. +\int_x^{\infty}e^{\frac{y-x}{2}}u_n(s,y)dy\right)\nonumber\\
&\leq\frac{C^*}{x^{3/2}}\left(e^{\eta x}\|u_0\|_1+\int_0^{\infty}e^{\eta y} u_0(y)dy\right),\,\forall n\in\NN,
\end{align}
it follows by the weak convergence $u_n(t)\rightharpoonup u(t)$ and dominated convergence, that for all $t>0$, a.e. $x>0$,
\begin{align*}
\lim_{n\to\infty}\int_0^t\int_0^{\infty}R(x,y)u_n(s,y)dyds=\int_0^t\int_0^{\infty}R(x,y)u(s,y)dyds,
\end{align*}
and then, using that $u_{0,n}\to u_0$ a.e., (\ref{DP00}), and dominated convergence,
\begin{align*}
\lim_{n\to\infty}&\int_0^{\infty}u_{0,n}(x)e^{\int_0^t\int_0^{\infty}R(x,y)u_n(s,y)dyds}dx\\
&=\int_0^{\infty}u_{0}(x)e^{\int_0^t\int_0^{\infty}R(x,y)u(s,y)dyds}dx.
\end{align*}
Therefore, $u$ satisfies (\ref{S2Eimplicit}) for all $t>0$ and $a. e.\, x>0$.

Arguing as in (\ref{DP00}) we obtain (\ref{S2Ec6021}), and arguing as in Step 1 we obtain (\ref{S2Ec602}). 

We now claim that 
\begin{align}
\label{DP11}
u\in C([0,\infty),L^1((0,\infty))).
\end{align}
For all $T>0$, $t_1$ and $t_2$ with $0\leq t_1\leq t_2\leq T$, we have by (\ref{DP99}),
\begin{align*} 
&\|u(t_1)-u(t_2)\|_1\\
&\leq \int_0^{\infty} u_0(x)\left|e^{\int_0^{t_1}\int_0^{\infty}R(x,y)u(s,y)dyds}-e^{\int_0^{t_2}\int_0^{\infty}R(x,y)u(s,y)dyds}\right|dx\\
&\leq \int_0^{\infty}u_0(x)e^{\frac{TC_0}{x^{3/2}}}\left(\int_{t_1}^{t_2}\int_0^{\infty}|R(x,y)|u(s,y)dyds\right) dx\\
&\leq |t_1-t_2|\int_0^{\infty}u_0(x)e^{\frac{TC_0}{x^{3/2}}}\frac{C^*}{x^{3/2}}\left(e^{\eta x}\|u_0\|_1+\int_0^{\infty}e^{\eta y} u_0(y)dy\right)dx,
\end{align*}
and then (\ref{DP11}) follows using (\ref{flat}).  Arguing as in Step 1 we deduce that $u$ is a weak solution of  (\ref{S2EG}), (\ref{S2EG2}).
\end{proof}

\begin{proof}[\bfseries\upshape{Proof of Theorem \ref{MT3t}}]
 Theorem \ref{MT3t} follows from Proposition \ref{MT3BISt} since the function $b(k, k')=\frac{\Phi\mathcal B_\beta}{kk'}$ satisfies (\ref{B1})--(\ref{Bbound}).
\end{proof}

\begin{remark}  
\label{S5RXBT2}
The same proof shows that Theorem \ref{MT3t} is still true for the equation 
\begin{equation}
\label{S1ER1NC}
\frac {\partial v} {\partial t}(t,k)=v(t, k)\!\!\!\!\int\limits_{[0, \infty)} \!\!\!\! v(t, k')
\big(e^{- \beta k }-e^{-\beta  k'}\big)\frac{\mathcal B_\beta (k, k')}{kk'}dk'.
\end{equation}
where the redistribution function $\mathcal B_\beta $ is kept without  truncation. This is possible because the property (\ref{flat}) is also propagated by the weak solutions of (\ref{S1ER1NC}) such that
\begin{align}
\label{FOBISNC}
v(t, k)=v_0(k)e^{\int_0^{t}\int_0^\infty \big(e^{- \beta k }-e^{-\beta  k'}\big)\frac{\mathcal B_\beta (k, k')}{kk'}\, v(s, k')\, dk'\, ds}.
\end{align} 
Notice in particular that the integral term in the exponential is well defined when $v(t)$ satisfies (\ref{flat}).
\end{remark}

\begin{remark}
Let $u$ and $v$ be two solutions of (\ref{S1ER1}), with a compactly supported initial data $u_0\in L^1([0,\infty))$ satisfying (\ref{flat}) and such that $\supp (u_0)\subset [0,L]$, $L>0$. It follows from the representation(\ref{S2Eimplicit}) that, for all $t>0$ and  $a.e.\, x>0$,
\begin{align*}
&|u(t,x)-v(t, x)|\leq u_0(x)e^{\frac{t C_L\|u_0\|_1}{x^{3/2}}} \frac{C_L}{x^{3/2}}\int_0^t\int_0^{\infty}|u(s,y-v(s,y)|dyds,
\end{align*}
and then, by Gronwall's Lemma, $u=v$ for a.e. $t>0$ and a.e. $x>0$.
\end{remark}

\subsection{$\boldsymbol{M_\alpha} $ as Lyapunov functional. } 
\label{entropy}
The  goal of this Section is the study of the functionals $M _{ \alpha  }(u(t))$, defined in (\ref{definition moment}), and $ D _{ \alpha}(u(t))$, defined in  (\ref{S1ERDH}),
acting on the weak solutions of problem (\ref{S2EG}), and to prove, in particular, Theorem \ref{characterization}. 

Let us start with the following simple lemma, that establishes a monotonicity property for the moments of a solution to (\ref{S2EG}).

\begin{lemma}
\label{LmomZ}
Let $u$ be the weak solution of (\ref{S2EG}) given by Theorem \ref{S8Th1} for an initial data $u_0\in\mathscr{M}_+([0,\infty))$ satisfying (\ref{Zcondition}). Then, the weak formulation (\ref{WSZ}) holds for $\varphi(x)=x^{\alpha}$ for all $\alpha\geq 1$. Moreover, for all $t_0\geq0 $,
\begin{gather}
\label{momentsA}
M_{\alpha}(u(t))\leq M_{\alpha}(u(t_0))\qquad\forall t\geq t_0.
\end{gather}
\end{lemma}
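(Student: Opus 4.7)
The statement has two parts: (i) the weak formulation (\ref{WSZ}) extends from bounded to the unbounded test function $\varphi(x)=x^\alpha$ with $\alpha\ge 1$, and (ii) the monotonicity (\ref{momentsA}). Once (i) is proved, (ii) is a direct sign argument: for $\alpha\ge 1$ the map $x\mapsto x^\alpha$ is nondecreasing while $x\mapsto e^{-x}$ is decreasing, so $(e^{-x}-e^{-y})(x^\alpha-y^\alpha)\le 0$ pointwise, and combined with $b\ge 0$ this gives $R(x,y)(\varphi(x)-\varphi(y))\le 0$ on $[0,\infty)^2$. Integrating (\ref{WSZ}) from $t_0$ to $t$ then yields (\ref{momentsA}).

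For (i) the first remark is that both sides make sense: since $x^\alpha\le C_{\alpha,\eta}e^{\eta x}$ for any $\eta>0$, the estimate (\ref{Zmoment}) gives $M_\alpha(u(t))\le C_{\alpha,\eta}X_\eta(u_0)<\infty$ for all $t\ge 0$. I would then approximate $\varphi(x)=x^\alpha$ by admissible test functions. Fix $\chi\in C_c^\infty([0,\infty))$ with $\chi\equiv 1$ on $[0,1]$, $\chi\equiv 0$ on $[2,\infty)$, $0\le\chi\le 1$. For $\alpha>1$ take $\varphi_R(x)=x^\alpha\chi(x/R)$, so $\varphi_R\in C_b^1([0,\infty))$ with $\varphi_R'(0)=0$, and Theorem \ref{S8Th1}(ii) applies. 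For the borderline case $\alpha=1$, set $\varphi_{R,\varepsilon}(x)=x\chi(x/R)(1-\chi(x/\varepsilon))$, enforcing $\varphi_{R,\varepsilon}'(0)=0$, and let $\varepsilon\to 0$ before $R\to\infty$. In either case (\ref{WSZ}) holds for the approximant.

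The crucial step is uniform integrability of the right-hand side as $R\to\infty$. Using $|\varphi_R(x)-\varphi_R(y)|\le C_\alpha(x+y)^{\alpha-1}|x-y|$, $|e^{-x}-e^{-y}|\le|x-y|$, and the bound (\ref{Bbound}), the integrand is dominated by
\begin{equation*}
\frac{C_*C_\alpha\, e^{(x+y)/2}(x+y)^{\alpha-1}|x-y|^2}{xy(x+y)}\,u(s,x)u(s,y)\,\mathds{1}_\Gamma(x,y).
\end{equation*}
On $\Gamma_2\subset[0,\delta_*]^2$ the constraint $|x-y|^2\le\rho_*^2\,xy(x+y)$ cancels the singular denominator, leaving a bounded quantity; on $\Gamma_1$ the bracketing $\theta x\le y\le\theta^{-1}x$ makes $x,y$ comparable so that $|x-y|^2/(xy(x+y))$ is controlled by a constant depending on $\theta$. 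In both regions the residual factor $e^{(x+y)/2}(x+y)^{\alpha-1}$ is integrable against $u(s,x)u(s,y)$ thanks to the propagated exponential moment (\ref{Zmoment}) and the conservation of mass (\ref{ZmassR}). Dominated convergence then delivers (\ref{WSZ}) for $\varphi(x)=x^\alpha$.

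\textbf{Main obstacle.} The delicate point is the near-origin analysis on $\Gamma_2$: the quadratic vanishing $|x-y|^2\le\rho_*^2\,xy(x+y)$ encoded in the support of $B$ is exactly what is needed to absorb the $(xy(x+y))^{-1}$ singularity of $b$, and this must be combined with the large-$x$ control from $X_\eta(u(s))$ in a single dominating function. The secondary technical nuisance is the case $\alpha=1$, for which a naive compactly supported cutoff does not satisfy $\varphi'(0)=0$ and must be further truncated near the origin, requiring a double limit $\varepsilon\to 0$, $R\to\infty$ controlled by the same majorant.
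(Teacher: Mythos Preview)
Your overall strategy---approximate $x^\alpha$ by admissible $C^1_b$ test functions with vanishing derivative at the origin, then pass to the limit by dominated convergence---is essentially the paper's approach. However, there is a genuine gap in your majorant.

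You bound $|e^{-x}-e^{-y}|\le |x-y|$, which, combined with (\ref{Bbound}), leaves a residual factor $e^{(x+y)/2}$ in the dominating function. You then claim this is integrable against $u(s,x)u(s,y)$ by (\ref{Zmoment}) and (\ref{ZmassR}). But (\ref{Zmoment}) only gives $X_\eta(u(s))\le X_\eta(u_0)$ for some $\eta>(1-\theta)/2$, and $\eta$ may well be strictly less than $1/2$. On $\Gamma$ the factor $e^{(x+y)/2}$ behaves like $e^{x/2}e^{y/2}$, and there is no way to control $\int e^{x/2}u(s,x)\,dx$ from $X_\eta(u(s))$ when $\eta<1/2$. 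So as written your dominating function is not shown to be integrable.

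The fix is elementary: use instead $|e^{-x}-e^{-y}|\le e^{-\min\{x,y\}}|x-y|$. Combined with (\ref{Bbound}) this produces $e^{|x-y|/2}$ rather than $e^{(x+y)/2}$, and on $\Gamma$ one has $|x-y|\le(1-\theta)\max\{x,y\}$, so $e^{|x-y|/2}\le e^{(1-\theta)\max\{x,y\}/2}$, which \emph{is} controlled by $X_\eta$ with $\eta>(1-\theta)/2$. This is exactly the route taken in the paper (via the bound (\ref{kboundC1}) in Lemma~\ref{k continuous}); the resulting majorant is
\[
C\alpha\,\max\{x,y\}^{\alpha-1}e^{\frac{(1-\theta)}{2}\max\{x,y\}}\,u(s,x)u(s,y)\,\mathds{1}_\Gamma(x,y),
\]
whose double integral splits (using $\max\{x,y\}\le\theta^{-1}\min\{x,y\}$ on $\Gamma$) into a product bounded by $X_\eta(u_0)\cdot M_{\alpha-1}(u_0)$.

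A minor remark on ordering: the paper first proves the monotonicity (\ref{momentsA}) directly on the approximants $\varphi_k$ (since each $\varphi_k$ is nondecreasing, the right-hand side of (\ref{WSZ}) has a sign before any limit is taken), and only afterwards establishes the extended weak formulation, using (\ref{momentsA}) in the bound. Your reversed order is also workable, since $y^{\alpha-1}\le C_{\alpha,\eta}e^{\eta y}$ lets you avoid invoking (\ref{momentsA}) inside the majorant, but you should make that substitution explicit rather than appealing to mass conservation alone.
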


\begin{proof}
Let $\alpha\geq1$ and $\varphi(x)=x^{\alpha}$. We first notice from (\ref{Zcondition}) and (\ref{Zmoment}) that $M_{\alpha}(u(t))<\infty$ for all $t\geq0$. 
Then, consider an approximation
$\{\varphi_k\}_{k\in\NN}\subset C^1_b([0,\infty))$ such that $\varphi_k$ is nondecreasing, $\varphi_k'(0)=0$, $\varphi_k'\leq \varphi'$ for all $k\in\NN$, and $\varphi_k\to\varphi$ pointwise as $k\to\infty$. Using the definite sign of the right hand side of (\ref{WSZ}) for the test function $\varphi_k$, we obtain
$$\frac{d}{dt}\int_{[0,\infty)}\varphi_k(x)u(t,x)dx\leq 0\qquad\forall t>0,\;\;\forall k\in\NN,$$
from where, for all $t_0\geq 0$, $\int_{[0,\infty)}\varphi_k(x)u(t,x)dx\leq\int_{[0,\infty)}\varphi_k(x)u(t_0,x)dx$ for all $t\geq t_0$ and all 
$k\in\NN$, and then (\ref{momentsA})
follows from dominated convergence theorem, by letting $k\to\infty$.

Let us prove now that (\ref{WSZ}) holds for $\varphi(x)=x^{\alpha}$. From (\ref{WSZ}) for the test function $\varphi_k$,
\begin{align}
\int_{[0,\infty)}&\varphi_k(x)u(t,x)dx=\int_{[0,\infty)}\varphi_k(x)u_0(x)dx\nonumber\\
&+\int_0^t \iint_{[0,\infty)^2}k_{\varphi_k}(x,y)u(s,x)u(s,y)dydxds. \label{E100}
\end{align}
Using that $\varphi_k'\leq \varphi'$ for all $k\in\NN$, we obtain from (\ref{kboundC1}) that for all $(x,y)\in\Gamma$,
\begin{equation*}
|k_{\varphi_k}(x,y)|\leq C\alpha\max\{x,y\}^{\alpha-1}e^{\frac{|x-y|}{2}},
\quad C=\max\left\{\frac{(1-\theta)^2}{\theta\delta_*(1+\theta)},\rho_*\right\},
\end{equation*}
and, since $|x-y|\leq (1-\theta)\max\{x,y\}$ and $\max\{x,y\}\leq\theta^{-1}\min\{x,y\}$ for all $(x,y)\in\Gamma$, we then deduce using also (\ref{Zmoment}) and (\ref{momentsA}), that for all $t\geq 0$ and $k\in\NN$,
\begin{align*}
&\iint_{[0,\infty)^2}|k_{\varphi_k}(x,y)|u(t,x)u(t,y)dydx\\
&\leq \frac{C\alpha}{\theta^{\alpha-1}}\iint_{[0,\infty)^2}\min\{x,y\}^{\alpha-1}e^{\frac{(1-\theta)}{2}\max\{x,y\}}u(t,x)u(t,y)dydx\\
&\leq\frac{2C\alpha}{\theta^{\alpha-1}}\bigg(\int_{[0,\infty)}e^{\frac{(1-\theta)x}{2}}u(t,x)dx\bigg)\bigg(\int_{[0,\infty)}y^{\alpha-1}u(t,y)dy\bigg)\\
&\leq \frac{2C\alpha}{\theta^{\alpha-1}}\bigg(\int_{[0,\infty)}e^{\eta x}u_0(x)dx\bigg)\bigg( \int_{[0,\infty)}y^{\alpha-1}u_0(y)dy\bigg).
\end{align*}
On the other hand, $k_{\varphi_k}(x,y)\to k_{\varphi}(x,y)$ for all $(x,y)\in[0,\infty)^2$ as $k\to\infty$. Passing to the limit as $k\to\infty$ in (\ref{E100}), it then follows from dominated convergence theorem that for all $t\geq 0$,
\begin{align}
\int_{[0,\infty)}& \varphi(x)u(t,x)dx=\int_{[0,\infty)}\varphi(x)u_0(x)dx\nonumber\\
&+\int_0^t \iint_{[0,\infty)^2}k_{\varphi}(x,y)u(s,x)u(s,y)dydxds \label{E101},
\end{align}
and then (\ref{WSZ}) holds.
\end{proof}

If $u$ is  a weak solution to (\ref{S2EG})  given by Theorem \ref{S8Th1}, then by Lemma \ref{LmomZ} the following identity holds,
\begin{equation}
\label{S7EHD}
\frac {d} {dt}M_{ \alpha  }(u(t))=\frac {1} {2}D _{ \alpha  } (u(t)) \qquad \forall t>0.
\end{equation}
Since $D_\alpha (u(t))\le 0$ for all $t>0$, this shows that $M_\alpha $ is a  Lyapunov functional on these  solutions. The identity (\ref{S7EHD}) is reminiscent of the  usual  entropy -  dissipation of entropy identity.
As already observed in the Introduction, since the support of the function $B$ is contained in the region $\Gamma  \subset [0,\infty)^2$, if $a>0$ and $b>0$ are such that $(a, b)\not \in \Gamma $ (they do not see each other) then, for all $\varphi \in C_b^1([0, \infty))$ such that $\varphi '(0)=0$,
$$
\iint _{ [0, \infty)^2 }\delta (x-a)\delta (y-b)R(x, y)(\varphi (x)-\varphi (y))dxdy=0.
$$
Let us then see  some of the  consequences of this  simple observation.

\begin{definition}
\label{def. disjoint}
We say that two points $a$ and $c$ on $[0,\infty)$ are $\Gamma$-disjoint if  $(a, c)\not \in \Gamma $.
We say that two sets $A$ and $C$ on $[0,\infty)$ are $\Gamma$-disjoint if  for all $(a,c)\in A\times C$, $(a, c)\not \in \Gamma$,   i.e., if $A\times C\subset [0,\infty)^2\setminus\mathring{\Gamma}$.
\end{definition}
Since the support  of any given measure  $u\in\mathscr{M}_+([0,\infty))$ is, by definition, a closed subset of $[0,\infty)$, then
\begin{equation}
\label{supp D}
(\supp (u))^c=\bigcup_{k=0}^{\infty}I_k,\,\,\, I_k\;\text{open interval },\,\, I_k\cap I_j=\emptyset\;\;\text{if}\;\;k\neq j.
\end{equation} 
We may write $I_k=(a_k,b_k)$ for $0\leq a_k<b_k$ for all $k\in\NN$, except if $\supp u\subset [r,\infty)$, $r>0$, for which $I_k=[0=a_k,b_k)$ for some $k$.
We now define
\begin{align*}
\mathcal{I}=\{I_k:\gamma_1(b_k)\geq a_k\},
\end{align*}
and denote $\{C_k\}_{k\in\mathcal{J}}$ the connected components of $\big(\bigcup_{I\in\mathcal{I}}I\big)^c$. Notice that, in general, $\mathcal{J}$ could be uncountable.
Finally define, for all  $u\in\mathscr{M}_+([0,\infty))$
\begin{equation}
\label{S7EAk}
A_k (u)=C_k\cap\supp (u),\,\,\,\forall k\in \mathcal{J}.
\end{equation} 
Notice by (\ref{S7EAk}) that $A_k(u)$ is a closed subset of $[0,\infty)$ for all $k\in\NN$, since it is the intersection of two closed sets.

We write $A_k (u)=A_k$ when no confusion is possible.
\begin{lemma}
$\mathcal{J}$ is a countable set.
\end{lemma}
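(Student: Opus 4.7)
My plan is to partition $\mathcal{J}$ into two classes according to whether the component $C_k$ is a nondegenerate interval or a singleton, and bound each class separately. Distinct nondegenerate components have pairwise disjoint nonempty interiors in $[0,\infty)$, so one rational can be chosen in the interior of each, which provides an injection into $\QQ$ and handles that class. The delicate class is that of singleton components $\{x\}$.

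For singletons, the case $x=0$ contributes at most one index, so I focus on $x>0$. If $\{x\}$ is a singleton component of $F:=[0,\infty)\setminus\bigcup_{I\in\mathcal I}I$, then every right neighbourhood of $x$ meets $\bigcup_{I\in\mathcal I}I$, so there exists a sequence $y_n\to x^+$ with $y_n\in I_{k_n}=(a_{k_n},b_{k_n})\in\mathcal I$. I would split into two subcases. If only finitely many distinct $I_{k_n}$ occur, then, after extraction, some fixed $(a,b)\in\mathcal I$ contains infinitely many $y_n\to x$, which forces $x\in\{a,b\}$; since $y_n>x$, one has $x=a$, so $x$ is the left endpoint of some interval of $\mathcal I$. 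The symmetric argument on the left then shows $x$ is also the right endpoint of another interval of $\mathcal I$. Thus $x$ is the common endpoint of two adjacent members of $\mathcal I$. Since $\mathcal I$ is countable (it is a subfamily of the components of the open set $(\supp u)^c\subset[0,\infty)$), the set of such $x$ is countable.

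The hard step will be ruling out the other subcase, in which infinitely many distinct $I_{k_n}\in\mathcal I$ accumulate at $x$. The key observation is that pairwise disjointness of these open intervals together with $a_{k_n}\to x^+$ forces $b_{k_n}\to x^+$ as well: otherwise some $\delta>0$ and a subsequence would yield $a_{k_n}<x+\delta<b_{k_n}$ for infinitely many $n$, so the point $x+\delta$ would lie in infinitely many pairwise disjoint intervals, a contradiction. Passing to the limit in the defining inequality $\gamma_1(b_{k_n})\ge a_{k_n}$ of $\mathcal I$, together with continuity of $\gamma_1$, then gives $\gamma_1(x)\ge x$. But the explicit formula (\ref{g}) yields $\gamma_1(x)<x$ for every $x>0$: for $x>\delta_*$ this is just $\theta<1$, and for $x\in(0,\delta_*]$ the inequality reduces, after clearing the denominator $2(1-\rho_*^2 x)>0$ and simplifying, to $\rho_*^2 x<1$, which holds on the domain where the relevant branch of $\gamma_1$ is defined. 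The resulting contradiction eliminates the second subcase, so the singleton components at $x>0$ are countable, and combining everything $\mathcal J$ is at most countable.
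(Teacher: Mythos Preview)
Your argument is correct and takes a genuinely different route from the paper. The paper's proof is quantitative: it observes that every interval $I_k=(a_k,b_k)\in\mathcal I$ has length at least $b_k-\gamma_1(b_k)$, and then iterates $\gamma_1$ downward to show that between any two fixed elements of $\mathcal I$ only finitely many others can fit; the finiteness of the components between consecutive gaps then gives countability. You instead argue topologically, splitting components into nondegenerate intervals (handled by the rational-in-the-interior trick) and singletons, and for singletons $\{x\}$ with $x>0$ you show either $x$ is an endpoint of some $I\in\mathcal I$ (a countable set of possibilities) or infinitely many distinct $I_{k_n}\in\mathcal I$ accumulate at $x$, which you rule out by passing to the limit in $\gamma_1(b_{k_n})\geq a_{k_n}$ to obtain $\gamma_1(x)\geq x$, a contradiction. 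Both proofs rest on the same key inequality $\gamma_1(x)<x$ for $x>0$; the paper uses it as a uniform length bound, you use it as a pointwise obstruction. Your approach is more purely topological and requires no iteration; the paper's is shorter once one accepts the iteration idea.

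Two small remarks. First, in your subcase~1 the conclusion that $x$ is the left endpoint of some $I\in\mathcal I$ already places $x$ in a countable set, so the sentence about the symmetric left-side argument is unnecessary (and as written it tacitly assumes the left side is also in subcase~1, which you only justify later via the subcase~2 contradiction). Second, rather than verifying $\gamma_1(x)<x$ on $(0,\delta_*]$ by clearing the denominator---which requires the implicit hypothesis $\rho_*^2\delta_*<1$ for the formula to be well-defined---you can simply invoke Remark~\ref{simple remark function z}, which asserts directly that $z(x)=x-\gamma_1(x)$ is continuous, strictly increasing, and vanishes only at $0$.
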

\begin{proof}
Given two elements of $\mathcal{I}$, there is at most a finite number of elements of $\mathcal{I}$ between them. More precisely, we claim that, for any given $I_i\in\mathcal{I}$, $I_j\in\mathcal{I}$, with $I_i=(a_i,b_i)$, $I_j=(a_j,b_j)$, $0<b_i\leq a_j$, then:
$\text{card}(\{I_k=(a_k,b_k)\in\mathcal{I}: b_i\leq a_k<b_k\leq a_j\})<\infty.$
The proof of this fact start with this trivial remark: if $I_k\in\mathcal{I}$, then $|I_k|=b_k-a_k\geq b_k-\gamma_1(b_k)$. Using that, if we consider the decreasing sequence $b_j$, $\gamma_1(b_j)$, $\gamma_1^2(b_j)=\gamma_1(\gamma_1(b_j))$,
$\gamma_1^3(b_j)$,..., then $\gamma_1^m(b_j)<b_i$ for some integer $m$, and therefore there could be only $m$ elements of $\mathcal{I}$ between $I_i$ and $I_j$.
 
For the sake of the argument, let us say that given two elements $I_i=(a_i,b_i)$ and $I_j=(a_j,b_j)$ of $\mathcal{I}$, there are $2$ more elements 
$I_1=(a_1,b_1)$ and $I_2=(a_2,b_2)$ of $\mathcal{I}$ between them, i.e., 
\begin{align*}
a_i<b_i\leq a_1<b_1\leq a_2<b_2\leq a_j<bj.
\end{align*}
Then, there are $3$ connected components in $(a_i,b_j)\setminus \big(I_i\cup I_1\cup I_2\cup I_j\big)$, namely
 $[b_i,a_1]$, $[b_1,a_2]$ and $[b_2,a_j]$. With this idea, it can be proved that the number of connected components of
 $[0,\infty)\setminus \big(\bigcup_{I\in\mathcal{I}}I\big)$, i.e., the collection $\{C_k\}_{k\in\mathcal{J}}$, is at most countable. 
\end{proof}

We prove now several useful  properties of the collection $\{A_k\}_{k\in\NN}$. 
\begin{lemma}
\label{lemma. blocks}
Let $u\in\mathscr{M}_+([0,\infty))$ and consider the collection $\{A_k\}_{k\in\NN}$ constructed above. Then
$A_i$ and $A_j$ are $\Gamma$-disjoint if and only if $i\neq j$, and 
\begin{align}
\label{supp blocks}
\supp (u)=\bigcup_{k=0}^{\infty}A_k.
\end{align}
\end{lemma}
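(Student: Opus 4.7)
The plan is to split the statement into the support identity (\ref{supp blocks}) and the two directions of the $\Gamma$-disjointness claim, and treat each separately. The support decomposition is essentially a set-theoretic unpacking of the construction; the real content is in the $\Gamma$-disjointness, which reduces to tracking a single monotonicity chain through the definition of $\mathcal{I}$.

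First I would establish (\ref{supp blocks}). The inclusion $\bigcup_k A_k\subset\supp(u)$ is immediate from $A_k=C_k\cap\supp(u)$. For the reverse, I would observe that every interval in $\mathcal{I}$ is one of the open intervals $I_k\subset(\supp(u))^c$ appearing in (\ref{supp D}), so $\bigcup_{I\in\mathcal{I}}I\subset(\supp(u))^c$ and hence
\[
\supp(u)\subset\Big(\bigcup_{I\in\mathcal{I}}I\Big)^{\!c}=\bigcup_{k\in\mathcal{J}}C_k;
\]
intersecting with $\supp(u)$ gives the claim.

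For the direction ``$i\neq j\Rightarrow A_i$ and $A_j$ are $\Gamma$-disjoint'', I would argue without loss of generality that $C_i$ lies to the left of $C_j$. Since they are distinct connected components of $(\bigcup_{I\in\mathcal{I}}I)^c$, at least one separating interval $I=(a,b)\in\mathcal{I}$ sits between them, with $\sup C_i\leq a<b\leq \inf C_j$ and, by definition of $\mathcal{I}$, $\gamma_1(b)\geq a$. For $x\in A_i$ and $y\in A_j$ we then have $x\leq a$ and $y\geq b$, and by the piecewise monotonicity of $\gamma_1$ in (\ref{g}),
\[
\gamma_1(y)\geq\gamma_1(b)\geq a\geq x.
\]
Using the characterization (\ref{S3EGamma20}) and the symmetry of $\Gamma$, $(x,y)\in\mathring{\Gamma}$ would require the strict inclusion $x\in(\gamma_1(y),\gamma_2(y))$, which is ruled out by $x\leq\gamma_1(y)$. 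Thus $(A_i\times A_j)\cap\mathring{\Gamma}=\emptyset$. Conversely, for $i=j$, any $x\in A_i$ already satisfies $\gamma_1(x)\leq x\leq\gamma_2(x)$ by Remark \ref{cone}, so $(x,x)\in\Gamma$, and in fact $(x,x)\in\mathring{\Gamma}$ as soon as $x>0$, which exhibits a witness of non-$\Gamma$-disjointness.

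The main technical point to watch is the monotonicity chain $\gamma_1(y)\geq\gamma_1(b)\geq a\geq x$: it must remain valid when $b$ and $y$ straddle the gluing point $x=\delta_*$ between the two pieces of the definition (\ref{g}). This is where the specific choice $\rho_*=\rho_*(\theta,\delta_*)$ from Remark \ref{cone} is used, since it makes $\gamma_1$ globally continuous and non-decreasing on $[0,\infty)$. Once this is in hand, the symmetry of $\Gamma$ closes the argument for free, and no finer information about the support of $u$ inside each $C_k$ is needed.
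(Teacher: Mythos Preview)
Your proof is correct and follows essentially the same approach as the paper: the support identity is handled by the same set-theoretic unpacking of $A_k=C_k\cap\supp(u)$ and $\supp(u)\subset\big(\bigcup_{I\in\mathcal{I}}I\big)^c$, and the $\Gamma$-disjointness for $i\neq j$ is obtained via the identical monotonicity chain $\gamma_1(y)\geq\gamma_1(b)\geq a\geq x$ through a separating interval $(a,b)\in\mathcal{I}$, while the $i=j$ case is dispatched by noting the diagonal lies in $\mathring{\Gamma}$. Your explicit remark about the continuity and monotonicity of $\gamma_1$ across the gluing point $\delta_*$ makes transparent a step the paper uses tacitly, but the underlying argument is the same.
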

\begin{proof}
It is clear that $A_i$ and $A_i$ are not $\Gamma$-disjoint, since $A_i\times A_i$ contains points on the diagonal, and therefore on $\mathring\Gamma$. Now, if $i\neq j$, we first observe that $A_i$ and $A_j$ are disjoint. Indeed, by definition $A_i\subset C_i$ and $A_j\subset C_j$, where $C_i$ and $C_j$ are different connected components of $[0,\infty)\setminus\big(\bigcup_{I\in\mathcal{I}}I\big)$, therefore disjoint. We now prove that $A_i$ and $A_j$ are in fact $\Gamma$-disjoint. Let us assume that $A_i$ is on the left of $A_j$, i.e., $\sup A_i<\inf A_j$. It follows from the construction that there exists at least one $I_k=(a_k,b_k)\in\mathcal{I}$ between $A_i$ and $A_j$, i.e., such that
$$\sup A_i\leq a_k<b_k\leq \inf A_j.$$
By definition of $\mathcal{I}$, the points $a_k$ and $b_k$ are $\Gamma$-disjoint, and then, for all 
$(a_i,a_j)\in A_i\times A_j$,
$$\gamma_1(a_j)\geq\gamma_1(b_k)\geq a_k\geq a_i,$$
hence $a_i$ and $a_j$ are $\Gamma$-disjoint. Finally, (\ref{supp blocks}) follows from the construction. Indeed, since by definition $A_k=C_k\cap\supp (u)$, then $\cup_{k\in\NN}A_k\subset\supp u$. On the other hand, by definition
$\cup_{k\in\NN} C_k=[0,\infty)\setminus\big(\cup_{I\in\mathcal{I}}I\big)$, and then by (\ref{supp D})
\begin{align*}
\supp (u)=\bigcap_{k\in\NN}I_k^c\subset \bigcup_{k\in\NN}C_k,
\end{align*}
from where the inclusion $\supp (u)\subset \cup_{k\in\NN}A_k$ follows.
\end{proof}

In the remaining part of the section we will use several times the following simple remark.
\begin{remark}
\label{simple remark function z}
Consider the function $z(x)=x-\gamma_1(x)$, $x\geq 0$, where $\gamma_1$ is given by (\ref{g}) in Remark \ref{cone}. Then, $z$ is a continuous and strictly increasing function on $[0,\infty)$, with $z(0)=0$.
\end{remark}
In the next Lemma we prove that any two sets $A_i$ and $A_j$ of the collection $\{A_k\}_{k\in\NN}$ are separated from each other by a positive distance, given by the function $z(x)$ of Remark \ref{simple remark function z} .  
\begin{lemma}
\label{corollary uniform distance}
Let $u\in\mathscr{M}_+([0,\infty))$ and consider the collection $\mathcal{A}=\{A_k\}_{k\in\NN}$ constructed above.  Suppose that 
$\emph{card}(\mathcal{A})\geq 2$. For any $k\in\NN$, let us denote
$x_k=\min A_k$ and $y_k=\sup A_k$. Given two elements $A_i$, $A_j$ in $\mathcal{A}$, suppose that $y_i<x_j$. Then,
\begin{equation}
\label{bound distance AA}
\emph{dist}(A_i,A_j)\geq x_j-\gamma_1(x_j)>0.
\end{equation}
Moreover, for every $\varepsilon>0$, let 
\begin{equation}
\label{definition of A epsilon}
\mathcal{A}_{\varepsilon}=\{A_k\in \mathcal A: A_k\subset(\varepsilon,\infty)\}.
\end{equation}
If $\mathcal{A}_{\varepsilon}\neq \emptyset$ and $\emph{card}(\mathcal{A}_{\varepsilon})\geq 2$, then 
\begin{equation}
\label{bound distance A epsilon}
\emph{dist}(A_i,A_j)> \varepsilon-\gamma_1(\varepsilon)>0\qquad\forall A_i,A_j\in\mathcal{A}_{\varepsilon},\,i\neq j.
\end{equation}
\end{lemma}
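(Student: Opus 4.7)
The plan is to combine the $\Gamma$-disjointness of distinct blocks $A_i$, $A_j$ (already established in Lemma \ref{lemma. blocks}) with the explicit description of $\Gamma$ given in (\ref{S3EGamma20})--(\ref{f}). The basic observation is that for $a\le b$, the condition $(a,b)\in\Gamma$ is equivalent to $b\le\gamma_2(a)$ (since $\gamma_1(a)\le a\le b$ automatically), and by the inverse relationship between the curves $\gamma_1$ and $\gamma_2$ (both encode the boundary $y=\gamma_2(x)\Leftrightarrow x=\gamma_1(y)$), this is in turn equivalent to $a\ge \gamma_1(b)$. Contrapositively, if $(a,b)\notin \mathring{\Gamma}$ with $a\le b$, then $a\le \gamma_1(b)$.

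For the first claim, fix arbitrary $a\in A_i$ and $b\in A_j$. Since $a\le y_i<x_j\le b$, in particular $a\le b$. By Lemma \ref{lemma. blocks}, $A_i$ and $A_j$ are $\Gamma$-disjoint, so by the remark above $a\le \gamma_1(b)$, hence
\begin{equation*}
b-a\;\ge\; b-\gamma_1(b)\;=\;z(b).
\end{equation*}
Remark \ref{simple remark function z} gives that $z$ is strictly increasing on $[0,\infty)$ with $z(0)=0$; since $b\ge x_j$, we conclude $b-a\ge z(x_j)=x_j-\gamma_1(x_j)$. Taking the infimum over $(a,b)\in A_i\times A_j$ yields (\ref{bound distance AA}). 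Positivity of $x_j-\gamma_1(x_j)$ is automatic: $y_i\ge 0$ and $y_i<x_j$ force $x_j>0$, so by strict monotonicity $z(x_j)>z(0)=0$.

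For the second claim, assume $A_i,A_j\in\mathcal{A}_\varepsilon$ with $y_i<x_j$. Then $x_j\in A_j\subset(\varepsilon,\infty)$ gives $x_j>\varepsilon$, and by strict monotonicity of $z$,
\begin{equation*}
x_j-\gamma_1(x_j)\;=\;z(x_j)\;>\;z(\varepsilon)\;=\;\varepsilon-\gamma_1(\varepsilon)\;>\;0.
\end{equation*}
Combining this strict inequality with (\ref{bound distance AA}) yields (\ref{bound distance A epsilon}).

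I do not anticipate a serious obstacle here; the only point requiring mild care is the equivalence $(a,b)\in\Gamma,\,a\le b\iff a\ge\gamma_1(b)$, which follows from the fact that the two boundary curves of $\Gamma$ above the diagonal are mutually inverse, a property visible directly from the formulas (\ref{g}), (\ref{f}) in Remark \ref{cone}. Everything else is a routine application of the monotonicity of $z$.
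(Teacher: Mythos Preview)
Your proof is correct and follows essentially the same approach as the paper's: both use that $\Gamma$-disjointness of $A_i$ and $A_j$ (Lemma \ref{lemma. blocks}) forces $a\le\gamma_1(b)$ for $a\in A_i$, $b\in A_j$ with $a\le b$, and then invoke the strict monotonicity of $z(x)=x-\gamma_1(x)$ from Remark \ref{simple remark function z}. The only cosmetic difference is that the paper first observes $\operatorname{dist}(A_i,A_j)=x_j-y_i$ (since the sets are closed and separated) and applies the argument to the single pair $(y_i,x_j)$, whereas you work with arbitrary $(a,b)$ and pass to the infimum; both routes are equally valid.
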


\begin{proof}
Since  $A_i$ and $A_j$  are closed sets and $y_i<x_j$, it follows that  $\text{dist}(A_i,A_j)=x_j-y_i$. 
By Lemma \ref{lemma. blocks}, the closed sets $A_i$ and $A_j$ are $\Gamma$-disjoint and then, by Definition
\ref{def. disjoint},
$$
y_i\le \gamma _1(x_j).
$$
Therefore   $\text{dist}(A_i,A_j)\ge x_j-\gamma _1(x_j)$ and, since $x_j>0$,  (\ref{bound distance AA}) follows from Remark
\ref{simple remark function z}.

Let now $\varepsilon>0$ be fixed and consider $A_i$ and $A_j$ in $\mathcal{A}_{\varepsilon}$. Without loss of generality, we may assume that 
$y_i<x_j$. Using Remark \ref{simple remark function z}, it then follows from (\ref{bound distance AA}) and (\ref{definition of A epsilon}) that
\begin{equation*}
\text{dist}(A_i,A_j)\geq z(x_j)> z(\varepsilon)>0.
\end{equation*}
\end{proof}

\begin{lemma}
\label{constant mass blocks}
Let $u$ be the weak solution of (\ref{S2EG}) constructed in Theorem \ref{S8Th1} for an initial data $u_0\in\mathscr{M}_+([0,\infty))$ satisfying (\ref{Zcondition}), and consider the collection $\mathcal{A}=\{A_k(u_0)\}_{k\in\NN}$ constructed above. Then 
\begin{equation}
\label{mass blocks}
\int_{A_k}u(t,x)dx=\int_{A_k}u_0(x)dx\qquad\forall t>0,\;\forall k\in\NN.
\end{equation}
\end{lemma}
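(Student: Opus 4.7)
The strategy is to test the weak formulation (\ref{WSZ}) against a tailor-made function $\varphi_k\in C^1_b([0,\infty))$ with $\varphi_k'(0)=0$ that equals $1$ on $A_k$ and $0$ on every other block $A_j$, and to exploit the $\Gamma$-disjointness of the blocks (Lemma \ref{lemma. blocks}) together with the invariance of the support $\supp u(t)=\supp u_0$ (Proposition \ref{comparison}(iii)) to conclude that the right-hand side of (\ref{WSZ}) vanishes identically with this choice. Then $t\mapsto \int \varphi_k u(t)$ is constant, which reduces to (\ref{mass blocks}) once $\varphi_k$ is designed so that $\int \varphi_k u(t)=\int_{A_k} u(t)$.

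\textbf{Construction of $\varphi_k$.} Fix $k$ and set $x_k=\inf A_k$, $y_k=\sup A_k\in(0,+\infty]$. Using the monotonicity of $\gamma_1$ and $\gamma_2$, the identity $\gamma_2\circ\gamma_1=\mathrm{id}$, and the $\Gamma$-disjointness of $A_k$ from $A_j$ ($j\ne k$), one first verifies the geometric fact that every other block $A_j$ lies in $[0,\gamma_1(x_k))\cup(\gamma_2(y_k),\infty)$. By Remark \ref{simple remark function z}, both buffer intervals $(\gamma_1(x_k),x_k)$ and $(y_k,\gamma_2(y_k))$ have strictly positive length (with only one buffer if $x_k=0$ or $y_k=\infty$). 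A standard smooth cut-off then yields $\varphi_k\in C^1_b([0,\infty))$ with $0\le\varphi_k\le 1$, $\varphi_k\equiv 1$ on $[x_k,y_k]\cap[0,\infty)$, and $\supp\varphi_k\subset(\gamma_1(x_k),\gamma_2(y_k))\cap[0,\infty)$. Since $\varphi_k$ is constant near the origin ($0$ if $x_k>0$, $1$ if $x_k=0$), $\varphi_k'(0)=0$, so $\varphi_k$ is admissible in Theorem \ref{S8Th1}(ii).

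\textbf{Vanishing of the collision integral.} Plugging $\varphi_k$ into (\ref{WSZ}), the integrand
$R(x,y)\bigl(\varphi_k(x)-\varphi_k(y)\bigr)$
is integrated against $u(t)\otimes u(t)$, whose support is contained in $\bigcup_{i,j\in\NN}(A_i\times A_j)$ by Step~1. On any $A_i\times A_j$ with $i\ne j$, the geometric step above gives $(x,y)\notin\Gamma$, hence $B(x,y)=0$ and $R(x,y)=0$. On $A_i\times A_i$, $\varphi_k$ takes the same value at $x$ and $y$ ($1$ if $i=k$, $0$ otherwise), so $\varphi_k(x)-\varphi_k(y)=0$. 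In either case the integrand vanishes, so $\frac{d}{dt}\int \varphi_k u(t)=0$ for a.e. $t>0$; combined with continuity in $t$ (Theorem \ref{S8Th1}(i)), the map $t\mapsto \int \varphi_k u(t)$ is constant. Since $\varphi_k=1$ on $A_k$, $\varphi_k=0$ on $\bigcup_{j\ne k} A_j$, and $u(t)$ is concentrated on $\bigcup_j A_j$, this common value equals $\int_{A_k}u(t,x)\,dx$ for every $t\ge 0$, which proves (\ref{mass blocks}).

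\textbf{Main obstacle.} The principal technical point is the geometric step: showing that if $A_j$ is $\Gamma$-disjoint from $A_k$ then actually $A_j\cap[\gamma_1(x_k),\gamma_2(y_k)]=\emptyset$, so that even boundary pairs $(x,y)\in\partial\Gamma\cap(A_i\times A_j)$ do not occur and no hidden contribution from $\partial\Gamma$ (where $B$ could in principle be positive) survives in the collision integral. This is where one must use carefully the explicit definition of the blocks via $\mathcal{I}$ and the connected components $\{C_k\}$, together with $\gamma_2\circ\gamma_1=\mathrm{id}$, to upgrade the nominal $\Gamma$-disjointness of Lemma \ref{lemma. blocks} into the strict inequalities $x<\gamma_1(x_k)$ or $x>\gamma_2(y_k)$ for every $x\in A_j$.
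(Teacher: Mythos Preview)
Your approach is genuinely different from the paper's and, once patched, is arguably cleaner. The paper does not build $\varphi_k$ from the explicit buffers $(\gamma_1(x_k),x_k)$ and $(y_k,\gamma_2(y_k))$. Instead it distinguishes two cases (according to whether $[0,\varepsilon]\subset\supp u_0$ for some $\varepsilon>0$), uses Lemma~\ref{corollary uniform distance} to prove that $D_k=\bigcup_{i\neq k}A_i$ is closed, and then invokes Urysohn's lemma to obtain the test function; the block containing the origin is treated separately by subtracting from the total mass. Your construction bypasses the closedness of $D_k$ entirely, which is the main source of the paper's case split.

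Two points deserve correction. First, your ``main obstacle'' is not one: the strict inclusion $A_j\subset[0,\gamma_1(x_k))\cup(\gamma_2(y_k),\infty)$ can fail (from the proof of Lemma~\ref{lemma. blocks} one only gets $\sup A_j\le\gamma_1(x_k)$, with equality possible when the separating interval $I_m=(a_m,b_m)\in\mathcal I$ satisfies $a_m=\gamma_1(b_m)$ and $b_m=x_k$). But you do not need strictness: since $\supp\varphi_k\subset(\gamma_1(x_k),\gamma_2(y_k))$ is open, $\varphi_k$ already vanishes at the endpoints, and for the cross terms, $B$ is continuous on $[0,\infty)^2\setminus\{0\}$ with $\supp B=\Gamma$, so $B\equiv 0$ on $\partial\Gamma\setminus\{0\}$ and the ``boundary pairs'' contribute nothing anyway.

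Second, there is a genuine (if small) gap: when $A_k=\{0\}$, you have $x_k=y_k=0$ and the right buffer $(y_k,\gamma_2(y_k))=(0,0)$ is empty, so the cutoff cannot be built as described. In that situation one checks that $C_k=\{0\}$ (if $C_k=[0,c]$ with $c>0$, then $c$ is the left endpoint of some $I\in\mathcal I$, hence $c\in\supp u_0\cap C_k=A_k=\{0\}$, a contradiction), so there exists $(0,b)\in\mathcal I$ with $b>0$, and every other block lies in $[b,\infty)$. You can then take $\varphi_k\equiv1$ on $[0,b/3]$ and $\varphi_k\equiv0$ on $[2b/3,\infty)$; the collision integrand vanishes because $k_{\varphi_k}(0,y)=0$ for all $y\ge0$ (Lemma~\ref{k continuous}) and $\varphi_k\equiv0$ on $[b,\infty)\supset\bigcup_{j\neq k}A_j$. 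Alternatively, handle this single block by subtraction from $M_0(u(t))=M_0(u_0)$, as the paper does.
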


\begin{proof}
In the trivial case $A_k=\supp (u_0)$ for all $k\in\NN$, then (\ref{mass blocks}) is just the conservation of mass (\ref{ZmassR}).
Suppose then that $\text{card}(\mathcal{A})\geq 2$. We consider separately two different cases. 

(i) Suppose that there exists $\varepsilon>0$ such that $[0,\varepsilon]\subset\supp(u_0)$.  Then, since $[0,\varepsilon]$ can not intersect  $A_k$ for two different values of $k$, there exists $k_0\in \NN$ such that $[0,\varepsilon]\subset A _{ k_0 }$. In particular $A _{ k_0 }\not \in \mathcal A_\varepsilon $. Let us see that
\begin{equation}
\label{decomposition A case 1}
\mathcal A=\mathcal A_\varepsilon  \cup \{A _{ k_0 }\}.
\end{equation}

Arguing by contradiction, suppose that for some $\ell \not =k_0$ we have $A_\ell \in \mathcal A \setminus \mathcal A_\varepsilon $. Since $[0,\varepsilon]\subset A_{k_0}$ and $A_{k_0}\cap A_{\ell}=\emptyset$, then $x_{\ell}=\min A_{\ell}>\varepsilon$. Therefore $A_{\ell}\in\mathcal{A}_{\varepsilon}$, which is a contradiction.

We wish now to estimate from below  the distances $\text{dist}(A_i, A_j)$ for all $A_i\in \mathcal A$, $A_j\in \mathcal A$, $i\not =j$.
By  (\ref{bound distance A epsilon}) and (\ref{decomposition A case 1}),
\begin{equation}
\label{uniform bound 10}
\text{dist}(A_i,A_j)>\varepsilon-\gamma_1(\varepsilon)>0\qquad\forall i\not = k_0, \forall j\not= k_0, i\not =j.
\end{equation}
On the other hand, for all $i\not =k_0$, $x_i=\min A_i >\varepsilon $ by (\ref{decomposition A case 1}) and then,
by (\ref{bound distance AA}) and Remark \ref{simple remark function z},
\begin{equation}
\text{dist}(A_i,A _{ k_0 })\ge x_i-\gamma _1(x_i)=z(x_i)>z(\varepsilon ). \label{uniform bound 1348}
\end{equation}
By  (\ref{decomposition A case 1}), (\ref{uniform bound 10}) and (\ref{uniform bound 1348}) we have then:
\begin{equation}
\text{dist}(A_i,A _{ j })>z(\varepsilon )>0, \,\,\,\forall A_j\in \mathcal A, \, \forall A_i\in \mathcal A. \label{uniform bound 1248}
\end{equation}

For any fixed  $k\in\NN$, we now claim that, since $A_i$ is closed for every $i\in\NN$, by (\ref{uniform bound 1248})   the set
\begin{equation}
D_k=\bigcup_{i\in\NN,i\neq k}A_i   \label{uniform bound 1247}
\end{equation}
is a closed subset of $[0, \infty)$. In order to prove that property, let us assume, by contradiction, that there exists a point $x_*\in \overline D_k \setminus D_k$. Let $\{x_n\} _{ n\in \NN }\subset D_k$ be a sequence such that converges to $x_*$. In particular $\{x_n\} _{ n\in \NN }$ is a Cauchy sequence.  Therefore, by (\ref{uniform bound 1248}), there exists $k_*\in \NN\setminus \{k\}$ such that, for some $n_*$ sufficiently large:
$$
x_n\in A _{ k_* },\,\,\forall n\ge n_*.
$$
Since $A _{ k_* }$ is a closed set, it follows that  $x_*\in A _{ k_* }\subset D_k$, and this is a contradiction. 

By  (\ref{uniform bound 1248}), $D_k$ and $A_k$ are disjoint subsets of $[0,\infty)$. Therefore,  by Urysohn's lemma, there exists a function $\varphi\in C_b([0,\infty))$ such that
\begin{equation*}
\varphi(x)=1\quad\forall x\in A_k\quad\text{and}\quad
\varphi(x)=0\quad\forall x\in D_k.
\end{equation*}
Using (\ref{uniform bound 1248}) and a density argument, we may assume that $\varphi\in C^1_b([0,\infty)).$
Then, since $\supp (u(t))=\supp (u_0)$ (cf. Proposition \ref{comparison} (iii)), it follows from (\ref{supp blocks})
\begin{align*}
\int_{[0,\infty)}\varphi(x)u(t,x)dx=\int_{A_k}u(t,x)dx,
\end{align*}
and since $A_i$ and $A_j$ are $\Gamma$-disjoint for $i\neq j$ (cf. Lemma \ref{lemma. blocks}), then by construction of $\varphi$,
\begin{align*}
\iint_{[0,\infty)^2}&R(x,y)(\varphi(x)-\varphi(y))u(t,x)u(t,y)dydx\\
&=\sum_{i=0}^{\infty}\iint_{A_i\times A_i}R(x,y)(\varphi(x)-\varphi(y))u(t,x)u(t,y)dydx\\
&=\iint_{A_k\times A_k}R(x,y)(\varphi(x)-\varphi(y))u(t,x)u(t,y)dydx=0,
\end{align*}
from where (\ref{mass blocks}) follows by the weak formulation.

(ii) Suppose that  the assumption of part (i) does not hold. In this case, there exists a strictly decreasing sequence $\{x_n\}_{n\in\NN}$ with 
$x_n\to 0$ as $n\to\infty$ such that $x_n\notin\supp (u_0)$ for all $n\in\NN$. Moreover, since $\supp(u_0)$ is a closed set, for each $n\in\NN$ there exists 
$\delta_n>0$ such that  
$$(x_n-\delta_n,x_n+\delta_n)\subset (\supp(u_0))^c.
$$ 
For every $n\in\NN$ and  $k\in\NN$  fixed such that
\begin{equation}
A_k\in \mathcal A _{ x_n },  \label{2348P}
\end{equation}
 where $\mathcal{A}_{x_n}$ is defined in (\ref{definition of A epsilon}), we consider the set:
$$
D _{ k, n }=\bigcup_{\substack{A_i\in\mathcal{A}_{x_n}\\A_i\neq A_k}} A_i
$$
Using now (\ref{bound distance A epsilon}) for $\varepsilon=x_n$ we deduce that $D _{ k, n }$ is a closed set by the same argument as for $D_k$ in  (\ref{uniform bound 1247}). By Urysohn's lemma again, we can then construct a test function $\varphi\in C^1_b([0,\infty))$ such that
\begin{equation*}
\varphi(x)=1\quad\forall x\in A_k\qquad\text{and}\qquad\varphi(x)=0\quad\forall x\in[0,x_n]\cup D _{ k, n }.
\end{equation*}
Arguing as in part (i), we then deduce that
\begin{equation}
\label{mass blocks xn}
\int_{A_k}u(t,x)dx=\int_{A_k}u_0(x)dx\qquad\forall t>0,\;\forall A_k\in\mathcal{A}_{x_n}.
\end{equation}
We use now that
\begin{equation*}
\mathcal{A}=\bigg(\bigcup _{ n\in \NN }\mathcal A _{ x_n }\bigg) \bigcup \Big\{ A_i\in \mathcal A: A_i \not \subset (0, \infty)\Big\}
\end{equation*}
because
\begin{equation*}
\bigcup _{ n\in \NN }\mathcal A _{ x_n } = \left\{ A_j \in \mathcal A: A_j  \subset (0, \infty)\right\}.
\end{equation*}
But, if $A_i \not \subset (0, \infty)$, then $0\in A_i$. Therefore, if $0\not \in \supp (u_0)$ there is no such $A_i$. 
If $0 \in \supp (u_0)$, since the sets $A_k$ are pairwise disjoint, such subset $A_i$ is unique. It follows that there exists at most a unique $k_0\in \NN$ such that:
\begin{equation}
\mathcal{A}=\bigg(\bigcup _{ n\in \NN }\mathcal A _{ x_n }\bigg) \bigcup \Big\{ A _{ k_0 }\Big\}.
\label{555P}
\end{equation}
The equality (\ref{mass blocks}) then follows from (\ref{mass blocks xn}), (\ref{555P}) and the conservation of mass (\ref{ZmassR}).
\end{proof}

We may prove now the main result of this Section.
\begin{proof}[\bfseries\upshape{Proof of Theorem \ref{characterization}}]
Let us prove $(i)\implies(iii)$. Suppose that $D_{\alpha}(u)=0$, and let, for $\varepsilon>0$,
$$\Gamma_{\varepsilon}=\{(x,y)\in\Gamma: d((x,y), \partial\Gamma)>\varepsilon,\;|x-y|>\varepsilon\}.$$
Since
$b(x,y)(e^{-x}-e^{-y})(x^{\alpha}-y^{\alpha})<0$ for all $(x,y)\in\Gamma_{\varepsilon}$,
it follows from $(i)$ that 
$
\supp (u\times u)\subset \Gamma_{\varepsilon}^c.
$
Letting $\varepsilon\to 0$, we deduce that
\begin{align}
\label{supp2}
\supp (u\times u)\subset \Delta\cup(\mathring{\Gamma})^c,\qquad\Delta=\{(x,x):x\geq 0\}.
\end{align}
Notice that any two points $y<x$ in the support of $u$ have to be at distance, namely, $x-y\geq x-\gamma_1(x)$. Otherwise
$\gamma_1(x)<y$ and then $(x,y)\in\mathring{\Gamma}\setminus\Delta$, in contradiction with (\ref{supp2}). Moreover, since the map $z(x)= x-\gamma_1(x)$ is continuous and strictly increasing on $[0,\infty)$, with $z(0)=0$,
it follows that the support of $u$ consists, at most, on a countable number of points, where the only possible accumulation point is $x=0$. Therefore $A_k=\{x_k\}$ for all $k\in\NN$, and then $(iii)$ holds. 

Let us prove $(iii)\implies (i)$. If $u$ is as in $(iii)$, then $\supp (u\times u)=\{(x_i,x_j):i,j\in\NN\}$, and then
\begin{align*}
D_{\alpha}(u)=\sum_{i\leq j}\chi(i,j)\alpha_i\alpha_jb(x_i,x_j)(e^{-x_i}-e^{-x_j})(x_i^{\alpha}-x_j^{\alpha})=0,
\end{align*}
where $\chi(i,j)=2$ if $i\neq j$ and $\chi(i,j)=1$ if $i=j$. Indeed, the terms with $i=j$ vanish due to the factor $(e^{-x_i}-e^{-x_j})(x_i^{\alpha}-x_j^{\alpha})$, and for those terms with $i\neq j$, then $b(x_i,x_j)=0$ since $(x_i,x_j)\notin \Gamma$.

We now prove $(iii)\implies(ii)$. Using (\ref{supp blocks}) in Lemma \ref{lemma. blocks} and the definition of $x_k$, for any $v\in\mathcal{F}$, 
\begin{align*}
M_{\alpha}(v)=\sum_{k=0}^{\infty}\int_{A_k}x^{\alpha}v(x)dx\geq \sum_{k=0}^{\infty}x_k^{\alpha}m_k=M_{\alpha}(u),
\end{align*} 
and since $u\in\mathcal{F}$, $u$ is indeed the minimizer of $M_\alpha $.

We finally prove $(ii)\implies (iii)$. Let $u$ be a minimizer of $M_\alpha $ and let $v=\sum_{k=0}^{\infty}m_k\delta_{x_k}$. We already know by the previous case that $v$ is also a minimizer of  $M_\alpha $, hence $M_{\alpha}(u)=M_{\alpha}(v)$. Since moreover
\begin{align*}
M_{\alpha}(v)=\sum_{k=0}^{\infty}x_k^{\alpha}m_k=\sum_{k=0}^{\infty}x_k^{\alpha}\int_{A_k}u(x)dx,
\end{align*}
it follows that 
\begin{align*}
\sum_{k=0}^{\infty}\int_{A_k}(x^{\alpha}-x_k^{\alpha})u(x)dx=0.
\end{align*}
By definition of $x_k$, all the terms in the sum above are nonnegative, and therefore
\begin{align*}
\int_{A_k}(x^{\alpha}-x_k^{\alpha})u(x)dx=0\qquad\forall k\in\NN,
\end{align*}
which implies that $A_k=\{x_k\}$ for all $k\in\NN$, and therefore $u=v$.  
\end{proof}

\subsection{Long time behavior.}
\label{long time Z}
\label{behavior}
This Section is devoted to he proof of  Theorem \ref{S7E001}, that we have divided in several steps. For a given increasing sequence $t_n\to\infty$ as $n\to\infty$, let us define
\begin{align}
\label{long time}
u_n(t)=u(t+t_n),\qquad t\geq 0,\;n\in\NN,
\end{align}
where $u$ is the weak solution of (\ref{S2EG}) constructed in Theorem \ref{S8Th1} for an initial data $u_0\in\mathscr{M}_+([0,\infty))$ satisfying (\ref{Zcondition}). We first notice by (\ref{S7EHD}) and Lemma \ref{prop.64} that for all $\alpha>1$ and $t>0$,
\begin{align*}
\frac{1}{2}\int_0^t |D_{\alpha}(u(s))|ds=M_{\alpha}(u_0)-M_{\alpha}(u(t))\leq M_{\alpha}(u_0),
\end{align*}
so by letting $t\to\infty$ we deduce $D_{\alpha}(u)\in L^1([0,\infty))$. Since moreover
\begin{align*}
\int_0^{t}D_{\alpha}(u_n(s))ds=\int_{t_n}^{t_n+t}D_{\alpha}(u(s))ds,\qquad\forall t\geq 0,
\end{align*}
it follows that 
\begin{align}
\label{limit dissipation}
\lim_{n\to\infty}\int_0^{t}D_{\alpha}(u_n(s))ds=0\qquad\forall t\geq 0.
\end{align}

\begin{proposition}
\label{long time limit}
Let $u$ be the weak solution of (\ref{S2EG}) constructed in Theorem \ref{S8Th1} for an initial data $u_0\in\mathscr{M}_+([0,\infty))$ satisfying (\ref{Zcondition}).  For every sequence $\{t_n\} _{ n\in \NN }$ such that $t_n\to \infty$, there exist a subsequence, still denoted $\{t_n\} _{ n\in \NN }$, and 
\begin{align}
U\in C([0,\infty),\mathscr{M}_+([0,\infty)))
\end{align}
such that  for all $\varphi\in C([0,\infty))$ satisfying (\ref{GC}), and all $t>0$,
\begin{align}
\label{u99Z}
\lim_{n\to\infty}\int_{[0,\infty)}\varphi(x)u(t+t_n,x)dx=\int_{[0,\infty)}\varphi(x)U(t,x)dx.
\end{align}
Moreover, $U$ is a weak solution of (\ref{S2EG})   such that  $M_0(U(t))=M_0(u_0)$ and for all $t>0$.
\end{proposition}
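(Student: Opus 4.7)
The plan is to mimic the compactness argument used for the approximating sequence $\{u_n\}_{n\in\NN}$ in Proposition \ref{precompactness}, Proposition \ref{equicontinuity} and Corollary \ref{cor:u}, but now applied to the time-shifted sequence $u_n(t)=u(t+t_n)$ built from a single weak solution $u$. Each $u_n$ is itself a weak solution of (\ref{S2EG}) in the sense of Theorem \ref{S8Th1} with initial data $u_n(0)=u(t_n)$, so by (\ref{ZmassR}) and (\ref{Zmoment}) we have the uniform bounds
\begin{equation*}
M_0(u_n(t))=M_0(u_0),\qquad X_\eta(u_n(t))\leq X_\eta(u_0)\qquad\forall t\geq 0,\;\forall n\in\NN.
\end{equation*}
The mass bound gives, for each fixed $t\geq 0$, relative compactness of $\{u_n(t)\}_{n\in\NN}$ in the weak-$\ast$ topology on $\mathscr{M}_+([0,\infty))$, and the exponential moment bound allows the extension of weak-$\ast$ convergence to test functions of growth (\ref{GC}), exactly as in Proposition \ref{precompactness}.

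For equicontinuity in the narrow topology, I would take $\varphi$ Lipschitz with $\|\varphi\|_\infty\leq 1$ and estimate, from the weak formulation (\ref{WSZ}),
\begin{equation*}
\bigg|\int \varphi\,u_n(t)-\int \varphi\,u_n(t_0)\bigg|\leq \frac{1}{2}\int_{t_0}^t\iint_{[0,\infty)^2}|k_\varphi(x,y)|u_n(s,x)u_n(s,y)\,dy\,dx\,ds,
\end{equation*}
and then use the bound (\ref{kbound}) for $k_\varphi$ on $\Gamma$ together with $M_0(u_n)=M_0(u_0)$ and the uniform bound on $X_{(1-\theta)/2}(u_n)$ (which is dominated by $X_\eta(u_0)$ since $\eta>(1-\theta)/2$) to obtain a constant independent of $n$. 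Arzel\`a--Ascoli then provides a subsequence converging in $C([0,\infty),(\mathscr{M}_+([0,\infty)),d_0))$ to some $U\in C([0,\infty),\mathscr{M}_+([0,\infty)))$, uniformly on compact subsets of $[0,\infty)$; the convergence (\ref{u99Z}) for test functions with the growth (\ref{GC}) follows from the uniform exponential moment estimate as in the tail estimate of Proposition \ref{precompactness}.

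To identify $U$ as a weak solution, I would pass to the limit $n\to\infty$ in the integrated weak formulation
\begin{equation*}
\int \varphi\,u_n(t)=\int \varphi\,u_n(0)+\frac{1}{2}\int_0^t\iint R(x,y)(\varphi(x)-\varphi(y))u_n(s,x)u_n(s,y)\,dy\,dx\,ds,
\end{equation*}
for $\varphi\in C^1_b([0,\infty))$ with $\varphi'(0)=0$. The left-hand side and the initial term pass to the limit by (\ref{u99Z}). The bilinear time integrand is handled by splitting the domain into $[0,R]^2$, $[\gamma_1(R),\infty)^2$ and the overlap $[\gamma_1(R),R]^2$, exactly as in the proof of (\ref{Kconvergence}) in Corollary \ref{S2Cor15}: on compacts one uses Stone--Weierstrass plus narrow convergence of $u_n$ to $U$, while the tail is controlled by (\ref{kbound}) together with the uniform exponential moment bound $X_\eta(u_n)\leq X_\eta(u_0)$ and the elementary inequality used in (\ref{I3bound3}). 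Conservation of mass for $U$ follows by taking $\varphi=1$ in the resulting identity, or equivalently by narrow convergence combined with the tail control provided by the uniform exponential moment.

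The main obstacle is the passage to the limit in the quadratic collision integral, since $k_\varphi$ is singular on the diagonal at the origin and narrow convergence does not by itself imply convergence of $u_n\otimes u_n$ against such an integrand; this is precisely the step where the exponential moment bound of (\ref{Zmoment}) and the continuity of $k_\varphi$ established in Lemma \ref{k continuous} are used together, in the same Stone--Weierstrass/tail truncation scheme developed in Corollary \ref{S2Cor15}. Once this is done, the rest of the statement (regularity in $t$, mass conservation, and the identification of $U$ as a weak solution) follows as in the proof of Theorem \ref{MT1}.
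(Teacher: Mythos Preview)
Your proposal is correct and follows precisely the approach indicated in the paper, which simply states that ``the proof is the same as the first part of the proof of Theorem \ref{MT1} for equation (\ref{P}).'' You have accurately unpacked this one-line reference: uniform mass and exponential moment bounds from (\ref{ZmassR})--(\ref{Zmoment}) give precompactness \`a la Proposition \ref{precompactness}, the bound on $K_\varphi$ gives equicontinuity as in Proposition \ref{equicontinuity}, Arzel\`a--Ascoli yields the limit $U$, and the Stone--Weierstrass/tail-truncation scheme of Corollary \ref{S2Cor15} handles the passage to the limit in the quadratic term.
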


\begin{proof}
The proof is the same as the first part of the  proof of Theorem \ref{MT1} for equation (\ref{P}).
\end{proof}

\begin{lemma}
\label{lemma. contained support}
Let $u$, $u_0$ and $U$ be as in Proposition \ref{long time limit}. Then 
\begin{align}
\label{contained support}
\supp (U(t))=\supp (U(0))\subset\supp (u_0)\qquad\forall t\geq 0.
\end{align} 
\end{lemma}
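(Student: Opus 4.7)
The plan is to reduce (\ref{contained support}) to two applications of Proposition \ref{comparison}~(iii): one to the solution $U$ itself (for the equality $\supp(U(t))=\supp(U(0))$), and one to the original solution $u$ (for the inclusion $\supp(U(0))\subset\supp(u_0)$). The link between the two is the narrow convergence of $u(\cdot+t_n)$ to $U$ provided by Proposition \ref{long time limit}.

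First I would verify that Proposition \ref{comparison} is applicable to $U$. Since $X_\eta$ is lower semi-continuous with respect to narrow convergence and $X_\eta(u(t+t_n))\le X_\eta(u_0)$ for all $n$ by (\ref{Zmoment}), one obtains $X_\eta(U(t))\le X_\eta(u_0)<\infty$ for every $t\geq 0$, so $U(0)$ satisfies (\ref{Zcondition}). Because $U$ is a weak solution of (\ref{S2EG}) obtained through the same compactness/limit scheme as in Theorem \ref{S8Th1} (cf.\ the proof of Proposition \ref{long time limit}), Proposition \ref{comparison}~(iii) applies directly to $U$ and yields $\supp(U(t))=\supp(U(0))$ for all $t\geq 0$.

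Next, for the inclusion $\supp(U(0))\subset\supp(u_0)$, I would pick an arbitrary nonnegative $\varphi\in C_c([0,\infty))$ with $\supp(\varphi)\cap\supp(u_0)=\emptyset$. By Proposition \ref{comparison}~(iii) applied to $u$, $\supp(u(t))=\supp(u_0)$ for every $t\geq 0$, hence $\int_{[0,\infty)}\varphi(x)u(t_n,x)dx=0$ for all $n\in\NN$. Since such a $\varphi$ trivially satisfies the growth condition (\ref{GC}), passing to the limit via (\ref{u99Z}) at $t=0$ gives $\int_{[0,\infty)}\varphi(x)U(0,x)dx=0$. Letting $\varphi$ run through a family exhausting the open set $[0,\infty)\setminus\supp(u_0)$ and invoking the characterization of the support of a Radon measure used in the proof of Proposition \ref{comparison}~(iii), one deduces $\supp(U(0))\cap\big([0,\infty)\setminus\supp(u_0)\big)=\emptyset$.

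The only point that requires a little care is using (\ref{u99Z}) at $t=0$: although the statement of Proposition \ref{long time limit} is phrased for $t>0$, the continuity of $U$ on $[0,\infty)$ together with the uniform-on-compacts nature of the convergence (inherited from the Arzel\`a--Ascoli argument of Corollary \ref{cor:u} that underlies the proof of Proposition \ref{long time limit}) extends the limit to $t=0$. I expect this to be the main technical obstacle; everything else reduces to the already established support-preservation result of Proposition \ref{comparison}~(iii) and standard facts about narrow convergence of Radon measures.
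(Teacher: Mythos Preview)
Your proposal is correct and follows essentially the same approach as the paper: both apply Proposition \ref{comparison}(iii) to $U$ for the equality of supports, and both use the narrow convergence $u(t_n)\to U(0)$ together with $\supp(u(t_n))=\supp(u_0)$ for the inclusion. The only cosmetic difference is that you argue the inclusion via the complement (test functions supported outside $\supp(u_0)$ integrate to zero against every $u(t_n)$, hence against $U(0)$), whereas the paper argues it directly (a point in $\supp(U(0))$ forces positivity of $\int\varphi\,du(t_n)$ for large $n$, hence lies in $\supp(u_0)$); these are contrapositives of each other. Your remark about extending (\ref{u99Z}) to $t=0$ is well taken---the paper also uses it at $t=0$ without further comment, relying implicitly on the same Arzel\`a--Ascoli uniformity you invoke.
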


\begin{proof}
On the one hand, since $U$ is a weak solution of (\ref{S2EG}), then by Proposition \ref{comparison} it follows that $\supp (U(t))=\supp (U(0))$ for all $t>0$, where $U(0)$ is given by (\ref{u99Z}) for $t=0$. On the other hand, again by Proposition \ref{comparison} we have, in particular, that $\supp (u_n(0))=\supp (u_0)$ for all $n\in\NN$. The result then follows from the convergence of $u_n(0)$ towards $U(0)$ in the sense of (\ref{u99Z}). Indeed, let $x_0\in\supp U(0)$. We use the characterization of the support of a measure given in the proof of part (iii) of Proposition \ref{comparison}. Then
\begin{align*}
\rho_{\varphi}=\int_{[0,\infty)}\varphi(x)U(0,x)dx>0,
\end{align*}
for all $\varphi\in C_c([0,\infty))$ such that $0\leq\varphi\leq 1$ and $\varphi(x_0)>0$. Using then (\ref{u99Z}) for $t=0$, we deduce that for all $\varphi$ as before, there exists $n_*\in\NN$, such that
\begin{align*}
\int_{[0,\infty)}\varphi(x)u_n(0,x)dx\geq\frac{\rho_{\varphi}}{2}>0\qquad\forall n\geq n_*,
\end{align*}
and then $x_0\in \supp (u_n(0))=\supp (u_0)$.
\end{proof}
A partial identification of the limit $U$ is given in our next Proposition.

\begin{proposition}
\label{S5ECVT}
Let $u$, $u_0$ and $U$ be as in Proposition \ref{long time limit}. Then
\begin{align}
\label{stat 222}
U(t)=\mu \qquad\forall t\geq 0,
\end{align}
where $\mu $ is the measure defined in (\ref{stat}).
\end{proposition}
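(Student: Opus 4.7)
The plan is first to show that the limit $U(t)$ lies, for almost every $t>0$, in the set of measures with vanishing $D_\alpha$, then to deduce that such a $U$ must be stationary, and finally to identify $U(0)$ with the measure $\mu$ of Theorem \ref{S7E001}.

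\textbf{Step 1 (zero dissipation in the limit).} Fix $\alpha>1$. By Lemma \ref{LmomZ} the map $t\mapsto M_\alpha(u(t))$ is nonincreasing, and the hypothesis $X_\eta(u_0)<\infty$ forces $M_\alpha(u_0)<\infty$; integrating (\ref{S7EHD}) gives $\tfrac{1}{2}\int_0^\infty|D_\alpha(u(s))|\,ds\le M_\alpha(u_0)<\infty$, from which (\ref{limit dissipation}) follows, i.e., $\int_0^T -D_\alpha(u_n(s))\,ds\to 0$ for every $T>0$. Because $-D_\alpha$ is the integral of the nonnegative continuous integrand $b(x,y)(e^{-x}-e^{-y})(y^\alpha-x^\alpha)$ against $v\otimes v$, the map $v\mapsto -D_\alpha(v)$ is sequentially narrowly lower semicontinuous. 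The narrow convergence $u_n(s)\to U(s)$ from Proposition \ref{long time limit} and Fatou's lemma then give $\int_0^T -D_\alpha(U(s))\,ds\le 0$, hence $D_\alpha(U(s))=0$ for a.e. $s>0$.

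\textbf{Step 2 (atomic structure).} Applying Theorem \ref{characterization} to $U(s)$ for every $s$ in a full measure subset yields $U(s)=\sum_j m_j(s)\delta_{z_j(s)}$ with the atoms $\{z_j(s)\}$ pairwise $\Gamma$-disjoint.

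\textbf{Step 3 ($U$ is stationary).} Substituting the atomic form of $U(t)$ into the right-hand side of (\ref{WSZ}),
\begin{equation*}
\tfrac{1}{2}\iint R(x,y)(\varphi(x)-\varphi(y))\,dU(t,x)\,dU(t,y)=\tfrac{1}{2}\sum_{i,j}m_i(t)m_j(t)R(z_i,z_j)(\varphi(z_i)-\varphi(z_j)),
\end{equation*}
the off-diagonal terms vanish because $b(z_i,z_j)=0$ when $i\neq j$ (by $\Gamma$-disjointness), and the diagonal terms vanish because $\varphi(z_i)-\varphi(z_i)=0$. Hence $t\mapsto\langle U(t),\varphi\rangle$ is constant for every admissible test function, and by the continuity of $U$ in $\mathscr M_+([0,\infty))$ one concludes $U(t)=U(0)$ for all $t\ge 0$.

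\textbf{Step 4 (identification with $\mu$).} Property~1 of Theorem \ref{S7E001} is exactly Lemma \ref{lemma. contained support}, and Property~2 was obtained in Step~2. For Property~3, Lemma \ref{constant mass blocks} yields $\int_{A_n(u_0)}u(t,x)\,dx=M_n$ for all $t\ge 0$; the uniform separation of $A_n(u_0)$ from $\bigcup_{m\neq n}A_m(u_0)$ given by Lemma \ref{corollary uniform distance} permits constructing, via Urysohn's lemma, a $C^1_b$ test function equal to $1$ on $A_n(u_0)$ and $0$ on the remaining blocks, and one passes to the limit using (\ref{u99Z}), obtaining $\sum_{i\in\mathcal J_n}M_i'=M_n$.

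The main obstacle is Property~4: whenever $k_n=\min A_n(u_0)>0$, some atom of $U(0)$ must lie precisely at $k_n$. The idea is to exploit the leftward transport of mass inside each block: Proposition \ref{prop.64}(i), combined with the per-block conservation of mass from Lemma \ref{constant mass blocks}, shows that for every $r\in(k_n,\sup A_n(u_0))$ the quantity $\int_{[k_n,r)}u(t,x)\,dx$ is nondecreasing in $t$. Passing to the limit $t\to\infty$ via (\ref{u99Z}) and then letting $r\to k_n^+$, together with the constraint from Step~2 that atoms of $U(0)$ are pairwise $\Gamma$-disjoint (so that only finitely many atoms of $U(0)$ can accumulate near $k_n$, by Remark \ref{simple remark function z}), forces a strictly positive residual mass to concentrate at $k_n$ itself.
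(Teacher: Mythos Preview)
Your Steps 1--3 and the verification of Properties 1--3 in Step 4 are correct and follow essentially the same route as the paper (your Step 3, plugging the atomic form directly into the weak formulation, is a slightly more direct variant of the paper's argument via Proposition \ref{comparison}(iii) and Lemma \ref{constant mass blocks}, but equally valid).

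For Property 4, however, there is a genuine gap in the way the argument is assembled. Passing to the limit $t\to\infty$ in the nondecreasing quantity $\int_{[k_n,r)}u(t,x)\,dx$ and using Portmanteau on the closed set $[k_n,r]$ correctly gives $U([k_n,r])\ge u_0([k_n,r))$ for each fixed $r>k_n$. But then \emph{letting $r\to k_n^+$}, as you write, only yields $U(\{k_n\})\ge u_0(\{k_n\})$, which is vacuous whenever $u_0$ has no atom at $k_n$. The $\Gamma$-disjointness of the atoms of $U$, which you correctly identify as the missing ingredient, must be used differently: you do \emph{not} send $r$ to $k_n^+$. Instead, argue by contradiction. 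If no atom of $U$ sits at $k_n$, let $z^*>k_n$ be the smallest atom of $U$ in $A_n(u_0)$ (well defined since $k_n>0$ and the atoms are $\Gamma$-disjoint), and fix any $r\in(k_n,z^*)$. Then $U([k_n,r])=0$, while the inequality above gives $U([k_n,r])\ge u_0([k_n,r))>0$; the strict positivity holds because $k_n=\min A_n(u_0)\in\supp u_0$ and a small left neighbourhood of $k_n$ lies in the gap $I_j\in\mathcal I$ defining $A_n$, hence outside $\supp u_0$. This contradiction forces an atom of $U$ at $k_n$, which is exactly how the paper closes the argument. Once you replace ``letting $r\to k_n^+$'' by this fixed choice of $r$, your proof is complete.
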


\begin{proof}
We first prove that $D_{\alpha}(U(t))=0$ for a.e. $t>0$ and for all $\alpha>1$. Indeed, if we define as in  proof of Theorem \ref{MT1}, $u_n(t)=u(t+t_n)$, we deduce by the same arguments
\begin{align*}
\lim_{n\to\infty}\int_0^t D_{\alpha}(u_n(s))ds=\int_0^t D_{\alpha} (U(s))ds=0\qquad\forall t\geq 0,
\end{align*}
hence $D_{\alpha}(U(t))=0$ for a.e. $t\geq 0$. \\
Then by Theorem \ref{characterization} , there exist 
$m_j(t)\geq 0$, $x_j(t)\geq 0$ such that,
\begin{gather}
\label{stationary U1}
U(t)=\sum_{j=0}^{\infty}m_j(t)\delta_{x_j(t)},\\
\label{disjoo}
x_i(t),\:x_j(t)\quad\text{are}\;\;\Gamma\text{-disjoint}\;\;\forall i\neq j.
\end{gather}
By (\ref{contained support}) in Proposition \ref{lemma. contained support},
\begin{align}
\label{xk}
x_j(t)=x_j(0):=x_j'\in\supp(u_0)\qquad\forall t\geq 0,\;\forall j\in\NN.
\end{align}
Furthermore, since by Proposition \ref{long time limit},  $U$  is a weak solution of (\ref{S2EG}),  it follows from Lemma \ref{constant mass blocks} that for all $t\geq 0$, $j\in\NN$,
\begin{align*}
m_j(t)=\int_{\{x_j(t)\}}U(t,x)dx=\int_{\{x_j(0)\}}U(0,x)dx=m_j(0):=m_j',
\end{align*}
and then by (\ref{stationary U1}) we conclude that  $U$ is independent of $t$

Let us prove now that $U$ satisfies Properties 1-4.  Properties 1 and 2 are already proved in (\ref{disjoo}) and (\ref{xk}). In order to prove 3, let $k\in\NN$ and 
$\varphi\in C^1_c([0,\infty))$ be such that $\varphi(x)=1$ for all $x\in A_k$ and $\varphi(x)=0$ for all $x\in\cup_{i\neq k} A_i$. This construction is possible by Urysohn's Lemma. Then, by (\ref{mass blocks}) in Lemma \ref{constant mass blocks},
\begin{align*}
\int_{[0,\infty)}\varphi(x)u_n(t,x)dx=\int_{A_k}u_n(t,x)dx=m_k,\qquad\forall n\in\NN,
\end{align*}
and then by (\ref{u99Z}) in Proposition \ref{long time limit},
\begin{align*}
\int_{[0,\infty)}\varphi(x)U(x)dx=m_k.
\end{align*}
Since $\supp (U)\subset\supp (u_0)$ by Lemma \ref{lemma. contained support}, we then deduce
\begin{align*}
\int_{[0,\infty)}\varphi(x)U(x)dx=\int_{A_k}U(x)dx=\sum_{j\in\mathcal{J}_k}m_j',
\end{align*}
and thus Property 3 holds. 

Let us prove Property 4. 
Let $k\in\NN$ and suppose that $x_k=\min\{x\in A_k\}>0$. By (\ref{sum masses}) in Property 3, the set $\mathcal{J}_k$ in non empty. Let then $x_j'\in\mathcal{J}_k$. If $x_j'=x_k$, there is nothing left to prove. Suppose then 
$x_j'\neq x_k$, which by definition of $x_k$ implies $x_j'>x_k$. We first notice that between $x_k$ and $x_j'$, there can only be a finite number of elements in $\mathcal{J}_k$. This is because $x_k>0$ and the points in $\mathcal{J}_k$ are pairwise $\Gamma$-disjoint, thus, the only possible accumulation point for any sequence in $\mathcal{J}_k$ is $x=0$. Consequently, the point $x_{j_0}'=\min\{x\in\mathcal{J}_k\}$ is well define. Again, if $x_{j_0}'=x_k$, there is nothing left to prove. Suppose then 
$x_{j_0}'>x_k$, and let $0<\varepsilon<(x_{j_0}'-x_k)/2$. On the one hand,
\begin{align}
\label{no mass}
\int_{[x_k,x_{j_0}'-\varepsilon]}U(x)dx=0.
\end{align}
On the other hand, let us show the integral in (\ref{no mass})  is strictly positive, which will be  a contradiction.
Since $A_k\subset\supp (u_0)$, in particular
\begin{align*}
\delta=\int_{[x_k,x_{j_0}'-\varepsilon)}u_0(x)dx>0,\quad\text{and}\quad\int_{A_k\cap[x_{j_0}'-\varepsilon,\infty)}u_0(x)dx>0,
\end{align*}
and then by Proposition \ref{prop.64},
\begin{align*}
\int_{[0,x_{j_0}'-\varepsilon)}u(t,x)dx>\int_{[0,x_{j_0}'-\varepsilon)}u_0(x)dx\qquad\forall t>0.
\end{align*}
We now deduce from Lemma \ref{constant mass blocks} that
\begin{align*}
\int_{\{x<x_k\}}u(t,x)dx=\text{constant}=\int_{\{x<x_k\}}u_0(x)dx\qquad\forall t>0,
\end{align*}
and then we obtain
\begin{align*}
\int_{[x_k,x_{j_0}'-\varepsilon)}u(t,x)dx>\int_{[x_k,x_{j_0}'-\varepsilon)}u_0(x)dx=\delta\qquad\forall t>0.
\end{align*}
It then follows from (\ref{u99Z}) that
\begin{align*}
\int_{[x_k,x_{j_0}'-\varepsilon]}U(x)dx\geq \limsup_{n\to\infty}\int_{[x_k,x_{j_0}'-\varepsilon]}u_n(t,x)dx>\delta>0,
\end{align*}
in contradiction with (\ref{no mass}).
\end{proof}

\begin{proof}[\bfseries\upshape{Proof of Theorem \ref{S7E001}}]
By Proposition \ref{long time limit},  Lemma \ref{lemma. contained support}, and Proposition \ref{S5ECVT},
there exists a sequence, $\{t_n\}_{n\in\NN}$ such that, if  $u_n(t)=u(t+t_n)$ for all $t>0$ and $n\in \NN$, then $u_n$ converges in $C([0,\infty),\mathscr{M}_+([0,\infty)))$ to the measure $\mu $ defined in (\ref{stat}). 

Let us assume that for some other  sequence $\{s_m\}_{m\in\NN}$, the sequence $\omega _m(t)=u(t+s_m)$ is such that $\omega _m$ converges in  $C([0,\infty),\mathscr{M}_+([0,\infty)))$ to a measure $W\in C([0,\infty),\mathscr{M}_+([0,\infty)))$.

Arguing as before, there exists a subsequence of $\{\omega _m\}_{m\in\NN}$, still denoted $\{\omega _m\}_{m\in\NN}$, such that, $\{\omega _m(t)\}_{m\in\NN}$ converges narrowly to a  measure $W\in\mathscr{M}_+([0,\infty))$ for every $t\ge 0$ as $m\to\infty$. Moreover, the limit $W$ is of the form
$$
W=\sum_{j=0}^{\infty}c_j\delta_{y_j},
$$ 
and satisfies the properties  1-4 in Theorem \ref{S7E001}.  We claim that $W=U$.

By Point (i) of Proposition \ref{prop.64},   for any $x\geq 0$, the map $t\mapsto\int_{[x,\infty)}u(t,y)dy$ is monotone nonincreasing on $[0,\infty)$. Therefore the following limit exists:
$$
F(x)=\lim_{t\to\infty}\int_{[x,\infty)}u(t,y)dy,\quad x\geq 0.
$$
From, 
$$
\int_{[x,\infty)}u(t,y)dy\geq \int_{[x,\infty)}\omega _m(t,y)dy,\quad\forall m\in\NN,
$$
we first deduce  that,
$$
\int_{[x,\infty)}u(t,y)dy\geq \int_{[x,\infty)}W(y)dy.
$$
On the other hand, it follows from the narrow convergence,
$$
\int_{[x,\infty)}W(y)dy\geq\limsup_{m\to\infty}\int_{[x,\infty)}w_m(t,y)dy,
$$
and then
$$
F(x)=\int_{[x,\infty)}W(y)dy.
$$
The same argument yields
$$
F(x)=\int_{[x,\infty)}\mu (y)dy,
$$
and then, using that $M_0(W)=M_0(u_0)=M_0(\mu)$, it follows that $W$ and $\mu$ have the same (cumulative) distribution function, and therefore $W=\mu$ (cf. \cite{Klenke}, Example 1.44, p.20).
\end{proof}

We describe in the following example the behavior of a particularly simple solution of the reduced equation (\ref{S2EG}) for which, although the sequence $\{A_k(u_0)\} _{ k\in \NN }$ has only one element,  the asymptotic limit $\mu $ has two Dirac measures.

\begin{example}
\label{S5Eex1}
Let $0<a<b<c$ be such that $B(a,b)>0$, $B(b,c)>0$, and $B(a,c)=0$, and let $x_0>0$, $y_0>0$ and $z_0>0$ be such that 
$x_0+y_0+z_0=1$. If we define
\begin{align*}
u_0=x_0\delta_a+y_0\delta_b+z_0\delta_c,
\end{align*} 
it follows from the choice of the constant $a, b, c$ that $A_0(u_0)=\{a, b, c\}$ and $A_k(u_0)=\emptyset$ for all $k>0$.
On the other hand, by Proposition \ref{comparison}, (iii) the weak solution $u$ of (\ref{S2EG}) given by Theorem \ref{S8Th1}  is of the form,
\begin{align*}
u(t)=x(t)\delta_a+y(t)\delta_b+z(t)\delta_c,\,\,\,\forall t>0,
\end{align*} 
 where, in addition, $x(t)+y(t)+z(t)=1$ for all $t>0$.
Using the weak formulation (\ref{WSZ}) for the test functions $\mathds{1}_{[b,\infty)}$,
$\mathds{1}_{[c,\infty)}$, and the conservation of mass, we obtain the following system of equations:
\begin{align*}
&x'(t)=R(a,b)x(t)y(t),\,\,\,x(0)=x_0\\
&y'(t)=-R(a,b)x(t)y(t)+R(b,c)y(t)z(t),\,\,\,y(0)=y_0\\
&z'(t)=-R(b,c)y(t)z(t),\,\,\,z(0)=z_0.
\end{align*}
Since $x'(t)\ge 0$ for all $t$ and $x(t)\in (x_0, 1)$,
\begin{equation*}
\lim _{ t\to \infty } x(t)=x_\infty \in [x_0, 1].
\end{equation*}
Moreover, for all $t>0$,
\begin{align*}
&y(t)=y_0e^{\int_0^t \big(R(b,c)z(s)-R(a,b)x(s)\big)ds}\\
&z(t)=z_0e^{-R(b,c)\int_0^t y(s)ds},
\end{align*}
and, by the conservation of mass,
\begin{align}
\frac{y(t)}{z(t)}&=\frac{y_0}{z_0}e^{\int_0^t\big(R(b,c)-x(s)(R(a,b)+R(b,c))\big)ds}\leq\frac{y_0}{z_0}e^{C t}, \label{S5Ex1}\\
C&=\big(R(b,c)-x_0(R(a,b)+R(b,c))\big).\nonumber 
\end{align}
If we suppose that
\begin{align*}
x_0>\frac{R(b,c)}{R(a,b)+R(b,c)},
\end{align*}
then $C<0$ and, by (\ref{S5Ex1}),  
\begin{align*}
\lim_{t\to\infty}\frac{y(t)}{z(t)}=0.\noindent
\end{align*}
Using that for all $t>0$, $z(t)\leq z_0$, we also have, using again   (\ref{S5Ex1}), 
$y(t)\leq y_0 e^{Ct}$, and then 
$\lim_{t\to\infty}y(t)=0$. However,  since for all $t>0$, 
$$
z'(t)=-R(b,c)z(t)y(t)\geq -R(b,c)z(t)y_0e^{Ct},
$$
we have,
$$
z(t)\geq z_0\exp\bigg(\frac{R(b,c)y_0(1-e^{Ct})}{C}\bigg).
$$
Then, since $z'(t)<0$,
$$z_\infty=\lim_{t\to\infty}z(t)\ge z_0\exp\bigg(\frac{R(b,c)y_0}{C}\bigg)>0,$$ 
and the measure $\mu $ is,
\begin{equation*}
\mu =x_\infty \delta _a+z_\infty \delta _c.
\end{equation*}
\end{example}

\appendix

\section{Some useful estimates}
\label{A1}
\setcounter{equation}{0}
\setcounter{theorem}{0}

\begin{lemma}
\label{klbounds}
If $B$ satisfies (\ref{B1})--(\ref{Bsupport2}), and $\varphi$ is $L$-Lipschitz on $[0,\infty)$, then for all $(x,y)\in\Gamma$:
\begin{align}
\label{kbound}
&|k_{\varphi}(x,y)|\leq LC_*Ae^{\frac{|x-y|}{2}},\quad A=\max\left\{\frac{(1-\theta)^2}{\theta\delta(1+\theta)},\rho_*\right\},\\
\label{lbound}
&|\ell_{\varphi}(x,y)|\leq \frac{LC_*(1-\theta)}{\theta^2(1+\theta)}e^{\frac{x-y}{2}}.
\end{align}
Moreover, the function $\mathcal{L}_{\varphi}$ given in (\ref{ll}) is continuous on $[0,\infty)$ and for all $x\in[0,\infty)$,
\begin{align}
\label{llbound}
|\mathcal{L}_{\varphi}(x)|\leq \frac{LC_*(1-\theta)}{\theta^2(1+\theta)}\big(e^{\frac{x-\gamma_1(x)}{2}}-e^{\frac{x-\gamma_2(x)}{2}}\big).
\end{align}
In particular, $\mathcal{L}_{\varphi}(0)=0$.
\end{lemma}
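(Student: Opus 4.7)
My plan is to prove the three bounds by direct computation, exploiting three ingredients: the Lipschitz estimate $|\varphi(x)-\varphi(y)|\le L|x-y|$, the mean value inequality $|e^{-x}-e^{-y}|\le |x-y|\,e^{-\min(x,y)}$, and the upper bound (\ref{Bbound}) on $B$. The support constraint $(x,y)\in\Gamma=\Gamma_1\cup\Gamma_2$ will then be used to tame the factor $1/(xy(x+y))$.

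For (\ref{kbound}), I would first combine the three ingredients and use the algebraic identity $(x+y)/2-\min(x,y)=|x-y|/2$ to obtain
\begin{equation*}
|k_\varphi(x,y)|\le LC_*\,\frac{|x-y|^2}{xy(x+y)}\,e^{|x-y|/2}.
\end{equation*}
I would then split into two cases. On $\Gamma_1$, setting $M=\max(x,y)>\delta_*$, the cone $\theta x\le y\le \theta^{-1}x$ gives $|x-y|\le (1-\theta)M$, $xy\ge \theta M^2$, and $x+y\ge (1+\theta)M$, so $|x-y|^2/(xy(x+y))\le (1-\theta)^2/(\theta(1+\theta)\delta_*)$. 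On $\Gamma_2$, the defining inequality $|x-y|\le\rho_*\sqrt{xy(x+y)}$ gives $|x-y|^2/(xy(x+y))\le\rho_*^2$. The maximum of these two geometric constants assembles into $A$ and (\ref{kbound}) follows.

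The argument for (\ref{lbound}) is analogous: the same three ingredients plus $(x+y)/2-y=(x-y)/2$ yield
\begin{equation*}
|\ell_\varphi(x,y)|\le LC_*\,\frac{y\,|x-y|}{x(x+y)}\,e^{(x-y)/2}.
\end{equation*}
On $\Gamma$ the cone condition provides $y/(x+y)\le 1/(\theta(1+\theta))$ (since $y\le\theta^{-1}x$ and $x+y\ge(1+\theta)x$) and $|x-y|/x\le (1-\theta)/\theta$, whose product is the claimed constant $(1-\theta)/(\theta^2(1+\theta))$.

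To handle $\mathcal{L}_\varphi$, I would integrate the pointwise estimate (\ref{lbound}) over the only interval where $\ell_\varphi(x,\cdot)$ is supported, namely $[\gamma_1(x),\gamma_2(x)]$ from Remark \ref{cone}, and compute
\begin{equation*}
\int_{\gamma_1(x)}^{\gamma_2(x)}e^{(x-y)/2}\,dy = 2\bigl(e^{(x-\gamma_1(x))/2}-e^{(x-\gamma_2(x))/2}\bigr),
\end{equation*}
absorbing the numerical factor into the constant to get (\ref{llbound}). Continuity of $\mathcal{L}_\varphi$ on $(0,\infty)$ is inherited from (\ref{Bcont}) and the continuity of $\gamma_1,\gamma_2$ (Remark \ref{cone}), via dominated convergence with the majorant already derived in (\ref{lbound}). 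Finally, the explicit formulas (\ref{g})--(\ref{f}) give $\gamma_1(0)=\gamma_2(0)=0$, so (\ref{llbound}) forces $\mathcal{L}_\varphi(0)=0$ and, via the same majorant, continuity at the origin.

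No step is genuinely difficult; the work is essentially bookkeeping. The only point deserving care is verifying that the geometric constants arising from the cone-type constraints assemble correctly into $A$ and $(1-\theta)/(\theta^2(1+\theta))$, and checking continuity of $\mathcal{L}_\varphi$ at the single point $x=0$ where the interval of integration collapses.
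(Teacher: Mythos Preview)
Your proof is correct and follows essentially the same route as the paper's: combine the Lipschitz bound, the mean value inequality for the exponential, and (\ref{Bbound}) to reduce $|k_\varphi|$ to $LC_*\,|x-y|^2/(xy(x+y))\,e^{|x-y|/2}$, then control the geometric factor separately on $\Gamma_1$ and $\Gamma_2$; similarly for $\ell_\varphi$, and integrate in $y$ over $[\gamma_1(x),\gamma_2(x)]$ for $\mathcal{L}_\varphi$. Two small remarks: on $\Gamma_2$ you correctly obtain $\rho_*^2$ rather than the $\rho_*$ appearing in the definition of $A$ (the paper writes $\rho_*$ here, which is harmless if $\rho_*\le 1$ but is in any case only a cosmetic discrepancy in the constant); and for $\ell_\varphi$ only two of your ``three ingredients'' are actually needed, since there is no factor $e^{-x}-e^{-y}$ in $\ell_\varphi$.
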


\begin{proof}
We first prove (\ref{kbound}). Let $(x,y)\in\supp (B)=\Gamma$, and assume, by the symmetry of $k_{\varphi}$, that $0\leq y\leq x$. 
By the mean value theorem, $|e^{-x}-e^{-y}|\leq e^{-y}(x-y)$, and from (\ref{Bbound}) and the Lipschitz condition,
\begin{align*}
|k_{\varphi}(x,y)|&\leq LC_*e^{\frac{x-y}{2}}\frac{(x-y)^2}{(x+y)xy}.
\end{align*}
Then by (\ref{Bsupport})--(\ref{Bsupport2})
\begin{equation*}
\frac{(x-y)^2}{(x+y)xy}\leq\left\{
 \begin{array}{ll}
 \frac{(1-\theta)^2}{\theta\delta_*(1+\theta)}&\text{if }(x,y)\in\Gamma_1,\\
  \rho_*&\text{if }(x,y)\in\Gamma_2,
 \end{array}
 \right.
 \end{equation*}
and (\ref{kbound}) follows.

In order to prove (\ref{lbound}) we use (\ref{Bbound}) and the Lipschitz condition to have, for all $(x,y)\in\Gamma$,
\begin{align*}
\label{lbound'}
|\ell_{\varphi}(x,y)|&\leq LC_* e^{\frac{x-y}{2}}\frac{y|x-y|}{x(x+y)}.
\end{align*}
Using that $\Gamma\subset\{(x,y)\in[0,\infty)^2:\theta x\leq y\leq \theta^{-1}x\}$, then
\begin{equation*}
\frac{y|x-y|}{x(x+y)}\leq\frac{(1-\theta)}{\theta^2(1+\theta)},
\end{equation*}
and (\ref{lbound}) follows. We obtain (\ref{llbound}) directly from (\ref{lbound}) and Remark \ref{cone}.

We finally prove the continuity of $\mathcal{L}_{\varphi}$ on $[0,\infty)$. By (\ref{Bcont})--(\ref{Bsupport2}), $\mathcal{L}_{\varphi}(x)$ is continuous for all $x>0$, so we only need to prove $\mathcal{L}_{\varphi}(x)\to 0$ as $x\to 0$. 
This follows from (\ref{llbound}) and the mean value theorem, using
$
\gamma_2(x)-\gamma_1(x)\leq (\theta^{-1}-\theta)x.
$
\end{proof}

\begin{remark}
Under the hypothesis of Lemma \ref{klbounds}, the function $k_{\varphi}$ could not be continuous at the origin $(x,y)=(0,0)$, since we do not know if
$\lim_{(x,y)\to(0,0)}k_{\varphi}(x,y)=0$. However we have the following.
\end{remark}

\begin{lemma}
\label{k continuous}

If $B$ satisfies (\ref{B1})--(\ref{Bsupport2}), then $k_{\varphi}\in C([0,\infty)^2)$ for all $\varphi\in C^1([0,\infty))$ with $\varphi'(0)=0$, and $k_{\varphi}(0,0)=0$.
\end{lemma}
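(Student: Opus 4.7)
The plan is to separate the problem into continuity away from the origin and continuity at $(0,0)$. Off the origin, $k_\varphi(x,y)=\frac{B(x,y)}{xy}(e^{-x}-e^{-y})(\varphi(x)-\varphi(y))$ is manifestly continuous: by (\ref{Bcont}) $B$ is continuous on $[0,\infty)^2\setminus\{(0,0)\}$, and the denominator $xy$ only vanishes on the axes, which intersect $\Gamma$ solely at $(0,0)$. Indeed, on $\Gamma_1$ one has $y\geq\theta x$ and $x\geq\theta y$ with $(x,y)\notin[0,\delta_*]^2$, so $xy>0$; and on $\Gamma_2$, if $x=0$ then the constraint $|x-y|\leq\rho_*\sqrt{xy(x+y)}$ forces $y=0$. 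Outside $\Gamma$, $k_\varphi$ vanishes identically, and it matches continuously with the values on $\Gamma$ at any boundary point away from $(0,0)$ because $B$ vanishes there by (\ref{Bcont}).

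It then remains to prove $k_\varphi(x,y)\to 0$ as $(x,y)\to(0,0)$, and since $k_\varphi\equiv 0$ off $\Gamma$, only the case $(x,y)\in\Gamma_2$ matters. Here I combine three elementary bounds. First, (\ref{Bbound}) gives
\[
B(x,y)\leq\frac{C_*e^{(x+y)/2}}{x+y}.
\]
Second, the mean value theorem yields $|e^{-x}-e^{-y}|\leq|x-y|$. Third, since $\varphi\in C^1([0,\infty))$ with $\varphi'(0)=0$, the modulus
\[
\omega(r):=\max_{t\in[0,r]}|\varphi'(t)|
\]
is continuous and nondecreasing with $\omega(0)=0$, and $|\varphi(x)-\varphi(y)|\leq\omega(\max\{x,y\})|x-y|$. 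Multiplying these estimates together,
\[
|k_\varphi(x,y)|\leq\frac{C_*e^{(x+y)/2}}{(x+y)xy}(x-y)^2\,\omega(\max\{x,y\}).
\]
Now I invoke the defining inequality of $\Gamma_2$, namely $|x-y|\leq\rho_*\sqrt{xy(x+y)}$, which gives $(x-y)^2\leq\rho_*^2\,xy(x+y)$. Substituting this in kills all singular factors:
\[
|k_\varphi(x,y)|\leq C_*\rho_*^2\, e^{(x+y)/2}\,\omega(\max\{x,y\}),\qquad(x,y)\in\Gamma_2.
\]
As $(x,y)\to(0,0)$ within $\Gamma_2$, the right-hand side tends to $0$, which simultaneously proves continuity at the origin and $k_\varphi(0,0)=0$.

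There is no real obstacle in this argument; the only subtlety is the potentially singular factor $(xy)^{-1}$, but this is precisely counterbalanced by the condition $(x-y)^2\lesssim xy(x+y)$ built into the support $\Gamma_2$, together with the smallness $|\varphi(x)-\varphi(y)|=o(|x-y|)$ supplied by the hypothesis $\varphi'(0)=0$. It is this interplay between the geometric truncation and the vanishing of $\varphi'$ at the origin that forces $k_\varphi$ to extend continuously through $(0,0)$.
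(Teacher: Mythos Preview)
Your proof is correct and follows essentially the same approach as the paper's: both reduce to showing $k_\varphi(x,y)\to 0$ as $(x,y)\to(0,0)$ along $\Gamma_2$, combine the bound (\ref{Bbound}) with the mean value theorem applied to both $e^{-x}-e^{-y}$ and $\varphi(x)-\varphi(y)$, and then use the defining inequality of $\Gamma_2$ to cancel the factor $(x-y)^2/((x+y)xy)$, leaving a term controlled by $|\varphi'(\xi)|$ (your $\omega(\max\{x,y\})$) which vanishes at the origin. Your treatment of continuity away from the origin is more explicit than the paper's (which simply asserts it is clear), but the key estimate at $(0,0)$ is the same.
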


\begin{proof}
By definition and (\ref{Bcont}), it is clear that $k_{\varphi}\in C([0,\infty)^2\setminus\{0\})$.
If we prove that $\lim_{(x,y)\to(0,0)}k_{\varphi}(x,y)=0$, the continuity at the origin follows. To this end we mimic the proof of (\ref{kbound}) using $\varphi(x)-\varphi(y)=\varphi'(\xi)(x-y)$ for some $\xi\in(\min\{x,y\},\max\{x,y\})$ instead of the Lipschitz condition, and we obtain
\begin{align}
\label{kboundC1}
|k_{\varphi}(x,y)|
&\leq \max\left\{\frac{(1-\theta)^2}{\theta\delta(1+\theta)},\rho\right\} |\varphi'(\xi)|e^{\frac{|x-y|}{2}}
\end{align}
for all $(x,y)\in\Gamma$ and all $\varphi\in C^1([0,\infty)).$ If $\varphi'(0)=0$, it follows from (\ref{kboundC1}) that
$\lim_{(x,y)\to(0,0)}k_{\varphi}(x,y)=0$.
\end{proof}

\begin{proposition}
\label{KLbounds}
Suppose that $B$ satisfies (\ref{B1})--(\ref{Bsupport2}), $\varphi$ is $L$-Lipschitz on $[0,\infty)$, and $u\in\mathscr{M}_+([0,\infty))$. Then
\begin{align}
\label{Kbound}
&|K_{\varphi}(u,u)|\leq LC_*A\bigg(\int_{[0,\infty)}e^{\frac{x-\gamma_1(x)}{2}}u(x)dx\bigg)\bigg(\int_{[0,\infty)}u(y)dy\bigg),\\
\label{Lbound}
&|L_{\varphi}(u)|\leq \frac{LC_*(1-\theta)}{2\theta^2(1+\theta)}\int_{[0,\infty)}\big(e^{\frac{x-\gamma_1(x)}{2}}-e^{\frac{x-\gamma_2(x)}{2}}\big)u(x)dx,
\end{align}
where $A$ is given in (\ref{kbound}).
\end{proposition}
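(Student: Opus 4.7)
\textbf{Proof proposal for Proposition \ref{KLbounds}.} The plan is to obtain both estimates by directly substituting the pointwise bounds on $k_\varphi$ and $\mathcal{L}_\varphi$ from Lemma \ref{klbounds} into the definitions of $K_\varphi$ and $L_\varphi$, and then exploiting the geometry of the support $\Gamma$ to rewrite the integrals in the stated form.

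First I would handle $L_\varphi(u)$, which is immediate: by definition (\ref{L}), $|L_\varphi(u)|\le \tfrac{1}{2}\int_{[0,\infty)}|\mathcal{L}_\varphi(x)|\,u(x)\,dx$, and plugging in the bound (\ref{llbound}) from Lemma \ref{klbounds} yields exactly (\ref{Lbound}).

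For $K_\varphi(u,u)$, I would start from the symmetric rewriting given in Remark \ref{Ksym}, namely
\begin{equation*}
K_\varphi(u,u)=\int_{[0,\infty)}\int_{[0,x)}k_\varphi(x,y)\,u(x)u(y)\,dy\,dx,
\end{equation*}
so that the inner integration variable satisfies $y\le x$. Since $\supp(B)=\Gamma$, on this region only pairs $(x,y)\in\Gamma$ with $y\le x$ contribute, and by (\ref{S3EGamma20})--(\ref{g}) such pairs satisfy $\gamma_1(x)\le y\le x$; in particular $0\le x-y\le x-\gamma_1(x)$. Applying (\ref{kbound}) from Lemma \ref{klbounds} and this bound on $|x-y|$, I obtain
\begin{equation*}
|K_\varphi(u,u)|\le LC_*A\int_{[0,\infty)}e^{\frac{x-\gamma_1(x)}{2}}u(x)\!\!\int_{[\gamma_1(x),x]}\!\!u(y)\,dy\,dx,
\end{equation*}
and enlarging the inner integration domain to all of $[0,\infty)$ gives (\ref{Kbound}).

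There is essentially no obstacle here; the proposition is a straightforward corollary of Lemma \ref{klbounds}. The only point requiring a little care is making sure the substitution $|x-y|\le x-\gamma_1(x)$ is valid on the relevant half of $\Gamma$, which follows from $\gamma_1(x)\le x\le \gamma_2(x)$ noted in Remark \ref{cone}.
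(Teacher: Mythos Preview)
Your proposal is correct and matches the paper's own proof essentially step for step: the paper also derives (\ref{Lbound}) directly from (\ref{llbound}), and for (\ref{Kbound}) it uses Remark \ref{Ksym}, the pointwise bound (\ref{kbound}), and the inclusion $\gamma_1(x)\le y\le x$ on the lower half of $\Gamma$ before enlarging the inner integral. The only cosmetic difference is that the paper writes $e^{|x-y|/2}=e^{x/2}e^{-y/2}$ and then bounds $e^{-y/2}\le e^{-\gamma_1(x)/2}$, whereas you bound $x-y\le x-\gamma_1(x)$ directly; the two are equivalent.
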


\begin{proof}
In order to prove (\ref{Kbound}), we use Remark \ref{cone}, Remark \ref{Ksym} and (\ref{kbound}):
\begin{align*}
|K_{\varphi}(u,u)|
&\leq LC_*A\int_0^{\infty}e^{\frac{x}{2}}u(x)\int_{\gamma_1(x)}^x e^{-\frac{y}{2}}u(y)dydx\\
&\leq LC_*A\int_0^{\infty} e^{\frac{x-\gamma_1(x)}{2}}u(x)\int_{\gamma_1(x)}^x u(y)dydx,
\end{align*}
from where (\ref{Kbound}) follows.
The estimate (\ref{Lbound}) follows directly from (\ref{llbound}).
\end{proof}
Let us define now
\begin{align}
&K_{\varphi,n}(u,u)=\frac{1}{2}\iint_{[0,\infty)^2}k_{\varphi,n}(x,y)u(t,x)u(t,y)dydx,\\
&k_{\varphi,n}(x,y)=b_n(x,y)(e^{-x}-e^{-y})(\varphi(x)-\varphi(y)), \label{kn}\\
&L_{\varphi,n}(u_n)=\frac{1}{2}\int_{[0,\infty)}\mathcal{L}_{\varphi,n}(x)u(t,x)dx,\\
&\mathcal{L}_{\varphi,n}(x)=\int_0^{\infty}\ell_{\varphi,n}(x,y)dy\\
&\ell_{\varphi,n}(x,y)=b_n(x,y)y^2e^{-y}(\varphi(x)-\varphi(y)).
\end{align}

\begin{remark}
\label{nbounds}
Since $\phi_n\leq x^{-1}$, the estimates (\ref{kbound}), (\ref{lbound}) and (\ref{llbound}) in Lemma \ref{klbounds} hold for $k_{\varphi,n}$, $\ell_{\varphi,n}$ and $\mathcal{L}_{\varphi,n}$ respectively, and estimates
(\ref{Kbound}) and (\ref{Lbound}) in Lemma \ref{KLbounds} hold for $K_{\varphi,n}(u,u)$ and $L_{\varphi,n}(u)$ respectively, for all $n\in\NN$.
\end{remark}

\begin{lemma}
\label{llconver}
$\mathcal{L}_{\varphi,n}\to\mathcal{L}_{\varphi}$ as $n\to\infty$ uniformly on the compact sets of $[0,\infty)$ for all $\varphi$ $L$-Lipschitz on $[0,\infty)$.
\end{lemma}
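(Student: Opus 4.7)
Fix a compact set $[0,M] \subset [0,\infty)$ and $\delta > 0$. The plan is to split $[0,M]$ into a neighborhood of the origin, where both $\mathcal{L}_{\varphi}$ and $\mathcal{L}_{\varphi,n}$ are uniformly small, and a compact set bounded away from $0$, where $b_n = b$ for all $n$ large enough, so the two functions coincide.

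First I would exploit the bound (\ref{llbound}) from Lemma \ref{klbounds}, which by Remark \ref{nbounds} holds uniformly in $n$: for all $x \ge 0$ and all $n \in \NN$,
\begin{equation*}
|\mathcal{L}_{\varphi}(x)| + |\mathcal{L}_{\varphi,n}(x)| \le \frac{2LC_*(1-\theta)}{\theta^2(1+\theta)}\Big(e^{\frac{x-\gamma_1(x)}{2}} - e^{\frac{x-\gamma_2(x)}{2}}\Big) =: \Psi(x).
\end{equation*}
Since $\gamma_2(x) - \gamma_1(x) \le (\theta^{-1} - \theta) x \to 0$ as $x \to 0$ (cf. Remark \ref{cone}), the mean value theorem gives $\Psi(x) \to 0$ as $x \to 0$. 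Hence there exists $\varepsilon \in (0,M)$ such that $\Psi(x) < \delta$ for every $x \in [0,\varepsilon]$, which handles $[0,\varepsilon]$ uniformly in $n$.

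Next, for $x \in [\varepsilon, M]$, observe that the integrand $\ell_{\varphi,n}(x,y)$ (and likewise $\ell_\varphi(x,y)$) is supported in $\Gamma \cap (\{x\} \times [0,\infty))$, which by (\ref{S3EGamma20}) is contained in $\{x\} \times [\gamma_1(x), \gamma_2(x)] \subset \{x\} \times [\theta\varepsilon, \theta^{-1}M]$. Choosing $N$ so that $1/N < \theta\varepsilon$ and $N > \theta^{-1}M$, the construction of $\phi_n$ yields $\phi_n(x) = x^{-1}$ and $\phi_n(y) = y^{-1}$ for every $x \in [\varepsilon,M]$ and every $y \in [\theta\varepsilon, \theta^{-1}M]$, whenever $n \ge N$. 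Consequently $b_n(x,y) = b(x,y)$ on this region, so $\ell_{\varphi,n}(x,y) = \ell_\varphi(x,y)$ pointwise, and therefore $\mathcal{L}_{\varphi,n}(x) = \mathcal{L}_\varphi(x)$ for all $x \in [\varepsilon, M]$ and all $n \ge N$.

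Combining both estimates gives $\sup_{x \in [0,M]} |\mathcal{L}_{\varphi,n}(x) - \mathcal{L}_\varphi(x)| < \delta$ for all $n \ge N$, which is the required uniform convergence on $[0,M]$. There is no real obstacle here: the only subtle point is noticing that $\Psi(x)$ controls both $|\mathcal{L}_\varphi|$ and $|\mathcal{L}_{\varphi,n}|$ uniformly in $n$ (so the problematic region $x \to 0$ is handled once, not separately for each $n$), and that away from the origin the truncation $\phi_n$ becomes trivial for $n$ large due to its explicit definition on $[1/n,n]$.
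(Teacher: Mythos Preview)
Your proof is correct and takes a somewhat different route from the paper. The paper splits at the $n$-dependent point $x=1/n$: for $x\in[0,1/n]$ it uses continuity of $\mathcal L_\varphi$ at $0$ (and the uniform bound in Remark~\ref{nbounds}) to make both terms small, while for $x\in[1/n,R]$ it does \emph{not} use the conical support of $B$ but instead notes that $\phi_n(x)\phi_n(y)=(xy)^{-1}$ on $[1/n,n]^2$ and then estimates the remaining $y$-tails $\int_0^{1/n}$ and $\int_n^\infty$ directly via~(\ref{lbound}). You instead fix $\varepsilon$ first and exploit the support condition $\Gamma\subset\{\theta x\le y\le\theta^{-1}x\}$ to conclude that on $[\varepsilon,M]$ the truncation is \emph{exactly} inactive for all large $n$, so $\mathcal L_{\varphi,n}=\mathcal L_\varphi$ there with no remainder to estimate. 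Your argument is a bit cleaner and makes more structural use of the cone $\Gamma$; the paper's argument is slightly more robust in that it would go through even without the cone property, needing only the pointwise bound~(\ref{lbound}).
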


\begin{proof}
Let $R>0$ and $x\in[0,R]$.  On the one hand, if $x\in[0,1/n]$, we have
$|\mathcal{L}_{\varphi}(x)-\mathcal{L}_{\varphi,n}(x)|\leq 2|\mathcal{L}_{\varphi}(x)|\to 0$ as $n\to \infty$, since $\mathcal{L}_{\varphi}(0)=0$ (cf. Lemma \ref{klbounds}). On the other hand, if $x\in[1/n,R]$ and $y\in[1/n,n]$, by definition $\phi_n(x)\phi_n(y)=(xy)^{-1}$, and then
\begin{align*}
|\mathcal{L}_{\varphi}(x)-\mathcal{L}_{\varphi,n}(x)|&\leq\int_0^{\frac{1}{n}}|\ell_{\varphi}(x,y)-\ell_{\varphi,n}(x,y)|dy\nonumber\\
&+\int_n^{\infty}|\ell_{\varphi}(x,y)-\ell_{\varphi,n}(x,y)|dy.
\end{align*}
The two integrals in the right hand side above are treated in the same way. Using
$|\ell_{\varphi}(x,y)-\ell_{\varphi,n}(x,y)|\leq|\ell_{\varphi}(x,y)|$ and (\ref{lbound}),
\begin{align*}
&\int_0^{\frac{1}{n}}|\ell_{\varphi}(x,y)|dy
\leq \frac{LC_*(1-\theta)}{\theta^2(1+\theta)}e^{\frac{R}{2}}\int_0^{\frac{1}{n}}e^{-\frac{y}{2}}dy\xrightarrow[n\to\infty]{}0,\\
&\int_n^{\infty}|\ell_{\varphi}(x,y)|dy\leq \frac{LC_*(1-\theta)}{\theta^2(1+\theta)}e^{\frac{R}{2}}\int_n^{\infty}e^{-\frac{y}{2}}dy\xrightarrow[n\to\infty]{}0,
\end{align*}
and the result follows.
\end{proof}

\section{The function $\mathcal B _{ \beta  }$, properties and scalings}
\setcounter{equation}{0}
\setcounter{theorem}{0}
\label{deduction}
In this Section, we describe several properties of the function $\mathcal B_\beta $. First, the parameter $\beta $ is used to scale  the variables, in such a way that the total mass of the solution is conserved. Then, for each $\beta >0$ fixed, the behavior of $\mathcal B_\beta (k, k')$ is studied when $k$ and $k'$ are varying on $(0, \infty)$.

\subsection{ $\beta$-scalings of $\mathcal B_\beta $.}
\label{betascaling}
It looks natural from (\ref{S1E1}) to introduce the scaled variable 
\begin{equation}
\label{S2E1XY}
\textbf x=\beta\, \textbf k,
\end{equation}
and define
\begin{equation}
\label{S2E2XY}
F(\tau,x)=f(t,k),\qquad \tau=\beta^3 t,\quad x=\beta k.
\end{equation}
The scaling (\ref{S2E2XY}) preserves the total number of particles:
\begin{align*}
\int_0^{\infty}  x^2 F(\tau, x)dx=\int_0^{\infty}k^2f(t,k)dk=\int_0^{\infty}k^2 f(0,k)dk\quad\forall\tau>0.
\end{align*}
In terms of $F$, 
\begin{gather*}
k^2\frac{\partial f}{\partial t}(t,k)=\beta^4x^2\frac{\partial F}{\partial \tau}(\tau,x),\\
\tilde{q} (f, f')=\beta^6FF'\big(e^{-x}-e^{-x'}\big)+\beta^3\big(F'e^{-x}-Fe^{-x'}\big),
\end{gather*}
and if we define
\begin{align}
\label{BBB}
B_{\beta}(x,x')=\beta^{-1}\mathcal{B}_{\beta}(k,k'),
\end{align}
that is,
\begin{align}
\label{BbetaA}
\!\!\!\!\!B_\beta (x, x')=\sqrt \beta e^{\frac {(x'+x )} {2}}\!\!\!\int _0^\pi 
\frac {(1+\cos^2\theta)} { |\textbf{x}'-\textbf{x}| }e^{-\beta \frac {m(x-x')^2+\frac { |\textbf{x}'-\textbf{x}|^4} {4m\beta ^2}} {2 |\textbf{x}'-\textbf{x}|^2}} d\cos \theta,
\end{align}
the equation (\ref{S1E1})  then reads
\begin{align}
\label{eqFscaling}
x^2\frac {\partial F} {\partial \tau }(\tau,x)=&\int _0^\infty B_\beta (x, x')FF'\big(e^{-x}-e^{-x'}\big)xx'dx'+\nonumber \\
&+\beta ^{-3}\int _0^\infty B_\beta (x, x')\big(F'e^{-x}-Fe^{-x'}\big)xx'dx'.
\end{align}
If we now define
\begin{equation}
\label{S2EXYu}
u(\tau, x)=x^2F(\tau, x)
\end{equation}
then from (\ref{eqFscaling}) we finally obtain
\begin{align}
\frac {\partial u} {\partial \tau }(\tau,x)&=\int _0^\infty \frac {B_{\beta}(x,x')} {xx'}\big(e^{-x}-e^{-x'}\big)uu'dx'+\nonumber\\
&+\beta ^{-3}\int _0^\infty  \frac {B_{\beta}(x,x')} {xx'}\big(u'x^2e^{-x}-ux'^2e^{-x'}\big)dx', \label{EscEu}
\end{align}
The second term in the right hand side of  (\ref{EscEu}) seems then negligible  when $\beta$ tends to  $\infty$, but no rigorous result on that direction is known.

\newpage
\subsection{The function $B_\beta (x, x')$ for $\beta $ fixed.}
In this Section we show some properties of the kernel $B_{\beta}$ defined in (\ref{BbetaA}).

\begin{figure}[h]
\centering
\includegraphics[width=\textwidth]{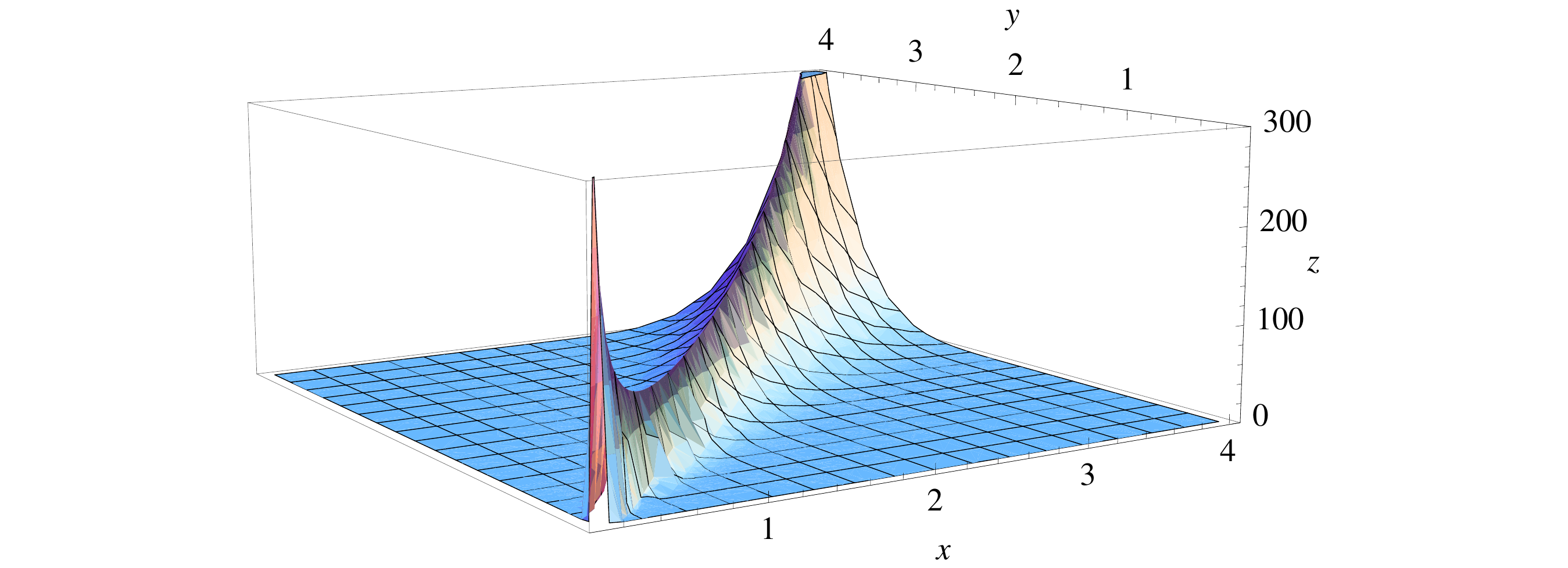}
\caption{The kernel $B_{\beta}(x,y)$ for $\beta=100$, $m=1$, $(x,y)\in[0.1,4]^2$.}
\label{fig1}
\end{figure}

\begin{figure}[h]
\captionsetup{width=0.8\textwidth}
 \centering
  \subfloat{
    \includegraphics[width=3.2cm]{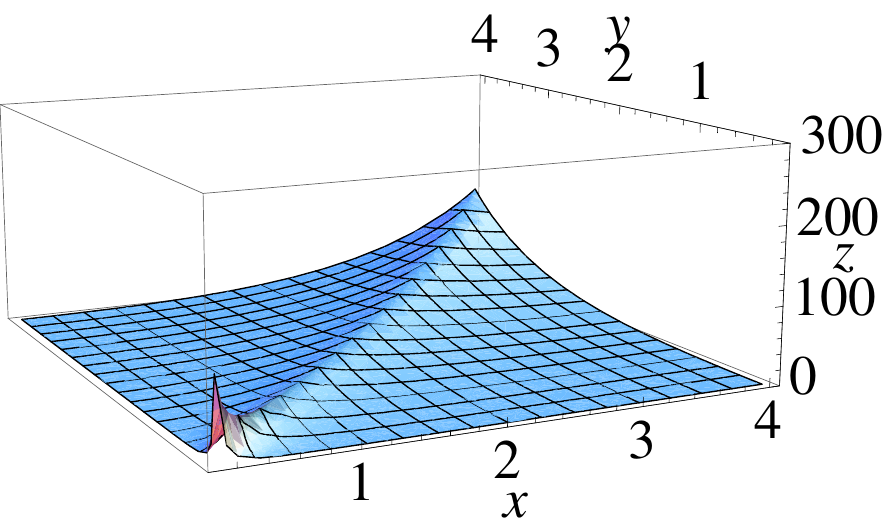}}
  \subfloat{
    \includegraphics[width=3.2cm]{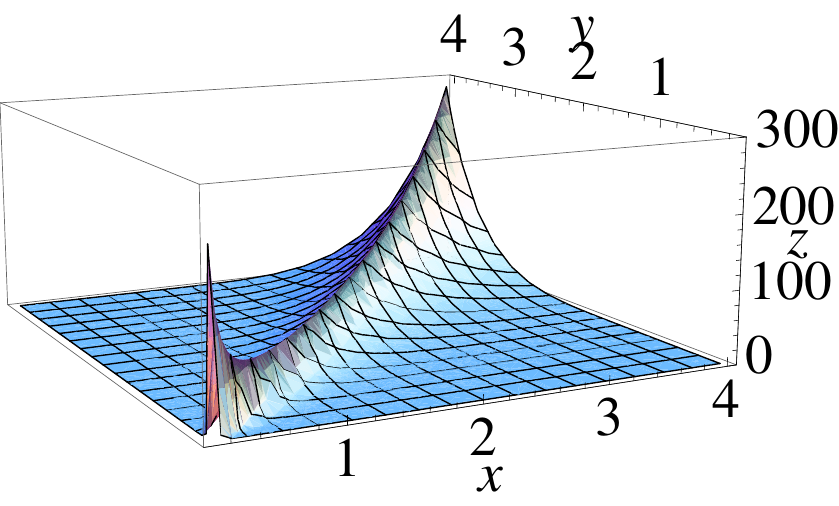}}
  \subfloat{
    \includegraphics[width=3.2cm]{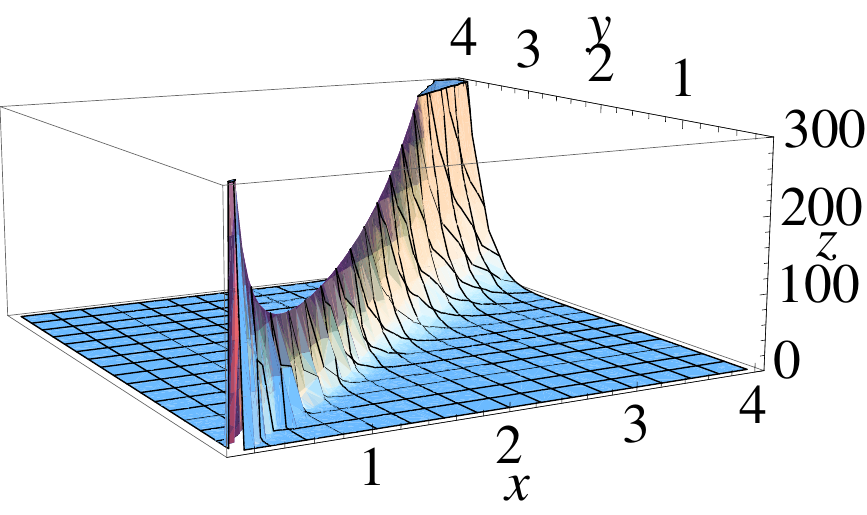}}
 \caption{From left to right, the kernel $B_{\beta}(x,y)$ for $m=1$, $(x,y)\in[0.1,4]^2$ and $\beta=10$, $\beta=50$ and $\beta=200$.}
 \label{fig234}
\end{figure}

\begin{figure}[h]
\captionsetup{width=0.8\textwidth}
\centering
\includegraphics[width=11cm]{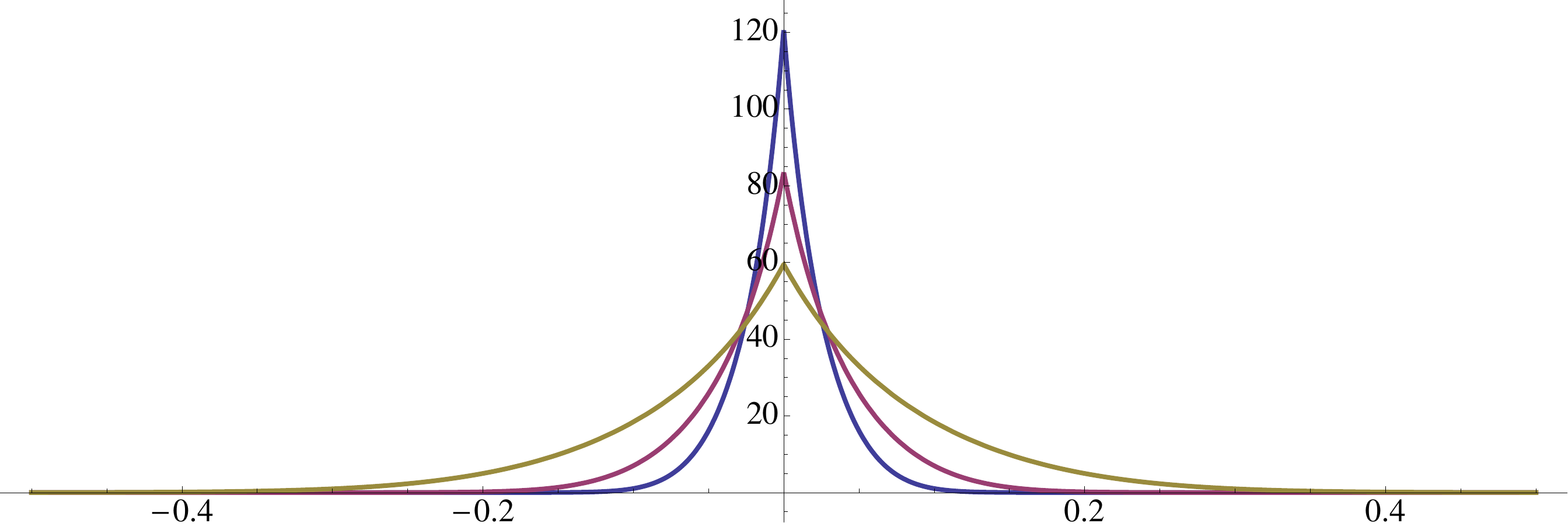}
\caption{Sections of $B_{\beta}$ for $\beta=50$ and $m=1$. The horizontal axis corresponds to the variable $\xi=(x-y)/\sqrt{2}$. The vertical axis corresponds to $B_{\beta}(x,y)$ for $x+y=\text{constant}$. 
In blue, $x+y=0.3$, in red $x+y=0.5$ and in yellow, $x+y=1.$}
\label{fig5}
\end{figure}

\begin{proposition}
For all $\beta>0$, $x>0$ and $x'>0$,
\begin{gather}
B_\beta (x, x')\le \sqrt \beta\;\frac {4\big(10\max^2\{x, x'\}+\min^2\{x, x'\}\big)} {15 \max^3\{x, x'\}}e^{\frac {(x'+x )} {2}},\label{ScalE1}\\
\intertext{and for all $x>0,\;x'>0$ with $x\neq  x'$,}
\lim _{ \beta \to \infty  }B_\beta (x, x')=0.\label{ScalE2}
\end{gather}
\end{proposition}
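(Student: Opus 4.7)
The plan is to establish both estimates from the basic bound $e^{-s}\le 1$ for $s\ge 0$ combined with a direct computation of the angular integral. Starting with (\ref{ScalE1}), observe that the exponent inside the integrand of (\ref{BbetaA}) is of the form $-\beta s/2$ with $s\ge 0$, hence trivially bounded by $1$. This reduces the upper bound to controlling
\begin{equation*}
I(x,x')=\int_0^\pi \frac{1+\cos^2\theta}{|\mathbf{x}'-\mathbf{x}|}\,d\cos\theta,\qquad |\mathbf{x}'-\mathbf{x}|^2=x^2+x'^2-2xx'\cos\theta.
\end{equation*}

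For this angular integral I would change variable $w=|\mathbf{x}'-\mathbf{x}|$, which gives $d\cos\theta=-w\,dw/(xx')$ and transforms the integration range $\theta\in[0,\pi]$ into $w\in[|x-x'|,x+x']$. The integrand becomes $(1+\cos^2\theta)/(xx')$ with $\cos\theta=(x^2+x'^2-w^2)/(2xx')$, so that writing $\mu=\min\{x,x'\}$, $M=\max\{x,x'\}$, $p=M-\mu$, $q=M+\mu$ (hence $q-p=2\mu$), one is left with the polynomial integral
\begin{equation*}
I(x,x')=\frac{1}{xx'}\int_p^q \Bigl(1+\tfrac{(x^2+x'^2-w^2)^2}{4x^2x'^2}\Bigr)\,dw.
\end{equation*}
This is an elementary computation; expanding $(x^2+x'^2-w^2)^2$ and integrating $w^k$ for $k=0,2,4$, then using the symmetric polynomials $q-p=2\mu$, $q^3-p^3=2\mu(3M^2+\mu^2)$ and $q^5-p^5=2\mu(5M^4+10M^2\mu^2+\mu^4)$, the cubic and quartic contributions cancel against the linear term and one ends up with
\begin{equation*}
I(x,x')=\frac{4(10M^2+\mu^2)}{15 M^3}.
\end{equation*}
Multiplying by $\sqrt\beta\, e^{(x+x')/2}$ yields exactly (\ref{ScalE1}).

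To obtain (\ref{ScalE2}), I would retain, rather than discard, the Gaussian factor and exploit that, since $m(x-x')^2\ge 0$ and $|\mathbf{x}'-\mathbf{x}|^4/(4m\beta^2)\ge 0$, one has the uniform-in-$\theta$ bound
\begin{equation*}
-\beta\,\frac{m(x-x')^2+|\mathbf{x}'-\mathbf{x}|^4/(4m\beta^2)}{2|\mathbf{x}'-\mathbf{x}|^2}\le -\beta\,\frac{m(x-x')^2}{2|\mathbf{x}'-\mathbf{x}|^2}\le -\frac{\beta m(x-x')^2}{2(x+x')^2},
\end{equation*}
where I used $|\mathbf{x}'-\mathbf{x}|^2\le (x+x')^2$. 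Pulling this $\theta$-independent factor out of the integral and using the bound (\ref{ScalE1}) already established gives
\begin{equation*}
B_\beta(x,x')\le \sqrt\beta\;\frac{4(10M^2+\mu^2)}{15 M^3}e^{(x+x')/2}\exp\!\Bigl(-\frac{\beta m(x-x')^2}{2(x+x')^2}\Bigr).
\end{equation*}
For $x\ne x'$ the coefficient of $\beta$ in the exponent is a strictly negative constant, so the factor $\sqrt\beta\,e^{-c\beta}\to 0$ as $\beta\to\infty$ and (\ref{ScalE2}) follows. There is no genuine obstacle in this argument; the only nontrivial step is the polynomial bookkeeping that produces the sharp constant $4(10M^2+\mu^2)/(15M^3)$.
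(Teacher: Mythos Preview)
Your proof is correct. For (\ref{ScalE1}) you follow the same strategy as the paper---bound the exponential by $1$ and evaluate the angular integral---with the only difference being that you carry out the computation via the substitution $w=|\mathbf{x}'-\mathbf{x}|$ while the paper simply states the value of $\int_{-1}^1 (1+t^2)(x^2+x'^2-2xx't)^{-1/2}\,dt$. For (\ref{ScalE2}) the routes genuinely diverge: the paper argues pointwise convergence of the integrand to zero and invokes dominated convergence, whereas you extract the $\theta$-uniform Gaussian factor $\exp\!\bigl(-\beta m(x-x')^2/(2(x+x')^2)\bigr)$ and then use the elementary fact that $\sqrt{\beta}\,e^{-c\beta}\to 0$ for $c>0$. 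Your argument is slightly more transparent here, since it deals with the $\sqrt{\beta}$ prefactor explicitly rather than absorbing it into the dominated convergence step.
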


\begin{proof} 
For all $x>0$ and $x'>0$,
\begin{align}
\label{Bproper1}
\frac{e^{-\frac {(x+x')}{2}}}{\sqrt{\beta}}B_\beta (x, x')&\le \int _0^\pi \frac {(1+\cos^2\theta)} { |\textbf{x}'-\textbf{x}| }d\cos \theta=\int  _{ -1 }^1 \frac {(1+t^2)} { \sqrt{x^2+x'^2-2xx't} }dt \nonumber\\
&=\frac {4\big(10\max^2\{x, x'\}+\min^2\{x, x'\}\big)} {15 \max^3\{x, x'\}},
\end{align}
and then (\ref{ScalE1}) holds.
If $x'\not =x$, we have first
$$
\lim _{ \beta \to \infty }e^{-\beta \frac {m(x-x')^2+\frac { |\textbf{x}'-\textbf{x}|^4} {4m\beta ^2}} {2 |\textbf{x}'-\textbf{x}|^2}} =0\qquad\forall\theta\in[0,\pi],
$$
and since
$$
 \frac {(1+\cos^2\theta)} { |\textbf{x}'-\textbf{x}| } e^{-\beta \frac {(x-x')^2+\frac { |\textbf{x}'-\textbf{x}|^4} {\beta ^2}} {2m |\textbf{x}'-\textbf{x}|^2}} \le  \frac {(1+\cos^2\theta)} { |\textbf{x}'-\textbf{x}|}
 \in L^1(d\cos\theta) \quad\forall\beta>0,
$$
then (\ref{ScalE2})  follows from Lebesgue's convergence Theorem.
\end{proof}

\begin{proposition}
\begin{align}
&B_\beta (x, x)=\sqrt{\beta}\left(\frac{2\sqrt{2\pi m\beta}}{x^2}+\mathcal{O}\left(\frac{1}{x}\right)^3\right)e^x\quad\text{as}\quad x\to \infty,\label{ScalE5}\\
&B_\beta (x, x)=\sqrt{\beta}\;\frac {44} {15 }\left(\frac {1} {x}+1\right)+\mathcal O(x)\quad\text{as}\quad x\to 0.\label{ScalE6}
\end{align}
\end{proposition}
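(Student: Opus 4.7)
I begin from \eqref{BbetaA} specialized to $x'=x$. On the diagonal the term $m(x-x')^2$ in the exponent vanishes while $|\mathbf{x}'-\mathbf{x}|^2 = 2x^2(1-\cos\theta)$, so the exponent reduces to $-|\mathbf{x}'-\mathbf{x}|^2/(8m\beta) = -x^2(1-\cos\theta)/(4m\beta)$. Changing variables successively $t=\cos\theta$ and then $s=1-t$ (so $s\in[0,2]$ and $1+\cos^2\theta = 1+(1-s)^2$), I obtain the working formula
\begin{equation*}
B_\beta(x,x) \;=\; \frac{\sqrt{\beta}\, e^x}{x} \int_0^2 \frac{1+(1-s)^2}{\sqrt{2s}}\, e^{-x^2 s/(4m\beta)}\, ds,
\end{equation*}
from which both asymptotics will follow.

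\textbf{The limit $x\to 0$.} In this regime the Gaussian factor equals $1+\mathcal O(x^2)$ uniformly on $[0,2]$, while $e^x = 1 + x + \mathcal O(x^2)$. The leading integral is elementary:
\begin{equation*}
\int_0^2 \frac{2-2s+s^2}{\sqrt{2s}}\,ds = 4 - \frac{8}{3} + \frac{8}{5} = \frac{44}{15}.
\end{equation*}
After the overall prefactor $1/x$, the two Taylor expansions above contribute errors of size $\mathcal O(x)$, and collecting the $1/x$ and constant parts yields \eqref{ScalE6}.

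\textbf{The limit $x\to\infty$.} Here the Gaussian factor concentrates near $s=0$, so I rescale $\sigma = x^2 s/(4m\beta)$:
\begin{equation*}
\frac{1}{x}\int_0^2 \frac{1+(1-s)^2}{\sqrt{2s}}\, e^{-x^2 s/(4m\beta)}\, ds \;=\; \frac{\sqrt{2m\beta}}{x^2}\int_0^{x^2/(2m\beta)} \frac{1+\bigl(1-\tfrac{4m\beta\sigma}{x^2}\bigr)^2}{\sqrt{\sigma}}\, e^{-\sigma}\, d\sigma.
\end{equation*}
The dominant contribution comes from freezing the numerator at its value $2$ at $\sigma=0$ and extending the upper limit to $+\infty$, producing $2\Gamma(\tfrac12)=2\sqrt{\pi}$ and hence the leading term $2\sqrt{2\pi m\beta}/x^2$. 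The corrections from the $\sigma$-dependent terms in the numerator are $\mathcal O(x^{-4})$ via the Gamma integrals $\int_0^\infty \sigma^{k-1/2}e^{-\sigma}d\sigma = \Gamma(k+\tfrac12)$ for $k=1,2$, and the tail $\sigma > x^2/(2m\beta)$ contributes an exponentially small remainder; both are absorbed in $\mathcal O(x^{-3})$. Multiplying by $\sqrt{\beta}\,e^x$ produces \eqref{ScalE5}. No step presents a genuine obstacle: the computation is a standard Watson--Laplace expansion at the endpoint singularity $s=0$, and the only care needed is in bookkeeping the quadratic correction in the numerator and in verifying the exponential smallness of the upper cut-off.
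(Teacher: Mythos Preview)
Your proof is correct. Both asymptotics follow exactly as you describe: the $x\to 0$ computation is just the elementary integral $\int_0^2 (2-2s+s^2)/\sqrt{2s}\,ds=44/15$ combined with the first two terms of $e^x$, and the $x\to\infty$ computation is a clean Watson--Laplace expansion at the integrable endpoint singularity.

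Your route differs from the paper's. The paper evaluates the one-dimensional integral in \emph{closed form},
\[
B_\beta(x,x)=\sqrt{\beta}\,\frac{2e^x\sqrt{\beta m}}{x^6}\Big(\sqrt{2\pi}\big(6\beta^2m^2-2\beta m x^2+x^4\big)\,\mathrm{Erf}\Big(\tfrac{x}{\sqrt{2\beta m}}\Big)-12e^{-x^2/(2\beta m)}(\beta m)^{3/2}x\Big),
\]
and then reads both asymptotics off this exact expression. That approach yields more information (an explicit formula valid for all $x$), but requires either a symbolic computation or a somewhat tedious by-hand evaluation. Your approach is more elementary and transparent, extracting each asymptotic directly without the intermediate closed form; it also makes the mechanism visible (Taylor expansion at $x=0$, Laplace concentration near $s=0$ as $x\to\infty$). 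Either method is perfectly adequate here.
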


\begin{proof}
By definition, for all $x>0$,
\begin{align*}
&B_\beta (x, x)=\sqrt{\frac{\beta}{2}}\frac{e^{x}}{x}\int _0^\pi 
\frac {(1+\cos^2\theta)} {\sqrt{1-\cos\theta} }e^{- \frac {x^2(1-\cos \theta)} {4m \beta }} d\cos \theta\\
&=\sqrt{\beta}\frac{2e^x\sqrt{\beta m}}{x^6}\Big(\sqrt{2\pi}\big(6\beta^2m^2-2\beta m x^2+x^4\big)Erf\left(\frac{x}{\sqrt{2\beta m}}\right)-\\
&\hspace{3cm}-12e^{-\frac{x^2}{2\beta m}}(\beta m)^{3/2}x\Big),
\end{align*}
and the result follows.
\end{proof}

The function $B_\beta $ is exponentially decreasing in the direction orthogonal  to the first diagonal, as shown in the next two Propositions.
\begin{proposition}
\label{ScalT4}
For all $\beta>0$,
\begin{align}
&\nabla B_\beta (x, x')\cdot (1, -1) > 0 \quad\hbox{if}\quad x'>x>0,\\
&\nabla B_\beta (x, x')\cdot (1, -1) < 0 \quad\hbox{if}\quad x>x'>0.
\end{align}
\end{proposition}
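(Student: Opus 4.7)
The plan is to work in the sum/difference coordinates $s=x+x'$, $\xi = x'-x$. Because $B_\beta$ is symmetric in $(x,x')$ (exchanging $x$ and $x'$ leaves the formula (\ref{BbetaA}) invariant), it depends on $\xi$ only through $\xi^2$, so I will reduce the question to showing $\partial_\xi B_\beta<0$ for $\xi>0$; the case $\xi<0$ then follows by evenness. Direct computation gives $\partial_x-\partial_{x'}=-2\partial_\xi$, so the claimed inequalities are equivalent to $\partial_\xi B_\beta <0$ for $\xi>0$. The prefactor $\sqrt\beta\,e^{s/2}$ does not depend on $\xi$, so it suffices to differentiate the integral over $\theta$.

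Let $c=\cos\theta$ and $v^2=|\mathbf{x}'-\mathbf{x}|^2 = x^2+x'^2-2xx'c$. A short algebraic identity yields the crucial decomposition
\begin{equation*}
v^2 \;=\; \tfrac12\bigl(s^2(1-c) + \xi^2(1+c)\bigr),
\end{equation*}
from which $\partial_\xi(v^2)=\xi(1+c)$ and $\partial_\xi v = \xi(1+c)/(2v)$. Writing the exponent in (\ref{BbetaA}) as $-\beta h$ with $h=m\xi^2/(2v^2)+v^2/(8m\beta^2)$, I would next differentiate the integrand with respect to $\xi$, combining the contributions from $\partial_\xi(1/v)$ and from $-\beta\,\partial_\xi h$. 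Factoring out $\xi$ and $1/v^3$, the bookkeeping produces
\begin{equation*}
\partial_\xi B_\beta \;=\; -\,\xi\,\sqrt\beta\,e^{s/2}\int_0^\pi \frac{1+c^2}{v^3}\,\tilde X(c,\xi,s)\,e^{-\beta h}\,dc,
\end{equation*}
with the integrand factor
\begin{equation*}
\tilde X \;=\; (1+c)\Bigl(\tfrac12+\tfrac{v^2}{8m\beta}\Bigr) + \beta m\Bigl(1-\tfrac{\xi^2(1+c)}{2v^2}\Bigr).
\end{equation*}

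The final step is the sign analysis of $\tilde X$, and this is where the decomposition of $v^2$ pays off: since $s^2(1-c)\ge 0$, one has $2v^2 \ge \xi^2(1+c)$, so $\xi^2(1+c)/(2v^2)\le 1$. Both brackets in $\tilde X$ are therefore nonnegative, and in fact $\tilde X>0$ pointwise (for $c=-1$ the first bracket vanishes but the second equals $\beta m>0$; for $c>-1$ the first bracket is strictly positive). Hence for $\xi>0$ the integrand is strictly positive and the $-\xi$ prefactor makes $\partial_\xi B_\beta<0$, i.e.\ $(1,-1)\cdot\nabla B_\beta>0$. The case $x>x'$ follows by evenness of $B_\beta$ in $\xi$.

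The main obstacle I expect is purely algebraic: correctly carrying out the differentiation of $h$ (two competing terms of opposite sign appear from $m\xi^2/(2v^2)$ alone) and then recognizing the right grouping of the four resulting terms into the two manifestly nonnegative blocks of $\tilde X$. Once that grouping is identified, the elementary inequality $v^2\ge \xi^2(1+c)/2$, which is essentially the defining inequality of $v^2$ in the $(s,\xi)$ coordinates, closes the argument without any further estimate on the exponential factor $e^{-\beta h}$.
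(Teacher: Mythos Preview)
Your proof is correct and follows essentially the same strategy as the paper: both compute the directional derivative, factor out $(x'-x)$, and verify the remaining integrand is strictly positive. Your use of the sum/difference coordinates $(s,\xi)$ and the identity $v^2=\tfrac12\bigl(s^2(1-c)+\xi^2(1+c)\bigr)$ streamlines the algebra compared with the paper's direct computation of $\Theta(x,x',t)-\Theta(x',x,t)$, but after clearing denominators your $\tilde X$ coincides with the paper's bracket $4(\beta m)^2(1-t)(x+x')^2+4\beta m(1+t)v^2+(1+t)v^4$.
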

\textbf{Proof.} It is only a straightforward calculation. With the help of Mathematica, using the change of variables $t=\cos\theta$,
\begin{align}
&\frac {\partial B_\beta} {\partial x}(x,x')=\frac{e^{\frac {(x'+x )} {2}}}{4m\sqrt{\beta}}\int _{-1}^{1} 
\frac {(1+t^2)} {|\textbf{x}'-\textbf{x}|^{5}}e^{-\beta \frac {m(x-x')^2+\frac { |\textbf{x}'-\textbf{x}|^4} {4m\beta ^2}} {2 |\textbf{x}'-\textbf{x}|^2}}\Theta(x, x', t) dt,
\label{EscE20}\\
&\Theta(x, x', t)=4(\beta m) ^2(t-1)x'(x-x')(x+x')-(x-tx') |\textbf{x}'-\textbf{x}|^4+\nonumber\\
&\hskip 2cm +2\beta m \left(x'^2-2tx'(x-1)+(x-2)x\right) |\textbf{x}'-\textbf{x}|^2.\label{EscE21}
\end{align}
The expression of  $\frac {\partial B_\beta} {\partial x'}$ is obtained from (\ref{EscE20}) and (\ref{EscE21}) using the permutation $x\leftrightarrow x'$. Then,
\begin{align}
&\nabla B_\beta (x, x')\cdot (1, -1)=\frac{e^{\frac {(x'+x )} {2}}}{4m\sqrt{\beta}}\int _{-1}^{1} 
\frac {(1+t^2)} {|\textbf{x}'-\textbf{x}|^{5} }e^{-\beta \frac {m(x-x')^2+\frac { |\textbf{x}'-\textbf{x}|^4} {4m\beta ^2}} {2 |\textbf{x}'-\textbf{x}|^2}}\times \nonumber \\
&\hskip 3cm \times 
\big(\Theta (x, x', t) -\Theta(x',x,t)\big)dt,
\end{align}
\begin{align*}
\Theta (x, x', t) -\Theta(x',x,t)&=(x'-x)\Big[4(\beta m) ^2(1-t)(x+x')^2\\
&+4\beta m (1+t)|\textbf{x}'-\textbf{x}|^2+(1+t)|\textbf{x}'-\textbf{x}|^4 \Big],
\end{align*}
and the result follows.
\qed

\begin{proposition}
\label{S3P15}
For all $\beta>0$, $x>0$ and $x'>0$,
\begin{align}
B _{ \beta  }(x, x')\le \mathscr B _{ \beta  }(x, x'),
\end{align}
where
\begin{align}
&\mathscr{B}_{\beta}(x, x')=\sqrt \beta e^{-\beta \frac {m(x-x')^2+\frac { (x-x')^4} {4m\beta ^2}} {2 (x+x')^2}}N (x+x', |x-x'|),\\
&N (p, q)= \frac {
8\, e^{\frac {p} {2}}\big(10(p+q)^2+(p-q)^2\big)}
 {15 (p+q)^3},\quad \forall p>0,\;\forall q>0.
\end{align}
\end{proposition}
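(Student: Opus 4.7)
The starting point is the integral representation (\ref{BbetaA}). After the change of variable $t=\cos\theta$, one has $|\textbf{x}'-\textbf{x}|^2 = x^2+x'^2-2xx't$, and, setting $p=x+x'$, $q=|x-x'|$ and $\xi(t)=\sqrt{x^2+x'^2-2xx't}$,
\[
B_\beta(x,x')=\sqrt\beta\,e^{p/2}\int_{-1}^{1}\frac{1+t^2}{\xi(t)}\,e^{-g(\xi(t))}\,dt,
\qquad
g(\xi):=\frac{\beta m\,q^2}{2\xi^2}+\frac{\xi^2}{8m\beta}.
\]
As $t$ varies in $[-1,1]$, $\xi(t)$ varies in $[q,p]$.

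The heart of the argument is the pointwise inequality $g(\xi)\ge h$ for all $\xi\in[q,p]$, where
\[
h:=\frac{\beta m\,q^2}{2p^2}+\frac{q^4}{8m\beta\,p^2}
\]
is precisely the constant appearing in the exponent of $\mathscr{B}_\beta$. This will be proved term by term. The first summand $\beta m q^2/(2\xi^2)$ is decreasing in $\xi$, hence bounded below on $[q,p]$ by its value at $\xi=p$, namely $\beta m q^2/(2p^2)$. The second summand $\xi^2/(8m\beta)$ is increasing, hence bounded below on $[q,p]$ by its value at $\xi=q$, namely $q^2/(8m\beta)$; and since $q\le p$ gives $q^2/p^2\le 1$, this dominates $q^4/(8m\beta p^2)$. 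Adding the two bounds yields $g(\xi)\ge h$.

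Substituting $e^{-g(\xi(t))}\le e^{-h}$ in the integrand and pulling $e^{-h}$ out,
\[
B_\beta(x,x')\le \sqrt\beta\,e^{-h}\,e^{p/2}\int_{-1}^{1}\frac{1+t^2}{\xi(t)}\,dt.
\]
The remaining integral was already evaluated in (\ref{Bproper1}): it equals $\tfrac{4(10\max^2\{x,x'\}+\min^2\{x,x'\})}{15\max^3\{x,x'\}}$. Using $p+q=2\max\{x,x'\}$ and $p-q=2\min\{x,x'\}$, one checks that multiplication by $e^{p/2}$ produces exactly $N(p,q)$, whence $B_\beta(x,x')\le \sqrt\beta\,e^{-h}N(p,q)=\mathscr{B}_\beta(x,x')$. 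No step is delicate; the only point worth stating carefully is the identification of $h$ with the exponent of $\mathscr{B}_\beta$, and the elementary inequality $q^2\ge q^4/p^2$ (consequence of $q\le p$) that makes the monotonicity of the two summands of $g$ match at different endpoints of $[q,p]$.
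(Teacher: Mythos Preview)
Your proof is correct and follows the same approach as the paper: bound the exponential factor in the integrand uniformly by $e^{-h}$ using $q\le|\textbf{x}'-\textbf{x}|\le p$, then identify the remaining integral via (\ref{Bproper1}). The paper simply writes the exponent as a single fraction and replaces $|\textbf{x}'-\textbf{x}|$ by $p$ in the denominator and by $q$ in the numerator in one stroke, whereas you split $g(\xi)$ into two monotone summands and bound each separately (incurring the harmless extra step $q^2\ge q^4/p^2$); the underlying estimate is the same.
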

\begin{proof}
For all $\textbf x\in \RR^3$ and $\textbf x'\in \RR^3$ such that $|\textbf x|=x$, $|\textbf x'|=x'$,
$$
|x-x'|\leq|\textbf{x}-\textbf{x}'|\leq x+x'.
$$
Therefore,
$$
B _{ \beta  }(x, x')\le \sqrt{\beta} e^{\frac {(x'+x )} {2}} e^{-\beta \frac {m(x-x')^2+\frac { (x-x')^4} {4m\beta ^2}} {2(x+x')^2}}\int _0^\pi 
\frac {(1+\cos^2\theta)} { |\textbf{x}'-\textbf{x}| } d\cos \theta,
$$
and the result follows using (\ref{Bproper1}).
\end{proof}

\begin{corollary} 
\begin{equation}
\forall x>0, x'>0:\,\, B _{ \beta  } (x, x')\le B _{ \beta  }\left(\frac {x+x'} {2}, \frac {x+x'} {2}\right),\label{ScalE30}
\end{equation}
\begin{align*}
&B_\beta \left(\frac {x+x'} {2}, \frac {x+x'} {2}\right)=\sqrt \beta\,\left(\frac {2\sqrt{2\pi m\beta}} {(x+x')^2}+\mathcal O\left(\frac {1} {x+x'}\right)^3\right)e^{\frac {x+x'} {2}}
\end{align*}
as $x+x'\to \infty$, and
\begin{align*}
&B_\beta \left(\frac {x+x'} {2}, \frac {x+x'} {2}\right)=\frac {44 \sqrt \beta\,} {15 }\left(\frac {1} {x+x'}+1\right)+\mathcal O(x+x'),\,\,\,x+x'\to 0.
\end{align*}
If $x+x'\to \infty$, and $|x-x'|\le \theta x$:
\begin{equation}
\label{ScalE31}
|e^{-x}-e^{-x'}|B_\beta (x, x')\le 2 \sqrt \beta\,\left(\frac {2\sqrt{2\pi m\beta}} {(x+x')^2}+\mathcal O\left(\frac {1} {x+x'}\right)^3\right)\left|\, \sinh \left(\frac {\theta x} {2} \right)\right|.
\end{equation}
For all $\rho >0$ fixed and $x>0$, $x'>0$ such that $x+x'=\rho $,
\begin{equation}
\label{ScalE51}
B _{ \beta  }(x, x')\le \sqrt \beta\,e^{-\beta \frac {(x-x')^2} {2m \rho ^2}}\Phi (\rho , |x-x'|).
\end{equation}
\end{corollary}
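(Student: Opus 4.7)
My plan is to handle the four assertions in sequence, each reducing to results already established in the paper.

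For (\ref{ScalE30}) I would fix $\rho=x+x'>0$ and parametrise the segment $\{(s,\rho-s):0<s<\rho\}$. Differentiating,
\begin{equation*}
\frac{d}{ds}B_\beta(s,\rho-s)=\nabla B_\beta(s,\rho-s)\cdot(1,-1),
\end{equation*}
which, by Proposition \ref{ScalT4}, is strictly positive when $\rho-s>s$ (i.e. $s<\rho/2$) and strictly negative when $s>\rho/2$. Hence the one-variable map $s\mapsto B_\beta(s,\rho-s)$ attains its unique maximum at $s=\rho/2$, which is exactly (\ref{ScalE30}). The two asymptotic expansions of $B_\beta\!\left(\tfrac{x+x'}{2},\tfrac{x+x'}{2}\right)$ are then read off from (\ref{ScalE5}) and (\ref{ScalE6}) by substituting $\tfrac{x+x'}{2}$ for the diagonal variable, with $x+x'\to\infty$ and $x+x'\to0$ taking the place of $x\to\infty$ and $x\to0$ respectively.

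For (\ref{ScalE31}) the key ingredient is the elementary identity
\begin{equation*}
e^{-x}-e^{-x'}=-2\,e^{-(x+x')/2}\sinh\!\left(\tfrac{x-x'}{2}\right),
\end{equation*}
so that $|e^{-x}-e^{-x'}|=2\,e^{-(x+x')/2}\bigl|\sinh(\tfrac{x-x'}{2})\bigr|$. Combining this with (\ref{ScalE30}) and the diagonal asymptotic from the previous step, the factor $e^{(x+x')/2}$ coming from the asymptotic cancels against the $e^{-(x+x')/2}$ from the hyperbolic identity, leaving the algebraic tail $\sqrt\beta\bigl(2\sqrt{2\pi m\beta}/(x+x')^2+\mathcal O((x+x')^{-3})\bigr)$ multiplied by $2\bigl|\sinh(\tfrac{x-x'}{2})\bigr|$. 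Since $|\sinh|$ is even and monotone increasing on $[0,\infty)$, the hypothesis $|x-x'|\le\theta x$ then gives $\bigl|\sinh(\tfrac{x-x'}{2})\bigr|\le\bigl|\sinh(\tfrac{\theta x}{2})\bigr|$, yielding (\ref{ScalE31}).

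Finally, (\ref{ScalE51}) is obtained directly from Proposition \ref{S3P15}: the quartic contribution $(x-x')^4/(4m\beta^2)$ in the exponent of $\mathscr B_\beta$ is nonnegative, so discarding it only enlarges the bound, and on the line $x+x'=\rho$ the remaining Gaussian factor and the prefactor $N(\rho,|x-x'|)$ depend only on $\rho$ and $|x-x'|$, producing the stated form with $\Phi(\rho,q)$ proportional to $N(\rho,q)$. I do not foresee any substantive obstacle: each assertion is a short manipulation on top of Propositions \ref{ScalT4} and \ref{S3P15} together with the diagonal asymptotics, and the one point requiring a little care is making sure the exponential prefactors in step (\ref{ScalE31}) cancel cleanly so that only the Gaussian algebraic tail and a $\sinh$ factor survive.
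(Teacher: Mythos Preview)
Your proposal is correct and follows essentially the same approach as the paper: Proposition~\ref{ScalT4} for the monotonicity giving (\ref{ScalE30}), direct substitution into (\ref{ScalE5})--(\ref{ScalE6}) for the diagonal asymptotics, the hyperbolic identity $|e^{-x}-e^{-x'}|=2e^{-(x+x')/2}|\sinh(\tfrac{x-x'}{2})|$ together with monotonicity of $\sinh$ for (\ref{ScalE31}), and Proposition~\ref{S3P15} for (\ref{ScalE51}). Your write-up is in fact slightly more explicit than the paper's on the parametrisation $s\mapsto B_\beta(s,\rho-s)$ and on the cancellation of the exponential prefactors, but the logical skeleton is identical.
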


\begin{proof} 
By Proposition \ref{ScalT4}, the function $B_\beta$ is strictly  decreasing in the direction orthogonal to the first diagonal, and then property  (\ref{ScalE30}) follows. In order to prove (\ref{ScalE31}) we have  first, when $x+x'\to \infty$,
\begin{equation*}
|e^{-x}-e^{-x'}|B_\beta (x, x')\le 2 \left(\frac {2\sqrt{2\pi m\beta}} {(x+x')^2}+\mathcal O\left(\frac {1} {x+x'}\right)^3\right)\left| \,\sinh \left(\frac {x'-x} {2} \right)\right|
\end{equation*}
If moreover, $0\le x'-x \le \theta x$ then
\begin{equation*}
0\le(e^{-x}-e^{-x'})B_\beta (x, x')\le 2 \left(\frac {2\sqrt{2\pi m\beta}} {(x+x')^2}+\mathcal O\left(\frac {1} {x+x'}\right)^3\right) \sinh \left(\frac {\theta x} {2} \right)
\end{equation*} 
If $-\theta x \le x'-x\le 0$ then, 
\begin{equation*}
0\le -\sinh \left(\frac {x'-x} {2} \right)=\sinh \left(\frac {x-x'} {2} \right)\le \sinh \left(\frac {\theta x} {2} \right),
\end{equation*} 
and (\ref{ScalE31}) follows.
\end{proof}

\begin{proposition}
\label{S2P26}
For all $\varphi \in C_c((0, \infty)\times (0, \infty))$:
\begin{align}
\lim _{ \beta \to \infty }\iint  _{ (0, \infty)^2 }\varphi (x, y) \Phi_\beta  (x, y) \mathscr B _{ \beta  }(x, y)dxdy=\nonumber \\
= \frac {88} {15}\sqrt\frac {{m \pi }} {2} erf(1)\int _{ (0, \infty) }\varphi \left(\frac {z} {2}, \frac {z} {2} \right)e^{\frac {z} {2}}dz \label{SADirac}
\end{align}
\end{proposition}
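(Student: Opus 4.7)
The strategy is a Laplace/concentration argument, exploiting the fact that $\mathscr B_\beta$ concentrates on the diagonal $\{x=y\}$ with width of order $1/\sqrt\beta$. First I would introduce the change of variables $z=x+y$, $\xi=y-x$ (Jacobian $1/2$), transforming the domain into $z>0$, $|\xi|<z$. Next I would rescale the fast variable by setting $\eta=\sqrt\beta\,\xi$, so that $d\xi=d\eta/\sqrt\beta$ and the exponent in $\mathscr B_\beta$ becomes
\begin{equation*}
-\beta\,\frac{m\xi^2+\xi^4/(4m\beta^2)}{2(x+y)^2}=-\frac{m\eta^2}{2z^2}-\frac{\eta^4}{8m\beta^3 z^2},
\end{equation*}
the second term going to zero uniformly on bounded sets of $(z,\eta)$. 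The prefactor $\sqrt\beta$ in $\mathscr B_\beta$ cancels exactly the $1/\sqrt\beta$ coming from this Jacobian.

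After these two changes of variables the integrand reads
\begin{equation*}
\tfrac{1}{2}\,\varphi\!\left(\tfrac{z-\eta/\sqrt\beta}{2},\tfrac{z+\eta/\sqrt\beta}{2}\right)\Phi_\beta\!\left(\tfrac{z-\eta/\sqrt\beta}{2},\tfrac{z+\eta/\sqrt\beta}{2}\right) e^{-\frac{m\eta^2}{2z^2}+O(\beta^{-3})} N\!\left(z,\tfrac{|\eta|}{\sqrt\beta}\right),
\end{equation*}
and I would pass to the limit $\beta\to\infty$ by dominated convergence. The pointwise limits give $\varphi(z/2,z/2)$ and, using the arithmetic $N(z,0)=88\,e^{z/2}/(15z)$, $N(z,|\eta|/\sqrt\beta)\to 88\,e^{z/2}/(15z)$; meanwhile $\Phi_\beta$ converges pointwise to the indicator of a set $\{|\eta|\leq \eta_*(z)\}$ determined by the geometry of $\supp(\Phi)$ rescaled by $\sqrt\beta$ (see Section \ref{betascaling}). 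The resulting $\eta$-integral is the truncated Gaussian $\int_{-\eta_*(z)}^{\eta_*(z)} e^{-m\eta^2/(2z^2)}d\eta$ which, via the substitution $u=\eta\sqrt{m}/(z\sqrt{2})$, equals $z\sqrt{2\pi/m}\,\mathrm{erf}\bigl(\eta_*(z)\sqrt{m/2}/z\bigr)$; matching the statement then requires this argument of $\mathrm{erf}$ to equal $1$, pinning down the exact form of $\eta_*(z)$ dictated by $\Phi_\beta$.

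The main obstacle is twofold. First, producing an $L^1$-majorant uniform in $\beta$ to legitimize dominated convergence: since $\supp(\varphi)$ is compact in $(0,\infty)^2$, $z$ varies in a compact subset of $(0,\infty)$, and on this set $N(z,|\eta|/\sqrt\beta)$ is uniformly bounded in $\beta\geq 1$ and the Gaussian $e^{-m\eta^2/(2z^2)}$ provides integrability in $\eta$ uniformly in $\beta$. Second, verifying that the geometry of $\supp(\Phi_\beta)$ in the rescaled $(z,\eta)$ variables produces the specific endpoint $\eta_*(z)$ that yields $\mathrm{erf}(1)$; this requires tracking carefully the conditions (\ref{Bsupport1})--(\ref{Bsupport2}) defining $\supp(\Phi)$ through the $\beta$-scaling $x=\beta k$ of Section \ref{betascaling}. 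Once these two points are handled, the dominated convergence output is $\tfrac{1}{2}\int_0^\infty \varphi(z/2,z/2)\,(88\,e^{z/2}/(15 z))\,z\sqrt{2\pi/m}\,\mathrm{erf}(1)\,dz$, which rearranges to the claimed identity.
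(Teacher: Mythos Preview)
Your approach is essentially the same as the paper's: pass to sum/difference variables, rescale the difference by a factor of order $\sqrt\beta$, and apply dominated convergence. The one cosmetic difference is that the paper rescales by $z_1=\sqrt{\beta/(2m)}\,\xi/\zeta$ rather than your $\eta=\sqrt\beta\,\xi$; this normalizes the Gaussian to $e^{-z_1^2}$ and absorbs the $z$-dependence into the Jacobian, so the cutoff $\Phi_\beta$ translates directly into the fixed range $z_1\in(-1,1)$ and the $\mathrm{erf}(1)$ falls out without having to solve for your $\eta_*(z)$. Your identification of the two obstacles (the uniform $L^1$ majorant via compact support of $\varphi$, and the geometry of $\supp(\Phi_\beta)$ under rescaling) matches exactly what the paper handles.
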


\begin{proof}
Define the new variables
\begin{align*}
\xi =x-y,\,\,\zeta =x+y,\,\,\,\psi (\xi, \zeta )=\varphi \left(\frac {\xi +\zeta } {2}, \frac {\zeta -\xi } {2} \right)
\end{align*}
and denote $\Psi_{\beta}(\xi,\zeta)=\Phi_{\beta}(x,y)$. Then,
\begin{align*}
I=&\iint  _{ (0, \infty)^2 }\varphi (x, y)\Phi_\beta  (x, y) \mathscr B _{ \beta  }(x, y)dxdy=\\
=&\iint  _{ D }\hskip -0.2cm  e^{- \frac {\beta ^2\xi ^2+\xi ^4} {2m \beta \zeta^2}}\Psi _{ \beta  } (\xi , \zeta )
\mathscr B_\beta \left(\frac {\xi +\zeta } {2}, \frac {\zeta -\xi } {2} \right)\psi (\xi, \zeta )d\xi d\zeta 
\end{align*}
where $D=\{(\zeta,\xi)\in\RR^2:\zeta >0,\;-\zeta <\xi <\zeta\}$. We write now,
$$
\frac {\beta ^2\xi ^2+\xi ^4} {2m \beta \zeta^2}=\frac {\beta \xi ^2} {2m\zeta^2}\left( 1+ \frac { \xi ^2 } {\beta ^2} \right)
$$
and the change of variables:
$$
\sqrt{\frac {\beta } {2m}}\frac {\xi } {\zeta }=z_1,\,\,\zeta =z_2;\,\,\, \xi =\sqrt{\frac {2m} {\beta }}z_1z_2, \, \zeta =z_2
$$
whose Jacobian is $\sqrt{2m/\beta }\, z_2$ and,
\begin{align*}
I=&  \iint _{ \Omega  } e^{-z_1^2\left( 1+\frac {2m z_1^2z_2^2} {\beta }\right)} \mathscr B_\beta \left(Z_1, Z_2 \right)\times \\
&\times 
\Psi  \left(\beta ^{-1} \sqrt{\frac {2m} {\beta }}z_1z_2, \beta ^{-1} z_2, \right) \psi \left( \sqrt{\frac {2m} {\beta }}z_1z_2, z_2\right)\sqrt{2m/\beta }\, z_2dz_1dz_2\\
&Z_1=\frac {1} {2}\left(z_2+\sqrt{\frac {2m} {\beta }} z_1z_2  \right),\,\,Z_2=\frac {1} {2}\left(z_2-\sqrt  \frac {2m} {\beta } z_1z_2  \right)
\end{align*}
Due to the cut off function $\Phi _\beta (x, y)$, the actual  domain of integration $\Omega _\beta $ is:
\begin{align*}
\Omega _\beta =\Big\{(z_1,z_2)\in \RR\times \RR^+;\,\, \sqrt{2m}|\,z_1|\le \theta z_2^{1/2}\left(1-\frac {2m} {\beta }z_1^2 \right)^{1/2} \Big\}
\end{align*}
where $\Omega $ is the domain where $z_2>0$, $z_1\in (-1, 1)$.  As $\beta \to \infty$, 
\begin{align*}
\lim _{ \beta \to \infty  }e^{-z_1^2\left( 1+\frac {2m z_1^2z_2^2} {\beta }\right)} \psi \left( \sqrt{\frac {2m} {\beta }}z_1z_2, z_2\right)=
e^{-z_1^2} \psi \left(0, z_2\right).
\end{align*}
On the other hand, using (\ref{ScalE6}), for all $z_1, z_2$,
\begin{align}
\label{S2EL1}
\lim _{ \beta \to \infty  } \frac {\mathscr B_\beta \left(Z_1,Z_2\right)} {\beta }
 = \frac {44\, e^{\frac {z_2} {2}} } {15 z_2}
\end{align}
By definition of $\Psi $, for all $z_1\in \RR$ and $z_2>0$ fixed, if $\beta $ is sufficiently large,
\begin{equation*}
\Psi  \left(\beta ^{-1} \sqrt{\frac {2m} {\beta }}z_1z_2, \beta ^{-1} z_2, \right)=1
\end{equation*}
Then,
\begin{align*}
\lim _{\beta \to \infty }I=&\frac {44} {15}\sqrt {2m} \iint_{\Omega } e^{-z_1^2}
e^{\frac {z_2} {2}} \psi \left(0, z_2\right)dz_1dz_2\\
=&\frac {44} {15}\sqrt\frac {{m \pi }} {2}  erf (1)\int _{ (0, \infty) }\varphi \left(\frac {z_2} {2}, \frac {z_2} {2} \right)e^{\frac {z_2} {2}}dz_2
\end{align*}
\end{proof}

The function $B_\beta (x, y)\ge 0$  coincides with  $\mathscr B_\beta (x, y)$ for $x=y$ and is below that function, that tends to a Dirac measure along the first diagonal as $\beta \to \infty$.
From properties (\ref{ScalE2}) and (\ref{SADirac}), the truncation of $\mathcal B_\beta $ may then be seen as reasonable.\\

\noindent
\textbf{Acknowledgments.}
The research of the first author is supported by the Basque Government through the BERC 2014-2017 program, by the Spanish Ministry of Economy and Competitiveness MINECO: BCAM Severo Ochoa accreditation SEV-2013-0323, and by MTM2014-52347-C2-1-R of DGES. The research of the second author is supported by grants MTM2014-52347-C2-1-R of DGES and  IT641-13 of the Basque Government. 

The authors gratefully thank the referees for their careful reading of the manuscript and  their valuable comments and recommendations.


\begin{thebibliography}{10}

\bibitem{Ballew2016BoseEinsteinCI}
J.~Ballew, G.~Iyer, and R.~L. Pego.
\newblock Bose-{E}instein condensation in a hyperbolic model for the
  {K}ompaneets equation.
\newblock {\em SIAM J. Math. Analysis}, 48:3840--3859, 2016.

\bibitem{Kumar2}
P.~K. {Barik}, A.~K. {Giri}, and Ph. {Lauren{\c c}ot}.
\newblock {Mass-conserving solutions to the Smoluchowski coagulation equation
  with singular kernel}.
\newblock {\em ArXiv e-prints, arXiv:1804.00853}, April 2018.

\bibitem{BIRKINSHAW199997}
M.~Birkinshaw.
\newblock The {S}unyaev-{Z}eldovich effect.
\newblock {\em Phys. Reports}, 310(2):97--195, 1999.

\bibitem{BOG}
V.~I. Bogachev.
\newblock {\em Measure Theory, Vol. 2}.
\newblock Springer Berlin Heidelberg, 2007.

\bibitem{F}
L.~M. Brown and R.~P. Feynman.
\newblock Radiative corrections to {C}ompton scattering.
\newblock {\em Phys. Rev.}, 85:231--244, 1952.

\bibitem{Buet}
C.~{Buet}, B.~{Despr\'es}, and Th. {Leroy}.
\newblock {Anisotropic models and angular moments methods for the Compton
  scattering.}
\newblock {\em E-prints, hal-01717173}, 2018.

\bibitem{doi:10.1063/1.865928}
R.~E. Caflisch and C.~D. Levermore.
\newblock Equilibrium for radiation in a homogeneous plasma.
\newblock {\em The Physics of Fluids}, 29(3):748--752, 1986.

\bibitem{Camejo}
C.~Cueto Camejo, R.~Gr\"opler, and G.~Warnecke.
\newblock Regular solutions to the coagulation equations with singular kernels.
\newblock {\em Mathematical Methods in the Applied Sciences},
  38(11):2171--2184.

\bibitem{Nouri1}
M.~ChaneâYook and A.~Nouri.
\newblock On a quantum kinetic equation linked to the {C}ompton effect.
\newblock {\em Transport Theory and Statistical Physics}, 33(5-7):403--427,
  2004.

\bibitem{EC}
E.~{Cort{\'e}s} and M.~{Escobedo}.
\newblock {On a system of equations for the normal fluid-condensate interaction
  in a Bose gas}.
\newblock {\em ArXiv e-prints}, March 2018.

\bibitem{1964PhFl....7..735D}
H.~{Dreicer}.
\newblock {Kinetic Theory of an Electron-Photon Gas}.
\newblock {\em Physics of Fluids}, 7:735--753, May 1964.

\bibitem{MR1491861}
M.~Escobedo, M.~A. Herrero, and J.~J.~L. Vel\'azquez.
\newblock A nonlinear {F}okker-{P}lanck equation modelling the approach to
  thermal equilibrium in a homogeneous plasma.
\newblock {\em Trans. Amer. Math. Soc.}, 350(10):3837--3901, 1998.

\bibitem{ESMI}
M.~Escobedo and S.~Mischler.
\newblock On a quantum {B}oltzmann equation for a gas of photons.
\newblock {\em Journal de Math\'ematiques Pures et Appliqu\'ees}, 80(5):471 --
  515, 2001.

\bibitem{EMV}
M.~Escobedo, S.~Mischler, and M.~A. Valle.
\newblock Homogeneous {B}oltzmann equation in quantum relativistic kinetic
  theory.
\newblock {\em Electron. J. Diff. Eqns. Monograph}, 4(2):1--85, 2003.

\bibitem{Escobedo2004208}
M.~Escobedo, S.~Mischler, and J.~J.~L. Vel\'azquez.
\newblock Asymptotic description of {D}irac mass formation in kinetic equations
  for quantum particles.
\newblock {\em Journal of Differential Equations}, 202(2):208 -- 230, 2004.

\bibitem{Nouri2}
E.~Ferrari and A.~Nouri.
\newblock On the {C}auchy problem for a quantum kinetic equation linked to the
  {C}ompton effect.
\newblock {\em Mathematical and Computer Modelling}, 43:838--853, 04 2006.

\bibitem{FO}
G.B. Folland.
\newblock {\em Real analysis: modern techniques and their applications}.
\newblock Pure and Applied Mathematics. Wiley, 1999.

\bibitem{Fournier}
N.~Fournier and Ph.~Lauren\c cot.
\newblock Well-posedness of {S}moluchowski's coagulation equation for a class
  of homogeneous kernels.
\newblock {\em Journal of Functional Analysis}, 233(2):351 -- 379, 2006.

\bibitem{Grachev}
S.~I. Grachev.
\newblock Nonstationary radiative transfer: Evolution of a spectrum by multiple
  {C}ompton scattering.
\newblock {\em Astrophysics}, 57(4):550--558, Dec 2014.

\bibitem{OK}
O.~Kavian.
\newblock Remarks on the {K}ompaneets equation, a simplified model of the
  {F}okker-{P}lanck equation.
\newblock {\em Stud. Math. Appl. North-Holland, Amsterdam}, 2002.

\bibitem{Klenke}
A.~Klenke.
\newblock {\em Probability Theory: A Comprehensive Course}.
\newblock Universitext. Springer London, 2013.

\bibitem{1957Komp}
A.~S. {Kompaneets}.
\newblock {The establishment of thermal equilibrium between quanta and
  electrons}.
\newblock {\em Soviet Journal of Experimental and Theoretical Physics},
  4:730--737, 1957.

\bibitem{MR3523075}
C.~D. Levermore, H.~Liu, and R.~L. Pego.
\newblock Global dynamics of {B}ose-{E}instein condensation for a model of the
  {K}ompaneets equation.
\newblock {\em SIAM J. Math. Anal.}, 48(4):2454--2494, 2016.

\bibitem{Mes}
P.~M\'esz\'a¡ros and R.~W.~Bussard.
\newblock The angle-dependent {C}ompton redistribution function in x-ray
  sources.
\newblock {\em The Astrophysical Journal}, 306:238--247, 06 1986.

\bibitem{Norris}
J.~R. Norris.
\newblock Smoluchowski's coagulation equation: uniqueness, nonuniqueness and a
  hydrodynamic limit for the stochastic coalescent.
\newblock {\em Ann. Appl. Probab.}, 9(1):78--109, 02 1999.

\bibitem{1965Weyman}
R.~{Weyman}.
\newblock {Diffusion Approximation for a Photon Gas Interacting with a Plasma
  via the Compton Effect}.
\newblock {\em Phys. Fluids}, 8:2112--2114, 1965.

\bibitem{1975SvPhU..18...79Z}
Y.~B. {Zel'Dovich}.
\newblock {Reviews of Topical Problems: Interaction of free electrons with
  electromagnetic radiation}.
\newblock {\em Soviet Physics Uspekhi}, 18:79--98, February 1975.

\bibitem{1969JETP...28.1287Z}
Y.~B. {Zel'Dovich} and E.~V. {Levich}.
\newblock {Bose Condensation and Shock Waves in Photon Spectra}.
\newblock {\em Soviet Journal of Experimental and Theoretical Physics},
  28:1287--1290, June 1969.

\bibitem{1972ZhETF..62.1392Z}
Y.~B. {Zel'Dovich}, E.~V. {Levich}, and R.~A. {Syunyaev}.
\newblock {Stimulated Compton Interaction between Maxwellian Electrons and
  Spectrally Narrow Radiation}.
\newblock {\em Soviet Journal of Experimental and Theoretical Physics},
  35:733--740, 1972.

\bibitem{1972JETP...35...81Z}
Y.~B. {Zel'Dovich} and R.~A. {Syunyaev}.
\newblock {Shock Wave Structure in the Radiation Spectrum During Bose
  Condensation of Photons}.
\newblock {\em Soviet Journal of Experimental and Theoretical Physics},
  35:81--85, 1972.

\end{thebibliography}
\end{document}